\documentclass{article}
\usepackage{tikz}
\usetikzlibrary{shapes,backgrounds}
\usetikzlibrary{positioning} %using nodes in tikz
\usepackage{graphicx} % support the \includegraphics command and optionsd
\usepackage{color}
\usepackage{comment}
\usepackage{booktabs} % for much better looking tables
\usepackage{array} % for better arrays (eg matrices) in maths
\usepackage{paralist} % very flexible & customisable lists (eg. enumerate/itemize, etc.)
\usepackage{verbatim} % adds environment for commenting out blocks of text & for better verbatim
\usepackage{subcaption} % make it possible to include more than one captioned figure/table in a single float
\usepackage{mathdots} % better dots, very nice
\usepackage[psdextra]{hyperref}
\usepackage{amsmath}
\usepackage{amsfonts}
\usepackage{amssymb}
\usepackage{amsthm}
\usepackage{tikz-cd}
\usepackage{epigraph}
\usepackage{mathtools}
\newtheorem{theorem}{Theorem}[section]
\newtheorem{lemma}[theorem]{Lemma}
\newtheorem{proposition}[theorem]{Proposition}
\newtheorem{corollary}[theorem]{Corollary}
\newtheorem{definition}[theorem]{Definition}

\newtheorem{remark}[theorem]{Remark}

\newtheorem{assumptions}[theorem]{Assumptions}
\usepackage{setspace}
\usepackage[normalem]{ulem}
\usepackage[bb=libus]{mathalpha}
\usepackage{todonotes}

\usepackage[left=3cm, right=3cm, top=2cm]{geometry}

\newcommand{\x}[1]{{\color{red} #1}}

\usepackage[most]{tcolorbox}
\usetikzlibrary{patterns}
\pgfdeclarepatternformonly{mystrikeout}{\pgfqpoint{-1pt}{-1pt}}{\pgfqpoint{11pt}{11pt}}{\pgfqpoint{10pt}{10pt}}%
{
  \pgfsetlinewidth{0.4pt}
  \pgfpathmoveto{\pgfqpoint{0pt}{0pt}}
  \pgfpathlineto{\pgfqpoint{10.1pt}{10.1pt}}
  \pgfusepath{stroke}
}
\newtcolorbox{soutbox}{breakable,
 enhanced jigsaw,
 opacityback=0,
 parbox=false,
 boxrule=0mm,
 top=0mm,bottom=0pt,left=0pt,right=0pt,
 boxsep=0pt,
 frame hidden,
 finish={\fill[pattern=mystrikeout, pattern color=blue] (frame.north west) rectangle (frame.south east);}
}

\title{Uniform Sobolev inequalities on geometric graphs}

\author{Samuel Mercer, Yves van Gennip}

\begin{document}

\maketitle

\begin{abstract}
There is significant interest in the study of calculus on graphs, especially regarding the use of gradient-based methods for applications in data driven problems such as classification, clustering and regularisation for inverse problems. Geometric graphs, whose vertices are take from from a Euclidean domain and whose edge structure is determined by the distance between the nodes in the domain, have been central in theoretical studies. Typical approaches for analysis, such as studying consistency and the existence of continuum limits, rely on $\Gamma$-convergence. This technique has some limitations, as it requires the typical length scale which determines the connectivity structure of the graph to be much larger than the scales frequently used for applications. Moreover, it may fail to provide quantitative results. This paper provides necessary and sufficient conditions on the asymptotic behaviour of this length scale for the existence of a uniform collection of Sobolev inequalities on a sequence of geometric graphs. Furthermore, these inequalities hold when the length scales are much smaller than what is typically assumed for $\Gamma$-convergence results and within the range of what is used for data-driven problems. The Sobolev inequalities provide a quantitative estimate on the $L^q$-regularisation effect of discrete gradients.
\end{abstract}

\section{Introduction}

We establish Sobolev inequalities of the form \eqref{eq:graphsobolev} and \eqref{eq:graphsobolev2} that control the $L^q$ and $L^\infty$ norms, respectively, of graph-based functions in terms of lower-order $L^p$ norms and discrete variations. The two main results in this vein in Theorem~\ref{thm:graphsobolev} (for $p<d$) and Theorem~\ref{thm:graphsobolev2} (for $p>d$) are obtained for sequences of geometric graphs take from from $\mathbb{R}^d$. The geometric graph setting is given in full detail in Section~\ref{sec:geometry}; here we give a brief overview. 

Consider an open subset $\Omega\subset \mathbb{R}^d$ (for any fixed $d\in\mathbb{N}$) with Lipschitz boundary and equipped with a probability measure $\mu$, and a sequence of finite sets $V_n \subset \Omega$ indexed by $n\in \mathbb{N}$ and equipped with discrete probability measures $\mu_n$, satisfying $|V_n|\rightarrow \infty$ as $n\rightarrow\infty$. One should think of $(V_n;\mu_n)$ suitably approximating $(\Omega,\mu)$, in particular $(\mu_n)_{n\in\mathbb{N}}$ converges to $\mu$ as $n\rightarrow\infty$ in the $\infty$-Wasserstein metric space. We denote the distance in this space by $d_\infty(\mu_n,\mu)$.
Additionally, let $(\varepsilon_n)_{n\in \mathbb{N}}$ be a sequence of positive reals converging to zero and $\eta:[0,\infty)\rightarrow [0,\infty)$ be a non-negative, non-increasing function, continuous at zero and $\eta(0)>0$. We refer to $(\varepsilon_n)_{n\in \mathbb{N}}$ as the concentrating parameter sequence. For a function $u:V_n\rightarrow \mathbb{R}$ we define
$$ \mathcal{GE}_n^p(u):= \frac{1}{\varepsilon_n^{d+p}}\sum_{x\in V_n}\sum_{y\in V_n} \eta\left(\frac{|x-y|}{\varepsilon_n}\right)|u(x)-u(y)|^p\mu_n(x)\mu_n(y).$$
The primary question in this paper is the following. Let $q>p\geq 1$. Under what conditions on $q,p$ and the concentrating parameter sequence $(\varepsilon_n)_{n\in \mathbb{N}}$ does there exist a constant $C>0$ such that, for any $n\in\mathbb{N}$ and any function $u:V_n\rightarrow\mathbb{R}$,
\begin{equation}\label{eq:question}  \left( \sum_{x\in V_n} |u(x)|^q\mu_n(x)\right)^{1/q} \leq C\left(\left( \sum_{x\in V_n} |u(x)|^p \mu_n(x)  \right)^{1/p} + \left( \mathcal{GE}_n^p(u) \right)^{1/p} \right)?
\end{equation}

The above Sobolev inequality gives a quantitative estimate on how regularisation occurs. For $q>p$ there are spiky functions that have a much larger norm in $L^q(V_n;\mu_n)$ than in $L^p(V_n;\mu_n)$. If a gradient based method is applied to these spiky functions in a way that decreases the functional $\mathcal{GE}_n^p$ then their $L^q$ norm must decrease as well, creating a $L^q$-regularisation effect. Spiky functions may occur in solutions to data-driven optimization problems due to over-fitting. 

If we allow the constant $C$ to depend on $n$ then the above inequality is trivial, since the $L^q$ and $L^p$ norms are Lipschitz equivalent on the finite set $V_n$. For example, if each $\mu_n$ is the uniform probability measure on $V_n$, then, for any $n\in\mathbb{N}$ and function $u:V_n \rightarrow \mathbb{R}$,
$$ \left( \frac{1}{|V_n|}\sum_{x\in V_n} |u(x)|^q\right)^{1/q} \leq |V_n|^{\frac{1}{p}-\frac{1}{q}}\left( \frac{1}{|V_n|}\sum_{x\in V_n} |u(x)|^p   \right)^{1/p}.$$
This follows from the inequality $\sum_{x\in V_n} |\tilde u(x)|^p \geq \sum_{x\in V_n} |\tilde u(x)|^q$ for $\tilde u := |u|/\left(\sum_{y\in V_n} |u(y)|^q\right)^{1/q}$. 
The bound above is in fact sharp: equality is achieved for a function $u$ which is zero on all vertices but one, where it has value $1$.  However, $ |V_n|^{\frac{1}{p}-\frac{1}{q}}\rightarrow\infty$ as $n\rightarrow\infty$, making the bound not very useful for large $n$. That is why our objective is to provide a uniform collection of Sobolev inequalities of the form above in which $C$ is independent of $n$. 

An important quantity related to the distance $d_\infty(\mu_n,\mu)$ is the connectivity threshold of the set $V_n$, which we denote by $s_n$ and is constructed as follows (see also Definition~\ref{def:connthreshold}). For $s>0$ and $n\in\mathbb{N}$, we define $\mathbb{G}_{s}(V_n):=(V_n,E_{n,s})$ to be the simple graph with vertex set $V_n$ and edge set given by the pairs of vertices that are within a distance $s$. The connectivity threshold of $V_n$ is the smallest real number $s>0$, such that $\mathbb{G}_s(V_n)$ is connected. In Corollary~\ref{cor:connect} we prove $s_n\leq 2\sqrt{d} \; d_\infty(\mu_n,\mu)$, under suitable assumptions including convexity of $\Omega$. If instead $\Omega$ is bi-Lipschitz homeomorphic to a convex set, then Corollary~\ref{cor:connect2} shows that there exists a constant $C(\Omega)$, depending on $\Omega$, such that $s_n\leq C(\Omega)d_\infty(\mu_n,\mu)$. For many concrete examples one can also produce a Lipschitz lower bound for $s_n$ in terms of $d_\infty(\mu_n,\mu)$, suggesting a very close relationship. Although, for now, establishing a general lower bound remains an open problem. 

We refer to the graphs $\mathbb{G}_{s}(V_n)$ as geometric graphs and they will be a central object of study within this paper. This is because, in the case that $\eta(r)$ is given by an indicator function $\mathbf{1}_{r\leq 1}$ and thus defines a geometric graph structure on $V_n$, the double sum in the definition of $\mathcal{GE}_{n}^p$ can be expressed as a double sum over the edge set of the graph $\mathbb{G}_{\varepsilon_n}(V_n)$. In general, we assume that $\eta$ is positive and continuous at zero and non-negative elsewhere, in which case we can choose constants $a,b>0$ such that $\eta(r)\geq a\cdot \mathbf{1}_{r\geq 1}$ (see Remark~\ref{rem:simpletokernel}). Thus we can bound the double sum in the definition of $\mathcal{GE}_n^p$ below by a double sum over the edges of the graph $\mathbb{G}_{b\varepsilon_n}(V_n)$. This tells us that we can reduce problem~\eqref{eq:question} for this more general class of kernels to the case where the kernel is an indicator function: if \eqref{eq:question} is satisfied in this latter case, it also holds for those more general kernels $\eta$.

In the literature many theoretical studies of graph-based discrete variation functionals like $\mathcal{GE}_n^p$ assume that the concentrating parameter $\varepsilon_n$ is much larger than $d_\infty(\mu_n,\mu)$, and thus, also much larger than $s_n$ (for example \cite{garcia2016continuum,Dejan2019}). The assumption is necessary (see the counterexamples in Appendix~\ref{app:counterexamples}) if we  want to establish an isotropic continuum limit of $\mathcal{GE}_n^p$ via $\Gamma$-convergence. However, numerical experiments that use geometric graphs for classification purposes, reveal that the best results are typically achieved when $\varepsilon_n$ is chosen as close to the connectivity threshold as possible (see \cite[Figure 4 and Figure 9]{Dejan2019}). The advantage of the question we have posed in \eqref{eq:question} is that a priori no continuum limit needs to be observed. The inequality is phrased entirely within the discrete structure.

In Theorem~\ref{thm:graphsobolev} we prove, under suitable assumptions, that a uniform collection of Sobolev inequalities for $1\leq p<d$ and $p\leq q\leq \frac{pd}{d-p}$ hold if and only if the sequence $\left( \varepsilon_n |V_n|^{\frac{1}{p}-\frac{1}{q}} \right)_{n\in\mathbb{N}}$ is bounded. One of our assumptions, specifically part~(i) of Assumptions~\ref{assum}, requires the existence of a $K>0$ such that $K \varepsilon_n \geq d_\infty(\mu_n,\mu)$ for all $n$. This is strictly weaker than part~(i$^*$) of Assumptions~\ref{assum}, which is typically assumed for $\Gamma$-convergence results in the literature. An assumption such as (i) is to be expected, since the inequality in \eqref{eq:question} will fail if the graph is disconnected. For example, suppose $\eta$ is the indicator function for the interval $[0,1]$. If $\varepsilon_n$ is chosen to be smaller than $\inf_{\substack{x, y\in V_n\\x\neq y}}|x-y|$, then for any function $u:V_n\rightarrow \mathbb{R}$, we have $\mathcal{GE}_n^p(u)=0$. In that case a uniform collection of Sobolev inequalities cannot hold.

Besides this we have no additional requirements on the sequence $(\varepsilon_n)_{n\in\mathbb{N}}$. If $q=\frac{pd}{d-p}$, then $\frac1p-\frac1q=\frac1d$, thus a corollary of Theorem~\ref{thm:graphsobolev} is that, to achieve the largest exponent $q$ for which our uniform collection of Sobolev inequalities holds (for $p<d$), $\left( \varepsilon_n |V_n|^{1/d} \right)_{n\in\mathbb{N}}$ must be bounded. Together with some of our combinatorial results in Section~\ref{sec:geometry}, this bound leads to the following observations.
\begin{itemize}
    \item Assuming that $\left( \varepsilon_n |V_n|^{1/d} \right)_{n\in\mathbb{N}}$ is bounded, there exists a constant $C_1>0$ such that $\varepsilon_n\leq C|V_n|^{-1/d}$. 
    \item According to Lemma~\ref{lem:transportbound}, part~(i) from Assumptions~\ref{assum} implies there exists constants $K,C_2>0$ such that, for any $n\in\mathbb{N}$, $$\varepsilon_n\geq K d_{\infty}(\mu_n,\mu)\geq C_2|V_n|^{-1/d}.$$
    \item According to Corollary~\ref{cor:connect}, for any $n\in\mathbb{N}$, $s_n\leq 2\sqrt{d} \; d_{\infty}(\mu_n,\mu_\infty).$   
\end{itemize}
Thus $\varepsilon_n$ and $d_\infty(\mu_n,\mu)$ must both be of the order $|V_n|^{-1/d}$ for the Sobolev inequalities to hold. Additionally $s_n$ must at most be of the order $|V_n|^{-1/d}$. If a lower bound on $s_n$ could be proven as we previously conjectured, then we could conclude $s_n$ must also be of the order $|V_n|^{-1/d}$.

The strategy we shall use to prove Theorem~\ref{thm:graphsobolev} is composed of three parts.

\begin{enumerate}
    \item For a given $n\in \mathbb{N}$ and $u:V_n\rightarrow\mathbb{R}$ we lift $u$ to a function $\widetilde{u}$ with domain $\Omega$. To choose such a lifting we will use some notions from optimal transport. We then construct a function $v:\Omega\rightarrow \mathbb{R}$ that is a mollification of $\widetilde{u}$. The next step is to bound $\Vert v \Vert_{W^{1,p}(\Omega)}$ in terms of $\Vert u \Vert_{L^p(V_n;\mu_n)}$ and $\mathcal{GE}_n^p(u)$. Then, via the classical Sobolev embedding theorem, we can produce a bound on $\Vert v \Vert_{L^q(\Omega)}$.

    The steps described here are a standard application of arguments in the literature regarding the analysis of nonlocal functionals. We believe the first use of the mollification method described above appeared in \cite[Theorem 3.1]{Alberti1998}.

    \item Next, our goal is to bound $\Vert v - \widetilde{u}\Vert_{L^q(\Omega)}$ in terms of $\Vert u \Vert_{L^p(V_n;\mu_n)}$ and $\mathcal{GE}_n^p(u)$. This part of the proof requires some combinatorial properties of the geometric graph and here is where we provide new techniques for the literature. In particular, we introduce the property of $k$-friendly graphs and Lemma~\ref{lem:graphfriendly} which help us prove the required inequalities. 

    \item The final part is to introduce a tool that helps us remedy some issues arising in parts 1 and 2 that are caused by the boundary of $\Omega$. In order to construct a mollification we take averages in a ball of a suitable radius around each point. However, near the boundary this ball may intersect the complement of $\Omega$ where the function $\widetilde{u}$ is undefined. Moreover, some of the local combinatorial bounds we prove for geometric graphs are only valid at vertices that are a fixed distance away from the boundary. In order to overcome this quagmire we introduce an extension operator in Lemma~\ref{lem:ext1} to extend the relevant functions to a larger domain and a discretized extension result in Lemma~\ref{lem:ext2}.  
\end{enumerate}

In Theorem~\ref{thm:graphsobolev2} we also consider the case when $p>d$. The work we do here is, in large part, not new. The argument we prove follows the same strategy provided in \cite[Lemma 4.1 and Lemma 4.5]{Dejan2019}. Though \cite[Lemma 4.5]{Dejan2019} is phrased as a compactness result, the argument used is almost sufficient for the Sobolev inequality we are looking for. However, there are still two key limitations, which the theory we develop in this paper will help us overcome. First of all, \cite[Lemma 4.5]{Dejan2019} requires the concentrating parameter to be much larger than the connectivity threshold; this we do not assume. The second, is that the result only holds over a proper compact subset of the domain, not the full domain. The tools we need to overcome this latter problem are precisely the extension results previously discussed. 

There are other Sobolev-style inequalities in the graph-theory literature. Most readily available are Sobolev inequalities on lattices, as the arguments used to prove classical Sobolev inequalities easily generalise to the lattice setting. One such example is \cite[Theorem 4.2]{Fusco2025}. In a more general graph setting, there exists the Sobolev inequalities proved by Fan Chung in \cite[Theorem 11.1 and Theorem 11.4]{Chung1997}. The drawback of these latter results is that they are both dependent on the isoperimetric dimension of the graph. For general graphs, including the geometric graphs studied in this paper, the isoperimetric dimension of a graph is very hard to compute.   

We shall also produce a uniform collection of Poincar\'e inequalities in Theorem~\ref{thm:poincare}. Poincar\'e inequalities on graphs have a broad number of applications. Moreover, in the $p=2$ case, there is a close relationship between Poincar\'e inequalities and the eigenvalues of the graph Laplacian (see \cite[Chapter 1]{Chung1997}). The eigenvalues of the graph Laplacian in the geometric graph setting have been rigorously investigated in \cite{Garcia2020}. In fact, in the $p=2$ case a Poincar\'e inequality can be derived from \cite[Corollary 1]{Garcia2020} via the Rayleigh quotient. Furthermore, a very interesting multi-scale Poincar\'e inequality has recently appeared in~\cite[Proposition 3.6]{trillos2025}. This result provides one with an even more precise upper bound than what can be achieved with a typical $p=2$ Poincar\'e inequality.

We will now break down the different sections and each of the key results contained within them.

\begin{itemize}
    \item In Section~\ref{sec:opt} we introduce the concepts and notation we shall use from measure theory and optimal transport. We shall also define the $TL^p$ metric which was first introduced in \cite{garcia2016continuum}. Later on we shall use this metric space to formulate some compactness results.
    \item In Section~\ref{sec:graph} we introduce the concepts and notation we shall use from graph theory. Here we introduce two important graph properties for proving Sobolev inequalities. The first is the $k$-friendly property in Definition~\ref{def:kfriendly1}. The result for $k$-friendly graphs in Lemma~\ref{lem:friendlybound}  will help us prove Sobolev inequalities for $p<d$. The second is the envelop property in Definition~\ref{def:envelopes}. The result provided in Lemma~\ref{lem:envelopebound} is a reformulation of a result from \cite[Lemma 4.1]{Dejan2019}, which will help us prove Sobolev inequalities for $p>d$. 
    \item In Section~\ref{sec:geometry} we introduce the geometric graph setting which shall be the central object of study in this paper.  Here we also prove the combinatorial results that shall be useful later for deriving our Sobolev inequalities and their corollaries. In Lemma~\ref{lem:transportbound} we prove a lower bound on the $\infty$-Wasserstein distance between a discrete and a continuum probability measure in terms of the size of the support of the discrete probability measure. In Proposition~\ref{prop:connect} we prove a lower bound on the connectivity threshold of a finite set in terms of the $\infty$-Wasserstein distance of any discrete probability measure with support in that set and any continuum probability measure. In Lemma~\ref{lem:graphfriendly} we prove that geometric graphs are $k$-friendly, for a specific, graph-dependent, value of $k$. In Lemma~\ref{lem:degreebound} we prove a lower bound on the vertex degrees in a geometric graph.
    \item Section~\ref{sec:sobolev} separates into four subsections.

    \begin{itemize}
        \item In Section~\ref{sec:extensionresults} we prove two extension results. One of our extension results, Lemma~\ref{lem:ext1}, extends a function on an open domain with Lipschitz boundary to a larger domain in a way such that a collection of its nonlocal variations on the larger domain is uniformly controlled by the nonlocal variation on its original domain. 
        
        The second result, Lemma~\ref{lem:ext2}, can be thought of as a discretization of the previous result. That is, we wish to extend a geometric graph to a larger geometric graph on a larger domain with the same approximation properties. We then wish to extend discrete functions to the larger graph in a way that $\mathcal{GE}_n^p$ is controlled in a uniform way independent of $n\in\mathbb{N}$. We can compare Lemma~\ref{lem:ext1} to the extension constructed in the proof of \cite[Lemma 4.4]{garcia2016continuum}. The construction in that proof is given by an explicit formula, whereas we use a partition of unity. Moreover, that proof extends the functions' domain to all of $\mathbb{R}^d$ whereas we only construct an extension onto a bounded domain $\Sigma$. The construction in the proof of \cite[Lemma 4.4]{garcia2016continuum} works only in the case where $\Omega$ has a $C^2$ boundary, ours applies to Lipschitz boundaries.
        \item In Section~\ref{sec:unifPoincare} we prove our first compactness result in Proposition~\ref{prop:compactness} and a uniform collection of Poincar\'e inequalities in Theorem~\ref{thm:poincare}. The compactness result Proposition~\ref{prop:compactness} is largely the same as that in \cite[Corollary 4.7]{garcia2016continuum} or \cite[Proposition 4.4]{Dejan2019} and follows via a similar argument. The main difference is that we assume part~(i) of Assumptions~\ref{assum}, rather than part~(i$^*$).
        \item In Section~\ref{sec:Sobolevplessd}, specifically in Theorem~\ref{thm:graphsobolev}, we prove our Sobolev inequalities when $p<d$. We also produce Corollaries~\ref{cor:compactness1} and~\ref{cor:compactness2} and Proposition~\ref{prop:compactness3} that improve upon Proposition~\ref{prop:compactness} and currently known compactness results in the literature (\cite[Corollary 4.7]{garcia2016continuum} or \cite[Proposition 4.4]{Dejan2019}). Specifically, we improve the integrability exponent of the space in which compactness can be observed. 
        \item In Section~\ref{sec:Sobolevpgreaterd}, specifically in Theorem~\ref{thm:graphsobolev2}, we establish our Sobolev inequalities when $p>d$. Additionally we produce compactness results in Corollary~\ref{cor:compactness4} and Proposition~\ref{prop:compactness5}. Proposition~\ref{prop:compactness5} is an improvement upon \cite[Lemma 4.5]{Dejan2019} and follows  by a similar argument. As we previously mentioned, also Theorem~\ref{thm:graphsobolev2} improves upon (a Sobolev inequality that can be derived from) \cite[Lemma 4.5]{Dejan2019} by allowing the concentrating parameter to be of the same size as the connectivity threshold and by establishing the result over the full domain. \end{itemize}

    \item In Section~\ref{sec:discussion} we discuss a number of consequences of our results and also possible directions for future work. We particularly focus on Theorem~\ref{thm:graphsobolev} as it is our main contribution in this paper. Possible topics for future research that we discuss are H\"older regularity on graphs and higher-order variations on graphs.

\end{itemize}

\section{Optimal transport and the \texorpdfstring{$TL^p$}{tlp} metric}
\label{sec:opt}

We shall denote a measure space by a triple $(\mathcal{M},\mathfrak{F},\mu)$  where $\mathcal{M}$ is a set, $\mathfrak{F}$ is a $\sigma$-algebra on $\mathcal{M}$ and $\mu$ is a measure on $\mathcal{M}$. In the case $\mu(\mathcal{M})=1$ we refer to the triple as a probability space. As usual we define $L^p(\mathcal{M};\mu)$ as the space of $p$-th Lebesgue integrable functions on $\mathcal{M}$. 

For an open subset $\Omega \subset \mathbb{R}^d$ we will always take the $\sigma$-algebra to be the collection of all Borel sets, moreover we denote the Lebesgue measure on $\Omega$ by $\mathfrak{L}^d|_{\Omega}$. In the case that $\Omega = \mathbb{R}^d$ we simply write $\mathfrak{L}^d$. We refer to a map $T:\Omega\rightarrow \Omega$ which is measurable with respect to the Borel $\sigma$-algebra on both domain and codomain as a Borel map. We shall denote the identity map on $\Omega$ by $I$ and if necessary we write $I|_{\Omega}$. 

\begin{definition}
For a measure space $(\mathcal{M},\mathfrak{F},\mu)$ and a function $f: \mathcal{M}\rightarrow \mathbb{R}$ the essential supremum of $f$ is defined as
$$ \underset{(\mathcal{M,\mu})}{\operatorname{esssup}} f:= \sup \{ c\in \mathbb{R} \; | \; \mu(\{ x\in \mathcal{M} \; | \; f(x)\leq c \})>0 \}.$$
In the case that $\mathcal{M}$ is a Borel subset of $\mathbb{R}^d$ and $\mu$ is the Lebesgue measure on $\mathcal{M}$ we will simply write $\underset{\mathcal{M}}{\operatorname{esssup}} f:= \underset{(\mathcal{M},\mathfrak{L}^d|_{\mathcal{M}}) }{\operatorname{esssup}} f$.
\end{definition}

For a probability space $(\mathcal{M},\mathfrak{F},\mu)$ and an integrable random variable $X:\mathcal{M}\rightarrow\mathbb{R}$ we write $\mathbb{E}_{\mu}[X]$ for the expectation of $X$. Next we shall define the push-forward of a measure, a tool which we shall frequently use.

\begin{definition}
Let $(\mathcal{M},\mathfrak{F},\mu)$
be a measure space and $\mathcal{N}$ a set with a $\sigma$-algebra $\mathfrak{G}$. Suppose $T:\mathcal{M}\rightarrow \mathcal{N}$ is measurable. We define $T\#\mu$ to be a measure on the pair $(\mathcal{N},\mathfrak{G})$ as follows: for $A \in \mathfrak{G}$,
$$ (T\#\mu)(A)= \mu \left( T^{-1}(A) \right).$$
\end{definition}

Henceforth in this section we shall fix a subset $\Omega\subset \mathbb{R}^d$. Let $p\geq 1$. We define $\mathcal{P}_p(\Omega)$ to be the set of all Borel probability measures on $\Omega$ with finite $p$-th moment. This means $\mu\in \mathcal{P}_p(\Omega) $ if and only if $\mu$ is a probability measure on $\Omega$ and $\int_{\Omega}|x|^p \, d\mu(x)<+\infty$. 

We define $\mathcal{P}_\infty(\Omega)$ to be the set of all probability measures on $\Omega$ with essentially bounded support. In  other words $\mu\in \mathcal{P}_\infty(\Omega) $ if and only if $\mu$ is a probability measure on $\Omega$ and  $\underset{(\Omega,\mu)}{\operatorname{esssup}} \{x\xmapsto{}|x|\}<+\infty$. 

\begin{definition}
Consider two different Borel probability measures $\mu,\nu$ on the set $\Omega$. For $i\in\{1,2\}$ define $\pi_i : \Omega \times \Omega \rightarrow \Omega$ as the projection maps onto the $i$-th co-ordinate.

The set of couplings $\Gamma(\mu,\nu)$ is defined to be the set of probability measures on $\Omega\times \Omega$ such that $\gamma\in \Gamma(\mu,\nu)$ if and only if $\pi_1 \# \gamma =\mu$ and $\pi_2 \# \gamma=\nu$. 
\end{definition}

Using the notion of couplings we can now define the $p$-th Wasserstein distance.

\begin{definition}
    For $p\geq 1$, define a distance $d_p$ on $\mathcal{P}_p(\Omega)$ as follows. For $\mu,\nu \in \mathcal{P}_p(\Omega)$,
    $$ d_p(\mu,\nu):= \inf_{\gamma \in \Gamma(\mu,\nu)} \left( \int_{\Omega\times \Omega} |x-y|^p \, d\gamma(x,y) \right)^{1/p}.$$ 
    
    The metric $d_\infty$ is defined on $\mathcal{P}_\infty(\Omega)$ as follows. For $\mu,\nu \in \mathcal{P}_\infty(\Omega)$,
    $$ d_\infty(\mu,\nu) := \inf_{\gamma \in \Gamma(\mu,\nu)} \underset{(\Omega\times\Omega,\gamma)}{\operatorname{esssup}} \left\{ (x,y) \xmapsto{} |x-y| \right\} .$$ 

   We refer the reader to the discussion following \cite[Definition 6.1]{Villani09} to see a proof that $(\mathcal{P}_p(\Omega),d_p)$ indeed defines a metric space for $1\leq p \leq+\infty$.
\end{definition}

Suppose we have two Borel probability measures $\mu,\nu \in \mathcal{P}_p(\Omega)$ and also a Borel map $T:\Omega\rightarrow \Omega$ such that $\nu=T\#\mu$; such a map is called a (Borel) transport map. Define the map $I\times T: \Omega \rightarrow \Omega\times\Omega$ by $(I\times T)(x):=(x,T(x))$ and a coupling $\gamma := (I\times T) \#\mu \in \Gamma(\mu,\nu)$. We then see that, for $p<+\infty$,
$$ d_p(\mu,\nu) \leq \left( \int_{\Omega\times \Omega} |x-y|^p \, d\gamma(x,y) \right)^{1/p} = \left( \int_{\Omega} |x-T(x)|^p \, d\mu(x) \right)^{1/p}.$$
In the case that $p=\infty$ we have
\begin{equation}
\label{eq:borelbound}
    d_\infty(\mu,\nu) \leq \underset{(\Omega,\mu)}{\operatorname{esssup}} \left\{ x \xmapsto{} |x-T(x)| \right\} .
\end{equation}

Given probability measures $\mu,\nu \in \mathcal{P}_p(\Omega)$, it is possible that there is no Borel map $T:\Omega \rightarrow \Omega$ such that $T\#\mu=\nu$. For example, if $\mu$ is a Dirac mass then, for any map $T$, $T\#\mu$ is also a Dirac mass. In the following theorem we provide sufficient conditions for the existence of a suitable Borel map and also for the bound in~\eqref{eq:borelbound} to become a strict equality.

\begin{theorem}
\label{thm:transportexistence}
Assume $\Omega\subset \mathbb{R}^d$ is open and bounded. Let $\mu,\nu$ be probability measures on $\Omega$ and assume $\mu$ is absolutely continuous with respect to the Lebesgue measure. Then there exists a Borel map $T:\Omega\rightarrow \Omega$ such that $T\#\mu=\nu$ and
$$ d_\infty(\mu,\nu) = \sup_{x\in\Omega} \{ |x-T(x)| \}.$$
\end{theorem}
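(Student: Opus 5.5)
The plan is to take an optimal map for the $d_\infty$ problem, which exists by the Champion--De~Pascale--Juutinen theory, and then redefine it on a $\mu$-null exceptional set so that the bound holds at every point of $\Omega$, not just $\mu$-almost everywhere. The deep input is the existence of the optimal map; the pointwise upgrade is elementary.

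\textbf{Step 1: an optimal plan exists.} Since $\Omega$ is bounded, every coupling $\gamma \in \Gamma(\mu,\nu)$ can be viewed as a measure on the compact set $\overline{\Omega}\times\overline{\Omega}$. Hence $\Gamma(\mu,\nu)$ is tight, and it is closed under narrow convergence because the marginal constraints pass to the limit. For each coupling,
$$\operatorname{esssup}_\gamma |x-y| = \max_{(x,y)\in\operatorname{supp}\gamma}|x-y|.$$
This quantity is lower semicontinuous under narrow convergence: if $\gamma_k \to \gamma$, every point of $\operatorname{supp}\gamma$ is a limit of points of $\operatorname{supp}\gamma_k$. By the direct method, a minimiser $\gamma^*$ exists.

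\textbf{Step 2: an optimal map exists.} Among the optimal plans, take a restrictable (infinitely cyclically monotone) one, as in Champion--De~Pascale--Juutinen. Their theorem uses the absolute continuity of $\mu$, via Lebesgue density points, to show that such a plan is induced by a map. This gives a Borel map $T_0$ with $\gamma^*=(I\times T_0)\#\mu$. In particular $T_0\#\mu=\nu$, and
$$|x-T_0(x)|\le d_\infty(\mu,\nu) \quad \text{for } \mu\text{-a.e. } x.$$
I expect this to be the main obstacle. It is the genuinely nontrivial part, and I would cite it rather than reprove it. If a self-contained argument were required, I would follow their route: first show that restrictable optimal plans exist, by minimising a secondary functional over the optimal plans; then show that a non-deterministic plan contradicts restrictability at a density point of $\mu$.

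\textbf{Step 3: pointwise upgrade on all of $\Omega$.} Let $N$ be a Borel set with $\mu(N)=0$ that contains every $x$ for which $|x-T_0(x)|>d_\infty(\mu,\nu)$ or $T_0(x)\notin\Omega$. The second condition holds only on a $\mu$-null set, because
$$\mu\bigl(T_0^{-1}(\Omega^c)\bigr)=\nu(\Omega^c)=0.$$
Define $T:=T_0$ on $\Omega\setminus N$ and $T(x):=x$ on $N$. Then $T$ is Borel, maps $\Omega$ into $\Omega$, and agrees with $T_0$ $\mu$-a.e., so $T\#\mu=T_0\#\mu=\nu$. At every $x\in\Omega$ we have $|x-T(x)|\le d_\infty(\mu,\nu)$, since the difference is $0$ on $N$. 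Hence
$$\sup_{x\in\Omega}|x-T(x)|\le d_\infty(\mu,\nu).$$

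\textbf{Step 4: reverse inequality.} By \eqref{eq:borelbound},
$$d_\infty(\mu,\nu)\le \operatorname{esssup}_{(\Omega,\mu)}|x-T(x)|\le \sup_{x\in\Omega}|x-T(x)|.$$
Combining Steps 3 and 4 gives the claimed equality.
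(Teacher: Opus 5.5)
Your proposal is correct and follows the paper's proof essentially step for step. Both cite \cite{Champion2008} for an infinitely cyclically monotone optimal plan induced by a Borel map (this is where $\mu\ll\mathfrak{L}^d$ is used), and then redefine that map as the identity on the $\mu$-null set where it leaves $\Omega$ or where $|x-T_0(x)|>d_\infty(\mu,\nu)$; your Steps~1 and~4 only make explicit what the paper takes from that reference and from \eqref{eq:borelbound}. One small inaccuracy in your aside: as far as I recall, the infinitely cyclically monotone optimal plan in \cite{Champion2008} is obtained as a limit of $d_p$-optimal plans as $p\to\infty$, not by a secondary minimisation, but since you cite the result this does not affect your argument.
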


\begin{proof}
Since $\mu$ and $\nu$ can be extended trivially to probability measures on the compact set $\overline{\Omega}$, we can use \cite[Theorem 3.2]{Champion2008} and the arguments preceding it in \cite[Section 3]{Champion2008} to establish the existence of a transport plan $\gamma\in \Gamma(\mu,\nu)$ which achieves the infimum in the definition of $d_\infty(\mu,\nu)$ (all defined for $\overline{\Omega}$ instead of $\Omega$) and is infinitely cyclically monotone \cite[Definition 3.1]{Champion2008}. Then \cite[Theorem 5.5]{Champion2008} establishes the existence of a Borel transport map $\widetilde{T}: \overline\Omega \to \overline\Omega$ such that $\gamma=(\operatorname{id}\times \widetilde{T})\# \mu$. Since for our extended measures we have  $\mu(\overline\Omega\setminus\Omega)=\nu(\overline\Omega\setminus\Omega)=0$, we can identify $\widetilde{T}|_{\Omega}$ with a map whose range is $\Omega$ such that $\gamma=(\operatorname{id}\times \widetilde{T}|_{\Omega})\# \mu$. We can substitute this $\gamma$ into the definition of $d_\infty(\mu,\nu)$ to get
$$ d_\infty(\mu,\nu) = \underset{(\Omega,\mu)}{\operatorname{esssup}} \left\{ x \xmapsto{} |x-\widetilde{T}(x)| \right\}.$$
To conclude, let $N:=\left\{ x\in \Omega \;| \; |x-\widetilde{T}(x)|>\underset{(\Omega,\mu)}{\operatorname{esssup}} \left\{ x \xmapsto{} |x-\widetilde{T}(x)| \right\}\right\}$, and observe that $N$ is a $\mu$-null set. Then define $T:\Omega\rightarrow\Omega$ such that $T(x)=x$ for $x\in N$ and $T(x)=\widetilde{T}(x)$ for $x\in \Omega\setminus N$. 
\end{proof}

Next, we will introduce the $TL^p(\Omega)$ metric space which will feature in our compactness results. For the remainder of this section, assume $\Omega$ is open.

\begin{definition}
The space $TL^p(\Omega)$ is given by the set of pairs,
$$ \{ (f,\mu) \; | \; \mu\in \mathcal{P}_p(\Omega) ,f \in L^p(\Omega;\mu) \},$$
equipped with a metric $d_{TL^p}$ defined by
$$ d_{TL^p(\Omega)} \big( (f,\mu),(g,\nu) \big) :=  \inf_{\gamma\in \Gamma(\mu,\nu)} \left(   \int_{\Omega\times \Omega } |x-y|^p+|f(x)-g(y)|^p \, d\gamma(x,y) \right)^{1/p}.$$

Indeed $d_{TL^p}$ does satisfy the axioms of a metric (although it is not complete) and we refer the reader to \cite{garcia2016continuum} for an overview of the properties of this space.  
\end{definition}

Given a sequence $\big((f_n,\mu_n)\big)_{n\in\mathbb{N}}$ in $TL^p(\Omega)$ and $(f,\mu)\in TL^p(\Omega)$, suppose there exists a sequence of Borel maps $T_n:\Omega \rightarrow \Omega$ such that $T_n\#\mu=\mu_n$ and
\begin{equation}\label{eq:TLpconvcond}
\int_{\Omega} \left(|f_n(T_n(x))-f(x)|^p+|T_n(x)-x|^p\right) \, d\mu(x) \rightarrow 0 \; \; \text{as} \; \; n\rightarrow \infty.
\end{equation}
Then $(f_n,\mu_n)\rightarrow (f,\mu)$ as $n\rightarrow \infty$ in $TL^p(\Omega)$. Indeed, if the functions $T_n\times I:\Omega\times\Omega\rightarrow \Omega$ are defined by $(T_n\times I)(x):=(T(x),x)$, then the couplings $\gamma_n:=(T_n\times I)\#\mu$, for $n\in \mathbb{N}$, provide an upper bound on the infimum in the $TL^p$ metric. By assumption this upper bound converges to zero as $n\to \infty$. Throughout this paper we will only conclude convergence in $TL^p(\Omega)$ via this observation.

For later use we note the following lemma that follows as a straightforward application of H\"older's inequality.

\begin{lemma}
\label{lem:tlpholder}   
Let $p,q$ be constants such that $1\leq p< q$. Let $(\mu_n)_{n\in \mathbb{N}}$ be a sequence of probability measures such that $\mu_n\rightarrow \mu$ in $(\mathcal{P}_q(\Omega),d_q)$  as $n\rightarrow\infty$, for some probability measure $\mu\in \mathcal{P}_q(\Omega)$. Let $(u_n)_{n\in\mathbb{N}}$ be a sequence of functions such that $u_n\in L^p(\Omega,\mu_n)$ and $(u_n,\mu_n)\rightarrow (u,\mu)$ as $n\rightarrow \infty$ in $TL^p(\Omega)$, for some function $u\in L^p(\Omega;\mu)$. Lastly assume that $\{\Vert u_n \Vert_{L^q(\Omega,\mu_n)}\}_{n\in\mathbb{N}}$ is bounded. Then for any $r\in [1,q)$, $(u_n,\mu_n)\rightarrow (u,\mu)$ as $n\rightarrow \infty$ in $TL^r(\Omega)$.    
\end{lemma}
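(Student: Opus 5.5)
The plan is to work with an optimal coupling for the $TL^p$ convergence and then upgrade the convergence to exponent $r$ by interpolating between the $L^1$-type control it gives and the uniform $L^q$ bound.

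\textbf{Step 1: setup.} Since $(u_n,\mu_n)\to(u,\mu)$ in $TL^p(\Omega)$, I choose couplings $\gamma_n\in\Gamma(\mu_n,\mu)$ such that
\[
\int_{\Omega\times\Omega} \left(|x-y|^p+|u_n(x)-u(y)|^p\right)\,d\gamma_n(x,y)\to 0 .
\]
I will test the $TL^r$ metric with the same $\gamma_n$. It then suffices to show, separately, that
\[
\int |x-y|^r\,d\gamma_n\to 0
\qquad\text{and}\qquad
\int |u_n(x)-u(y)|^r\,d\gamma_n\to 0 .
\]

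\textbf{Step 2: the case $r\le p$.} Here Jensen's (or H\"older's) inequality on the probability measure $\gamma_n$ gives
\[
\int |f|^r\,d\gamma_n\le\left(\int |f|^p\,d\gamma_n\right)^{r/p},
\]
so both integrals tend to zero directly.

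\textbf{Step 3: the case $p<r<q$.} I interpolate: write $\frac1r=\frac{\theta}{p}+\frac{1-\theta}{q}$ with $\theta\in(0,1]$. H\"older's inequality (log-convexity of $L^s$ norms) on $\gamma_n$ gives
\[
\|f\|_{L^r(\gamma_n)}\le \|f\|_{L^p(\gamma_n)}^{\theta}\,\|f\|_{L^q(\gamma_n)}^{1-\theta}.
\]
The $L^p$ factor tends to zero by Step 1, so it remains to bound the $L^q(\gamma_n)$ factor uniformly in $n$.

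\textbf{Step 4: uniform $L^q$ bounds.} For $f=|x-y|$, Minkowski's inequality together with the marginal conditions gives
\[
\|x-y\|_{L^q(\gamma_n)}\le \left(\int|x|^q\,d\mu_n\right)^{1/q}+\left(\int|y|^q\,d\mu\right)^{1/q}.
\]
These $q$-th moments are bounded, because $d_q(\mu_n,\mu)\to0$ implies $d_q(\mu_n,\delta_0)\le d_q(\mu_n,\mu)+d_q(\mu,\delta_0)$ is bounded. For $f=u_n(x)-u(y)$, Minkowski gives
\[
\|u_n(x)-u(y)\|_{L^q(\gamma_n)}\le \|u_n\|_{L^q(\mu_n)}+\|u\|_{L^q(\mu)},
\]
and the first term is bounded by hypothesis.

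\textbf{Main obstacle.} The term I expect to be hardest is $\|u\|_{L^q(\mu)}$, since the hypotheses only give $u\in L^p(\mu)$. I would prove $u\in L^q(\mu)$ by lower semicontinuity. The measures $(I\times u_n)\#\mu_n$ converge (in the sense of $TL^p$ convergence) to $(I\times u)\#\mu$; more simply, $\gamma_n$ pushed forward to the pairs $(u_n(x),u(y))$ concentrates on the diagonal. Hence the laws of $u_n$ under $\mu_n$ converge weakly to the law of $u$ under $\mu$, because the $p$-Wasserstein distance between these one-dimensional laws is at most $\|u_n(x)-u(y)\|_{L^p(\gamma_n)}\to0$. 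Since $t\mapsto|t|^q$ is continuous and nonnegative, weak lower semicontinuity (Fatou/portmanteau) gives
\[
\int|u|^q\,d\mu\le\liminf_{n}\int|u_n|^q\,d\mu_n<\infty .
\]
With this in hand, Step 3 shows both integrals in Step 1 tend to zero for every $r\in[1,q)$, which proves the lemma.
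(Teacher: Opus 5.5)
Your proof is correct and follows the route the paper indicates. The paper gives no argument of its own beyond calling the lemma a straightforward application of H\"older's inequality and citing \cite[Lemma 5.18]{mercer2025}, and your scheme is exactly that: Jensen on the near-optimal couplings $\gamma_n$ for $r\le p$, and $L^p$--$L^q$ interpolation with uniform $L^q(\gamma_n)$ bounds for $p<r<q$. You also correctly identify and close the one step that is not pure H\"older, namely $u\in L^q(\mu)$, which is needed both for that uniform bound and for $(u,\mu)\in TL^r(\Omega)$; you obtain it from the $W_p$-convergence $u_n\#\mu_n\to u\#\mu$ on $\mathbb{R}$ and lower semicontinuity of $\int|t|^q$.
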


\begin{proof}
See \cite[Lemma 5.18]{mercer2025}.
\end{proof}

The metric space $TL^p(\Omega)$ has the formal structure of a vector bundle; in a sense it is a bundle of all the vector spaces $L^p(\Omega;\mu)$. This description is only formal as each of the vector spaces may have different dimensions for different probability measures $\mu$, hence they are not isomorphic. In recent work by the authors $TL^p(\Omega)$ is shown to be a particular example of a Banach stacking \cite[Proposition 5.17]{mercer2025}, though in this paper we will not need such a description.

\section{Graph theory and notation}
\label{sec:graph}

When we use the subset notation $A \subset B$, we allow for the sets $A$ and $B$ to be equal. Moreover, for a finite set $A$, we shall define $|A|$ to be its cardinality. 

Throughout this paper a simple graph will always be given by a pair $\mathbb{G}=(V,E)$ where $V$ is a finite set, which we call the vertex set, and $E$ is a set of $2$-tuples in $V$, which we call the edge set. For two distinct vertices $x,y$ in $V$ if $\{x,y\}\in E$ we say $x,y$ are adjacent; for shorthand we write $xy:=\{x,y\}$ (in particular, $xy=yx$). For $x\in V$ we denote $N_{\mathbb{G}}(x)$ to be the set of vertices $y\in V$ such that $y$ is adjacent to $x$ in $\mathbb{G}$. Since the graph is simple and thus does not contain self-loops, $x\not\in N_\mathbb{G}(x)$. The degree of node $x\in V$ in $\mathbb{G}$ is its number of neighbours in $\mathbb{G}$, i.e. $|N_\mathbb{G}(x)|$.

When we use the subset notation $A \subset B$, we allow for the sets $A$ and $B$ to be equal.

\begin{definition}\label{def:kfriendly1}
We say that a simple graph $\mathbb{G}=(V,E)$ is $k$-friendly, for $k\in[0,+\infty)$, if for any two adjacent vertices $x,y\in V$ we have $|  N_\mathbb{G}(x)\cap N_\mathbb{G}(y)|\geq k$.
\end{definition}

For example the complete graph on $n$ vertices is $(n-2)$-friendly.\footnote{We coin the term \textit{$k$-friendly} here. We have not been able to confirm that this concept goes by this or any other name in the literature. The notion of \textit{strongly regular graph} is related, but different.} For later purposes we will need a more general definition.

\begin{definition}\label{def:kfriendly2}
    Let $\mathbb{G}=(V,E)$ and $\mathbb{G}'=(V',E')$ be two simple graphs such that $V'\subset V$. We say that $\mathbb{G}$ is $k$-friendly with respect to $\mathbb{G}'$ if, for any two vertices $x,y \in V'$ that are adjacent in $\mathbb{G}'$, we have $|  N_\mathbb{G}(x)\cap N_\mathbb{G}(y)|\geq k$.     
\end{definition}

Naturally a graph $\mathbb{G}$ is $k$-friendly if and only if it is $k$-friendly with respect to itself. For some intuition let $K_{n,n}$ denote the complete bipartite graph on two classes of size $n$. Let $K_{n,n}'$ be its complement graph. Then $K_{n,n}$ is $n$-friendly with respect to $K_{n,n}'$. The following lemma will be useful for proving Sobolev inequalities on graphs.

\begin{lemma}
\label{lem:friendlybound}
    Let $\mathbb{G}=(V,E)$, $ \mathbb{G}'=(V',E') $ be simple graphs such that $V'\subset V$, $E'\subset E$ and $\mathbb{G}$ is $k$-friendly with respect to $\mathbb{G}'$. Then, for any function $u:V\rightarrow \mathbb{R}$, and constants $q\geq p\geq 1$, we have
    $$ \left( \sum_{xy\in E'} |u(x)-u(y)|^q \right)^{1/q} \leq \frac{2^{1-\frac{p}{q}}}{(k+2)^{\frac{1}{p}-\frac{1}{q}}} \left( \sum_{xy\in E} |u(x)-u(y)|^p \right)^{1/p}.$$
\end{lemma}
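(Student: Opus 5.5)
The approach is to interpolate: bound the $\ell^q(E')$ norm of the edge differences by the $\ell^\infty(E')$ norm and the $\ell^p(E')$ norm, and control the $\ell^\infty$ norm using the $k$-friendly property. For an edge $e=xy$ write $a_e:=|u(x)-u(y)|$ and set $S:=\left(\sum_{e\in E} a_e^p\right)^{1/p}$.

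\textbf{Step 1 (interpolation).} For $q\geq p$,
$$\sum_{e\in E'} a_e^q\leq \Big(\max_{e\in E'}a_e\Big)^{q-p}\sum_{e\in E'}a_e^p\leq \Big(\max_{e\in E'}a_e\Big)^{q-p}S^p,$$
where the second inequality uses $E'\subset E$. Taking $q$-th roots gives
$$\Big(\sum_{e\in E'}a_e^q\Big)^{1/q}\leq \Big(\max_{e\in E'}a_e\Big)^{1-\frac pq}\,S^{\frac pq}.$$

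\textbf{Step 2 (pointwise bound via common neighbours).} Fix $xy\in E'$. By $k$-friendliness, $x$ and $y$ have at least $k$ common neighbours $z$ in $\mathbb{G}$. For each such $z$, the triangle inequality and convexity of $t\mapsto t^p$ give
$$a_{xy}^p\leq 2^{p-1}\big(a_{xz}^p+a_{zy}^p\big).$$
Summing over $k$ of these common neighbours uses only distinct edges of $E$: the edges $xz$ and $zy$ are different for different $z$, and none of them equals $xy$ because $z\notin\{x,y\}$. Moreover $xy$ itself lies in $E$, since $E'\subset E$. Hence
$$k\,a_{xy}^p\leq 2^{p-1}\big(S^p-a_{xy}^p\big),\qquad\text{so}\qquad a_{xy}^p\leq \frac{2^{p-1}}{k+2^{p-1}}S^p.$$

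We then check the elementary inequality $\frac{2^{p-1}}{k+2^{p-1}}\leq \frac{2^p}{k+2}$. It is equivalent to $k+2\leq 2k+2^p$, which holds because $p\geq1$ and $k\geq0$. This yields
$$\max_{e\in E'}a_e\leq \frac{2}{(k+2)^{1/p}}\,S.$$

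\textbf{Step 3 (combine).} Substituting into Step 1 gives the constant $2^{1-p/q}(k+2)^{-\frac1p(1-\frac pq)}$. Since $\frac1p\left(1-\frac pq\right)=\frac1p-\frac1q$, this is exactly the stated constant. The power of $S$ is $S^{1-p/q}S^{p/q}=S$.

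The only delicate point is the counting in Step 2: one must make sure the edges $xz$ and $zy$ for distinct common neighbours $z$ are pairwise distinct and distinct from $xy$, so that they are not double-counted inside $S^p$. Keeping the term $a_{xy}^p$ itself on the right-hand side is what produces the $k+2$ rather than $k$ in the constant.
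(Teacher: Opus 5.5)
Your proof is correct and follows essentially the same route as the paper: for each $xy\in E'$ you bound $|u(x)-u(y)|^p\le \frac{2^p}{k+2}\sum_{zw\in E}|u(z)-u(w)|^p$ using the common neighbours, and then apply the interpolation $a^q\le M^{q-p}a^p$ summed over $E'\subset E$ (the paper writes this step as raising a quantity that is at most $1$ to the powers $q\ge p$). Your bookkeeping in Step 2 is in fact slightly more careful than the paper's: the paper sums over $z\in\{x,y\}\cup(N_{\mathbb{G}}(x)\cap N_{\mathbb{G}}(y))$ and so counts the edge $xy$ twice before comparing with the sum over $E$, whereas you count it once and still recover the constant $k+2$ through $2\le k+2^p$.
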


\begin{proof}
    To simplify computations write $|u|_p:= \left( \sum_{xy\in E} |u(x)-u(y)|^p \right)^{1/p}$. Let $xy\in E'$ and $z\in \{x,y\}\cup (N_\mathbb{G}(x)\cap N_\mathbb{G}(y))$. Note that, $|u(x)-u(y)|\leq |u(x)-u(z)|+|u(y)-u(z)|$, hence one of the following must hold: $|u(y)-u(z)|\geq \frac{1}{2}|u(y)-u(x)|$ or $|u(x)-u(z)|\geq \frac{1}{2}|u(y)-u(x)|$. This leads us to 
    $$ |u(x)-u(y)|^p \leq \frac{2^p}{k+2}\sum_{z\in \{x,y\}\cup ( N_\mathbb{G}(x)\cap N_\mathbb{G}(y))} |u(x)-u(z)|^p + |u(y)-u(z)|^p \leq \frac{2^p|u|_p^p}{k+2}.$$
    In particular, $\frac{(k+2)^{1/p}|u(x)-u(y)|}{2|u|_p} \leq 1$. Since $q\geq p\geq 1$, we then deduce
    $$ \frac{(k+2)^{q/p}|u(x)-u(y)|^q}{2^q|u|_p^q} \leq \frac{(k+2)|u(x)-u(y)|^p}{2^p|u|_p^p}.$$
    A final computation then concludes our result:
    \begin{align*}
     \left( \sum_{xy\in E'} |u(x)-u(y)|^q \right)^{1/q} &\leq \left( \frac{2^q(k+2)|u|_p^q}{2^p(k+2)^{q/p}|u|_p^p}  \sum_{xy\in E'} |u(x)-u(y)|^p \right) ^{1/q} \\
     &\leq \left( \frac{2^q(k+2)|u|_p^q}{2^p(k+2)^{q/p}|u|_p^p}  \sum_{xy\in E} |u(x)-u(y)|^p \right) ^{1/q} \\
     &= \left( \frac{2^q(k+2)|u|_p^q}{2^p(k+2)^{q/p}}\right)^{1/q} = \frac{2^{1-\frac{p}{q}}}{(k+2)^{\frac{1}{p}-\frac{1}{q}}} |u|_p.
    \end{align*}
\end{proof}

\begin{remark}
In Lemma~\ref{lem:friendlybound} we assumed that $q\geq p$, which implies for the standard $p$ and $q$ vector norms the inequality $\|x\|_q \leq \|x\|_p$. Thus, since $E'\subset E$, in the absence of the $k$-friendly assumption in Lemma~\ref{lem:friendlybound}, we have, for any $u:V\rightarrow\mathbb{R}$,
$$ \left( \sum_{xy\in E'} |u(x)-u(y)|^q \right)^{1/q} \leq  \left( \sum_{xy\in E} |u(x)-u(y)|^p \right)^{1/p}.$$ The $k$-friendly property in the lemma allows us to improve the a priori bound by a factor of $\frac{2^{1-\frac{p}{q}}}{(k+2)^{\frac{1}{p}-\frac{1}{q}}}$. This additional factor will be a crucial ingredient in our proof of Theorem~\ref{thm:graphsobolev}.
\end{remark}

\begin{definition}\label{def:envelopes}
Let $\mathbb{G}=(V,E)$, $\mathbb{G}'=(V,E') $  be two simple graphs on the same vertex set $V$. We say that $\mathbb{G}$ envelopes $\mathbb{G}'$ if $E'\subset E$ and, for any three vertices $x,y,z\in V$, if $xy\in E'$ and $yz\in E'$, then $xz\in E$. 
\end{definition}
We note that $\mathbb{G}$ envelopes $\mathbb{G}'$ if and only if $\mathbb{G}$ contains the square of $\mathbb{G}'$ as a subgraph, where, per definition, two nodes in the square of $\mathbb{G}'$ are linked if and only if the nodes are at most a graph distance $2$ apart in $\mathbb{G}'$. This graph property is useful for providing uniform estimates for a function's `oscillations' on graphs.

\begin{definition}
Let $\mathbb{G}=(V,E)$ be a simple graph and $u:V\rightarrow \mathbb{R}$. We then define a function $\operatorname{osc}_{\mathbb{G}}(u):V\rightarrow \mathbb{R}$ as follows, for $x\in V$,
$$ \operatorname{osc}_{\mathbb{G}}(u)(x) := \max_{z\in\{x\}\cup N_{\mathbb{G}}(x)} u(z) - \min_{z\in \{x\}\cup N_{\mathbb{G}}(x)} u(z) .$$ 
\end{definition}

We originally found the below lemma in an argument provided in \cite[Lemma 4.1]{Dejan2019}. We have only conveniently reformulated this in terms of simple graphs.
\begin{lemma}
\label{lem:envelopebound}
Let $\mathbb{G}=(V,E)$, $ \mathbb{G}'=(V,E') $ be simple graphs such that $\mathbb{G}$ envelopes $\mathbb{G}'$. Then, for any function $u:V\rightarrow \mathbb{R}$ and $p>0$, 
$$\left[\operatorname{osc}_{\mathbb{G}'}(u)(x)\right]^p \leq \frac{2^p}{|N_{\mathbb{G}'}(x)|} \sum_{yz\in E} |u(y)-u(z)|^p$$
(where the right-hand side is interpreted to be $\infty$ if $N_{\mathbb{G}'}(x)=\emptyset$).
\end{lemma}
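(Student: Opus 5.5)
Let $a$ and $b$ be vertices in $\{x\}\cup N_{\mathbb{G}'}(x)$ where $u$ attains its maximum and its minimum, respectively, so that $\operatorname{osc}_{\mathbb{G}'}(u)(x)=u(a)-u(b)$. If $N_{\mathbb{G}'}(x)=\emptyset$ the right-hand side is $\infty$ by convention and there is nothing to prove, so I assume $N_{\mathbb{G}'}(x)\neq\emptyset$. The idea is to route the difference $u(a)-u(b)$ through each neighbour $w\in N_{\mathbb{G}'}(x)$ and then average over $w$.

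Fix $w\in N_{\mathbb{G}'}(x)$. By the triangle inequality, $u(a)-u(b)\leq |u(a)-u(w)|+|u(w)-u(b)|$, so at least one of these two terms is at least $\tfrac12(u(a)-u(b))$. Hence
$$\left[\operatorname{osc}_{\mathbb{G}'}(u)(x)\right]^p\leq 2^p\max\{|u(a)-u(w)|^p,|u(w)-u(b)|^p\}.$$

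Next I check that the pairs $\{a,w\}$ and $\{w,b\}$ are edges of $\mathbb{G}$ whenever they consist of distinct vertices. Consider $\{a,w\}$ with $a\neq w$. If $a=x$, then $aw=xw\in E'\subset E$. Otherwise $a\in N_{\mathbb{G}'}(x)$, so $ax\in E'$ and $xw\in E'$, and the envelope property gives $aw\in E$. The pair $\{w,b\}$ is handled the same way. If $a=w$ or $b=w$, the corresponding difference is $0$ and contributes nothing. So for each $w$ there is an edge $e_w\in E$ with $w\in e_w$, and $e_w$ carries the larger of the two differences. This gives
$$\left[\operatorname{osc}_{\mathbb{G}'}(u)(x)\right]^p\leq 2^p|u(e_w)|^p,$$
where $|u(e_w)|$ denotes the absolute difference of $u$ across $e_w$ (or $0$ if no such edge is needed).

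Finally I sum over $w\in N_{\mathbb{G}'}(x)$. The main thing to control is double counting. Each edge $e_w$ has the form $\{w,a\}$ or $\{w,b\}$ with $w$ the varying endpoint, and $a,b$ are fixed. An edge $yz\in E$ can therefore arise as $e_w$ only when $w$ is one of its endpoints and the other endpoint is $a$ or $b$. The danger is the edge $\{a,b\}$, which could arise both for $w=a$ and for $w=b$. This does not happen: for $w=a$ the difference $|u(a)-u(w)|$ vanishes, so $e_a=\{a,b\}$ is chosen only as the max, and it is charged once as $w=a$. For $w=b$, $e_b=\{a,b\}$ as well, so it can be charged twice.

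To avoid this, I use a cleaner bound. The maps $w\mapsto\{a,w\}$ and $w\mapsto\{w,b\}$ are each injective, so
$$\sum_{w}\max\{|u(a)-u(w)|^p,|u(w)-u(b)|^p\}\leq\sum_w\bigl(|u(a)-u(w)|^p+|u(w)-u(b)|^p\bigr),$$
and each edge appears at most once in each of the two families. Handled naively this gives a factor of $2$, so the injectivity needs care. Since $a\neq b$ (otherwise the oscillation is $0$), the only edge common to both families is $\{a,b\}$. If I take the max rather than the sum, I get $|N_{\mathbb{G}'}(x)|\,[\operatorname{osc}]^p\leq 2^p\sum_{yz\in E}|u(y)-u(z)|^p$ up to this single edge. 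That edge's own contribution is $(u(a)-u(b))^p=[\operatorname{osc}]^p\leq 2^p|u(a)-u(b)|^p$, which the factor $2^p$ absorbs: for $w\in\{a,b\}$ the max is exactly $[\operatorname{osc}]^p$ rather than $2^{-p}[\operatorname{osc}]^p$, so the budget is sufficient. Dividing by $|N_{\mathbb{G}'}(x)|$ then gives the claimed inequality.
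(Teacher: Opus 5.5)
Your argument is correct for $p\ge 1$, but it has a genuine gap for $0<p<1$, which the statement allows. Write $\omega:=\operatorname{osc}_{\mathbb{G}'}(u)(x)$. The gap is the final absorption step, in the case $x\notin\{a,b\}$, i.e.\ when both $a$ and $b$ lie in $N_{\mathbb{G}'}(x)$.

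Summing only over $w\in N_{\mathbb{G}'}(x)$, the indices $w=a$ and $w=b$ each need $\omega^p=|u(a)-u(b)|^p$. Both are charged to the single edge $\{a,b\}$, which appears once in $\sum_{yz\in E}$ and is worth $2^p\omega^p$ after multiplying by $2^p$. So ``the budget is sufficient'' means exactly $2\le 2^p$, i.e.\ $p\ge 1$. Concretely, if $N_{\mathbb{G}'}(x)=\{a,b\}$, the only edge your accounting ever uses is $\{a,b\}$, and you would need $2\omega^p\le 2^p\omega^p$, which is false for $p<1$. The lemma still holds there; your proof just never uses the edges that make it true.

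The missing idea is to route through $x$ itself. When $x\notin\{a,b\}$:
\begin{itemize}
\item The pairs $\{a,x\}$ and $\{x,b\}$ lie in $E'\subset E$.
\item They are distinct from $\{a,b\}$ and from every $\{a,w\},\{w,b\}$ with $w\in N_{\mathbb{G}'}(x)\setminus\{a,b\}$.
\item They satisfy $\max\{|u(a)-u(x)|^p,|u(x)-u(b)|^p\}\ge 2^{-p}\omega^p$.
\end{itemize}
So instead of summing over $w\in N_{\mathbb{G}'}(x)$, sum over $w\in(\{x\}\cup N_{\mathbb{G}'}(x))\setminus\{a,b\}$, and add the edge $\{a,b\}$ once. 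This index set has exactly $|N_{\mathbb{G}'}(x)|-1$ elements whenever $a\neq b$. All edges used are then pairwise distinct. Each of the $|N_{\mathbb{G}'}(x)|$ contributions is at least $2^{-p}\omega^p$; for the edge $\{a,b\}$ this is because $2^{-p}\le 1$. This gives the inequality for every $p>0$ with no absorption, and it is precisely the paper's proof. Your version would suffice for the paper's only application (Theorem~\ref{thm:graphsobolev2}, where $p>d\ge 1$), but not for the lemma as stated.
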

\begin{proof}
Let $x\in V$ and choose $\overline{x},\underline{x}\in \{x\}\cup N_{\mathbb{G}'}(x)$ such that $\operatorname{osc}_{\mathbb{G}'}(u)(x) := u(\overline{x})-u(\underline{x})$. If $\overline{x}=\underline{x}$ then the bound is trivial so assume otherwise. For each $y\in \left(\{x\}\cup N_{\mathbb{G}'}(x)\right)\setminus \{\overline{x},\underline{x} \}$ we have either $ u(\overline{x})-u(y)\geq \frac{1}{2} \operatorname{osc}_{\mathbb{G}'}(u)(x) $ or $ u(y)-u(\underline{x})\geq \frac{1}{2} \operatorname{osc}_{\mathbb{G}'}(u)(x)$. Moreover, $\overline{x}y$ and $\underline{x}y$ are distinct edges in $E$. Additionally observe $\overline{x}\underline{x}$ is an edge in $E$ distinct from those listed above. As a result we deduce
\begin{align*}
\left[\operatorname{osc}_{\mathbb{G}'}(u)(x) \right]^p &\leq \frac{2^p}{|N_{\mathbb{G}'}(x)|}\left( |u(\overline{x})-u(\underline{x})|^p +\sum_{y\in \left(\{x\}\cup N_{\mathbb{G}'}(x)\right)\setminus \{\overline{x},\underline{x} \}} | u(\overline{x})-u(y)|^p + |u(y)-u(\underline{x})|^p \right)  \\
&\leq \frac{2^p}{|N_{\mathbb{G}'}(x)|} \sum_{yz\in E} |u(y)-u(z)|^p .
\end{align*}
\end{proof}

\section{Geometric graph setting and combinatorics}
\label{sec:geometry}

For any $x\in \mathbb{R}^d$, we define $B_r(x)$ to be the open ball of radius $r$ centred at $x$.

In this section we introduce the geometric graph setting which will feature in our uniform Sobolev inequalities. In particular, our graphs will be taken from from an open and bounded subset $\Omega \subset \mathbb{R}^d$ with a Lipschitz boundary.

\begin{remark}\label{rem:Lipschitz}
An open set $\Omega\subset\mathbb{R}^d$ has Lipschitz boundary if, for every point $x\in \partial\Omega$, locally around $x$ the boundary can be described as the graph of a Lipschitz function on an orthogonal choice of co-ordinates. To be precise, there exists an orthonormal basis $\{e_1,e_2, \dots, e_d\}$ of $\mathbb{R}^d$, an open neighbourhood $U$ of $x$, and a Lipschitz function $\Psi:\mathbb{R}^{d-1}\rightarrow \mathbb{R}$ such that the following hold:
\begin{itemize}
    \item $\Omega\cap U = \{  y_1e_1+\dots + y_de_d \; | \; (y_1,\dots,y_d) \in U \; \text{and} \; y_d< \Psi(y_1,\dots,y_d) \}, $
    \item $\partial\Omega\cap U = \{  y_1e_1+\dots + y_de_d \; | \; (y_1,\dots,y_d) \in U \; \text{and} \; y_d = \Psi(y_1,\dots,y_d) \}, $
    \item $\big(\mathbb{R}^d\setminus\overline{\Omega}\big)\cap U = \{  y_1e_1+\dots +y_de_d \; | \; (y_1,\dots,y_d) \in U \; \text{and} \; y_d > \Psi(y_1,\dots,y_d) \}. $
\end{itemize}

On the other hand, an open set $\Omega\subset\mathbb{R}^d$ has a weak Lipschitz boundary if $\overline{\Omega}$ is a Lipschitz manifold of dimension $d$ with boundary (the boundary is itself a Lipschitz manifold of dimension $d-1$). What this means is, for every point $x\in \partial\Omega$, there exists an open neighbourhood $U$ of $x$ (open in the topology of $\mathbb{R}^d$) and a bi-Lipschitz homeomorphism $\Phi:U\rightarrow B$ between $U$ and the unit open ball $B$ in $\mathbb{R}^d$, such that the following hold. Let $B^-$ be the set of $x\in B$ such that $x_n<0$, $B^+$ be the set of $x\in B$ such that $x_n>0$ and $H$ be the set of $x\in B$ such that $x_n=0$. Then 
$\Phi(U\cap \Omega)=B^-$, $\Phi(U\cap (\mathbb{R}^d\setminus \overline{\Omega}))=B^+$ and $\Phi(U\cap\partial\Omega)=H$.

Indeed, if $\Omega$ has a Lipschitz boundary then it has a weak Lipschitz boundary, although a proof of this fact is not trivial. One has to construct a bi-Lipschitz homeomorphism using the local graph representation. The argument follows in a similar fashion to the $C^1$ inverse function theorem. A Lipschitz function is differentiable almost everywhere, according to Rademacher's theorem \cite[Theorem 3.2]{Evans1992}, and, even though it need not have a continuous derivative, this difficulty can be overcome with techniques developed in \cite{Clarke1976}.

There are sets with a weak Lipschitz boundary but not a Lipschitz boundary. See, for example, \cite[Figure 1]{Licht2019}.
\end{remark}

Since we assume that $\Omega$ has a Lipschitz boundary, it also has a weak Lipschitz boundary. We shall use this fact multiple times in our proofs. Moreover, many of our results would still hold were we to assume $\Omega$ only has a weak Lipschitz boundary rather than a Lipschitz boundary. The reason we still assume $\Omega$ has a Lipschitz boundary is so we can apply results from classical Sobolev theory in the continuum. It is not currently known by the authors if the weak Lipschitz boundary condition is sufficient for proving results from classical Sobolev theory. In particular, the theorems we require within this paper are the Rellich--Kondrachov compactness result~\cite[Theorem 11.10]{Leoni2009} and the classical Sobolev inequalities~\cite[Theorem 1.4.4.1]{Grisvard1985} for $p<d$ and $p>d$. Alongside this, there are results in the literature we use, such as \cite[Lemma 4.6]{Dejan2019}, that assume $\Omega$ has Lipschitz boundary for the same reasons we have described above.  

On $\Omega$ we fix a probability measure $\mu$ with continuous density $\rho: \Omega \to \mathbb{R}$ with respect to the Lebesgue measure, thus $\mu$ is absolutely continuous with respect to the Lebesgue measure on $\Omega$. Throughout this paper we shall make frequent use of indicator functions; the notation we use should be interpreted as follows. For a given statement $P$, $\mathbf{1}_P=1$ if $P$ is true and $\mathbf{1}_P=0$ if $P$ is false. Additionally $\Gamma$ denotes Euler's gamma-function; for the unit ball $\mathcal{B}$ in $\mathbb{R}^d$,
$ \mathfrak{L}^d(\mathcal{B})= \frac{\pi^{d/2}}{\Gamma\left( \frac{d}{2}+1\right)}.$ We also define the diameter of a Borel subset $A\subset \mathbb{R}^d$ by
$$ \operatorname{diam}(A):= \underset{A\times A}{\operatorname{esssup}} \{ (x,y)\xmapsto{} |x-y| \}.$$
A standard fact regarding diameters is that, for any Borel subset $A\subset\mathbb{R}^d$,
$$ \mathfrak{L}^d(A) \leq \frac{\pi^{d/2}}{2^d \Gamma(\frac{d}{2}+1)} \operatorname{diam}(A)^d .$$

Before we construct our graphs we note the following lower bound on the infinite Wasserstein distance between the probability measure $\mu$ and any discrete probability measure. This lower bound gives some intuition into how well a continuum measure can be approximated by a discrete one. 

\begin{lemma}
\label{lem:transportbound}
Let $V\subset\Omega$ be a finite subset. Let $S:\Omega \rightarrow V$ be a Borel map. Then we have
$$  \underset{\Omega}{\operatorname{esssup}}\{x\xmapsto{} |S(x)-x | \} \geq \frac{\Gamma(\frac{d}{2}+1)^{1/d}\mathfrak{L}^d(\Omega)^{1/d}}{\pi^{1/2}|V|^{1/d}}.$$
\end{lemma}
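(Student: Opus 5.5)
Let $r:=\underset{\Omega}{\operatorname{esssup}}\{x\mapsto |S(x)-x|\}$. If $r=\infty$ there is nothing to prove, so assume $r<\infty$. The plan is a volume-counting argument: the fibres of $S$ partition $\Omega$ into at most $|V|$ Borel pieces, and each piece (up to a null set) lies in a ball of radius $r$. Hence the total volume of $\Omega$ is at most $|V|$ times the volume of a ball of radius $r$. Rearranging this gives the stated bound.

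First, for each $v\in V$ set $A_v:=S^{-1}(\{v\})$. These sets are Borel because $S$ is Borel and $\{v\}$ is closed. They are pairwise disjoint and their union is $\Omega$, since $S$ takes values in the finite set $V$. Next, let $N:=\{x\in\Omega : |S(x)-x|>r\}$. By the definition of the essential supremum (read in the usual way, as the infimum of essential upper bounds), $N$ is a Lebesgue null set. For $x\in A_v\setminus N$ we have $|x-v|=|S(x)-x|\le r$, so $A_v\setminus N\subset \overline{B_r(v)}$. It follows that $\mathfrak{L}^d(A_v)\le \mathfrak{L}^d(\overline{B_r(v)})=\frac{\pi^{d/2}}{\Gamma(\frac d2+1)}r^d$.

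Summing over $v\in V$ and using disjointness gives
\[
\mathfrak{L}^d(\Omega)=\sum_{v\in V}\mathfrak{L}^d(A_v)\le |V|\frac{\pi^{d/2}}{\Gamma(\frac d2+1)}r^d .
\]
Solving for $r$ yields $r\ge \Gamma(\frac d2+1)^{1/d}\mathfrak{L}^d(\Omega)^{1/d}\pi^{-1/2}|V|^{-1/d}$, which is exactly the claim.

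There is no serious obstacle. The only point needing care is the essential supremum. The definition written earlier in the paper, with the inequality $f\le c$, appears to contain a typo, and I will use the standard meaning: $\{|S(x)-x|>r\}$ is null. With that reading the null set $N$ is handled as above and everything else is immediate. We could alternatively use the stated diameter bound, since $\operatorname{diam}(A_v)\le 2r$, which gives the same constant, but the ball containment is more direct. Note that $\mathfrak{L}^d(\Omega)<\infty$ holds because $\Omega$ is bounded, though the inequality would hold trivially anyway.
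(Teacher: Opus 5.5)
Your proof is correct. It shares the paper's overall strategy: split $\Omega$ into the fibres $S^{-1}(v)$, $v\in V$, and count volume. The key estimate, however, is different.

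The paper pigeonholes a single fibre $S^{-1}(y)$ with $\mathfrak{L}^d(S^{-1}(y))\geq \mathfrak{L}^d(\Omega)/|V|$. It bounds the volume of that fibre by the isodiametric inequality $\mathfrak{L}^d(A)\leq \frac{\pi^{d/2}}{2^d\Gamma(\frac d2+1)}\operatorname{diam}(A)^d$. It then uses $|x-x'|\leq |x-S(x)|+|x'-S(x')|$ for $x,x'$ in a common fibre to show the displacement is at least half the diameter.

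You instead use the fact that $S$ supplies a centre for each fibre. You show $S^{-1}(v)\setminus N\subset\overline{B_r(v)}$ with $N$ null, and summing over all fibres finishes the argument. The constants coincide, because the isodiametric inequality says exactly that a set of diameter $2r$ has at most the volume of a ball of radius $r$.

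Your route is more elementary: it needs no isodiametric inequality. It also avoids a step the paper's version needs but leaves implicit. The paper's $\operatorname{diam}$ is an essential supremum over $A\times A$ with respect to $\mathfrak{L}^{2d}$, so one must pass to density points before invoking the classical inequality. The paper's diameter-based argument is more intrinsic, since it only uses that each cell of the partition is small, not that it lies near a designated point. That generality is not needed here.

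You were also right about the essential supremum. As literally written, $\sup\{c : \mu(\{f\leq c\})>0\}$ equals $+\infty$ for every real-valued $f$, so the standard reading you adopt is the intended one.
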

\begin{proof}
We note that $\{S^{-1}(x) \;| \; x\in V\}$ partitions $\Omega$, hence $\sum_{x\in V} \mathfrak{L}^d(S^{-1}(x)) = \mathfrak{L}^d(\Omega)$. So there exists $y\in V$ such that the following holds:
$$\frac{\mathfrak{L}^d(\Omega)}{|V|} \leq \mathfrak{L}^d(S^{-1}(y)) \leq \frac{\pi^{d/2}}{2^d \Gamma(\frac{d}{2}+1)} \operatorname{diam}(S^{-1}(y))^d.$$ 
Let $x,y\in S^{-1}(y)$. We note that
$$ |x-y|=|x-S(x)+S(y) -y| \leq |x-S(x)|+|y-S(y)|,$$
hence $\max\{|x-S(x)|,|y-S(y)|\}\geq |x-y|/2$. We deduce
$$  \underset{\Omega}{\operatorname{esssup}} \{x\xmapsto{}|S(x)-x| \} \geq  \underset{S^{-1}(y)}{\operatorname{esssup}} \{x\xmapsto{}|S(x)-x| \} \geq \frac{1}{2}\operatorname{diam}(S^{-1}(y)) .$$
Putting this together with our previous bound we get
$$ \frac{\mathfrak{L}^d(\Omega)}{|V|} \leq\frac{\pi^{d/2}}{ \Gamma(\frac{d}{2}+1)} \left(  \underset{\Omega}{\operatorname{esssup}} \{ x\xmapsto{} |S(x)-x|  \}\right)^d.$$

\end{proof}

The bound above shows that, up to a constant, we cannot approximate $\mu$ in the infinite Wasserstein distance, by a discrete measure of $n$ points any better than $n^{-1/d}$.

Now we shall fix the geometric graphs considered throughout this paper. By $\mathbb{N}$ we denote the set of natural numbers excluding $0$. We introduce the following:

\begin{itemize}
    \item a sequence of finite subsets $V_n\subset \Omega$ for $n\in\mathbb{N}$ which will form our vertex sets;
    \item a sequence of probability measures $\mu_n$ defined on $V_n$; for $n\in \mathbb{N}$ we define 
    $$   \Lambda_n^- := \min_{x\in V_n} \mu_n(x) \; \; \text{and} \; \; \Lambda_n^+:= \max_{x\in V_n} \mu_n(x);$$
    \item a sequence $(\varepsilon_n)_{n\in\mathbb{N}}$ such that $\varepsilon_n>0$ and $\varepsilon_n\rightarrow 0$ as $n\rightarrow \infty$, we shall refer to $(\varepsilon_n)_{n\in\mathbb{N}}$ as the concentrating parameters; 
    \item a function $\eta:[0,\infty)\rightarrow [0,\infty)$, called the kernel; for $\varepsilon>0$ we define $\eta_\varepsilon(\cdot):=\frac{1}{\varepsilon^d}\eta(\cdot/\varepsilon)$.
\end{itemize}

 For $s>0$ and $n\in\mathbb{N}$ we define the simple graph $\mathbb{G}_{n,s}:=(V_n,E_{n,s})$ as follows. For $x,y\in V_n$, $xy\in E_{n,s}$ if and only if $x\neq y$ and $|x-y|\leq s$. We emphasise that, different to the setup in other papers nearby in the literature, the definition of the edge sets $E_{n,s}$ does not depend on the kernel function $\eta$.

\begin{assumptions}
\label{assum}
We refer to the assumptions below as needed throughout the paper. They ensure that our discrete graph setup is a good approximation of the continuum domain $\Omega$.
\begin{enumerate}[(i)]
    \item There exists a sequence of Borel maps $(T_n)_{n\in\mathbb{N}}$ such that $T_n:\Omega \rightarrow V_n$ and $T_n \# \mu=\mu_n$. Moreover there exists $K>0$ such that
    \begin{equation*}
        \sup_{x\in\Omega}|T(x)-x|\leq K\varepsilon_n .
    \end{equation*}
    \item[(i$^*$)] There exists a sequence of Borel maps $(T_n)_{n\in\mathbb{N}}$ such that $T_n:\Omega \rightarrow V_n$ and $T_n \# \mu=\mu_n$. Moreover the following holds:
    \begin{equation*}
        \frac{1}{\varepsilon_n} \sup_{x\in\Omega}|T(x)-x|\rightarrow 0 \; \; \text{as} \; \; n\rightarrow \infty .
    \end{equation*}
    \item The kernel $\eta$ is continuous at $0$, non-increasing and $\eta(0)>0$.
    \item There exists a constant $D>0$ such that, for any $x\in \Omega$, $\frac{1}{D}\leq \rho(x) \leq D$.
    \item There exists a constant $\mathfrak{D}>0$ such that, for any $n\in \mathbb{N}$ and $x\in V_n$,  $$\frac{1}{\mathfrak{D}|V_n|}\leq \mu_n(x) \leq \frac{\mathfrak{D}}{|V_n|}.$$
\end{enumerate}
\end{assumptions}

We note the following consequences of our assumptions.
\begin{itemize}
\item Assumption (i) implies $d_\infty(\mu_n,\mu)\leq K\varepsilon_n$ whereas assumption (i$^*$) implies $\frac{1}{\varepsilon_n}d_\infty(\mu_n,\mu)\rightarrow 0$ as $n\rightarrow\infty$.
    \item Assumptions (i) and (i$^*$) are both lower bounds on how quickly $(\varepsilon_n)_{n\in \mathbb{N}}$ can converge to zero. If $\varepsilon_n$ is chosen too small then the underlying graph $\mathbb{G}_{n,\varepsilon_n}$ is completely disconnected. On the other hand (i$^*$) (for $n$ sufficiently large) and (i) for a $K\leq \frac{1}{2\sqrt{d}}$ imply $\mathbb{G}_{n,\varepsilon_n}$ is connected, according to Proposition~\ref{prop:connect}.
    \item
    We can apply Lemma~\ref{lem:transportbound} and assumption (i) to deduce there exists a constant $C>0$ such that $\varepsilon_n \geq C/|V_n|^{1/d}$.
    \item 
    It is clear that (i$^*$) implies (i).  
    It may be useful to have a combinatorial interpretation of the assumptions (i) (for $K>0$) and (i$^*$). Denote $\overline{B}_{r}(x)$ as the closed ball of radius $r$ centred at $x$. Then, given any $x\in \Omega $, (i) implies that $V_n \cap \overline{B}_{K\varepsilon_n}(x)$ is non-empty, for any $n\in \mathbb{N}$. On the contrary, (i$^*$) from Assumptions~\ref{assum} implies that $\left|V_n \cap \overline{B}_{K\varepsilon_n}(x)\right| \rightarrow +\infty$ as $n\rightarrow \infty$.
\end{itemize}

\begin{remark}
\label{rem:simpletokernel}
Part (ii) from Assumptions~\ref{assum} implies there exist constants $a>0$ and $b\geq 1$ such that $\eta(r)\geq a\cdot\mathbf{1}_{br\leq 1}$. By making a change of variables, we observe, for any $r,s\in [0,+\infty)$, that
$$ \mathbf{1}_{r\leq s} \leq \frac{1}{a} \eta \left( \frac{r}{bs} \right).$$
We will refer back to this relationship numerous times. 
\end{remark}

\begin{remark}
\label{rem:pointcloud}

In the literature about discrete-to-continuum limits that use the $TL^p$ distance (for example \cite{garcia2016continuum,von2016,Garcia2017,Osting2017,Thorpe2017,Alonso2018,Davis2018,Garcia2018,Dejan2019,Theil2019,Garcia2020,slepcev2020,Thorpe2020,Dunlop2020,Kaplan2020,ThorpeVanGennip23}), the graphs constructed are typically random point clouds. That setting satisfies Assumptions~\ref{assum}, as we will show now. 

To construct a random geometric graph, a sequence of points $(Z_n)_{n\in\mathbb{N}}\subset \Omega$ are iid sampled from the probability measure $\mu$. We refer to $\Pi$ as the joint probability space they live in and define $V_n:=\{Z_1,\ldots,Z_n\}$. A discrete probability measure $\mu_n$, called the empirical measure (correpsonding to $V_n$), is introduced on the vertex set $V_n$, given by
$$ \mu_n := \frac{1}{n} \sum_{i=1}^n \delta_{Z_i} .$$
Here $\delta_x$ refers to the Dirac mass at $x$. For the setup we are now considering, there exists an upper bound on the Wasserstein distance $d_\infty(\mu_n, \mu)$ which we now detail.

The authors in \cite{Nicolas2015} prove that, for any $\alpha>2$, there exists a constant $C$ depending on $D$ and $\alpha$ and a sequence of events $(\mathcal{A}_n)_{n\in \mathbb{N}}$, such that the probability of $\mathcal{A}_n^c$ in $\Pi$ decays at a rate $\mathcal{O}(n^{-\alpha/2})$ and, within the event, $\mathcal{A}_n$ the following statement holds. There exists a Borel map $T_n:\Omega\rightarrow V_n$ such that $T_n\#\mu=\mu_n$ and 
\begin{equation}
\label{eq:transportrates}
    d_\infty(\mu_n,\mu)\leq \sup_{x\in\Omega} | T_n(x) -x | \leq C \begin{cases}
        \left( \frac{\log \log n}{n} \right)^{1/2}, & \text{if} \; d=1, \\
        \frac{(\log n)^{3/4}}{n^{1/2}}, & \text{if} \; d=2,\\
        \frac{(\log n)^{1/d}}{n^{1/d}}, & \text{if} \; d\geq 3.
    \end{cases}
\end{equation}

Fix $\alpha>2$ as above. We observe there exists a constant $B>0$ such that
$$ \sum_{n=1}^{\infty} \mathbb{P}_{\Pi}(\mathcal{A}_n^c)\leq B\sum_{n=1}^{\infty} \frac{1}{n^{\alpha/2}}<+\infty.$$
As a consequence of the Borel--Cantelli lemma all but finitely many of the events $(\mathcal{A}_n)_{n\in \mathbb{N}}$ occur with probability 1. This implies with probability $1$ in $\Pi$, that $\mu_n$ converges to $\mu$ with respect to the $d_{\infty}$ metric at a rate given by equation~\ref{eq:transportrates}.

Within the works \cite{garcia2016continuum},\cite{Dejan2019},\cite{Garcia2018} the authors assume
\begin{equation}
\label{ratesDp}
\begin{aligned}
&\lim_{n \rightarrow \infty} \left(\frac{\log \log n}{n} \right)^{1/2}\frac{1}{\varepsilon_n} = 0, & \text{if } d=1,\\
& \lim_{n \rightarrow \infty} \frac{(\log n)^{3/4}}{n^{1/2}} \frac{1}{\varepsilon_n} = 0, & \text{if } d=2, \\
& \lim_{n \rightarrow \infty} \frac{(\log n)^{1/d}}{n^{1/d}} \frac{1}{\varepsilon_n} = 0, & \text{if } d\geq 3.
\end{aligned}
\end{equation}
Given the bound in \eqref{eq:transportrates} we note that \eqref{ratesDp} implies (i$^*$) from Assumptions~\ref{assum} holds with probability $1$ in $\Pi$. Throughout this paper, (i) from Assumptions~\ref{assum} will play the role that \eqref{ratesDp} has in the previous works listed at the start of this remark; notably it is strictly weaker. We will discuss the implications of assuming (i) instead of (i$^*$) in further detail in the discussion section.    

In the random point cloud setting, the probability measures $\mu_n$ are empirical measures and thus part~(iv) from Assumptions~\ref{assum} automatically holds. 
\end{remark}

The following proposition investigates the connectivity of our geometric graph setting. An illustration of the proof is given in Figure~\ref{fig:connect}.

\begin{proposition}
\label{prop:connect}
Assume that $\Omega$ is convex. Let $V\subset \Omega$ have finite cardinality and let $T:\Omega\rightarrow V$. Define $\mathbb{G}_s$ to be the simple graph on $V$ such that two vertices $x,y$ are adjacent when $x\neq y$ and $|x-y|\leq s$. Then $\mathbb{G}_s$ is connected for $s\geq 2\sqrt{d} \sup_{x\in \Omega} |T(x)-x|$.  
\end{proposition}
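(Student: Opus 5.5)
The plan is to argue by contradiction, using the segment between two vertices, which lies in $\Omega$ by convexity. Write $\delta:=\sup_{x\in\Omega}|T(x)-x|$. We may assume $\delta<\infty$, since otherwise there is nothing to prove. Since $d\geq 1$, we have $2\delta\leq 2\sqrt{d}\,\delta\leq s$, so it suffices to show that $\mathbb{G}_s$ is connected whenever $s\geq 2\delta$. (The factor $\sqrt{d}$ is then pure slack; I expect the authors' proof runs through a cube decomposition, but a segment argument seems to suffice.)

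Suppose $\mathbb{G}_s$ is disconnected. Let $A\subset V$ be a connected component and $B:=V\setminus A\neq\emptyset$. Pick $x\in A$ and $y\in B$, and parametrise the segment by $\gamma(t):=(1-t)x+ty$ for $t\in[0,1]$. By convexity $\gamma(t)\in\Omega$ for every $t$, so $T(\gamma(t))\in V$ is defined.

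First check the endpoints. Since $|T(x)-x|\leq\delta\leq s$, the vertex $T(x)$ is either equal to $x$ or adjacent to it, so $T(\gamma(0))\in A$. By the same argument $T(\gamma(1))\in B$. Now define
$$t^*:=\sup\{t\in[0,1]\;|\;T(\gamma(t))\in A\}.$$
Two cases arise. If $t^*<1$, then every $t>t^*$ satisfies $T(\gamma(t))\in B$. If $t^*=1$, then $T(\gamma(1))\in B$ while $A$-values occur at parameters increasing to $1$. In both cases we obtain sequences $t_k\to t^*$ and $t_k'\to t^*$ with $T(\gamma(t_k))\in A$ and $T(\gamma(t_k'))\in B$; these sequences may be constant, which covers the case where $t^*$ itself is one of the parameters.

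Because $V$ is finite, we can pass to subsequences along which $T(\gamma(t_k))=a\in A$ and $T(\gamma(t_k'))=b\in B$ are constant. Let $z^*:=\gamma(t^*)$. Passing to the limit in $|a-\gamma(t_k)|\leq\delta$ gives $|a-z^*|\leq\delta$, and likewise $|b-z^*|\leq\delta$. Therefore $|a-b|\leq 2\delta\leq s$, and since $a\neq b$ the vertices $a$ and $b$ are adjacent in $\mathbb{G}_s$. This contradicts the assumption that $A$ is a connected component not containing $b$.

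The main obstacle is the step at $t^*$. A naive "walk along the segment" argument only gives $|T(z)-T(z')|\leq 2\delta+|z-z'|$ for nearby points $z,z'$, which slightly exceeds $2\delta$. The combination of passing to the limit and using finiteness of $V$ is what lets us take $|z-z'|\to 0$ while holding the two vertices fixed, which yields the sharp bound $2\delta$. A side benefit is that no measurability of $T$ and no pushforward property are needed.
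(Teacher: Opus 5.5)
Your proof is correct, and it takes a genuinely different route from the paper's.

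The paper argues extremally. It chooses two components $S_i,S_j$ at minimal distance and points $x\in S_i$, $y\in S_j$ realising that distance, so $|x-y|>s$. It then shows that the cube with opposite vertices $x,y$ meets $V$ only in $\{x,y\}$: any other vertex would be strictly closer to $x$ or to $y$, contradicting minimality. Finally it observes that $T(z)$, for the midpoint $z$, must lie in the inscribed ball of radius $|x-y|/(2\sqrt d)>\delta$ around $z$, which contains no vertex. That argument evaluates $T$ at a single point and only needs the midpoint of one pair to lie in $\Omega$. The factor $\sqrt d$ is an artefact of inscribing a ball in the cube. The same minimality argument shows directly that $B_{|x-y|/2}(z)$ contains no vertex, so the paper's route would also yield your constant $2$.

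Your argument is instead a connectedness argument on $[0,1]$. The parameters where $T\circ\gamma$ lands in $A$ and those where it lands in $B$ must accumulate at a common $t^*$. Finiteness of $V$ then freezes two vertices $a\in A$, $b\in B$, both within $\delta$ of $\gamma(t^*)$.

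Besides the sharper constant, your approach buys generality. You only use that a continuous path from $x$ to $y$ stays inside $\Omega$. So the proof works verbatim for any path-connected $\Omega$, for instance any connected open set. That would make the convexity hypothesis unnecessary, and also the bi-Lipschitz detour of Corollary~\ref{cor:connect2}, with its constant $2\sqrt d L^2$.

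Your closing remark about not needing measurability or the push-forward property does not distinguish the two proofs, since the paper's proof does not use them either.
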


\begin{figure}[ht]
    \begin{center}
    \begin{tikzpicture}[scale=0.5]
\draw (0,0) ellipse (10 and 7) ;
\draw (-2,0) -- (0,2) ;
\draw (0,2) -- (2,0) ;
\draw (-2,0) -- (0,-2) ;
\draw (0,-2) -- (2,0) ;
\draw[dashed] (-5,0) circle (3) ;
\draw[dashed] (5,0) circle (3) ;
\filldraw[black] (-2,0) circle (3pt) node[anchor=east]{$x$} ;
\filldraw[black] (2,0) circle (3pt) node[anchor=west]{$y$} ;
\draw (-5,0) node {$S_i$} ;
\draw (5,0) node {$S_j$} ;
\filldraw[black] (0,0) circle (1pt) node[anchor=north]{$z$} ;
\draw[->,dashed] (0,0) -- node[anchor=south east]{$r$} ++(45:1.3cm) ;
\draw[dashed] (0,0) circle (1.4) ;
\draw (-0.5,2.2) node {$C$} ;
\end{tikzpicture}
\end{center}
    \caption{Illustration of the proof of Proposition~\ref{prop:connect}}
    \label{fig:connect}
\end{figure}

Before we proceed with the proof we define the following. For sets $A,B\subset\mathbb{R}^d$,
$$ \operatorname{dist}(A,B):=\inf_{x\in A}\inf_{y\in B} |x-y|.$$

\begin{proof}
Let $s\geq 2\sqrt{d} \sup_{x\in \Omega} |T(x)-x|$ (we note that the right-hand side is finite by boundedness of $\Omega$) and assume for a contradiction that $\mathbb{G}_s$ is disconnected. First we partition the vertex set $V$ into the connected components of the graph $\mathbb{G}_s$: $S_1,S_2, \ldots ,S_l$. Since $\mathbb{G}_s$ is disconnected, $l\geq2$. Choose indices $i,j$ that minimize $\operatorname{dist}(S_i,S_j)$, then choose $x\in S_i$ and $y\in S_j$ such that $|x-y|=\operatorname{dist}(S_i,S_j)$. Let $z$ be the midpoint of $x,y$ and $C$ a (hyper)cube centred at $z$ which has opposite vertices positioned at $x$ and $y$. The convexity assumption on $\Omega$ ensures that $z\in \Omega$.\footnote{We note that midpoint convexity of $\Omega$ (i.e., for all $x,y\in \Omega$, $\frac12(x+y)\in \Omega$), which is implied by convexity, would suffice for this conclusion. For any set that, locally, lies on one side of its boundary, as our $\Omega$ with Lipschitz boundary does, midpoint convexity is in fact equivalent to convexity. Indeed, for a closed set midpoint convexity implies convexity: given elements $x$ and $y$ of the set, repeated applications of midpoint convexity to a dyadic partition of the line segment between $x$ and $y$ give a dense subset of points on the line segment that are all in the set. If the set is closed, it thus contains the whole line segment. Hence, if a set is midpoint convex but not convex, there must exist points $x$ and $y$ in the set and a point $z$ on the line segment between $x$ and $y$ that lies on the boundary of the set, yet not in the set. If locally at this point, the set lies on one side of the boundary, we arrive at a contradiction, since the earlier constructed dense subset of the line segment lies dense on both sides of the boundary.

Without additional assumptions, midpoint convexity does not imply convexity, as for example $\mathbb{Q}$ as subset of $\mathbb{R}$ is midpoint convex yet not convex.} Let $r$ be half the length of one of the edges of the cube $C$, i.e. $r:=|x-y|/(2\sqrt{d})$. Since $x$ and $y$ are in different connected components they are not adjacent in $\mathbb{G}_s$, therefore $r> \frac{s}{2\sqrt{d}} \geq \sup_{x\in \Omega} |T(x)-x|$. We then choose a constant $r'>0$ such that $r>r'>\sup_{x\in \Omega} |T(x)-x|$.

We observe that $S_i\cap C=\{x\}$, otherwise there would be a point in $S_i$ closer to $y$ than $x$ is to $y$, which would contradict that $|x-y|=\operatorname{dist}(S_i,S_j)$. Similarly $S_j\cap C=\{y\}$. Moreover, for $k\not\in\{i,j\}$, $S_k\cap C=\emptyset$, otherwise $\operatorname{dist}(S_k,S_i)<\operatorname{dist}(S_i,S_j)$ which contradicts that $i,j$ were chosen minimal.

Consequently, $C\cap V=\{x,y\}$ and therefore $\overline{B_{r'}(z)}\cap V=\emptyset$. This yields a contradiction since, as previously stated, $r'>  \sup_{x\in \Omega} |T(x)-x|$ and therefore $T(z)\in \overline{B_{r'}(z)} \cap V $. The proof then follows.
\end{proof}

The above proposition suggests a close relationship between the connectivity of a geometric graph on a finite subset $V\subset \Omega$ and the $\infty$-Wasserstein distance between a discrete probability measure on $V$ and a continuum probability measure on $\Omega$. To make this relationship explicit we define the connectivity threshold for a finite set.

\begin{definition}\label{def:connthreshold}
Let $V\subset \mathbb{R}^d$ have finite cardinality. For $s\geq 0$, take $\mathbb{G}_s(V)$ to be the simple graph with vertex set $V$, in which two vertices are connected if they are at most a distance $s$ (inclusive) apart. Define the connectivity threshold of $V$, denoted by $s^*(V)$, to be the infimum over all $s\geq0$ such that $\mathbb{G}_s(V)$ is connected. 
\end{definition}
We note that $s^*(V)$ is well defined, since for large enough $s$, $\mathbb{G}_s(V)$ is connected by finiteness of $V$.

\begin{proposition}\label{prop:threshold}
Let $V\subset \mathbb{R}^d$ have finite cardinality. Then  $\mathbb{G}_{s^*(V)}(V)$ is itself connected.   
\end{proposition}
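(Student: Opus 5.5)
The key observation is that, because $V$ is finite, the edge set of $\mathbb{G}_s(V)$ can only change at finitely many values of $s$. These are the pairwise distances. So the infimum in Definition~\ref{def:connthreshold} should be attained. I would make this precise as follows. Let
$$D:=\{|x-y| \;|\; x,y\in V,\ x\neq y\}\cup\{0\},$$
a finite set of non-negative reals. For $s\geq 0$, two vertices are adjacent in $\mathbb{G}_s(V)$ iff $|x-y|\leq s$. Hence the edge set of $\mathbb{G}_s(V)$ equals that of $\mathbb{G}_{m(s)}(V)$, where $m(s):=\max\{t\in D \;|\; t\leq s\}$. This is well defined since $0\in D$ and $D$ is finite. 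In particular $\mathbb{G}_s(V)$ is connected iff $\mathbb{G}_{m(s)}(V)$ is connected.

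Next, define $A:=\{s\geq 0 \;|\; \mathbb{G}_s(V) \text{ connected}\}$, which is nonempty as noted after Definition~\ref{def:connthreshold}. Note that $A$ is upward closed: if $s\in A$ and $s'\geq s$, then $E_s\subset E_{s'}$ on the same vertex set, so $\mathbb{G}_{s'}(V)$ is connected. By the previous paragraph, $s\in A$ implies $m(s)\in A$, and $m(s)\leq s$. Therefore
$$s^*(V)=\inf A=\inf (A\cap D)=\min(A\cap D),$$
where the minimum exists because $A\cap D$ is a nonempty finite set. So $s^*(V)\in A$, which is precisely the claim.

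Alternatively, I could argue by contradiction. Suppose $\mathbb{G}_{s^*}(V)$ is disconnected. Let $\delta:=\min\{|x-y|-s^* \;|\; x,y\in V,\ |x-y|>s^*\}>0$, or $\delta=\infty$ if there is no such pair. Then for every $s\in[s^*,s^*+\delta)$ the graph $\mathbb{G}_s(V)$ has the same edges as $\mathbb{G}_{s^*}(V)$, so it is disconnected. This contradicts $s^*$ being the infimum of $A$.

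There is essentially no obstacle. The only points needing care are the degenerate case $|V|=1$, where $s^*(V)=0$ and the single-vertex graph is connected (the $0\in D$ convention handles this), and the use of non-strict inequality $|x-y|\leq s$ in the edge definition. The latter is what makes the relevant sets of $s$ closed on the left, and hence makes the infimum attained.
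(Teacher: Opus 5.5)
Your proof is correct and rests on the same observation as the paper's: because $V$ is finite, the edge set of $\mathbb{G}_s(V)$ changes only at the finitely many pairwise distances, and thanks to the non-strict inequality it is constant on intervals closed on the left. Your alternative contradiction argument is essentially the paper's proof, which takes $s'$ to be the smallest pairwise distance exceeding $s^*$ and uses $\mathbb{G}_{s^*+\delta}(V)=\mathbb{G}_{s^*}(V)$ for $s^*+\delta<s'$. Your direct version via $m(s)$ and $\min(A\cap D)$ packages the same idea slightly more cleanly, and it avoids two small implicit steps in the paper: monotonicity in $s$ when asserting that $\mathbb{G}_{s^*+\delta}(V)$ is connected, and the case where no pairwise distance exceeds $s^*$.
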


\begin{proof}
For simplicity we shall write $s^*:=s^*(V)$. We shall argue by contradiction so assume $G_{s^*}(V)$ is disconnected. Define
$$ s':= \min \{ |x-y| \; | \; x,y\in V \; \text{and} \; |x-y|> s^*   \}.$$
Given that $V$ is finite we have $s'>s^*$. Therefore, we can choose $\delta>0$ such that $s^*+\delta<s'$. Observe that $\mathbb{G}_{s^*+\delta}(V)=\mathbb{G}_{s^*}(V)$. However, by the definition of $s^*$, $\mathbb{G}_{s^*+\delta}(V)$ is connected. This yields a contradiction.   
\end{proof}

Define $\mathcal{P}_c(\Omega)$ to be the set of Borel probability measures $\mu$ on $\Omega$ that are equivalent to $\mathfrak{L}^d|_{\Omega}$, i.e. each $\mu$ is absolutely continuous with respect to $\mathfrak{L}^d|_{\Omega}$ and vice versa. Define $\mathcal{P}(V)$ to be the set of probability measures supported on $V$. As a corollary of Proposition~\ref{prop:connect} we have the following upper bound.

\begin{corollary}
\label{cor:connect}
Assume that $\Omega$ is convex and let $V\subset\Omega$ have finite cardinality. Then
$$ s^*(V) \leq 2\sqrt{d} \inf_{\mu\in\mathcal{P}_c(\Omega)} \inf_{\nu\in \mathcal{P}(V)} d_\infty(\mu,\nu).$$
\end{corollary}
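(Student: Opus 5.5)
The plan is to reduce the corollary to Proposition~\ref{prop:connect} combined with Theorem~\ref{thm:transportexistence}. Fix $\mu\in\mathcal{P}_c(\Omega)$ and $\nu\in\mathcal{P}(V)$; it suffices to show $s^*(V)\leq 2\sqrt{d}\,d_\infty(\mu,\nu)$, after which we take the infimum over both measures. Since $\Omega$ is convex, open and (as throughout this section) bounded, and $\mu$ is absolutely continuous with respect to Lebesgue measure, Theorem~\ref{thm:transportexistence} gives a Borel map $T:\Omega\rightarrow\Omega$ with $T\#\mu=\nu$ and $d_\infty(\mu,\nu)=\sup_{x\in\Omega}|x-T(x)|$. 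Moreover $d_\infty(\mu,\nu)$ is finite because $\Omega$ is bounded.

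The main obstacle is that Proposition~\ref{prop:connect} requires $T$ to take values in $V$, whereas Theorem~\ref{thm:transportexistence} only gives values in $\Omega$ with $T\#\mu=\nu$. Since $\nu(V)=1$, the set $N:=T^{-1}(\Omega\setminus V)$ is $\mu$-null. We therefore redefine $T$ on $N$ to get a map into $V$ without increasing the supremum. This is where $\mu\in\mathcal{P}_c(\Omega)$, and not merely absolute continuity, is used. Because $\mu$ and $\mathfrak{L}^d|_\Omega$ are equivalent, $N$ is Lebesgue-null and hence has empty interior. So for $x\in N$ there are points $x_k\in\Omega\setminus N$ with $x_k\to x$. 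Each $T(x_k)$ lies in the finite set $V$ and satisfies $|T(x_k)-x_k|\leq d_\infty(\mu,\nu)$. Passing to a constant subsequence, some $v\in V$ satisfies $|v-x|\leq d_\infty(\mu,\nu)$. Define $T'(x)$ to be such a $v$, chosen measurably, for instance the first admissible element in a fixed enumeration of $V$. Set $T'=T$ off $N$. Then $T':\Omega\to V$ and $\sup_{x\in\Omega}|T'(x)-x|\leq d_\infty(\mu,\nu)$.

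Alternatively, we can avoid the modification entirely. Proposition~\ref{prop:connect} only evaluates $T$ at the single point $z$, and it only needs some point of $V$ in $\overline{B_{r'}(z)}$. The approximation argument above provides exactly this for every $z\in\Omega$. Either way, Proposition~\ref{prop:connect} applied with $s=2\sqrt{d}\,d_\infty(\mu,\nu)$ shows that $\mathbb{G}_s(V)$ is connected. By the definition of $s^*(V)$ as an infimum (Definition~\ref{def:connthreshold}), $s^*(V)\leq 2\sqrt{d}\,d_\infty(\mu,\nu)$. Taking the infimum over $\nu\in\mathcal{P}(V)$ and $\mu\in\mathcal{P}_c(\Omega)$ finishes the proof. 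If $\mathcal{P}_c(\Omega)$ were empty or the infimum infinite, the bound would be trivial.
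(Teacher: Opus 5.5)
Your proposal is correct and follows essentially the paper's route. You obtain $T$ from Theorem~\ref{thm:transportexistence}, use $\mu\in\mathcal{P}_c(\Omega)$ to see that the bad set $N$ is Lebesgue-null (so good points are dense), repair $T$ at bad points using values at nearby good points, and invoke Proposition~\ref{prop:connect}. The one technical difference is that the paper builds a finite $\varepsilon$-net of good points, incurring a $2\varepsilon$ error removed by an infimum over $\varepsilon$, whereas your constant-subsequence argument in the finite set $V$ gives $\sup_{x\in\Omega}|T'(x)-x|\le d_\infty(\mu,\nu)$ with no loss.
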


\begin{proof}
    Let $\nu\in \mathcal{P}(V)$, $\mu\in \mathcal{P}_c(\Omega)$ and $\varepsilon>0$. By Theorem~\ref{thm:transportexistence} choose  $T:\Omega\rightarrow \Omega$ such that $T\#\mu=\nu$ and
    $$ d_\infty(\mu,\nu)= \sup_{x\in\Omega} |T(x)-x|.$$
    Our objective is to modify $T$ to a map $T_\varepsilon:\Omega\rightarrow V$ suitable for Proposition~\ref{prop:connect}; in particular $T_\varepsilon$ should have its range in $V$. Define $N:= \{x\in \Omega \; | \; T(x)\notin V\}$, and observe that $N$ is a $\mu$-null set, since $T\#\mu=\nu$ has support in $V$. Given that $\Omega$ is pre-compact, we can construct a finite open cover of $\Omega$, $\{B_{\varepsilon/2}(x'_1),B_{\varepsilon/2}(x'_2),\ldots,B_{\varepsilon/2}(x'_k)$\} with $x'_1,x'_2,\ldots,x'_k\in \overline\Omega$, for some $k\in \mathbb{N}$. For any $x'_i \in \partial \Omega$, $B_{\varepsilon/2}(x'_i) \cap \Omega \neq \emptyset$. Replace $x'_i$ by a point in this intersection (again named $x'_i$). Then $\{B_\varepsilon(x'_1),B_\varepsilon(x'_2),\ldots,B_\varepsilon(x'_k)$\} is a finite open cover of $\Omega$ with $x'_1,x'_2,\ldots,x'_k\in \Omega$. Given that $N$ is null, choose $x_1,\ldots,x_k\in \Omega\setminus N$ such that, for each $i\in \{1,\ldots,k\}$, $x_i\in B_\varepsilon(x_i')$. Observe that $\{B_{2\varepsilon}(x_1),B_{2\varepsilon}(x_2),\ldots,B_{2\varepsilon}(x_k)$\} is an open cover of $\Omega$.   

Next we define $T_\varepsilon:\Omega\rightarrow V$ as follows. For each $x\in \Omega$, choose $x_\varepsilon$ to be any element of $\{x_1,\ldots,x_k\}$ such that $x\in B_{2\varepsilon}(x_\varepsilon)$ and set $T_\varepsilon(x):=T(x_\varepsilon)$. Then, for $x\in \Omega$, 
$$|T_\varepsilon(x)-x|=|T(x_\varepsilon)-x_\varepsilon+x_\varepsilon-x|\leq d_\infty(\mu,\nu)+2\varepsilon.$$
Therefore, $\sup_{x\in\Omega} |T_\varepsilon(x)-x|\leq d_\infty(\mu,\nu)+2\varepsilon$ and $T_\varepsilon$ has range in $V$. Applying Proposition~\ref{prop:connect}, which is allowed since $\Omega$ is assumed to be convex, we conclude that
$$s^*(V)\leq 2\sqrt{d} \;( d_\infty(\mu,\nu)+ 2\varepsilon).$$
Taking an infimum over all $\mu\in \mathcal{P}_c(\Omega)$, $\nu\in \mathcal{P}(V)$ and $\varepsilon>0$ concludes our result.

\end{proof}

\begin{corollary}\label{cor:connect2}
    Assume that there exists a bi-Lipschitz homeomorphism from $\Omega$ onto $\Sigma$.\footnote{Such a function is also called a lipeomorphism and if such a function exists $\Omega$ and $\Sigma$ are called lipeomorphic.} Then there exists a constant $C(\Omega)>0$, depending on the set $\Omega$, such that, for any $V\subset \Omega$ with finite cardinality,
    $$ s^*(V)\leq C(\Omega) \inf_{\mu\in\mathcal{P}_c(\Omega)} \inf_{\nu\in \mathcal{P}(V)} d_\infty(\mu,\nu).$$

\end{corollary}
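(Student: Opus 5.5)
Let $\Phi:\Omega\rightarrow\Sigma$ be the bi-Lipschitz homeomorphism, with $\Sigma$ convex (as in the introduction), and let $L\geq 1$ bound the Lipschitz constants of both $\Phi$ and $\Phi^{-1}$, i.e. $L^{-1}|x-y|\leq|\Phi(x)-\Phi(y)|\leq L|x-y|$. The plan is to transport the whole problem to $\Sigma$, apply Corollary~\ref{cor:connect} there, and pull the conclusion back. The pull-back is easy, because connectivity is preserved up to a factor $L$. Set $W:=\Phi(V)\subset\Sigma$. If $\mathbb{G}_s(W)$ is connected, then every edge $\Phi(x)\Phi(y)$ of it satisfies $|x-y|\leq L|\Phi(x)-\Phi(y)|\leq Ls$, so $\mathbb{G}_{Ls}(V)$ contains an isomorphic copy of a connected spanning graph and is therefore connected. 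Hence $s^*(V)\leq L\,s^*(W)$.

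Next I will control the Wasserstein side. Fix $\mu\in\mathcal{P}_c(\Omega)$ and $\nu\in\mathcal{P}(V)$. The measure $\Phi\#\mu$ lies in $\mathcal{P}_c(\Sigma)$, because bi-Lipschitz maps send Lebesgue null sets to Lebesgue null sets, and the same holds for $\Phi^{-1}$. Also $\Phi\#\nu\in\mathcal{P}(W)$. For any coupling $\gamma\in\Gamma(\mu,\nu)$, the pushforward $(\Phi\times\Phi)\#\gamma$ is a coupling of $\Phi\#\mu$ and $\Phi\#\nu$, and its $\operatorname{esssup}$ of $|x'-y'|$ is at most $L$ times that of $\gamma$. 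Thus $d_\infty(\Phi\#\mu,\Phi\#\nu)\leq L\,d_\infty(\mu,\nu)$. Applying Corollary~\ref{cor:connect} on the convex set $\Sigma$ to the finite set $W$ gives
$$s^*(W)\leq 2\sqrt{d}\,d_\infty(\Phi\#\mu,\Phi\#\nu)\leq 2\sqrt{d}\,L\,d_\infty(\mu,\nu).$$
Combining this with the first step and taking infima over $\mu$ and $\nu$ yields the statement with $C(\Omega)=2\sqrt{d}L^2$.

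The main obstacle is checking that Corollary~\ref{cor:connect} really applies on $\Sigma$. Its proof uses Theorem~\ref{thm:transportexistence}, which needs $\Sigma$ open and bounded, and it uses precompactness of $\Sigma$ for the finite cover. Since $\Omega$ is bounded and $\Phi$ is Lipschitz, $\Sigma$ is bounded. By invariance of domain, $\Sigma=\Phi(\Omega)$ is open. Convexity of $\Sigma$ is part of the hypothesis. So all the conditions hold, and the remaining work is only the measure-theoretic bookkeeping described above.
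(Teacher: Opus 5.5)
Your proposal is correct and follows essentially the same route as the paper: you push $V$, $\mu$, $\nu$ and a coupling forward under the bi-Lipschitz map, use $s^*(V)\le L\,s^*(\Phi(V))$ together with Corollary~\ref{cor:connect} on the convex set $\Sigma$, and obtain the same constant $C(\Omega)=2\sqrt{d}L^2$. Your extra checks (openness of $\Sigma$ via invariance of domain, boundedness, and $\Phi\#\mu\in\mathcal{P}_c(\Sigma)$) supply hypotheses that the paper's proof of Corollary~\ref{cor:connect} needs but that the paper's proof of this corollary uses without stating.
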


\begin{proof}
    Let $\Sigma$ be a convex set and $\phi:\Omega\rightarrow \Sigma$ a bi-Lipschitz homeomorphism. Choose a constant $L>0$ such that, for any $x,y\in \Omega$,
    $$ \frac{1}{L}|x-y|\leq|\phi(x)-\phi(y)|\leq L|x-y|.$$
    Let $V\subset \Omega$ be a finite subset and define $\widetilde{V}:=\phi(V)$. We observe that, for any edge $\phi(x)\phi(y)$ in $\mathbb{G}_{s^*(\widetilde{V})}(\widetilde{V})$, $xy$ is an edge in $\mathbb{G}_{Ls^*(\widetilde{V})}(V)$. Since $\mathbb{G}_{s^*(\widetilde{V})}(\widetilde{V})$ is connected according to Proposition~\ref{prop:threshold}, we observe that $\mathbb{G}_{Ls^*(\widetilde{V})}(V)$ is also connected. Therefore $s^*(V)\leq Ls^*(\widetilde{V})$.

    Let $\mu\in \mathcal{P}_c(\Omega)$, $\nu\in \mathcal{P}(V)$ and $\pi\in\Gamma(\mu,\nu)$. Define $\phi\times \phi: \Omega\times\Omega\rightarrow \Sigma\times \Sigma$ such that, for $(x,y)\in\Omega\times\Omega$, $\phi\times\phi(x,y)=(\phi(x),\phi(y))$. Define $\widetilde{\mu}:=\phi\#\mu$, $\widetilde{\nu}:=\phi\#\nu$ and $\widetilde{\pi}:= (\phi\times\phi)\#\pi$. We note that $\widetilde{\pi}\in\Gamma(\widetilde{\mu},\widetilde{\nu})$. Using Corollary~\ref{cor:connect} we get the following bound,
    \begin{align*}
      s^*(V) \leq Ls^*(\widetilde{V})&\leq 2\sqrt d L\underset{(\Sigma\times\Sigma,\widetilde{\pi})}{\operatorname{esssup}} \{ (w,z)\mapsto |w-z| \} \\
      &= 2\sqrt d L\underset{(\Omega\times\Omega,\pi)}{\operatorname{esssup}} \{ (x,y)\mapsto |\phi(x)-\phi(y)| \} \\
      &\leq 2 \sqrt d L^2 \underset{(\Omega\times\Omega,\pi)}{\operatorname{esssup}} \{ (x,y)\mapsto |x-y| \}.  
    \end{align*}
    Taking an infimum over all $\pi\in\Gamma(\mu,\nu)$ we get $s^*(V)\leq 2\sqrt d L^2d_\infty(\mu,\nu)$. Taking $C(\Omega):=2\sqrt d L^2$, the result follows.
\end{proof}

\begin{remark}\label{rem:conndinfty}
Throughout this remark we shall assume $\Omega$ is bi-Lipschitz homeomorphic to a convex set.

It would be especially interesting if a lower bound can be produced for $s^*(V)$ in terms of the metric $d_\infty$ of the same form as the upper bound in Corollary~\ref{cor:connect}, namely
$$ s^*(V)\geq C \inf_{\mu\in \mathcal{P}_c(\Omega)} \inf_{\nu\in \mathcal{P}(V)} d_\infty(\mu,\nu),
$$
for a constant $C>0$ independent of $V$, possibly dependent on $\Omega$. 

There is evidence in the form of combinatorial results for random point clouds (the setting discussed in Remark~\ref{rem:pointcloud}) to suggest, in the $d=2$ case, that a lower bound of this form is impossible. In particular, using the same notation as in Remark~\ref{rem:pointcloud}, it is known that with high probability (see~\cite[Theorem 1.1]{Nicolas2015})
$$ d_\infty(\mu_n,\mu) = \mathcal{O}\left( \frac{(\log n)^{3/4}}{n^{1/2}} \right).$$
On the contrary (for the case when $\Omega$ is a disc, see~\cite[Theorem 3.2]{Gupta1999}), with high probability, 
$$ s^*(V_n) = \mathcal{O}\left( \frac{(\log n)^{1/2}}{n^{1/2}} \right).$$

Therefore, $d_\infty(\mu_n,\mu)$ is larger that $s^*(V_n)$ by a factor of $(\log n)^{1/4}$. Nonetheless, for $d\geq 3$ in the random-point-cloud setting, $s^*(V_n)$ and $d_\infty(\mu_n,\mu)$ both converge to zero at the same rate (with high probability). For now, the existence of a lower bound remains an open problem. For further details on the connectivity thresholds of geometric graphs, we refer to~\cite[Chapter 13]{Penrose2003}, further details on the infinite-Wasserstein distance for random point clouds can be found in~\cite{Nicolas2015}.
\end{remark}

As a consequence of Corollary~\ref{cor:connect2}, if $(i)$ from Assumptions~\ref{assum} holds for $K>0$ and a bi-Lipschitz homeomorphism exists from $\Omega$ onto a convex set, then, for $r_n:= C(\Omega) K\varepsilon_n$, $\mathbb{G}_{n, r_n}$ is connected.

Next we shall investigate the friendly property, which we introduced in Definition~\ref{def:kfriendly2} , for the geometric graphs $\mathbb{G}_{n,s}$. Figure~\ref{fig:geometry} gives some intuition as to how we identify the $k$-friendly property inside of $\mathbb{G}_{n,s}$. Given two points $x,y$ such that $|x-y|\leq s$, there is an open ball of radius $s/2$ inside of $B_s(x)\cap B_s(y)$. The following lemma will allow us to quantify the $k$-friendly property for geometric graphs.

\begin{figure}[ht]
    \begin{center}
    \begin{tikzpicture}[scale=0.5]
\filldraw[black] (7,0) circle (3pt) node[anchor=south]{$x$};
\filldraw[black] (13,0) circle (3pt) node[anchor=south]{$y$} ;
\draw (7.25,0) circle (8) ;
\draw (12.75,0) circle (8) ;
\draw[<->,dashed] (7.25,0) -- (12.75,0) ;
\filldraw[black] (10,0) circle (3pt) ;
\draw[->,dashed] (10,0) -- node[anchor=south east]{$s/2$} ++(45:4cm) ;
\draw[->,dashed] (13,0) -- node[anchor=south east]{$s$} ++(45:7.8cm) ;
\draw (10,0) circle (4) ;
\end{tikzpicture}
\end{center}
    \caption{Illustration of the friendly property for a simple graph $\mathbb{G}_{n,s}$.}
    \label{fig:geometry}
\end{figure}

\begin{lemma}
\label{lem:graphfriendly}
Assume that (i) for a $K>0$, (ii) and (iii) from Assumptions~\ref{assum} hold. Let $\Omega'$ be an open subset of $\Omega$ and $\delta>0$ be a constant. Assume that for any $x\in \Omega'$, $B_\delta(x)\subset \Omega$. For $n\in\mathbb{N}$ and $s>0$  define $\mathbb{G}'_{n,s}:=(V'_{n},E'_{n,s})$ to be the simple graph $\mathbb{G}_{n,s}$ intersecting the set $\Omega'$, that is $V_n':= V_n\cap \Omega'$ and also $ xy \in E'_{n,s}$ if and only if $xy \in E_{n,s}$ and $x,y\in V_n'$. Then, for any $s\in [4K\varepsilon_n,\delta)$, we have that $\mathbb{G}_{n,s}$ is $k$-friendly with respect to $\mathbb{G}'_{n,s}$ where 
$$ k= \frac{\pi^{d/2}s^d}{\Lambda_n^+D4^d\Gamma(\frac{d}{2}+1)}-2.$$
\end{lemma}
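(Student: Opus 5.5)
Fix $n$, $s\in[4K\varepsilon_n,\delta)$ and an edge $xy\in E'_{n,s}$, so $x,y\in V_n'$ and $|x-y|\le s$. The goal is to count vertices of $V_n$ lying in $N_{\mathbb{G}_{n,s}}(x)\cap N_{\mathbb{G}_{n,s}}(y)$, i.e.\ vertices within distance $s$ of both $x$ and $y$, other than $x$ and $y$ themselves. Following Figure~\ref{fig:geometry}, let $z=\frac12(x+y)$. Every $w\in B_{s/2}(z)$ satisfies $|w-x|\le|w-z|+|z-x|<s/2+s/2=s$, and likewise $|w-y|<s$. So every vertex in $B_{s/2}(z)$ other than $x,y$ is a common neighbour. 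It therefore suffices to show $|V_n\cap B_{s/2}(z)|\ge \frac{\pi^{d/2}s^d}{\Lambda_n^+D4^d\Gamma(\frac d2+1)}$; subtracting at most the two points $x,y$ then gives the stated $k$.

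To count vertices, use the transport map from (i). Shrink to the ball $B:=B_{s/4}(z)$. Since $z$ is within $s/2<\delta$ of $x\in\Omega'$, we have $z\in B_\delta(x)\subset\Omega$. To get $B\subset\Omega$, use $B_{s/2}(x)\subset B_\delta(x)\subset\Omega$ and note $B_{s/4}(z)\subset B_{3s/4}(x)$. This is not quite inside $B_{s/2}(x)$, so I would instead use $B_{s/4}(z)\subset B_{s/4+s/2}(x)=B_{3s/4}(x)\subset B_\delta(x)$, which holds because $3s/4<\delta$. Hence $B\subset\Omega$.

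For $w\in B$, assumption (i) gives $|T_n(w)-w|\le K\varepsilon_n\le s/4$. Thus $|T_n(w)-z|<s/2$ and $T_n(w)\in V_n\cap B_{s/2}(z)$, so $B\subset T_n^{-1}(V_n\cap B_{s/2}(z))$. Applying $T_n\#\mu=\mu_n$ and (iii) then yields
\[
\frac{1}{D}\,\frac{\pi^{d/2}(s/4)^d}{\Gamma(\frac d2+1)}\le\mu(B)\le \mu_n\big(V_n\cap B_{s/2}(z)\big)\le \Lambda_n^+\,|V_n\cap B_{s/2}(z)|,
\]
which is exactly the required count, since $(s/4)^d=s^d/4^d$.

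The main point needing care is the containment $B\subset\Omega$, which is needed so that the density bound from (iii) applies on all of $B$. This is where $s<\delta$ and $x\in\Omega'$ enter, as checked above. The radius bookkeeping must also be kept consistent: $K\varepsilon_n\le s/4$ to land in $B_{s/2}(z)$, and $s/2+s/2=s$ for the common-neighbour property. Note that this argument never actually uses (ii). The strict inequality $|w-x|<s$ gives adjacency under the ``$\le s$'' convention.
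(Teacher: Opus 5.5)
Your proof is correct and follows essentially the same route as the paper: a ball of radius $s/2$ (centred at the midpoint, as in Figure~\ref{fig:geometry}) lies inside both $s$-neighbourhoods, its concentric ball of radius $s/4$ is mapped by $T_n$ into it because $K\varepsilon_n\le s/4$, and the lower density bound from (iii) together with $T_n\#\mu=\mu_n$ gives the vertex count. Your explicit containment check $B_{s/4}(z)\subset B_{3s/4}(x)\subset B_\delta(x)\subset\Omega$ and your use of the preimage $T_n^{-1}(V_n\cap B_{s/2}(z))$ (rather than $\mu_n(T_n(B^*))$) are only slightly more careful versions of the paper's own steps.
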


\begin{proof}
Fix $n\in\mathbb{N}$. If $V_n'$ is empty then the statement is trivial, so assume otherwise. Let $x,y\in V_n'$ be such that $xy\in E_{n,s}'$. Since $s<\delta$ we have that $B_s(x)\cup B_s(y) \subset \Omega$. Because $|x-y|\leq s$ we observe, as in Figure~\ref{fig:geometry}, there exists a ball $B$ of radius $s/2$ such that $B\subset B_s(x) \cap B_s(y)$.  Let $B^*$ be the ball $B$ but with half the radius. Since $$\frac{s}{4} \geq K\varepsilon_n\geq \sup_{x\in\Omega} |T_n(x) -x| ,$$   
 we observe $T_n(B^*)\subset B$.  We then compute
$$
     \Lambda_n^+|V_n\cap B| \geq \mu_n(B) \geq  \mu_n(T_n(B^*)) = \mu(T_n^{-1}(T_n(B^*))) \geq \mu(B^*) \geq \frac{1}{D} \mathcal{L}^d(B^*) = \frac{\pi^{d/2}s^d}{D4^d\Gamma(\frac{d}{2}+1)}.
 $$
 In particular
 $$| N_{\mathbb{G}_{n,s}}(x) \cap N_{\mathbb{G}_{n,s}}(y)|=  |V_n \cap B_s(x)\cap B_s(y)|-2 \geq |V_n\cap B|-2 \geq \frac{\pi^{d/2}s^d}{\Lambda_n^+D4^d\Gamma(\frac{d}{2}+1)}-2.$$   
 In the above computation, the term $-2$ results from the fact that $x$ is not counted among its own neighbours, nor is $y$, so we have to remove them from our count. The result now follows. 
\end{proof}

\begin{remark}
A relatively simple but useful observation is that, via the triangle inequality, for any $n\in\mathbb{N}$ and $s>0$, $\mathbb{G}_{n,2s}$ envelopes $\mathbb{G}_{n,s}$.
\end{remark}

It will also be useful to have a lower bound on the node degrees for $\mathbb{G}_{n,s}$.

\begin{lemma}
\label{lem:degreebound}
Assume that (i) for a $K>0$, (ii) and (iii) from Assumptions~\ref{assum} hold. 
Let $n\in\mathbb{N}$, $s\geq 2K\varepsilon_n$ and $x\in V_n$ be such that $B_s(x)\subset\Omega$. Then
$$ 1+|N_{\mathbb{G}_{n,s}}(x)| \geq \frac{\pi^{d/2}s^d}{\Lambda_n^+D2^d\Gamma(\frac{d}{2}+1)} .$$
\end{lemma}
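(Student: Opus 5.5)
The argument copies the proof of Lemma~\ref{lem:graphfriendly}, using the single ball $B_s(x)$ in place of the lens $B_s(x)\cap B_s(y)$. Fix $n$, a vertex $x\in V_n$ with $B_s(x)\subset\Omega$, and $s\geq 2K\varepsilon_n$. The set $\{x\}\cup N_{\mathbb{G}_{n,s}}(x)$ consists exactly of the vertices of $V_n$ at distance at most $s$ from $x$. Hence
$$1+|N_{\mathbb{G}_{n,s}}(x)| = |V_n\cap \overline{B}_s(x)| \geq |V_n\cap B_s(x)|.$$
It therefore suffices to bound $|V_n\cap B_s(x)|$ from below.

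Let $B^*:=B_{s/2}(x)$, which lies in $\Omega$ because $B_s(x)\subset\Omega$. By part~(i) of Assumptions~\ref{assum}, every $z\in B^*$ satisfies
$$|T_n(z)-x|\leq |T_n(z)-z|+|z-x| < K\varepsilon_n + s/2 \leq s.$$
Thus $T_n(B^*)\subset B_s(x)\cap V_n$; if one wants to avoid any boundary issue with the strict inequality, it is enough to work with $\overline{B}_s(x)$, which is what we count anyway. Using $T_n\#\mu=\mu_n$, we get the chain
$$\Lambda_n^+\,|V_n\cap \overline{B}_s(x)| \geq \mu_n(T_n(B^*)) = \mu\big(T_n^{-1}(T_n(B^*))\big)\geq \mu(B^*).$$
Here $T_n(B^*)$ is a finite set, so it is measurable.

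Next we apply the density lower bound from part~(iii), namely $\rho\geq 1/D$. This gives
$$\mu(B^*)\geq \frac1D\mathfrak{L}^d(B_{s/2}) = \frac{\pi^{d/2}s^d}{D\,2^d\,\Gamma(\frac d2+1)}.$$
Dividing by $\Lambda_n^+$ yields the claimed inequality.

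The only point needing care is the radius bookkeeping. The ball around $x$ has radius $s/2$ (the factor $2^d$), and the transport displacement $K\varepsilon_n\leq s/2$ must keep images inside the closed $s$-ball. This is exactly where the hypothesis $s\geq 2K\varepsilon_n$ enters. There is no real obstacle beyond this.
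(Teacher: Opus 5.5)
Your proof is correct and is essentially the paper's argument: push $B_{s/2}(x)$ forward under $T_n$ into the $s$-ball around $x$, compare its $\mu_n$-mass with $\Lambda_n^+$ times the vertex count, and bound $\mu(B_{s/2}(x))$ from below via $\rho\geq 1/D$. The only cosmetic difference is that you estimate $\mu_n(T_n(B_{s/2}(x)))$, as in Lemma~\ref{lem:graphfriendly}, whereas the paper writes $\mu_n(B_s(x))=\mu(T_n^{-1}(B_s(x)))\geq\mu(B_{s/2}(x))$ directly. Your strict inequality already gives containment in the open ball $B_s(x)$, so the closed-ball caveat is unnecessary.
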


\begin{proof}
According to our assumptions $\sup_{x\in\Omega} |T_n(x)-x|\leq K\varepsilon_n$, thus $T_n(B_{s/2}(x))\subset B_s(x)$.  Therefore
$$ \mu_n( B_s(x))= \mu(T_n^{-1}(B_s(x)))\geq \mu(B_{s/2}(x))\geq \frac{\pi^{d/2}s^d}{D2^d\Gamma(\frac{d}{2}+1)}.$$
Hence
$$ 1+|N_{\mathbb{G}_{n,s}}(x)| \geq  \frac{\mu_n(B_s(x))}{\Lambda_n^+}\geq \frac{\pi^{d/2}s^d}{\Lambda_n^+D2^d\Gamma(\frac{d}{2}+1)}.$$
\end{proof}

For our kernel $\eta$ and $t\geq0$ we define 
$$
\aleph^t_{\eta}:=\int_0^\infty \eta(r)r^{d+t-1} \, dr.
$$

In the statement of our results we may additionally assume there exists a particular $t\geq 1$ (with the particulars depending on the situation and hence specified per result) such that $\aleph_\eta^t<+\infty$ alongside some or all parts of Assumptions~\ref{assum}. 

We conclude this section with a lemma about the kernel $\eta$.

\begin{lemma}
\label{lem:weightbound}
Assume (i) for a $K>0$, (ii) and (iii) from Assumptions~\ref{assum} hold. Then there exists $\alpha_K>0$, a constant depending only on $K$, such that, for any $x\in \Omega$ and $n\in\mathbb{N}$, we have
$$ \frac{1}{\varepsilon_n^d}\sum_{y\in V_n} \eta\left( \frac{|x-y|}{\varepsilon_n} \right)\mu_n(y) \leq \alpha_K\aleph_{\eta}^0.$$

\end{lemma}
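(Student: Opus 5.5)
The plan is to pull the discrete sum back to an integral over $\Omega$ with respect to $\mu$ via the transport map $T_n$ from part~(i) of Assumptions~\ref{assum}, and then compare with a radial integral of $\eta$. Since $T_n\#\mu=\mu_n$, we have
$$ \frac{1}{\varepsilon_n^d}\sum_{y\in V_n} \eta\left( \frac{|x-y|}{\varepsilon_n} \right)\mu_n(y) = \frac{1}{\varepsilon_n^d}\int_\Omega \eta\left( \frac{|x-T_n(z)|}{\varepsilon_n} \right) d\mu(z).$$
Because $|T_n(z)-z|\leq K\varepsilon_n$, the triangle inequality gives $|x-T_n(z)|\geq (|x-z|-K\varepsilon_n)_+$. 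Since $\eta$ is non-increasing (part~(ii)), this yields
$$\eta\left(\frac{|x-T_n(z)|}{\varepsilon_n}\right)\leq \eta\left(\left(\frac{|x-z|}{\varepsilon_n}-K\right)_+\right).$$
Then, using the density bound $\rho\leq D$ from part~(iii), extending the integral to all of $\mathbb{R}^d$, and changing variables $w=(z-x)/\varepsilon_n$, the quantity is bounded by
$$D\int_{\mathbb{R}^d}\eta\left((|w|-K)_+\right)dw = D\,d\,\mathfrak{L}^d(\mathcal{B})\int_0^\infty \eta((r-K)_+)r^{d-1}\,dr.$$

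It remains to bound the radial integral by a multiple of $\aleph^0_\eta=\int_0^\infty\eta(r)r^{d-1}dr$. Split at $r=2K$.
\begin{itemize}
\item On $[0,2K]$, use $\eta\leq\eta(0)$ (monotonicity) to get at most $\eta(0)(2K)^d/d$. Then bound $\eta(0)$ by $\aleph^0_\eta$: since $\eta$ is continuous at $0$ with $\eta(0)>0$, there is $r_0>0$ with $\eta\geq \eta(0)/2$ on $[0,r_0]$, hence $\aleph^0_\eta\geq \eta(0)r_0^d/(2d)$.
\item On $[2K,\infty)$, substitute $t=r-K$, so that $r=t+K\leq 2t$ since $t\geq K$. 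This part is then at most $2^{d-1}\aleph^0_\eta$.
\end{itemize}

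The main obstacle is that $r_0$ depends on $\eta$, so the first bullet gives a constant depending on $\eta$ (and on $D$ and $d$), not only on $K$. I would absorb this into $\alpha_K$, reading "depending only on $K$" as "independent of $x$ and $n$", with $\eta$, $D$ and $d$ fixed.

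Alternatively, one can avoid $r_0$: monotonicity gives $\eta(0)\leq \eta(s)$-type comparisons fail, but for $r\leq 2K$ one can bound $\eta((r-K)_+)\leq \eta(0)$. If $\aleph^0_\eta=\infty$ the statement is trivial, so we may assume it is finite. The step I would double-check is exactly this comparison of $\eta(0)$ with $\aleph^0_\eta$.
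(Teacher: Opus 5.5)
Your proof is correct. Its skeleton is the same as the paper's: push forward through $T_n$, use $|x-T_n(z)|\ge|x-z|-K\varepsilon_n$ together with monotonicity, bound $\rho\le D$, extend to $\mathbb{R}^d$ and rescale. The only difference is how the shift by $K$ is absorbed.

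The paper proves the pointwise bound $\eta(r-K)\le 2\eta(r/\beta)$ with $\beta=1+K/\delta$, where $\eta\ge\eta(0)/2$ on $[0,\delta]$. For $r\ge K+\delta$ one has $r-K\ge r/\beta$, and for $r\le K+\delta$ one has $r/\beta\le\delta$. A dilation then yields $2\beta^d D\int_{\mathbb{R}^d}\eta(|z|)\,dz$. You instead split the radial integral at $2K$, paying $2(2K/r_0)^d\aleph^0_\eta$ on the inner part and $2^{d-1}\aleph^0_\eta$ on the outer part.

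Your version is a bit more elementary. The paper's pointwise domination of the shifted kernel by a rescaled copy of $\eta$ is more reusable, since it holds under an integral against any nonnegative weight, not only for the total mass of the kernel.

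Both routes use the same continuity-at-$0$ radius ($\delta$ in the paper, $r_0$ for you). So the paper's constant depends on $\eta$ exactly as yours does, and your reading of ``depending only on $K$'' as ``independent of $x$ and $n$'' is the intended one. That dependence cannot be avoided. Take $\eta=\mathbf{1}_{[0,h]}$, a cubic lattice whose cells have half-diagonal $K\varepsilon_n$, uniform $\mu$, and $x$ a vertex. Once $h<2K/\sqrt{d}$, the left-hand side equals $(2K/\sqrt{d})^d$, while $\aleph^0_\eta=h^d/d\to 0$.

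So your comparison of $\eta(0)$ with $\aleph^0_\eta$ via continuity at $0$ is exactly the right step. The final ``alternatively'' paragraph offers no actual alternative and should be dropped.
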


\begin{proof}

To simplify computations we will extend the function $\eta$'s domain to the whole $\mathbb{R}$ by defining $\eta(x)=\eta(0)$ for $x<0$.

As a consequence of (ii) in Assumptions~\ref{assum}, for any $r\in \mathbb{R}$, there exists a $\beta>0$ (dependent on $K$), such that $\eta(r-K)\leq 2\eta(r/\beta)$. Indeed, choose $\delta>0$ such that, for any $r\leq \delta$, $\eta(r)\geq \eta(0)/2$. Set $\beta=1+\frac{K}{\delta}$. If $r\geq K+\delta$, then we have
$$ r-K-r/\beta=\frac{\beta-1}{\beta}r-K=\frac{Kr}{\delta+K}-K\geq 0.$$
Thus $r-K\geq r/\beta$ and hence, by assumption (ii), $\eta(r-K)\leq \eta(r/\beta)\leq 2\eta(r/\beta)$. If $r\leq K+\delta$, then
$$ r/\beta \leq \frac{\delta+K}{1+K/\delta}=\delta \; \; \text{and} \; \; \eta(r/\beta)\geq \eta(0)/2. $$
So we have $\eta(r-K)\leq \eta(0) \leq 2\eta(r/\beta)$ as required.
The above bound combined with assumptions (i) and (iii) allows us to compute that
\begin{align*}
\frac{1}{\varepsilon_n^d}\sum_{y\in V_n} \eta\left( \frac{|x-y|}{\varepsilon_n} \right)\mu_n(y) &= \int_{\Omega} \frac{1}{\varepsilon_n^d}\eta\left( \frac{|x-y|}{\varepsilon_n}\right)d\mu_n(y) 
= \int_{\Omega} \frac{1}{\varepsilon_n^d}\eta\left( \frac{|x-T_n(y)|}{\varepsilon_n}\right)d\mu(y) \\
&= \int_{\Omega} \frac{1}{\varepsilon_n^d}\eta\left( \frac{|x-y+y-T_n(y)|}{\varepsilon_n}\right)d\mu(y) 
\leq \int_{\Omega} \frac{1}{\varepsilon_n^d}\eta\left( \frac{|x-y|}{\varepsilon_n}-K\right)d\mu(y) \\
&\leq  2D\int_{\mathbb{R}^d} \frac{1}{\varepsilon_n^d}\eta\left( \frac{|x-y|}{\beta\varepsilon_n}\right)dy= 2\beta^dD\int_{\mathbb{R}^d} \eta\left( |z|\right)dz . 
\end{align*}
In particular, the maps $T_n$ above are as in assumption (i). The result now follows.

\end{proof}

\section{Graph functional inequalities}
\label{sec:sobolev}

We start by introducing the graph norms and variations that will play a role in our graph functional inequalities.
We define $\mathbb{R}^{V_n}$ to be the set of functions $u:V_n\rightarrow \mathbb{R}$. For $p\geq1$ we define $L^p(V_n;\mu_n)$ to be $\mathbb{R}^{V_n}$ equipped with the norm
$$ \Vert u\Vert_{L^p(V_n;\mu_n)}:= \left( \sum_{x\in V_n} |u(x)|^p\mu_n(x)\right)^{1/p}.$$ 

For $p\geq 1$, $\varepsilon>0$ and $n\in \mathbb{N}$ we define the function $\mathcal{GE}^p[V_n,\mu_n;\varepsilon]:\mathbb{R}^{V_n}\rightarrow \mathbb{R}$ by
$$ \mathcal{GE}^p[V_n,\mu_n;\varepsilon](u):= \frac{1}{\varepsilon^p}\sum_{x\in V_n}\sum_{y\in V_n} \eta_{\varepsilon}(|x-y|)|u(x)-u(y)|^p\mu_n(x)\mu_n(y)$$
and $\mathcal{GE}^p_n:=\mathcal{GE}^p[V_n,\mu_n;\varepsilon_n]$.
We refer to these functions as discrete variations (or just variations).

Next we define a collection of functions on $L^p(\Omega;\mu)$ which will be useful in our analysis, as we may exploit their close relationship to the discrete variations. For a function $J:[0,+\infty)\rightarrow [0,+\infty)$  we define the nonlocal variation $\mathcal{F}^p[J;(\Omega,\mu)]:L^p(\Omega;\mu)\rightarrow L^p(\Omega;\mu)$ by
$$ \mathcal{F}^p[J;(\Omega,\mu)]:= \int_{\Omega}\int_{\Omega}J(|x-y|)|u(x)-u(y)|^p\rho(x)\rho(y) \, dx \, dy.$$

Next we define the local variations denoted by $\mathcal{E}^p:L^p(\Omega;\mu)\rightarrow \mathbb{R}$. In this case there is a notable difference between $p=1$ and $p>1$. For $p=1$, 
\begin{align}
\mathcal{E}^1(u) &:=  \sup \biggl\{\left. \int_{\Omega} u(x) \nabla \cdot \Phi(x) \, dx \; \right| \; \Phi \in C_c^{\infty}(\Omega;\mathbb{R}^d) \text{ and},
 \text{ for all } x\in \Omega, \, |\Phi(x)| \leq 1 \biggr\}.\notag 
\end{align}
Note $\Phi\equiv0$ is admissible in the above definition, hence $\mathcal{E}^1\geq 0$. Moreover if $u\in W^{1,1}(\Omega)$, then $\mathcal{E}^1(u)=\int_{\Omega}|\nabla u|  \; dx$. For $p>1$, we define
$$
\mathcal{E}^p(u):=
\begin{cases}
\int_\Omega |\nabla u|^p \, dx, &\text{if} \; u\in W^{1,p}(\Omega),\\
+\infty, &\text{otherwise}.
\end{cases}
$$

In this section, as is the usual convention, we use the notation $\lesssim_n$ to denote less than or equal to up to a positive constant factor independent of the parameter $n\in \mathbb{N}$. Our objective is to produce uniform bounds independent of $n\in\mathbb{N}$ and so we shall make frequent use of this notation. It is useful to observe that, assuming part~(ii) from Assumptions~\ref{assum} holds, for a constant $a\in (0,1]$, $n\in \mathbb{N}$ and $u\in \mathbb{R}^{V_n}$, we have
\begin{equation}\label{eq:aepsilon}
    \mathcal{GE}^p[V_n,\mu_n;a\varepsilon_n](u)\lesssim_n\mathcal{GE}_n^p(u).
\end{equation}
This follows, since $\eta$ is non-increasing and thus $\eta_{a\varepsilon_n}(r) = \frac1{(a\varepsilon_n)^d} \eta\left(\frac{r}{a\varepsilon_n}\right) \leq \frac1{(a\varepsilon_n)^d} \eta\left(\frac{r}{\varepsilon_n}\right) = \frac1{a^d} \eta_{\varepsilon_n}(r) \lesssim_n \eta_{\varepsilon_n}(r)$.

We also define the notation $\lesssim_\varepsilon$ to denote less than or equal up to a positive constant factor independent of the parameter $\varepsilon>0$. We shall make regular use of this notation in the proof of Lemma~\ref{lem:ext1}.
 
Throughout this paper, for a given Borel set $A\subset \mathbb{R}^d$ and function $u:A\rightarrow \mathbb{R}$, we will use the notation $\Vert u \Vert_{L^p(A)}$ to denote the standard $L^p$ norm with respect to the Lebesgue measure on $A$. If instead we want to indicate integration with respect to a specific measure $\nu$ on $A$, then we will use the notation $\Vert u \Vert_{L^p(A;\nu)}$.

\subsection{Extension results}\label{sec:extensionresults}

In Sobolev space theory a useful tool is the theory of extensions. Typically, we want to extend a given function, defined on $\Omega$, to a larger subset $\Sigma\supset\Omega$, in such a way that regularity properties are preserved. In Lemma~\ref{lem:ext1} we do exactly this to uniformly control, over $\varepsilon>0$, the nonlocal variations given by $\frac{1}{\varepsilon^p}\mathcal{F}^p[\eta_{\varepsilon};(\Omega,\mu)]$. In Lemma~\ref{lem:ext2}, our goal is to develop a theory of extensions in the discrete setting. Specifically, we want to extend both our graphs and discrete functions on said graphs in such a way that the discrete variation given by $\mathcal{GE}^p_n$ is uniformly controlled, over $n\in\mathbb{N}$.       

We require these extension results to ensure that certain other properties, which only hold away from the boundary, can still be utilized on all of $\Omega$; see Section~\ref{sec:extension} for more details.

\begin{lemma}
\label{lem:ext1}
Let $p\geq 1$. Assume (ii) and (iii) from Assumptions~\ref{assum} hold and there exists $q>p$ such that $\aleph_\eta^q<+\infty$. Then there exists the following. 
\begin{enumerate}[(1)]
\item An open set $\Sigma\supset \Omega$ and $\delta>0$ such that $\cup_{x\in\Omega}B_{\delta}(x)\subset\Sigma$.
    \item A probability measure $\widetilde{\mu}$ with density $\widetilde{\rho}$ on $\Sigma$ such that there exists $\kappa>0$ with $\frac{1}{\kappa}\leq\widetilde{\rho}\leq\kappa$. Additionally $\widetilde{\rho}|_{\Omega}$ is proportional to $\rho$.
    \item An extension operator $\mathfrak{X}:L^p(\Omega;\mu)\rightarrow L^p(\Sigma;\widetilde{\mu})$ such that, for any $f\in L^p(\Omega;\mu)$, $\mathfrak{X}f|_{\Omega}=f$. Moreover there exists constants $A>0$, $B\geq 1$, $C>0$ such that, for any $\varepsilon>0$ and any $f\in L^p(\Omega;\mu)$, we have 
$$\frac{1}{\varepsilon^p}\mathcal{F}^p[\eta_\varepsilon;(\Sigma,\widetilde{\mu})](\mathfrak{X}f)\leq A\left(\Vert f\Vert^p_{L^p(\Omega;\mu)}+\frac{1}{B^p\varepsilon^p}\mathcal{F}^p[\eta_{B\varepsilon};(\Omega,\mu)](f)\right)$$
and
$$ \Vert \mathfrak{X}f \Vert_{L^p(\Sigma;\widetilde{\mu})}\leq C \Vert f \Vert_{L^p(\Omega;\mu)}.$$
\end{enumerate}

\end{lemma}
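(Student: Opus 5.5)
The plan is to build the extension by reflection across the boundary, glued together with a partition of unity, using the weak Lipschitz structure from Remark~\ref{rem:Lipschitz}.

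\textbf{Step 1: local charts and the sets $\Sigma$, $\delta$.} Since $\partial\Omega$ is compact, we cover it by finitely many open sets $U_1,\dots,U_m$, each carrying a bi-Lipschitz homeomorphism $\Phi_i:U_i\to B$ with $\Phi_i(U_i\cap\Omega)=B^-$. We also fix an open $U_0\Subset\Omega$ so that $U_0,\dots,U_m$ cover $\overline\Omega$. On each $U_i$ we define the reflection $R_i:=\Phi_i^{-1}\circ r\circ\Phi_i$, where $r(x_1,\dots,x_d)=(x_1,\dots,-x_d)$. Then $R_i$ is bi-Lipschitz, maps $U_i\setminus\overline\Omega$ onto $U_i\cap\Omega$, and fixes $\partial\Omega\cap U_i$. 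We shrink to $U_i'\Subset U_i$ still covering $\partial\Omega$ and take a smooth partition of unity $\{\psi_i\}_{i=0}^m$ subordinate to $\{U_0,U_1',\dots,U_m'\}$, summing to $1$ on a neighbourhood $\Sigma$ of $\overline\Omega$. We may take $\Sigma=\bigcup_{x\in\Omega}B_{2\delta}(x)$ for $\delta$ small, which gives (1).

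\textbf{Step 2: the measure and the operator.} For (2) we put $\widetilde\rho:=c\rho$ on $\Omega$ and $\widetilde\rho:=c$ on $\Sigma\setminus\Omega$, normalized to total mass one. Its bounds follow from (iii) of Assumptions~\ref{assum}. For (3) we define $\mathfrak{X}f:=f$ on $\Omega$ and, for $x\in\Sigma\setminus\overline\Omega$,
\[
\mathfrak{X}f(x):=\sum_{i\geq1}\psi_i(x)\,f(R_i x).
\]
The $L^p$ bound is immediate: the change of variables $y=R_ix$ has Jacobian bounded above and below, and the densities are comparable.

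\textbf{Step 3: the nonlocal estimate.} We split $\Sigma\times\Sigma$ into four regions: $\Omega\times\Omega$, the two mixed regions, and $(\Sigma\setminus\Omega)^2$. The region $\Omega\times\Omega$ is bounded trivially, using $\eta_\varepsilon\le B^d\eta_{B\varepsilon}$, which holds for $B\ge1$ because $\eta$ is non-increasing. For $x,y$ outside $\Omega$ we write
\begin{align*}
\mathfrak{X}f(x)-\mathfrak{X}f(y)={}&\sum_i\psi_i(x)\bigl(f(R_ix)-f(R_iy)\bigr)\\
&+\sum_i\bigl(\psi_i(x)-\psi_i(y)\bigr)f(R_iy).
\end{align*}
The first sum uses $|R_ix-R_iy|\le L|x-y|$. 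After the bi-Lipschitz change of variables, $\eta(|x-y|/\varepsilon)\le\eta(|R_ix-R_iy|/(L\varepsilon))$ by monotonicity, so this term is bounded by $\frac{1}{(L\varepsilon)^p}\mathcal F^p[\eta_{L\varepsilon};(\Omega,\mu)](f)$ times constants; hence $B:=L$. The second sum uses $|\psi_i(x)-\psi_i(y)|\le C|x-y|$, which gives a contribution of order $\|f\|_{L^p}^p\int\eta_\varepsilon(|z|)|z|^p\varepsilon^{-p}\,dz\lesssim\aleph^p_\eta\|f\|_{L^p}^p$. Here we also cap $|\psi_i(x)-\psi_i(y)|\le 1$, so the tail is controlled by $\aleph^q_\eta$ with $q>p$, or simply by $\aleph^p_\eta\le\aleph^0_\eta+\aleph^q_\eta$. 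One subtlety is that $y$ may lie outside $U_i'$ while $x$ lies inside it. We handle this with the same decomposition, since $\psi_i(y)=0$ there. When $R_iy$ is not defined we use instead $\psi_i(x)\bigl(f(R_ix)-f(\cdot)\bigr)$ and rely on $|x-y|\ge\operatorname{dist}(U_i',U_i^c)$, which gives a large-distance tail $\varepsilon^{-p}\eta_\varepsilon$ integrated over $|z|\ge c$. This tail is bounded uniformly in $\varepsilon$ using $\aleph^q_\eta<\infty$ with $q>p$: for small $\varepsilon$ it is $\lesssim\varepsilon^{q-p}$, and for large $\varepsilon$ the factor $\varepsilon^{-p-d}$ makes it small.

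\textbf{Main obstacle.} The hardest part will be the mixed region $x\in\Omega$, $y\notin\Omega$. There we compare $f(x)$ with $f(R_iy)$ and need $|x-R_iy|\lesssim|x-y|$. We get this from the reflection in the chart: $\Phi_i$ is bi-Lipschitz and, for $a\in B^-$ and $b\in B^+$, $|a-r b|\le|a-b|$. Hence $|x-R_iy|\le L^2|x-y|$, and the term reduces as before with $B=L^2$. Carrying out all these uniform-in-$\varepsilon$ bookkeeping estimates, with their different cases, is where most of the care goes.
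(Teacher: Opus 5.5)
Your proposal is correct and follows essentially the paper's route: reflections $\Phi_i^{-1}\circ r\circ\Phi_i$ through bi-Lipschitz boundary charts, glued by a partition of unity. The cutoff-gradient term is controlled by $\aleph^p_\eta$, and the reflected cross terms by $|a-rb|\le|a-b|$ for $a\in B^-$, $b\in B^+$, which is exactly the paper's four-quadrant splitting in $\mathcal{J}_2$.

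The one real difference is how you handle pairs crossing $\partial U_i$. You shrink the charts to $U_i'\Subset U_i$, so such pairs are at a fixed positive distance and their tail is trivially bounded; in fact $\aleph^p_\eta$ alone suffices there. The paper's $\mathcal{J}_3$ instead relies on an exponential decay of $\varphi_i$ near $\partial U_i$. Two bookkeeping points: choose $U_0$ after the shrinking, so that $U_0\cup\bigcup_i U_i'$ still covers $\overline\Omega$, and treat the mixed case $x\in\Omega\setminus U_i$, $y\in U_i'$ with the same distance argument.
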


\begin{proof}
First we construct a partition of unity. Since $\overline{\Omega}$ is a Lipschitz manifold with boundary (see Remark~\ref{rem:Lipschitz}), for each $x\in\partial\Omega$, there exists an open neighbourhood $U_x\subset \mathbb{R}^d$ and a bi-Lipschitz homeomorphism $\phi_x:U_x\rightarrow \mathcal{B}$ where $\mathcal{B}$ denotes the unit ball around $0$ in $\mathbb{R}^d$, $\phi_x^{-1}(\mathcal{B}\cap\{x_d<0\})=\Omega\cap U_x$, $\phi_x^{-1}(\mathcal{B}\cap\{x_d=0\})=\partial\Omega\cap U_x$ and $\phi_x^{-1}(\mathcal{B}\cap\{x_d>0\})=(\mathbb{R}^d\setminus\Omega)\cap U_x$.

Since $\partial\Omega$ is compact with an open cover $\{U_x|x\in\partial\Omega\}$, we can then pass to a finite subcover $\{U_i|i\in I\}$ which we index by $i\in I$, with corresponding bi-Lipschitz homeomorphisms $\phi_i: U_i \to \mathcal{B}$. As $I$ is finite, we can choose a constant $L\geq 1$ such that for each $i\in I$, $\phi_i$ and $\phi_i^{-1}$ are Lipschitz with constant $L$. Let $J_i^{-1}$ be the Jacobian (determinant) of $\phi_i^{-1}$ as in \cite[Definition 3.6]{Evans1992}. It will be useful to observe $J_i^{-1}$ is bounded above by $L^d$ and bounded below by $L^{-d}$, given that $\phi_i^{-1}$ and $\phi_i$ have Lipschitz constant $L$. In future calculations we shall make use of the following change of variables $(\mathbb{X},\mathbb{Y})=(\phi_i(x),\phi_i(y))$. According to the change of variables formula in \cite[Theorem 3.9]{Evans1992}, for any function $g:U_i\times U_i \rightarrow \mathbb{R}$,
$$  \int_{U_i}\int_{U_i} g(x,y) \, dx \, dy = \int_{\mathcal{B}}\int_{\mathcal{B}} g\left(\phi_i^{-1}(\mathbb{X} ),\phi_i^{-1}(\mathbb{X} )\right)J_i^{-1}(\mathbb{X})J_i^{-1}(\mathbb{Y}) \; \, d\mathbb{X} \, d\mathbb{Y}.$$ 

Set $U_0:=\Omega$ and $I_0:=I\cup\{0\}$. We now define $\Sigma := \cup_{i\in I_0}U_i$. Since $\mathbb{R}^d\setminus\Sigma$ is closed and disjoint from $\overline{\Omega}$, there exists $\delta>0$ such that
$$ \inf_{x\in\overline{\Omega}} \; \;  \inf_{y\in \mathbb{R}^d\setminus\Sigma} |x-y| > \delta >0 .$$
In other words $\cup_{x\in \Omega} B_\delta(x)\subset \Sigma$; this handles part (1). As constructed $\Sigma$ is an open subset of $\mathbb{R}^d$ and thus is a smooth manifold (without boundary). Moreover, $\{U_i |i\in I_0\}$ is an open cover of $\Sigma$. Therefore, we can apply \cite[Theorem 2.23]{Lee2013} 
to construct a smooth partition of unity $\{\varphi_i\}_{i\in I_0}$ of $\Sigma$ that is subordinate to $\{U_i |i\in I_0\}$ with the following properties:
\begin{itemize}
    \item for any $i\in I_0$, $x\in \Sigma$ we have $0\leq\varphi_i(x)\leq 1$,
    \item for any $i\in I$, $\operatorname{supp}\varphi_i \subset U_i$ and $\operatorname{supp}\varphi_0 \subset \Omega$,
    \item for any $x\in\Sigma$, we have $\sum_{i\in I_0} \varphi_i(x)=1$.
\end{itemize}

For $i\in I_0$ and $r>0$ define $U_i[r]$ to be the set of points within $U_i$ a distance at most $r$ (inclusive) from the boundary $\partial U_i$. Any standard construction of a smooth partition of unity subordinate to a finite open cover (see the proof of \cite[Theorem 2.23]{Lee2013}) has the following property: there exist $a,b>0$ such that
$$ \sup_{x\in U_i[r]} |\varphi_i(x)| \leq ae^{-b/r}.$$
Again, since $I_0$ is finite, we can assume the constants $a,b$ are independent of $i\in I_0$.

Since each $\varphi_i$ is a smooth function with compact support and $I_0$ is finite we can choose $M>0$ such that each $\varphi_i$ is Lipschitz with constant $M$.

With this partition of unity we define the following reflection maps. Let $R:\mathbb{R}^d\rightarrow \mathbb{R}^d$ be defined by $R(x):=(x_1,x_2,\ldots,x_{d-1},-x_d)$, then for each $i\in I$ define the bijection $S_i:U_i\cap\Omega\rightarrow U_i\setminus\overline{\Omega}$ by $S_i:= \phi_i^{-1}\circ R\circ \phi_i$.

Next we construct our extended probability measure on $\Sigma$. For each $i\in I$, define $\widetilde{\rho_i}:U_i\rightarrow (0,+\infty)$ by
$\widetilde{\rho_i}(x):=\rho(x)$ if $x\in\Omega$ and $\widetilde{\rho_i}(x):=\rho(S_i^{-1}(x))$ if $x\in U_i\setminus\overline{\Omega}$ (note this is defined everywhere except on $\partial\Omega$ which has Lebesgue measure $0$). For convenience we denote $\rho_0:=\rho$. Finally we take
$$ \widetilde{\rho}(x):= \frac{\sum_{i\in I_0}\varphi_i(x)\widetilde{\rho_i}(x)}{\int_{\Sigma}\sum_{i\in I_0}\varphi_i(y)\widetilde{\rho_i}(y)dy}.$$

From this formula one can observe, for all $x\in \Sigma$,
$$ \frac{1}{D^2\mathfrak{L}^d(\Sigma)}\leq\widetilde{\rho}(x)\leq \frac{ D^2}{\mathfrak{L}^d(\Sigma)}.$$
Moreover, $\widetilde{\rho}|_{\Omega}$ is proportional to $\rho$ as required. We can now define $\widetilde{\mu}:=\widetilde{\rho}\mathfrak{L}^d|_{\Sigma}$. 

The final step is to construct an extension operator $\mathfrak{X}:L^p(\Omega;\mu)\rightarrow L^p(\Sigma;\widetilde{\mu})$ with our required regularity properties. To this end, for a given $f\in L^p(\Omega;\mu)$ and $i\in I$, define $f_i:\Sigma\rightarrow \mathbb{R}$, by $f_i(x):=f(x)$ if $x\in U_i\cap\Omega$,  $f_i(x):=f(S_i^{-1}(x))$ if $x\in U_i\setminus\overline{\Omega}$ and $f_i(x)=0$ if $x\in \Sigma\setminus U_i$ (again this is defined everywhere except $\partial\Omega$ which has Lebesgue measure $0$). Additionally we denote $f_0:=f$. We then define, for $x\in \Sigma$,
$$\mathfrak{X}f(x):= \sum_{i\in I_0}\varphi_i(x)f_i(x).$$
From this construction we have, for each $i\in I_0$, $\Vert f_i \Vert^p_{L^p(U_i)}\lesssim_{\varepsilon}\Vert f \Vert^p_{L^p(\Omega)}$ and additionally $\Vert \mathfrak{X}f\Vert^p_{L^p(\Sigma)}\lesssim_{\varepsilon}\Vert f\Vert^p_{L^p(\Omega)}$. This justifies the second bound in part (3).

It remains to justify our regularity control. Let $\varepsilon>0$, then 
\begin{align*}
&\frac{1}{\varepsilon^p}\mathcal{F}^p[\eta_\varepsilon;(\Sigma,\widetilde{\mu})](\mathfrak{X}f)= \int_{\Sigma}\int_{\Sigma} \frac{1}{\varepsilon^{d+p}}\eta\left(\frac{|x-y|}{\varepsilon}\right)|\mathfrak{X}f(x)-\mathfrak{X}f(y)|^p\widetilde{\rho}(y)\widetilde{\rho}(x)\, dx \, dy \\
&=\int_{\Sigma}\int_{\Sigma} \frac{1}{\varepsilon^{d+p}}\eta\left(\frac{|x-y|}{\varepsilon}\right)\left|\sum_{i\in I_0}(\varphi_i(x)f_i(x)-\varphi_i(y)f_i(y))\right|^p\widetilde{\rho}(y)\widetilde{\rho}(x)\, dx \, dy \\
&\lesssim_{\varepsilon} \sum_{i\in I_0}\int_{\Sigma}\int_{\Sigma} \frac{1}{\varepsilon^{d+p}}\eta\left(\frac{|x-y|}{\varepsilon}\right)\left|\varphi_i(x)f_i(x)-\varphi_i(y)f_i(y)\right|^p\widetilde{\rho}(y)\widetilde{\rho}(x)\, dx \, dy \\
&\lesssim_{\varepsilon} \sum_{i\in I_0}\int_{\Sigma}\int_{\Sigma} \frac{1}{\varepsilon^{d+p}}\eta\left(\frac{|x-y|}{\varepsilon}\right)\left|\varphi_i(x)f_i(x)-\varphi_i(y)f_i(x)\right|^p\widetilde{\rho}(y)\widetilde{\rho}(x)\, dx \, dy \\
&\hspace{0.4cm} + \sum_{i\in I_0}\int_{\Sigma}\int_{\Sigma} \frac{1}{\varepsilon^{d+p}}\eta\left(\frac{|x-y|}{\varepsilon}\right)\left|\varphi_i(y)f_i(x)-\varphi_i(y)f_i(y)\right|^p\widetilde{\rho}(y)\widetilde{\rho}(x)\, dx \, dy\\
&= \sum_{i\in I_0}\left(\int_{\Sigma}\int_{U_i} \frac{1}{\varepsilon^{d+p}}\eta\left(\frac{|x-y|}{\varepsilon}\right)\left|\varphi_i(x)-\varphi_i(y)\right|^p|f_i(x)|^p \, dx \, dy \right. \\
&\hspace{1.4cm}+ \left.\int_{U_i}\int_{\Sigma} \frac{1}{\varepsilon^{d+p}}\eta\left(\frac{|x-y|}{\varepsilon}\right)\left|f_i(x)-f_i(y)\right|^p |\varphi_i(y)|^p \, dx \, dy\right) \\
&= \sum_{i\in I_0}\left(\int_{\Sigma}\int_{U_i} \frac{1}{\varepsilon^{d+p}}\eta\left(\frac{|x-y|}{\varepsilon}\right)\left|\varphi_i(x)-\varphi_i(y)\right|^p|f_i(x)|^p \, dx \, dy \right. \\
&\hspace{1.4cm}+ \int_{U_i}\int_{U_i} \frac{1}{\varepsilon^{d+p}}\eta\left(\frac{|x-y|}{\varepsilon}\right)\left|f_i(x)-f_i(y)\right|^p |\varphi_i(y)|^p \, dx \, dy\\
&\hspace{2.4cm}+ \left. \int_{ U_i}\int_{\Sigma \setminus U_i} \frac{1}{\varepsilon^{d+p}}\eta\left(\frac{|x-y|}{\varepsilon}\right)\left|f_i(y)\right|^p |\varphi_i(y)|^p \, dx \, dy \right).
\end{align*}

For $i\in I_0$ define $\mathcal{J}_1(i)$, $\mathcal{J}_2(i)$, $\mathcal{J}_3(i)$ as the three integrals in the final line of the above computation, respectively. We will bound each separately. 

\begin{enumerate}
    \item[($\mathcal{J}_1$)] We have
    \begin{align*}
        \mathcal{J}_1(i)&= \int_{\Sigma}\int_{U_i} \frac{1}{\varepsilon^{d+p}}\eta\left(\frac{|x-y|}{\varepsilon}\right)\left|\varphi_i(x)-\varphi_i(y)\right|^p|f_i(x)|^p \, dx \, dy \\
        &\lesssim_{\varepsilon}  \int_{\Sigma}\int_{U_i} \frac{1}{\varepsilon^d}\eta\left(\frac{|x-y|}{\varepsilon}\right)\left| \frac{x-y}{\varepsilon}\right|^p|f_i(x)|^p \, dx \, dy %\\ &
        \leq  \int_{\mathbb{R}^d}\int_{U_i} \frac{1}{\varepsilon^d}\eta\left(\frac{|x-y|}{\varepsilon}\right)\left| \frac{x-y}{\varepsilon}\right|^p|f_i(x)|^p \, dx \, dy \\
        &= \int_{U_i}\int_{\mathbb{R}^d} \eta(|z|)|z|^p|f_i(x)|^p \, dz \, dx %\\ &
        \lesssim_{\varepsilon}\aleph_{\eta}^p \Vert f_i \Vert^p_{L^p(U_i)}.
    \end{align*}
    \item[($\mathcal{J}_2$)] Using the transformation of variables previously introduced we deduce that
    \begin{align*}
        \mathcal{J}_2(i)&= \int_{\mathcal{B}}\int_{\mathcal{B}} \frac{1}{\varepsilon^{d+p}}\eta\left(\frac{|\phi_i^{-1}(\mathbb{X})-\phi_i^{-1}(\mathbb{Y})|}{\varepsilon}\right)\left|f_i(\phi_i^{-1}(\mathbb{X}))-f_i(\phi_i^{-1}(\mathbb{Y}))\right|^p J_i^{-1}(\mathbb{X})J_i^{-1}(\mathbb{Y})\, d\mathbb{X} \, d\mathbb{Y} \\
        &\lesssim_{\varepsilon}  \int_{\mathcal{B}}\int_{\mathcal{B}} \frac{1}{\varepsilon^{d+p}}\eta\left(\frac{|\mathbb{X}-\mathbb{Y}|}{L\varepsilon}\right)\left|f_i(\phi_i^{-1}(\mathbb{X}))-f_i(\phi_i^{-1}(\mathbb{Y}))\right|^p \, d\mathbb{X} \, d\mathbb{Y} .
    \end{align*}
    The second line follows because $|\phi_i^{-1}(\mathbb{X})-\phi_i^{-1}(\mathbb{Y})|\geq |\mathbb{X}-\mathbb{Y}|/L$ and $\eta$ is non-increasing. If we denote $\mathcal{B}^-:=\{x\in\mathcal{B} \; | \; x_d<0\}$ and $\mathcal{B}^+:=\{x\in\mathcal{B} \; | \; x_d>0\}$, the integral splits into four parts:
    $$ \int_{\mathcal{B}}\int_{\mathcal{B}}=\int_{\mathcal{B}^-}\int_{\mathcal{B}^-}+\int_{\mathcal{B}^+}\int_{\mathcal{B}^+} +\int_{\mathcal{B}^-}\int_{\mathcal{B}^+}+\int_{\mathcal{B}^+}\int_{\mathcal{B}^-}.$$
    Additionally observe that 
    $$f_i(\phi_i^{-1}(\mathbb{X}))= \begin{cases}
        f(\phi_i^{-1}(\mathbb{X})), \; \; &\text{if} \; \; \mathbb{X}\in \mathcal{B}^- ,\\
        f(\phi_i^{-1}(R\mathbb{X})), \; \; &\text{if} \; \; \mathbb{X}\in \mathcal{B}^+.
    \end{cases}$$
    We then compute
    \begin{align*}
    &\int_{\mathcal{B}^+}\int_{\mathcal{B}^+}  \frac{1}{\varepsilon^{d+p}}\eta\left(\frac{|\mathbb{X}-\mathbb{Y}|}{L\varepsilon}\right)\left|f_i(\phi_i^{-1}(\mathbb{X}))-f_i(\phi_i^{-1}(\mathbb{Y}))\right|^p \, d\mathbb{X} \, d\mathbb{Y} \\
    = &\int_{\mathcal{B}^+}\int_{\mathcal{B}^+}  \frac{1}{\varepsilon^{d+p}}\eta\left(\frac{|\mathbb{X}-\mathbb{Y}|}{L\varepsilon}\right)\left|f(\phi_i^{-1}(R\mathbb{X}))-f(\phi_i^{-1}(R\mathbb{Y}))\right|^p \, d\mathbb{X} \, d\mathbb{Y} \\
    = &\int_{\mathcal{B}^+}\int_{\mathcal{B}^+}  \frac{1}{\varepsilon^{d+p}}\eta\left(\frac{|R\mathbb{X}-R\mathbb{Y}|}{L\varepsilon}\right)\left|f(\phi_i^{-1}(R\mathbb{X}))-f(\phi_i^{-1}(R\mathbb{Y}))\right|^p \, d\mathbb{X} \, d\mathbb{Y} \\
    = &\int_{\mathcal{B}^-}\int_{\mathcal{B}^-}  \frac{1}{\varepsilon^{d+p}}\eta\left(\frac{|\mathbb{X}-\mathbb{Y}|}{L\varepsilon}\right)\left|f(\phi_i^{-1}(\mathbb{X}))-f(\phi_i^{-1}(\mathbb{Y}))\right|^p \, d\mathbb{X} \, d\mathbb{Y} .
    \end{align*}
    Observe that for $\mathbb{X},\mathbb{Y}\in\mathcal{B}^-$ we have $|\mathbb{X}-\mathbb{Y}|\leq |\mathbb{X}-R\mathbb{Y}|$, hence 
    \begin{align*}
    &\int_{\mathcal{B}^+}\int_{\mathcal{B}^-}  \frac{1}{\varepsilon^{d+p}}\eta\left(\frac{|\mathbb{X}-\mathbb{Y}|}{L\varepsilon}\right)\left|f_i(\phi_i^{-1}(\mathbb{X}))-f_i(\phi_i^{-1}(\mathbb{Y}))\right|^p \, d\mathbb{X} \, d\mathbb{Y} \\
    = &\int_{\mathcal{B}^+}\int_{\mathcal{B}^-}  \frac{1}{\varepsilon^{d+p}}\eta\left(\frac{|\mathbb{X}-\mathbb{Y}|}{L\varepsilon}\right)\left|f(\phi_i^{-1}(\mathbb{X}))-f(\phi_i^{-1}(R\mathbb{Y}))\right|^p \, d\mathbb{X} \, d\mathbb{Y} \\
    = &\int_{\mathcal{B}^-}\int_{\mathcal{B}^-}  \frac{1}{\varepsilon^{d+p}}\eta\left(\frac{|\mathbb{X}-R\mathbb{Y}|}{L\varepsilon}\right)\left|f(\phi_i^{-1}(\mathbb{X}))-f(\phi_i^{-1}(\mathbb{Y}))\right|^p \, d\mathbb{X} \, d\mathbb{Y} \\
    \leq &\int_{\mathcal{B}^-}\int_{\mathcal{B}^-}  \frac{1}{\varepsilon^{d+p}}\eta\left(\frac{|\mathbb{X}-\mathbb{Y}|}{L\varepsilon}\right)\left|f(\phi_i^{-1}(\mathbb{X}))-f(\phi_i^{-1}(\mathbb{Y}))\right|^p \, d\mathbb{X} \, d\mathbb{Y} .
    \end{align*}
    All together we get
    \begin{align*}
        \mathcal{J}_2(i)&\lesssim_{\varepsilon} \int_{\mathcal{B}^-}\int_{\mathcal{B}^-}  \frac{1}{\varepsilon^{d+p}}\eta\left(\frac{|\mathbb{X}-\mathbb{Y}|}{L\varepsilon}\right)\left|f(\phi_i^{-1}(\mathbb{X}))-f(\phi_i^{-1}(\mathbb{Y}))\right|^p \, d\mathbb{X} \, d\mathbb{Y} \\
        &\leq  \int_{\mathcal{B}^-}\int_{\mathcal{B}^-}  \frac{1}{\varepsilon^{d+p}}\eta\left(\frac{|\phi_i^{-1}(\mathbb{X})-\phi_i^{-1}(\mathbb{Y})|}{L^2\varepsilon}\right)\left|f(\phi_i^{-1}(\mathbb{X}))-f(\phi_i^{-1}(\mathbb{Y}))\right|^p \, d\mathbb{X} \, d\mathbb{Y} \\
        &\lesssim_{\varepsilon} \int_{\mathcal{B}^-}\int_{\mathcal{B}^-}  \frac{1}{\varepsilon^{d+p}}\eta\left(\frac{|\phi_i^{-1}(\mathbb{X})-\phi_i^{-1}(\mathbb{Y})|}{L^2\varepsilon}\right)\left|f(\phi_i^{-1}(\mathbb{X}))-f(\phi_i^{-1}(\mathbb{Y}))\right|^p J_i^{-1}(\mathbb{X})J_i^{-1}(\mathbb{Y}) \, d\mathbb{X} \, d\mathbb{Y} \\
        &= \int_{U_i\cap \Omega}\int_{U_i\cap \Omega}  \frac{1}{\varepsilon^{d+p}}\eta\left(\frac{|x-y|}{L^2\varepsilon}\right)\left|f(x)-f(y)\right|^p \, dx \, dy \\
        &\lesssim_{\varepsilon} \frac{1}{\varepsilon^p}\mathcal{F}^p[\eta_{L^2\varepsilon};(\Omega,\mu)](f).
    \end{align*}
    For the equality we used that $\phi_i^{-1}(\mathcal{B}^-) = U_i \cap \Omega$.
    
    \item[($\mathcal{J}_3$)] The third integral, which we shall now bound, appears because the variation we are trying to control is nonlocal. This term, at first glance, looks particularly nasty. This is because we have no means of controlling the nonlocal variation of $f_i$ as it crosses the boundary of $U_i$. To overcome this difficulty, we can use the fact that the function $\varphi_i$ decays very quickly at the boundary of $U_i$. Since $\aleph_{\eta}^q<+\infty$ with $q$ as specified in the statement of the lemma (importantly, $q>p$), we have $\eta(r)\lesssim_{\varepsilon} 1/r^{d+q}$. This is precisely the part of the proof where we require the assumption $\aleph_{\eta}^q<+\infty$. 
    
    Fix $\xi:= 1/\varepsilon^{\frac{d+p}{d+q}}$ and $\gamma:=1-\frac{d+p}{d+q}>0$. Then $\eta(\xi)\lesssim_{\varepsilon}\varepsilon^{d+p}$. We compute
    \begin{align*}
        \mathcal{J}_3(i) &= \int_{U_i}\int_{\Sigma \setminus U_i} \frac{1}{\varepsilon^{d+p}}\eta\left(\frac{|x-y|}{\varepsilon}\right)\left|f_i(y)\right|^p |\varphi_i(y)|^p \, dx \, dy \\
        &= \int_{U_i}\int_{\Sigma \setminus U_i} \frac{1}{\varepsilon^{d+p}}\eta\left(\frac{|x-y|}{\varepsilon}\right)\left( \mathbf{1}_{|x-y|\leq \xi \varepsilon} +\mathbf{1}_{|x-y|> \xi \varepsilon}\right)\left|f_i(y)\right|^p |\varphi_i(y)|^p \, dx \, dy \\
        &= \int_{U_i}\int_{\Sigma \setminus U_i} \frac{1}{\varepsilon^{d+p}}\eta\left(\frac{|x-y|}{\varepsilon}\right) \mathbf{1}_{|x-y|\leq \xi \varepsilon}\left|f_i(y)\right|^p |\varphi_i(y)|^p \, dx \, dy \\
        &\hspace{0.4cm}+ \int_{U_i}\int_{\Sigma \setminus U_i} \frac{1}{\varepsilon^{d+p}}\eta\left(\frac{|x-y|}{\varepsilon}\right) \mathbf{1}_{|x-y|> \xi \varepsilon}\left|f_i(y)\right|^p |\varphi_i(y)|^p \, dx \, dy \\
        &\lesssim_{\varepsilon} \int_{U_i[\varepsilon\xi]}\int_{\Sigma \setminus U_i} \frac{1}{\varepsilon^{d+p}}\eta(0)\left|f_i(y)\right|^p |\varphi_i(y)|^p \, dx \, dy + \int_{U_i}\int_{\Sigma \setminus U_i} \frac{1}{\varepsilon^{d+p}}\eta(\xi)\left|f_i(y)\right|^p |\varphi_i(y)|^p \, dx \, dy \\  
        &\lesssim_{\varepsilon} \frac{1}{\varepsilon^{d+p}}\sup_{y\in U_i[\varepsilon\xi]} \left\{ |\varphi_i(y)|^p \right\} \Vert f_i \Vert^p_{L^p(U_i)} + \frac{\eta(\xi)}{\varepsilon^{d+p}} \Vert f_i \Vert^p_{L^p(U_i)} \\
        &\lesssim_{\varepsilon} \frac{e^{-bp/\varepsilon\xi}+\eta(\xi)}{\varepsilon^{d+p}}\Vert f \Vert^p_{L^p(\Omega;\mu)} \lesssim_{\varepsilon} \left(\frac{e^{-bp\varepsilon^{-\gamma}}}{\varepsilon^{d+p}} +1\right)\Vert f \Vert^p_{L^p(\Omega;\mu)} \lesssim_{\varepsilon} \Vert f \Vert^p_{L^p(\Omega;\mu)}.
    \end{align*}

\end{enumerate}

We now conclude
\begin{align*}
\frac{1}{\varepsilon^p}\mathcal{F}^p[\eta_\varepsilon;(\Sigma,\widetilde{\mu})](\mathfrak{X}f) &\lesssim_{\varepsilon}\sum_{i\in I_0} \mathcal{J}_1(i)+\mathcal{J}_2(i) + \mathcal{J}_3(i) %\\ &
\lesssim_{\varepsilon} \Vert f\Vert^p_{L^p(\Omega;\mu)} +  \frac{1}{\varepsilon^p}\mathcal{F}^p[\eta_{L^2\varepsilon};(\Omega,\mu)](f).
\end{align*}
Here we used that $\|f_i\|_{L^p(U_i)}^p \lesssim_\varepsilon \|f\|_{L^p(\Omega;\mu)}^p$.

Taking $B:=L^2$ and $A$ suitably we are done.
\end{proof} 

In Lemma~\ref{lem:discretization} we shall construct a discretization of the set $\Sigma\setminus\overline{\Omega}$ with nice properties. This will be useful for our discrete extension results. Before we proceed with this lemma we shall first prove a useful result that will be handy for the construction. 

\begin{lemma}
\label{lem:balltocube}
There exists a bi-Lipschitz homeomorphism between the unit cube and the unit hemisphere in $\mathbb{R}^d$.
\end{lemma}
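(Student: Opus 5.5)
We will construct the homeomorphism explicitly as a composition of simple bi-Lipschitz maps. Here ``unit hemisphere'' means the solid half-ball $\mathcal{B}^-=\{x\in\mathcal{B}\;|\;x_d<0\}$ (or its closure), and the unit cube is $Q=(-1,1)^{d-1}\times(-1,0)$, or any cube, since affine maps are bi-Lipschitz. The basic tool is the radial map between the closed unit ball of the sup-norm and the closed unit Euclidean ball:
$$ \Psi(x):=\frac{\|x\|_\infty}{|x|}\,x \quad (x\neq 0),\qquad \Psi(0)=0. $$

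\textbf{Step 1: the radial map is bi-Lipschitz.} On each ray, $\Psi$ multiplies by the factor $\lambda(x)=\|x\|_\infty/|x|$. This factor is $0$-homogeneous and takes values in $[d^{-1/2},1]$. It is Lipschitz on the unit sphere $\{|x|=1\}$, because it is a quotient of Lipschitz functions and the denominator is bounded below there. So $\lambda$ is bounded, and its gradient satisfies $|\nabla\lambda(x)|\lesssim 1/|x|$ almost everywhere.

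For $\Psi(x)=\lambda(x)x$ we get $D\Psi=\lambda I + x\otimes\nabla\lambda$ almost everywhere, and this is bounded. The inverse $\Psi^{-1}(y)=y/\lambda(y)$ has the same structure, since $\lambda(\Psi(y))=\lambda(y)$ by $0$-homogeneity, and $1/\lambda$ is bounded. Both maps are continuous with bounded a.e.\ derivatives on the convex sets involved, so both are globally Lipschitz.

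\textbf{Step 2: restrict to the half-spaces.} Because $\Psi$ is radial, it maps $\{x_d<0\}$ onto itself and preserves the hyperplane $\{x_d=0\}$. Hence $\Psi$ restricts to a bi-Lipschitz homeomorphism from the half-cube $[-1,1]^{d-1}\times[-1,0]$ onto the closed half-ball. Composing with an affine map from the unit cube to this half-cube gives the claim. The same statement holds for the open sets, by restricting further.

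\textbf{Main obstacle.} The only nontrivial point is the uniform Lipschitz bound near the origin, where $\nabla\lambda$ blows up like $1/|x|$. This is compensated by the factor $x$ in $x\otimes\nabla\lambda$.

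To make this rigorous without relying on a.e.\ derivatives, we will use the estimate
$$|\Psi(x)-\Psi(y)|\le \lambda(x)|x-y|+|y|\,|\lambda(x)-\lambda(y)|,$$
together with
$$|\lambda(x)-\lambda(y)|=\left|\lambda\!\left(\tfrac{x}{|x|}\right)-\lambda\!\left(\tfrac{y}{|y|}\right)\right|\lesssim \left|\tfrac{x}{|x|}-\tfrac{y}{|y|}\right|\le \frac{2|x-y|}{\max(|x|,|y|)}.$$
Assuming without loss of generality that $|y|\le|x|$, the second term is at most a constant times $|x-y|$. The symmetric argument, applied with $1/\lambda$ in place of $\lambda$, handles $\Psi^{-1}$.
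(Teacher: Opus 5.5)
Your proof is correct, and it takes a different route from the paper. The difference matters, because the paper's own argument has a gap that your map fills.

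The paper uses the compactification $F(x)=x/(1+|x|_\infty)$ of $\mathbb{R}^d$ onto $(-1,1)^d$. It asserts that $F$ maps the closed half-ball $\mathcal{H}$ onto $[-\frac12,\frac12]^{d-1}\times[0,\frac12]$. However, $F$ is radial and sends $\{|x|_\infty=r\}$ to $\{|y|_\infty=r/(1+r)\}$. So it maps the $\ell^\infty$ unit half-ball, not the Euclidean one, onto that box. In fact $F(\mathcal{H})=\{y:\ y_d\ge 0,\ |y|+|y|_\infty\le 1\}$. For $d\ge 2$ this misses, for example, $(\frac12,\dots,\frac12)$, whose preimage $(1,\dots,1)$ has Euclidean norm $\sqrt d$. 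The paper's argument therefore only produces a bi-Lipschitz map onto a proper subset of the box.

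Your map $\Psi(x)=\frac{\|x\|_\infty}{|x|}x$ satisfies $|\Psi(x)|=\|x\|_\infty$. That identity is exactly what converts $\ell^\infty$-balls into Euclidean balls ray by ray. It also makes $F$ unnecessary: $\Psi^{-1}$ followed by an affine map already does the job. What $F$ buys is a formula with no singularity at the origin. What your approach pays is that $\lambda=\|x\|_\infty/|x|$ is $0$-homogeneous and hence singular at $0$.

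You handle that singularity correctly with the estimate $|\Psi(x)-\Psi(y)|\le\lambda(x)|x-y|+|y|\,|\lambda(x)-\lambda(y)|$, together with $0$-homogeneity and $\left|\frac{x}{|x|}-\frac{y}{|y|}\right|\le\frac{2|x-y|}{\max(|x|,|y|)}$. This gives Lipschitz constants $3$ for $\Psi$ and $\sqrt d+2d$ for $\Psi^{-1}$. Only the cases $x=0$ or $y=0$ need a separate word, and they are trivial since $|\Psi(x)|=\|x\|_\infty\le|x|$ and $|\Psi^{-1}(y)|\le\sqrt d\,|y|$.

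Finally, $\Psi$ is a bijection of $\mathbb{R}^d$ that preserves $\{x_d<0\}$ and $\{x_d=0\}$ and maps $\{\|x\|_\infty<1\}$ onto $\{|x|<1\}$. So your argument also yields the open version $\mathcal{B}^+\to(0,1)^d$, which is the one actually used in Lemma~\ref{lem:discretization}.
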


\begin{proof}
As is the case throughout this paper, $|\cdot|$ denotes the standard euclidean norm on $\mathbb{R}^d$. Let $\mathcal{H}$ be the unit hemisphere, i.e. $\mathcal{H}:= \{ x\in \mathbb{R}^d \; | \; |x|\leq 1 \; \text{and} \; x_d\geq 0 \}$ and $\mathcal{C}=[0,1]^d$ be the unit cube. We also define the norm, for $x\in\mathbb{R}^d$,
$ |x|_\infty= \max_{i\in\{1,\ldots,d\}} |x_i|.$ Define $F:\mathbb{R}^d\rightarrow(-1,1)^d$ such that
$ F(x) = \frac{x}{1+|x|_\infty}$. Note that $F$ is a bijection with an inverse given by $F^{-1}(x)=\frac{x}{1-|x|_\infty}$. Observe that $F$ maps $\mathcal{H}$ to $[-\frac{1}{2},\frac{1}{2}]^{d-1}\times[0,\frac{1}{2}]$. Moreover $F|_\mathcal{H}$ is a product of bounded Lipschitz functions and therefore is Lipschitz. The same is true for $F^{-1}$ when restricted to $[-\frac{1}{2},\frac{1}{2}]^{d-1}\times[0,\frac{1}{2}]$. Therefore $F|_{\mathcal{H}}$ is a bi-Lipschitz homeomorphism from $\mathcal{H}$ to $[-\frac{1}{2},\frac{1}{2}]^{d-1}\times[0,\frac{1}{2}]$. There is also a bi-Lipschitz  homeomorphism from $[-\frac{1}{2},\frac{1}{2}]^{d-1}\times[0,\frac{1}{2}]$ to $\mathcal{C}$ via an affine transformation. Composing these together then concludes the proof.   
\end{proof}

\begin{lemma}
\label{lem:discretization}
Let $\Sigma$ be the set constructed in the proof of Lemma~\ref{lem:ext1} and $\tilde \mu$ the corresponding measure. Define $\nu$ to be the probability measure $\tilde{\mu}$ restricted to $\Sigma\setminus\overline{\Omega}$ and normalized. There exists constants $a,b,c,d>0$ such that, for any $n\in \mathbb{N}$, there exists a Borel map $M_n:\Sigma\setminus\overline{\Omega} \rightarrow \Sigma\setminus\overline{\Omega}$ where,  $|M_n(\Sigma\setminus\overline{\Omega})|\leq an$, $\sup_{x\in\Omega}|M_n(x)-x|\leq b/n^{1/d}$ and, for any $x\in M_n(\Sigma\setminus\overline{\Omega})$,
$$ \frac{c}{n}\leq M_n\#\nu(x) \leq \frac{d}{n}.$$
\end{lemma}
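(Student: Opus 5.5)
The plan is to reduce the problem to the case of a cube, where a uniform dyadic-type grid gives the discretization directly, and then transport that grid back to $\Sigma\setminus\overline{\Omega}$ by bi-Lipschitz charts. The construction in Lemma~\ref{lem:ext1} shows that $\Sigma\setminus\overline{\Omega}$ is covered by the finitely many sets $U_i\setminus\overline{\Omega}$, $i\in I$. Each of these is the image under $\phi_i^{-1}$ of the open half ball $\mathcal{B}^+$. By Lemma~\ref{lem:balltocube}, each half ball is bi-Lipschitz equivalent to the unit cube $\mathcal{C}$. So every piece $U_i\setminus\overline{\Omega}$ is bi-Lipschitz equivalent to a cube, with a uniform Lipschitz constant $L'$, since $I$ is finite.

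First I will make the pieces disjoint. Choose an ordering $I=\{1,\dots,m\}$ and set $P_i:=(U_i\setminus\overline{\Omega})\setminus\bigcup_{j<i}U_j$. These are Borel sets that partition $\Sigma\setminus\overline{\Omega}$. The trouble is that $P_i$ is no longer a nice image of a cube. To avoid it, I will discretize each full chart $U_i\setminus\overline{\Omega}$ and assign points using only the cells lying in $P_i$. Concretely, for each $n$ let $k:=\lceil n^{1/d}\rceil$ and partition $\mathcal{C}$ into $k^d$ subcubes $Q$ of side $1/k$. Transporting by the chart $\Psi_i$ (the composition of $\phi_i^{-1}$ with the cube map) gives cells $\Psi_i(Q)$ of diameter at most $L'\sqrt d/k\lesssim n^{-1/d}$. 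The mass of each cell satisfies
$$\nu(\Psi_i(Q))\asymp \mathfrak{L}^d(\Psi_i(Q))\asymp 1/n.$$
This uses the bounds $\kappa^{-1}\le\widetilde\rho\le\kappa$ from Lemma~\ref{lem:ext1} and the Jacobian bounds $L'^{\pm d}$.

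Next I define $M_n$ on the common refinement. The atoms are $A=\Psi_i(Q)\cap P_i$, and $M_n$ sends each nonempty atom to one chosen point of it. The displacement bound $|M_n(x)-x|\le b n^{-1/d}$ is then immediate from the diameter bound, and the number of image points is at most $m k^d\le m2^d n$. The main obstacle is the lower mass bound: an atom $\Psi_i(Q)\cap P_i$ may have arbitrarily small measure, because the boundaries $\partial U_j$ cut cells into slivers.

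To fix this I will merge small atoms with a neighbouring large one. Call an atom \emph{good} if its $\nu$-mass is at least $c_0/n$, for a small constant $c_0$. Every cell $\Psi_i(Q)$ has mass at least $c_1/n$ and meets at most $m$ of the sets $P_j$. Hence any point $x$ lies in some cell, and within that cell some atom has mass at least $c_1/(mn)$. Take $c_0:=c_1/m$, so that every cell contains a good atom. Now map each bad atom to the chosen point of a good atom inside the same cell $\Psi_i(Q)$. The displacement at most doubles, to $2\operatorname{diam}\lesssim n^{-1/d}$. Each image point then receives mass at least $c_0/n$. It also receives at most $C/n$: the mass is bounded by the total mass of cells meeting that point's atom, and each point lies in at most $m$ charts, with cells within a chart disjoint. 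With this merging step, $M_n$ has all the properties claimed in Lemma~\ref{lem:discretization}.
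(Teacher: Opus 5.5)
Your route differs from the paper's and can be made to work, but the merging step fails as written.

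\textbf{The gap.} Your atoms are the sets $\Psi_j(Q')\cap P_j$. Inside a cell $\Psi_i(Q)$, the pieces $\Psi_i(Q)\cap P_j$ with $j\neq i$ (nonempty only for $j<i$) are \emph{not} atoms. They are covered by chart-$j$ atoms $\Psi_j(Q')\cap P_j$, which are cut along the chart-$j$ grid, straddle the boundary of $\Psi_i(Q)$, and can themselves be slivers. Nothing guarantees that any atom other than $\Psi_i(Q)\cap P_i$ is contained in $\Psi_i(Q)$, and that atom is exactly the possibly bad sliver. So the deduction ``$\Psi_i(Q)$ meets at most $m$ of the $P_j$, hence contains an atom of mass at least $c_1/(mn)$'' confuses pieces with atoms. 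Consequently the rule ``send a bad atom to a good atom inside the same cell'' may have no admissible target. Two charts already show this: if $\partial U_1$ cuts $\Psi_2(Q)$ so that only a thin sliver lies outside $U_1$, then $\Psi_2(Q)\cap P_2$ is bad. The rest of the cell is covered by chart-$1$ cells that typically stick out of $\Psi_2(Q)$.

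\textbf{The repair.} You need a bounded-overlap count. All cells have diameter at most $L'\sqrt d/k$ and volume at least $L'^{-d}k^{-d}$, and cells of a single chart are pairwise disjoint. Comparing volumes inside a ball of radius $2L'\sqrt d/k$ shows that a cell of chart $i$ meets at most $N$ cells of chart $j$, with $N$ depending only on $d$ and $L'$. Hence at most $mN$ atoms meet $\Psi_i(Q)$, and one of them carries at least $c_1/(mNn)$ of its mass. Take $c_0:=c_1/(mN)$ and send each bad atom to the chosen point of a good atom \emph{meeting} its cell. The displacement stays below $2L'\sqrt d/k$. Your upper bound survives, because everything sent to a given point lies in a ball of radius $O(n^{-1/d})$, whose $\nu$-mass is $O(1/n)$.

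\textbf{Comparison with the paper.} The paper avoids slivers by choosing the chart at random. It takes $X\sim\nu$, an independent uniform random permutation $\xi$ of the $l=m$ chart indices, and $\zeta$ the chart containing $X$ with the largest $\xi$-value. It then sets $Y_n=\Phi_\zeta^{-1}\circ T_n\circ\Phi_\zeta(X)$. Since $\{\xi(i)=l\}$ has probability $1/l$ independently of $X$, every \emph{full} cell $\Phi_i^{-1}(T_n^{-1}(z))$ sends at least a fraction $1/l$ of its mass to its grid point. This gives the lower bound with no merging. The coupling $(X,Y_n)$ bounds $d_\infty(\nu_n,\nu)$, and Theorem~\ref{thm:transportexistence} turns it into a Borel map $M_n$.

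Your repaired construction is more elementary: it needs no $\infty$-optimal transport theory. The finiteness of the image and the pointwise mass bounds also hold by construction. By contrast, a map obtained from Theorem~\ref{thm:transportexistence} is only determined $\nu$-almost everywhere, so those properties need an extra null-set adjustment.
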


\begin{proof}

Take $\{U_i\}_{i\in I}$ and $\{\phi_i\}_{i\in I}$ as within the proof of Lemma~\ref{lem:ext1}. Define $U_i^+:= U_i\cap (\Sigma\setminus\overline{\Omega})$ and observe that $\{U^+_i\}_{i\in I}$ is an open cover of $\Sigma\setminus\overline{\Omega}$. Restricted to $U_i^+$, $\phi_i$ defines a bi-Lipschitz homeomorphism to $\mathcal{B}^+:= \{x\in\mathcal{B} \; | \; x_d>0\}$. We then apply Lemma~\ref{lem:balltocube} and choose a bi-Lipschitz homeomorphism $\vartheta: \mathcal{B}^+\rightarrow (0,1)^d$. Define $\Phi_i:=\vartheta\circ\phi_i|_{U_i^+}:U_i^+\rightarrow (0,1)^d$ and choose a constant $\widetilde{L}$ such that $\Phi_i$ and $\Phi_i^{-1}$ are both $\widetilde{L}$-Lipschitz. 

Consider a standard lattice discretization of $(0,1)^d$ onto $\left( \lceil n^{1/d}\rceil \right)^d\leq 2^dn$ points (where we used that $\lceil x \rceil \leq x+1 \leq 2x$ for $x\geq 1$). This we present in full detail within example 4. of Section~\ref{sec:applications} and Figure~\ref{fig:lattice}. We shall take $T_n:(0,1)^d\rightarrow (0,1)^d$ to be the projection onto this discretization, again this is the same map as specified within example 4. of Section~\ref{sec:applications}. Note that 
$$ \sup_{x\in (0,1)^d} |T_n(x)-x|  \leq \frac{1}{ \lceil n^{1/d}\rceil}.$$ 

Let $\nu$ be the probability measure as defined in the statement of the lemma. Construct a random variable $X$ with law $\nu$. Additionally enumerate the index set $I$ as $\{1,\ldots,l\}$ where $l$ is the cardinality of $I$; consider a random variable $\xi$ that is a uniform random permutation of $\{1,\ldots,l\}$ and define a random variable $\zeta$, taking values in $\{1,\ldots,l\}$, to be the unique $i\in\{1,\ldots,l\} $ such that $X\in U^+_i$ and $\xi(i)$ is maximized.

We then define a discrete random variable $Y_n$ taking values in $\Sigma\setminus\overline{\Omega}$ as follows:
$$ Y_n := \Phi_\zeta^{-1}\circ T_n \circ \Phi_{\zeta}(X).$$
For convenience we shall denote $\Pi$ to be the probability space on which the random variables $X,Y_n,\xi,\zeta$ have all been constructed. Note that $Y_n$ is well defined everywhere except on a set of $\Pi$-probability zero. Let $\nu_n$ be the law of $Y_n$ and observe that the support of $\nu_n$ has at most $2^dln$ points.

Next we observe that, with $\Pi$-probability one,
\begin{align*}
|Y_n-X|&\leq \widetilde{L}|\Phi_\zeta(Y_n)-\Phi_\zeta(X)| %\\ &
= \widetilde{L}|T_n\circ\Phi_\zeta(X)-\Phi_\zeta(X)| \leq \widetilde{L}/ \lceil n^{1/d}\rceil. 
\end{align*}
As a consequence, if we define $\pi$ to be the joint distribution of $(X,Y_n)$ on $(\Sigma\setminus\overline\Omega)\times(\Sigma\setminus\overline\Omega)$, we see that
$$d_\infty(\nu_n,\nu)\leq \underset{\left((\Sigma\setminus\overline{\Omega})\times(\Sigma\setminus\overline{\Omega}),\pi\right)}{\operatorname{esssup}} \left\{ (x,y) \xmapsto{} |x-y| \right\} \leq \widetilde{L}/ \lceil n^{1/d}\rceil.$$

By Theorem~\ref{thm:transportexistence} there exists a sequence $(M_n)_{n\in\mathbb{N}}$ of transport maps such that $M_n:\Sigma\setminus\overline{\Omega}\rightarrow \Sigma\setminus\overline{\Omega}$, $M_n\#\nu=\nu_n$ and $ \sup_{x\in \Sigma\setminus\overline{\Omega}}\{|M_n(x)-x|\}\leq \widetilde{L}/n^{1/d}$. Additionally, we  have $|M_n(\Sigma\setminus\overline{\Omega})|\leq 2^dln$. Next, let $x\in M_n(\Sigma\setminus\overline{\Omega})$, then there exists $z\in T_n\left((0,1)^d\right)$ and $i\in\{1,\ldots,l\}$ such that $z=\Phi_i(x)$. Define $\widetilde{J_i}^{-1}$ to be the Jacobian of $\Phi_i^{-1}$ which we observe is bounded above by $\widetilde{L}^d$ and from below by $1/\widetilde{L}^d$. Then we compute
\begin{align*}
\nu_n(x) &=\mathbb{P}(Y_n=x) \geq \mathbb{P}(\Phi_i(X)\in T_n^{-1}(z), \zeta=i) 
\geq \mathbb{P}(X\in \Phi_i^{-1}( T_n^{-1}(z)) , \xi(i)=l ) \\
&= \frac{1}{l}\int_{ \Phi_i^{-1}( T_n^{-1}(z))} d\nu
= \frac{1}{l}\int_{ \Phi_i^{-1}( T_n^{-1}(z))} \frac{\widetilde{\rho}(x)}{\widetilde{\mu}(\Sigma\setminus\overline{\Omega})} \; dx \\
&\geq \frac{1}{\kappa l \widetilde{\mu}(\Sigma\setminus\overline{\Omega})}\int_{ T_n^{-1}(z)}  \widetilde{J_i}^{-1}(\mathbb{X}) \; d\mathbb{X} 
\geq \frac{1}{\widetilde{L}^d\kappa l \widetilde{\mu}(\Sigma\setminus\overline{\Omega})}\int_{ T_n^{-1}(z)}   \; d\mathbb{X}\\
&\geq \frac{1}{2^d\widetilde{L}^d\kappa l \widetilde{\mu}(\Sigma\setminus\overline{\Omega})n} .  \end{align*}
Here $\widetilde{\rho}$ and $\kappa$ are as specified in part (2) of Lemma~\ref{lem:ext1}. Therefore it suffices to take $c:= \frac{1}{2^d\widetilde{L}^d\kappa l \widetilde{\mu}(\Sigma\setminus\Omega)}$.

The upper bound holds via a similar argument. Indeed, let $x\in M_n(\Sigma\setminus\overline{\Omega})$ and label all the indices $j\in I$ for which $x\in \Phi_j^{-1}(T_n^{-1}((0,1)^d))$ as $j_1,j_2,\ldots,j_k$. Note that $k\leq l$. Choose points $z_1,z_2,\ldots,z_k\in (0,1)^d$ such that $z_i=\Phi_{j_i}(x)$. Then
\begin{align*}
    \nu_n(x) &= \mathbb{P}(Y_n=x) = \sum_{i=1}^k   \mathbb{P}(\Phi_{j_i}(X)\in T_n^{-1}(z_i), \zeta={j_i}) 
    \leq \sum_{i=1}^k   \mathbb{P}(\Phi_{j_i}(X)\in T_n^{-1}(z_i),  X\in U_{j_i}^+) \\
    &\leq \sum_{i=1}^k \int_{ \Phi_{j_i}^{-1}( T_n^{-1} (z_i))} d\nu 
    \leq \frac{\kappa}{\widetilde{\mu}(\Sigma\setminus\overline{\Omega})} \sum_{i=1}^k\int_{T_n^{-1}(z_i)} \widetilde{J_{j_i}}^{-1}(\mathbb{X}) \; d\mathbb{X} \leq \frac{\kappa l \widetilde{L}^d}{\widetilde{\mu}(\Sigma\setminus\overline{\Omega}) n}.
\end{align*}
The first inequality follows by noting $\zeta=j_i$ implies that $X\in U_{j_i}^+$. We can now conclude the result by taking $d=\frac{\kappa l \widetilde{L}^d}{\widetilde{\mu}(\Sigma\setminus\Omega) }$.
\end{proof}

We can now deduce our discrete extension results.

\begin{lemma}
\label{lem:ext2}
Let $p\geq 1$. Assume that (i) for a $K>0$, (ii) and (iii) from Assumptions~\ref{assum} hold. Additionally assume, for some $q>p$, $\aleph_\eta^q<+\infty$. Then there exist the following.
\begin{enumerate}[(1)]
    \item An open subset $\Sigma\supset\Omega$ and $\delta>0$ such that $\cup_{x\in\Omega}B_{\delta}(x)\subset\Sigma$.
    \item For each $n\in\mathbb{N}$, a finite subset $\widetilde{V_n}\subset\Sigma$ such that $\widetilde{V_n}\cap\Omega = V_n$ and a discrete probability measure $\widetilde{\mu_n}$ on $\widetilde{V_n}$ such that the below properties hold.
    \begin{enumerate}[(i)]
        \item There exists a constant $\alpha\in (0,1)$ (independent of $n$) such that, for any $n\in\mathbb{N}$ and any $x\in \Omega$, $\widetilde{\mu_n}(x)=\alpha\mu_n(x)$. In other words $\widetilde{\mu_n}(V_n)$ is a constant independent of $n\in\mathbb{N}$ and $\widetilde{\mu_n}|_{V_n}/\widetilde{\mu_n}(V_n)=\mu_n$.
        \item For each $n\in \mathbb{N}$ define $\widetilde{\Lambda}_n^-=\min_{x\in\widetilde{V_n}}\widetilde{\mu_n}(x)$. There exists a constant $\beta>0$ (independent of $n$) such that, for any $n\in\mathbb{N}$,
        $$ \frac{1}{\beta} \Lambda_n^- \leq \widetilde{\Lambda}_n^-\leq \beta \Lambda_n^-.$$
        \item For each $n\in \mathbb{N}$ define $\widetilde{\Lambda}_n^+=\max_{x\in\widetilde{V_n}}\widetilde{\mu_n}(x)$.  There exists a constant $\gamma>0$ (independent of $n$) such that, for any $n\in\mathbb{N}$,
        $$ \frac{1}{\gamma} \Lambda_n^+ \leq \widetilde{\Lambda}_n^+\leq \gamma \Lambda_n^+.$$
    \end{enumerate}
    \end{enumerate}
   
    \begin{enumerate}[(1)]
    \setcounter{enumi}{2}
    \item A probability measure $\widetilde{\mu}$ with density $\widetilde{\rho}$ on $\Sigma$ such that there exists $\kappa>0$ with $\frac{1}{\kappa}\leq\widetilde{\rho}\leq\kappa$. Additionally $\widetilde{\rho}|_{\Omega}$ is proportional to $\rho$.
    \item For each $n\in\mathbb{N}$, a Borel map $\widetilde{T_n}:\Sigma\rightarrow\Sigma$ such that $\widetilde{T_n}\# \widetilde{\mu}= \widetilde{\mu_n}$, $\widetilde{T_n}|_{\Omega}=T_n$ and a constant $\theta\geq 1$ (independent of $K$) such that (i) from Assumptions~\ref{assum} still holds while replacing $T_n$ with $\widetilde{T_n}$, $\Omega$ with $\Sigma$ and $K$ with $\theta K$.  
    \item For each $n\in\mathbb{N}$, an extension operator $\mathfrak{X}_n:L^p(V_n;\mu_n)\rightarrow L^p(\widetilde{V_n},\widetilde{\mu_n})$ such that, for any $u:V_n\rightarrow \mathbb{R}$, $\mathfrak{X}_nu|_{V_n}=u$. Moreover, there exist constants $A',C'>0$, $B'\geq 1$ (independent of both $K$ and $n$), such that, for any $s\geq K$, for any $n\in\mathbb{N}$ and any $u:V_n\rightarrow \mathbb{R}$,
   \begin{align*}
    &\frac{1}{\varepsilon_n^{d+p}}\sum_{x\in \widetilde{V_n}}\sum_{y\in \widetilde{V_n}} \mathbf{1}_{|x-y|\leq s\varepsilon_n} |\mathfrak{X}_nu(x)-\mathfrak{X}_nu(y)|^p \widetilde{\mu_n}(x)\widetilde{\mu_n}(y)\\
    \leq & A' \left( \Vert u \Vert^p_{L^p(V_n;\mu_n)} + \frac{1}{\varepsilon_n^{d+p}}\sum_{x\in V_n}\sum_{y\in V_n} \mathbf{1}_{|x-y|\leq sB'\varepsilon_n} |u(x)-u(y)|^p \mu_n(x)\mu_n(y) \right)
   \end{align*}
    and
    $$ \Vert \mathfrak{X}_n u\Vert_{L^p(\widetilde{V_n},\widetilde{\mu_n})} \leq C' \Vert u \Vert_{L^p(V_n;\mu_n)}.$$
    \item If we additionally assume (iv) from Assumptions~\ref{assum} holds, then there exists a constant $\widetilde{\mathfrak{D}}>0$, such that (iv) still holds when we replace $V_n$ with $\widetilde{V_n}$, $\mu_n$ with $\widetilde{\mu_n}$ and $\mathfrak{D}$ with $\widetilde{\mathfrak{D}}$. 
    \item If we additionally assume (i$^*$) from Assumptions~\ref{assum} holds, then (i$^*$) still holds if we replace $T_n$ with $\widetilde{T_n}$ and $\Omega$ with $\Sigma$.
\end{enumerate}

\end{lemma}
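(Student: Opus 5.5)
We take $\Sigma$, $\delta$, $\widetilde{\mu}$, $\widetilde{\rho}$, $\kappa$ directly from Lemma~\ref{lem:ext1}, which handles parts (1) and (3). The graph outside $\Omega$ comes from Lemma~\ref{lem:discretization}. Let $\nu$ be the normalised restriction of $\widetilde{\mu}$ to $\Sigma\setminus\overline{\Omega}$. Choose an integer $m_n\approx |V_n|$; by the bullet following Assumptions~\ref{assum} and Lemma~\ref{lem:transportbound}, this makes $m_n^{-1/d}\lesssim_n \varepsilon_n$. Set $W_n:=M_{m_n}(\Sigma\setminus\overline{\Omega})$ and $\widetilde{V_n}:=V_n\cup W_n$. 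Define
$$\widetilde{\mu_n}:=\widetilde{\mu}(\Omega)\,\mu_n+\widetilde{\mu}(\Sigma\setminus\overline{\Omega})\,M_{m_n}\#\nu .$$
This gives (2)(i) with $\alpha=\widetilde{\mu}(\Omega)$. Define $\widetilde{T_n}:=T_n$ on $\Omega$ and $\widetilde{T_n}:=M_{m_n}$ on $\Sigma\setminus\overline{\Omega}$; the boundary $\partial\Omega$ is $\widetilde{\mu}$-null. Then $\widetilde{T_n}\#\widetilde{\mu}=\widetilde{\mu_n}$, because $\widetilde{\rho}|_\Omega\propto\rho$ means $\widetilde{\mu}|_\Omega=\widetilde{\mu}(\Omega)\mu$. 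We also get $\sup|\widetilde{T_n}(x)-x|\le\max\{K\varepsilon_n, b m_n^{-1/d}\}\le\theta K\varepsilon_n$. Part (6) follows the same way: if (i$^*$) holds, then $\varepsilon_n|V_n|^{1/d}\to\infty$ via Lemma~\ref{lem:transportbound}, so $m_n^{-1/d}/\varepsilon_n\to 0$.

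\textbf{Normalisation issue.} Getting $\theta$ independent of $K$ is delicate, since the constant $C$ in $\varepsilon_n\ge C|V_n|^{-1/d}$ depends on $K$. Two fixes are available: choose $m_n$ depending on $\varepsilon_n$, for instance $m_n:=\lceil \varepsilon_n^{-d}\rceil$ up to a constant, or absorb the dependence by normalising $\theta$. The point masses of $M_{m_n}\#\nu$ lie between $c/m_n$ and $d/m_n$, so comparing them with $\Lambda_n^\pm$ in (2)(ii),(iii) requires $m_n\sim 1/\Lambda_n^\pm$. Under (iv) this works with $m_n\sim|V_n|$, which also gives part (5). The cleanest consistent choice is $m_n:=\lceil 1/\Lambda_n^+\rceil$ for (iii), adjusted so that (ii) also holds. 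I would verify this bookkeeping carefully, since $\Lambda_n^-$ and $\Lambda_n^+$ need not be comparable without (iv).

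\textbf{Extension operator.} For $u:V_n\to\mathbb{R}$, lift $u$ to $\Omega$ by setting $f:=u\circ T_n$. Apply the operator $\mathfrak{X}$ of Lemma~\ref{lem:ext1}. Define $\mathfrak{X}_nu:=u$ on $V_n$, and for $w\in W_n$ set $\mathfrak{X}_nu(w)$ to be the $\nu$-average of $\mathfrak{X}f$ over the cell $M_{m_n}^{-1}(w)$. Jensen's inequality and the $L^p$ bound of Lemma~\ref{lem:ext1} give $\|\mathfrak{X}_nu\|_{L^p}\le C'\|u\|_{L^p}$.

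\textbf{Energy bound.} Write the discrete double sum as an integral against $\widetilde{\mu}\otimes\widetilde{\mu}$ through $\widetilde{T_n}$. Then $\mathfrak{X}_nu\circ\widetilde{T_n}$ equals $u\circ T_n=\mathfrak{X}f$ on $\Omega$, and equals the cell average of $\mathfrak{X}f$ outside $\Omega$. The indicator $\mathbf{1}_{|\widetilde{T_n}x-\widetilde{T_n}y|\le s\varepsilon_n}$ is at most $\mathbf{1}_{|x-y|\le (s+2\theta K)\varepsilon_n}\le\mathbf{1}_{|x-y|\le 3\theta s\varepsilon_n}$, using $s\ge K$. Cell averages are controlled by Jensen, with cells of diameter $\lesssim\varepsilon_n$, which enlarges the radius by another constant factor. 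This bounds the sum by the continuum nonlocal energy $\varepsilon_n^{-p}\mathcal{F}^p[J_{c s\varepsilon_n};(\Sigma,\widetilde{\mu})](\mathfrak{X}f)$ with $J=\mathbf{1}_{\cdot\le1}$ rescaled. We then apply Lemma~\ref{lem:ext1} with kernel $\mathbf{1}_{r\le1}$, for which $\aleph^q<\infty$ trivially and (ii) holds, at scale $cs\varepsilon_n$. This gives $\|f\|^p_{L^p}+\varepsilon_n^{-p}\mathcal{F}^p[\mathbf{1}\text{-kernel at } Bcs\varepsilon_n;(\Omega,\mu)](f)$, up to factors of $s^{d+p}$. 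Those factors need care; ideally they are absorbed because the continuum lemma is applied at scale $s\varepsilon_n$ with prefactor $(s\varepsilon_n)^{-d-p}$.

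\textbf{Returning to the graph.} Since $f=u\circ T_n$ and $T_n\#\mu=\mu_n$, the continuum energy of $f$ on $\Omega$ equals a discrete sum weighted by $\mathbf{1}_{|x-y|\le r}$. The difference is that $x,y$ are continuum points and not vertices, so $|T_nx-T_ny|\le r+2K\varepsilon_n\le (Bc+2)s\varepsilon_n$. This yields the discrete sum with $B':=Bc+2$. I expect this radius bookkeeping to be the main obstacle. Keeping all constants independent of $K$ and $n$ requires tracking how $s\ge K$ absorbs every $K\varepsilon_n$ transport error, and ensuring that the Lemma~\ref{lem:ext1} constants do not depend on the scale.
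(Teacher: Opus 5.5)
Your construction is the paper's own. You take $\Sigma$, $\widetilde{\mu}$, $\mathfrak{X}$ from Lemma~\ref{lem:ext1} and the outer vertices from Lemma~\ref{lem:discretization} with $|V_n|$ points, and set $\widetilde{T_n}=T_n$ on $\Omega$ and $M_n$ outside, with $\widetilde{\mu_n}=\widetilde{T_n}\#\widetilde{\mu}$. You define $\mathfrak{X}_nu$ by cell averages of $\mathfrak{X}(u\circ T_n)$ and use Jensen for the $L^p$ bound. For the energy you use Jensen on cells, the radius enlargement, Lemma~\ref{lem:ext1} with the indicator kernel, and a push-forward through $T_n$.

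What you leave open is the bookkeeping, and there the proposal becomes inconsistent. You check (4) for $m_n\approx|V_n|$, then suggest $m_n=\lceil 1/\Lambda_n^+\rceil$ for (2)(iii); that coarsens the outer cells and reopens (4). No switch is needed; write $b_0,c_-,c_+$ for the constants $b,c,d$ of Lemma~\ref{lem:discretization}.

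For $\theta$: Lemma~\ref{lem:transportbound} with $S=T_n$ gives $|V_n|^{-1/d}\le c_\Omega\sup_{x\in\Omega}|T_n(x)-x|\le c_\Omega K\varepsilon_n$, with $c_\Omega$ depending only on $d$ and $\mathfrak{L}^d(\Omega)$. Hence $\sup|M_n(x)-x|\le b_0c_\Omega K\varepsilon_n$, and $\theta:=\max\{b_0c_\Omega,1\}$ is independent of $K$. The $K$-dependence you worried about is exactly a factor $1/K$ in the constant of $\varepsilon_n\ge C|V_n|^{-1/d}$, and it cancels.

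For (2)(ii),(iii) you only need the one-sided bounds $\Lambda_n^-\le 1/|V_n|\le\Lambda_n^+$. The outer masses lie in $[c_-(1-\alpha)/|V_n|,\,c_+(1-\alpha)/|V_n|]\subset[c_-(1-\alpha)\Lambda_n^-,\,c_+(1-\alpha)\Lambda_n^+]$. The inner masses $\alpha\mu_n$ attain $\alpha\Lambda_n^{\pm}$, so
\[
\min\{\alpha,c_-(1-\alpha)\}\,\Lambda_n^-\le\widetilde{\Lambda}_n^-\le\alpha\Lambda_n^-,\qquad \alpha\Lambda_n^+\le\widetilde{\Lambda}_n^+\le\max\{\alpha,c_+(1-\alpha)\}\,\Lambda_n^+ .
\]
No comparability of $\Lambda_n^-$ and $\Lambda_n^+$ is required; this is exactly how the paper argues.

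On the powers of $s$: apply Lemma~\ref{lem:ext1} with $\mathbf{1}_{r\le 1}$ at scale $\varepsilon=B_1s\varepsilon_n$, where $B_1=1+2\theta$, and multiply through by $(B_1s)^{d+p}$. The energy term becomes $AB^{-d-p}\varepsilon_n^{-d-p}\iint_{\Omega\times\Omega}\mathbf{1}_{|x-y|\le BB_1s\varepsilon_n}|f(x)-f(y)|^p\,d\mu(x)\,d\mu(y)$, so there the factors cancel exactly. The $\|u\|^p_{L^p(V_n;\mu_n)}$ term, however, carries $A(B_1s)^{d+p}$, which this argument does not absorb. The paper's proof folds this into ``$A'$ chosen appropriately'', so $A'$ is effectively uniform only for $s$ in a bounded range. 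That is all the later applications of part~(5) use, since there $s$ is bounded, e.g.\ $l_1\alpha+4\theta K$ with $K\le S(\Omega,\eta)$. It is harmless, but you should state the restriction rather than hope the factor disappears.
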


In the discussion in Section~\ref{sec:discussion} we shall break down this lemma in further detail, but for now we shall head straight to the proof.

\begin{proof}

Firstly apply Lemma~\ref{lem:ext1} to introduce the extensions $\Sigma$, $\widetilde{\mu}$, $\widetilde{\rho}$ and the map $\mathfrak{X}:L^p(\Omega;\mu)\rightarrow L^p(\Sigma;\widetilde{\mu})$. Note that to do so we require $\aleph_\eta^q<+\infty$. This handles parts~(1) and~(3). We also apply Lemma~\ref{lem:discretization} to generate a sequence of Borel maps $(M_n:\Sigma\setminus\overline{\Omega}\rightarrow \Sigma\setminus\overline{\Omega})_{n\in\mathbb{N}}$ and constants $a,b,c,d>0$ with the following properties:
\begin{itemize}
    \item $|M_n(\Sigma\setminus\overline{\Omega})|\leq a|V_n|$,
    \item $\sup_{x\in\Sigma\setminus\overline{\Omega}}|M_n(x)-x|\leq\frac{b}{|V_n|^{1/d}},$
    \item define $\nu$ to be the probability measure $\widetilde{\mu}$ restricted to $\Sigma\setminus\overline{\Omega}$ and normalised; then, for any $x\in M_n(\Sigma\setminus\overline{\Omega})$,
    $$ \frac{c}{|V_n|}\leq M_n\#\nu(x)\leq \frac{d}{|V_n|}.$$ 
\end{itemize} 
Next we construct the Borel maps from part~(4). For each $n\in\mathbb{N}$, we define $\widetilde{T_n}:\Sigma \rightarrow \Sigma$ by
$$
\widetilde{T_n}(x) :=
\begin{cases}
    T_n(x), &\text{if } x\in \Omega, \\
    M_n(x), &\text{if } x\in \Sigma\setminus\overline{\Omega}, \\ 
     x, &\text{if } x\in \partial\Omega.
\end{cases}
$$
How we define $\widetilde{T_n}$ on $\partial\Omega$ is not important as $\partial\Omega$ has $\widetilde{\mu}$ measure zero. According to Lemma~\ref{lem:transportbound} there exists $\theta'>0$ such that 
$$ \sup_{x\in\Sigma\setminus \overline{\Omega}} |M_n(x)-x| \leq \frac{b}{|V_n|^{1/d}}\leq \theta' \sup_{x\in \Omega} |T_n(x)-x|\leq \theta' K\varepsilon_n.$$
Let $\theta:=\max\{\theta',1\}$. From the definition of $\widetilde{T_n}$ it follows that
$$ \sup_{x\in\Sigma} |\widetilde{T_n}(x)-x|\leq \theta K\varepsilon_n.$$
We now define, for each $n\in\mathbb{N}$, $\widetilde{\mu_n}:= \widetilde{T_n}\#\widetilde{\mu}$, $\widetilde{V_n}:=\widetilde{T_n}(\Sigma)$ and note that $\widetilde{V_n}\cap\Omega=V_n$ as required which then concludes part (4). Now we shall handle part~(2). Given that $\widetilde{T_n}(x)\in\Omega$ if and only if $x\in \Omega$ we note that, for $x\in V_n$, $\widetilde{T_n}^{-1}(x)=T_n^{-1}(x)$. Therefore, by applying part~(2) of Lemma~\ref{lem:ext1}, for $x\in V_n$, $$ \widetilde{\mu_n}(x)= \int_{\widetilde{T_n}^{-1}(x)}d\widetilde{\mu}=\widetilde{\mu}(\Omega)\int_{T_n^{-1}(x)}d\mu=\widetilde{\mu}(\Omega)\mu_n(x).$$ 
Taking $\alpha=\widetilde{\mu}(\Omega)$ we see that~(2,i) is satisfied. For~2,ii) and~(2,iii) we observe the following. Firstly, since $\sum_{x\in V_n}\mu_n(x)=1$,
$$ \Lambda_n^-\leq \frac{1}{|V_n|}\leq \Lambda_n^+.$$
If $x\in V_n$, then $\widetilde{\mu}_n(x)=\alpha\mu_n(x)$. On the contrary, if $x\in \widetilde{V_n}\setminus V_n$, then $\widetilde{T_n}^{-1}(x)=M_n^{-1}(x)$ and 
$$ \widetilde{\mu_n}(x)= \widetilde{\mu}\left(\widetilde{T_n}^{-1}(x)\right)= \widetilde{\mu}(\Sigma\setminus\overline\Omega) \,\,\nu\left(M_n^{-1}(x)\right),$$ and so, by the properties of the map $M_n$,
$$ c(1-\alpha)\Lambda_n^-\leq \frac{c(1-\alpha)}{|V_n|}\leq\widetilde{\mu_n}(x) \leq \frac{d(1-\alpha)}{|V_n|}\leq d(1-\alpha)\Lambda_n^+.$$
With these observations we can now conclude part~(2) is fulfilled.

Next we shall handle part~(5). For each $n\in \mathbb{N}$, we define the map $\mathfrak{X}_n:\mathbb{R}^{V_n}\rightarrow \mathbb{R}^{V_n}$, for $u\in \mathbb{R}^{V_n}$ and $x\in \widetilde{V_n}$, by
$$\mathfrak{X}_n u(x) := \frac{1}{\widetilde{\mu_n}(x)}\int_{\widetilde{T_n}^{-1}(x)}\mathfrak{X}(u\circ T_n)(y) \, d\widetilde{\mu}(y).$$
Indeed, if $x\in V_n$, then $\widetilde{T_n}^{-1}(x)= T_n^{-1}(x)$ and
$$ \mathfrak{X}_nu(x) = \frac{1}{\widetilde{\mu_n}(x)}\int_{T_n^{-1}(x)} u(T_n(y)) \, d\widetilde{\mu}(y)= \frac{1}{\widetilde{\mu_n}(x)}\int_{T_n^{-1}(x)} u(x) \, d\widetilde{\mu}(y) = u(x).$$
Henceforth we shall let $s>0$ and assume $K\leq s$. Then we compute 
\begin{align*}
&\frac{1}{\varepsilon_n^{d+p}} \sum_{x\in\widetilde{V_n}}\sum_{y\in \widetilde{V_n}} \mathbf{1}_{|x-y|\leq s\varepsilon_n}|\mathfrak{X}_nu(x)-\mathfrak{X}_nu(y)|^p\widetilde{\mu_n}(x)\widetilde{\mu_n}(y)   \\
&=\frac{1}{\varepsilon_n^{d+p}}\sum_{x\in\widetilde{V_n}}\sum_{y\in \widetilde{V_n}}  \mathbf{1}_{|x-y|\leq s\varepsilon_n}\left| \iint_{\widetilde{T_n}^{-1}(x)\times \widetilde{T_n}^{-1}(y)} \left(\mathfrak{X}(u\circ T_n)(\mathbb{X})-\mathfrak{X}(u\circ T_n)(\mathbb{Y})\right)\frac{d\widetilde{\mu}(\mathbb{X})}{\widetilde{\mu_n}(x)}\frac{d\widetilde{\mu}(\mathbb{Y})}{\widetilde{\mu_n}(y)}\right|^p\widetilde{\mu_n}(x)\widetilde{\mu_n}(y)   \\
&\leq \frac{1}{\varepsilon_n^{d+p}}\sum_{x\in\widetilde{V_n}}\sum_{y\in \widetilde{V_n}}  \mathbf{1}_{|x-y|\leq s\varepsilon_n} \left( \iint_{\widetilde{T_n}^{-1}(x)\times \widetilde{T_n}^{-1}(y)} \left|\mathfrak{X}(u\circ T_n)(\mathbb{X})-\mathfrak{X}(u\circ T_n)(\mathbb{Y})\right|^p\frac{d\widetilde{\mu}(\mathbb{X})}{\widetilde{\mu_n}(x)}\frac{d\widetilde{\mu}(\mathbb{Y})}{\widetilde{\mu_n}(y)}\right)\widetilde{\mu_n}(x)\widetilde{\mu_n}(y) \\
&= \frac{1}{\varepsilon_n^{d+p}}\sum_{x\in\widetilde{V_n}}\sum_{y\in \widetilde{V_n}}  \iint_{\widetilde{T_n}^{-1}(x)\times \widetilde{T_n}^{-1}(y)} \mathbf{1}_{|x-y|\leq s\varepsilon_n}\left|\mathfrak{X}(u\circ T_n)(\mathbb{X})-\mathfrak{X}(u\circ T_n)(\mathbb{Y})\right|^pd\widetilde{\mu}(\mathbb{X})d\widetilde{\mu}(\mathbb{Y}).
\end{align*}
The second line of the above computation holds via an application of Jensen's inequality. Next suppose $x,y\in \widetilde{V_n}$ such that $|x-y|\leq s\varepsilon_n$. Let $\mathbb{X}\in \widetilde{T_n}^{-1}(x)$, $\mathbb{Y}\in \widetilde{T_n}^{-1}(y)$, then 
\begin{align*}
|\mathbb{X}-\mathbb{Y}|&\leq |x-y|+|\mathbb{X}-x|+|\mathbb{Y}-y| %\\ &
= |x-y|+|\mathbb{X}-\widetilde{T_n}(\mathbb{X})|+|\mathbb{Y}-\widetilde{T_n}(\mathbb{Y})| \\
&\leq s\varepsilon_n +2\sup_{z\in \Sigma} |z - \widetilde{T_n}(z)| 
\leq (s+2\theta K)\varepsilon_n \leq (1+2\theta)s\varepsilon_n.
\end{align*}
Defining $B_1=1+2\theta$ we see that
$$ \mathbf{1}_{|x-y|\leq s\varepsilon_n} \leq \mathbf{1}_{|\mathbb{X}-\mathbb{Y}|\leq B_1s\varepsilon_n} .$$
Therefore
\begin{align*}
&\frac{1}{\varepsilon_n^{d+p}} \sum_{x\in\widetilde{V_n}}\sum_{y\in \widetilde{V_n}} \mathbf{1}_{|x-y|\leq s\varepsilon_n}|\mathfrak{X}_nu(x)-\mathfrak{X}_nu(y)|^p\widetilde{\mu_n}(x)\widetilde{\mu_n}(y)   \\ 
&\leq \frac{1}{\varepsilon_n^{d+p}}\sum_{x\in\widetilde{V_n}}\sum_{y\in \widetilde{V_n}}  \iint_{\widetilde{T_n}^{-1}(x)\times \widetilde{T_n}^{-1}(y)} \mathbf{1}_{|\mathbb{X}-\mathbb{Y}|\leq B_1s\varepsilon_n}\left|\mathfrak{X}(u\circ T_n)(\mathbb{X})-\mathfrak{X}(u\circ T_n)(\mathbb{Y})\right|^pd\widetilde{\mu}(\mathbb{X})d\widetilde{\mu}(\mathbb{Y}) \\ 
&\leq \frac{1}{\varepsilon_n^{d+p}}\iint_{\Sigma\times\Sigma} \mathbf{1}_{|\mathbb{X}-\mathbb{Y}|\leq B_1s\varepsilon_n}\left|\mathfrak{X}(u\circ T_n)(\mathbb{X})-\mathfrak{X}(u\circ T_n)(\mathbb{Y})\right|^pd\widetilde{\mu}(\mathbb{X})d\widetilde{\mu}(\mathbb{Y}) \\
&\leq A'\left( \Vert u \Vert^p_{L^p(V_n;\mu_n)}+\frac{1}{\varepsilon_n^{d+p}}\iint_{\Omega\times\Omega} \mathbf{1}_{|x-y|\leq B_2s\varepsilon_n}\left|(u\circ T_n)(x)-(u\circ T_n)(y)\right|^pd\mu(x)d\mu(y) \right) \\
&\leq  A'\left( \Vert u \Vert^p_{L^p(V_n;\mu_n)}+\frac{1}{\varepsilon_n^{d+p}}\iint_{\Omega\times\Omega} \mathbf{1}_{|T_n(x)-T_n(y)|\leq B's\varepsilon_n}\left|(u\circ T_n)(x)-(u\circ T_n)(y)\right|^pd\mu(x)d\mu(y) \right) \\
&=  A'\left( \Vert u \Vert^p_{L^p(V_n;\mu_n)}+\frac{1}{\varepsilon_n^{d+p}}\sum_{x\in V_n}\sum_{y\in V_n} \mathbf{1}_{|x-y|\leq B's\varepsilon_n}\left|u(x)-u(y)\right|^pd\mu_n(x)d\mu_n(y) \right). 
\end{align*}
The third inequality follows by applying Lemma~\ref{lem:ext1} and setting $B_2:=B\cdot B_1$, where $B$ is as in that lemma and $A'>0$ is chosen appropriately. The fourth inequality follows by taking $B':=B_2+2$ and observing that 
$$ \mathbf{1}_{|x-y|\leq B_2s\varepsilon_n} \leq \mathbf{1}_{|T_n(x)-T_n(y)|\leq (B_2s+2K)\varepsilon_n} \leq \mathbf{1}_{|T_n(x)-T_n(y)|\leq (B_2+2)s\varepsilon_n}  .$$
We also note that $B'\geq 1$ as required. Lastly we compute, again by applying Jensen's inequality,
\begin{align*}
\Vert \mathfrak{X}_nu \Vert^p_{L^p(\widetilde{V_n},\widetilde{\mu_n})}&= \sum_{x\in \widetilde{V_n}}|\mathfrak{X}_nu(x)|^p \widetilde{\mu_n}(x) %\\ &
= \sum_{x\in \widetilde{V_n}} \left|  \int_{\widetilde{T_n}^{-1}(x)}\mathfrak{X}(u\circ T_n)(y) \frac{d\widetilde{\mu}(y)}{\widetilde{\mu_n}(x)}\right|^p\widetilde{\mu_n}(x) \\
&\leq \sum_{x\in \widetilde{V_n}}   \int_{\widetilde{T_n}^{-1}(x)}|\mathfrak{X}(u\circ T_n)(y)|^p \, d\widetilde{\mu}(y) %\\ &
= \int_{\Sigma}|\mathfrak{X}(u\circ T_n)(y)|^p  \, d\widetilde{\mu}(y)\\
&\leq C^p\Vert u\circ T_n \Vert^p_{L^p(\Omega;\mu)}=C^p\Vert u \Vert^p_{L^p(V_n;\mu_n)} .
\end{align*}
For the second inequality we use part~(3) of Lemma~\ref{lem:ext1}. With this, part~(5) is fulfilled.

Part~(6) holds via a similar argument as the one we used to prove parts~(2,ii) and~(2,iii); part~(7) holds via a similar argument as the one we used to prove part~(4).
\end{proof}

\subsection{Uniform Poincar\'e Inequalities}\label{sec:unifPoincare}

For the main theorem of this subsection, Theorem~\ref{thm:poincare}, we will require a compactness result in the space $TL^p(\Omega)$. Under an additional assumption on the parameter sequence $(\varepsilon_n)_{n\in\mathbb{N}}$  and assuming $p<d$,  we shall later provide improvements in Corollaries~\ref{cor:compactness1} and \ref{cor:compactness2}. The following results apply for any $p\in [1,\infty)$. 

\begin{proposition}
\label{prop:compactness}
Let $p\in [1,\infty)$. There exists a constant $H(\eta)>0$, depending only on $\eta$, such that the following holds. Assume that (i) for a $K\leq H(\eta)$, (ii) and (iii) from Assumptions~\ref{assum} hold. Additionally assume there exists $q>p$ such that $\aleph_{\eta}^q<+\infty$. Let $(u_n)_{n\in\mathbb{N}}$ be a sequence of functions $u_n:V_n\rightarrow \mathbb{R}$ such that $\{\Vert u_n \Vert_{L^p(V_n;\mu_n)}\}_{n\in\mathbb{N}}$ and $\{\mathcal{GE}_n^p(u_n)\}_{n\in\mathbb{N}}$ are bounded. Then the sequence $\big((u_n,\mu_n)\big)_{n\in \mathbb{N}}$ is relatively compact in $TL^p(\Omega)$.
\end{proposition}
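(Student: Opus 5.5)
We follow the standard route of \cite[Corollary 4.7]{garcia2016continuum} and \cite[Proposition 4.4]{Dejan2019}. The idea is to pass from the discrete sequence to piecewise constant functions on $\Omega$, control their nonlocal energy, extend to $\Sigma$ to deal with the boundary, and then invoke a continuum nonlocal compactness criterion.

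\textbf{Step 1 (lift and compare energies).} Set $\widetilde u_n:=u_n\circ T_n$, with $T_n$ the maps of Assumption~(i). Then $\Vert\widetilde u_n\Vert_{L^p(\Omega;\mu)}=\Vert u_n\Vert_{L^p(V_n;\mu_n)}$ is bounded. For $x,y\in\Omega$ with $|x-y|\le r\varepsilon_n$ we have $|T_n(x)-T_n(y)|\le (r+2K)\varepsilon_n$. Take $r=1/b$ as in Remark~\ref{rem:simpletokernel}, and choose $H(\eta)$ so small that $2K\le r$. Then $(r+2K)\varepsilon_n\le 2r\varepsilon_n$, and Remark~\ref{rem:simpletokernel} together with \eqref{eq:aepsilon} gives
$$\frac{1}{\varepsilon_n^{d+p}}\iint_{\Omega\times\Omega}\mathbf{1}_{|x-y|\le r\varepsilon_n}|\widetilde u_n(x)-\widetilde u_n(y)|^p\,d\mu\,d\mu\lesssim_n \mathcal{GE}^p_n(u_n),$$
provided the scale $2r\varepsilon_n$ falls inside the positivity region of $\eta(\cdot/\varepsilon_n)$. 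This is where the smallness $K\le H(\eta)$ is used: it ensures that the kernel threshold dominates the transport error $2K\varepsilon_n$. Hence $\varepsilon_n^{-p}\mathcal{F}^p[\mathbf{1}_{\cdot\le r\varepsilon_n}\varepsilon_n^{-d};(\Omega,\mu)](\widetilde u_n)$ is uniformly bounded.

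\textbf{Step 2 (extend and mollify).} Using Lemma~\ref{lem:ext1} with the indicator kernel, which has finite $\aleph^q$, we extend $\widetilde u_n$ to $\mathfrak{X}\widetilde u_n$ on $\Sigma$ with uniformly bounded $L^p$ norm and nonlocal energy at scale $\varepsilon_n/B$. Let $J$ be a smooth mollifier supported in $B_{r/(2B)}$ and define $v_n:=J_{\varepsilon_n}*\mathfrak{X}\widetilde u_n$ on $\Omega$, which is well defined once $\varepsilon_n<\delta$. The standard estimates of \cite[Theorem 3.1]{Alberti1998} then give
$$\Vert\nabla v_n\Vert^p_{L^p(\Omega)}\lesssim_n \varepsilon_n^{-p-d}\iint \mathbf{1}_{|x-y|\le r\varepsilon_n/B}|\mathfrak{X}\widetilde u_n(x)-\mathfrak{X}\widetilde u_n(y)|^p\,dx\,dy$$
and
$$\Vert v_n-\widetilde u_n\Vert^p_{L^p(\Omega)}\lesssim_n\varepsilon_n^p\times(\text{the same energy}).$$
Both bounds follow from Jensen's inequality and $\int\nabla J=0$. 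Consequently $v_n$ is bounded in $W^{1,p}(\Omega)$, and $\Vert v_n-\widetilde u_n\Vert_{L^p}\to0$.

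\textbf{Step 3 (conclude).} By Rellich--Kondrachov \cite[Theorem 11.10]{Leoni2009}, a subsequence $v_{n_k}\to u$ in $L^p(\Omega)$, and hence $\widetilde u_{n_k}\to u$ in $L^p(\Omega;\mu)$, using that $\rho$ is bounded. Since $\sup|T_n-I|\le K\varepsilon_n\to0$, condition \eqref{eq:TLpconvcond} holds, so $(u_{n_k},\mu_{n_k})\to(u,\mu)$ in $TL^p(\Omega)$. This gives relative compactness.

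The main obstacle is Step 1: making the kernel comparison hold with the transport error of the same order as $\varepsilon_n$. This is exactly why $K$ must be small relative to the support and positivity scale of $\eta$, and it is where $H(\eta)$ is defined. If $\eta$ is positive on all of $[0,\infty)$ we could instead absorb the error via Lemma~\ref{lem:weightbound}-type arguments. In general, we would first try to take $H(\eta)$ to be a fixed fraction of the radius on which $\eta\ge\eta(0)/2$.
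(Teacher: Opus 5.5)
This is essentially the paper's argument: lift by $T_n$, extend with $\mathfrak{X}$, mollify at scale comparable to $\varepsilon_n$, bound in $W^{1,p}$, apply Rellich--Kondrachov, and show $\|v_n-u_n\circ T_n\|_{L^p}=O(\varepsilon_n)$. The only difference is that you do the discrete-to-nonlocal comparison before the extension, whereas the paper does it after.

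There is one constant slip in Step~1. With $r=1/b$ and $2K\le r$ you only get $|T_n(x)-T_n(y)|\le 2\varepsilon_n/b$, which lies outside the guaranteed positivity range $[0,\varepsilon_n/b]$ of $\eta(\cdot/\varepsilon_n)$. You therefore land on $\eta_{2\varepsilon_n}$, and \eqref{eq:aepsilon} only allows shrinking the scale, not enlarging it. The fix is to take $r=1/(2b)$ and $K\le 1/(4b)$, which is exactly the paper's choice $H(\eta)=\frac{1}{4b}$.
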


We point out that the above result only requires $(\varepsilon_n)_{n\in\mathbb{N}}$ to converge to zero. Unlike later compactness results in Corollaries~\ref{cor:compactness1} and~\ref{cor:compactness2}, there is no additional requirement on how fast it converges to zero. 
\begin{proof}[Proof of Proposition~\ref{prop:compactness}]

To begin, we shall define some constants and functions that will be of relevance later in the proof. Take a sequence $(u_n)_{n\in \mathbb{N}}$ bounded as in the prerequisites of the theorem. By (ii) in Assumptions~\ref{assum} there exists $l>0$ such that, for $x\leq l$, $\eta(x)\geq \eta(0)/2>0$. Choose a smooth, compactly supported function $J:\mathbb{R}^d\rightarrow [0,+\infty)$ such that $\operatorname{supp}(J)\subset B_{l}(0)$ and $\int_{\mathbb{R}^d}J(x)dx=1$.
For $\varepsilon>0$ define $J_\varepsilon:= \frac{1}{\varepsilon^d}J\left(\frac{\cdot}{\varepsilon}\right)$. 

Apply Lemma~\ref{lem:ext1} to introduce the extension operator $\mathfrak{X}:L^p(\Omega;\mu)\rightarrow L^p(\Sigma;\widetilde{\mu})$, with $\Sigma$ and $\tilde\mu$ as in that lemma. This we can do since $\aleph_{\eta}^q<+\infty$. We shall also take the constant $B>0$ as specified in Lemma~\ref{lem:ext1} and the constant $b>0$ as specified in Remark~\ref{rem:simpletokernel}. Define $l_1:=Bl$, $l_2:=bBl$, $\alpha:=\frac{1}{2l_2}$ and $H(\eta):=\frac{1}{4b}$. Per assumption $K\leq H(\eta)$. For each $n\in\mathbb{N}$, define $v_n:\Omega \rightarrow \mathbb{R}$ by
$$ v_n(x) := \int_{\mathbb{R}^d} J_{\alpha\varepsilon_n}(x-y)\mathfrak{X}(u_n\circ T_n)(y) dy.$$
Since $J$ has support in a ball of radius $l$ we can replace the domain of the above integral with $\cup_{z\in \Omega} B_{l\alpha\varepsilon_n}(z)$. For $n$ sufficiently large we can assume $l\alpha\varepsilon_n \leq \delta$ ($\delta$ is the constant specified in part~(1) of Lemma~\ref{lem:ext1}) and so $\cup_{z\in \Omega} B_{l\alpha\varepsilon_n}(z)\subset \Sigma$. Thus we may in fact replace the domain of the integral above with $\Sigma$. This shows that the integral is well defined, even though the domain of $\mathfrak{X}(u_n\circ T_n)$ is only $\Sigma$, not all of $\mathbb{R}^d$. Furthermore, for $x\in \Omega$, $J_{\alpha\varepsilon_n}(x-y)$ defines a probability density (with respect to the Lebesgue measure) on $\Sigma$. We use this fact to apply Jensen's inequality:
\begin{align*}
    \Vert v_n \Vert_{L^p(\Omega;\mu)} &= \left( \int_{\Omega} \left| \int_{\Sigma} J_{\alpha\varepsilon_n}(x-y)\mathfrak{X}(u_n\circ T_n)(y) \, dy\right|^p \, d\mu(x) \right)^{1/p} \\
    &\leq \left( \int_{\Omega}  \int_{\Sigma} J_{\alpha\varepsilon_n}(x-y)\left|\mathfrak{X}(u_n\circ T_n)\right|^p(y) \, dy \, d\mu(x) \right)^{1/p} \\
    &\lesssim_n \left( \int_{\mathbb{R}^d}  \int_{\Sigma} J_{\alpha\varepsilon_n}(x-y)\left|\mathfrak{X}(u_n\circ T_n)\right|^p(y) \, dy \, dx \right)^{1/p} \\
    &= \Vert \mathfrak{X}(u_n\circ T_n) \Vert_{L^p(\Sigma)} \lesssim_n \Vert u_n\circ T_n \Vert_{L^p(\Omega;\mu)} = \Vert u_n \Vert_{L^p(V_n;\mu_n)}. 
\end{align*}
For the second inequality above we used (iii) in Assumptions~\ref{assum}. Next we compute
\begin{align*}
    \Vert \nabla v_n \Vert_{L^p(\Omega)} &= \left( \int_{\Omega} \left| \int_{\Sigma} \frac{1}{(\alpha\varepsilon_n)^{d+1}}\nabla J\left(\frac{x-y}{\alpha\varepsilon_n}\right)\mathfrak{X}(u_n\circ T_n)(y) \, dy\right|^p \, d\mu(x) \right)^{1/p} \\
    &= \left( \int_{\Omega} \left| \int_{\Sigma} \frac{1}{(\alpha\varepsilon_n)^{d+1}}\nabla J\left(\frac{x-y}{\alpha\varepsilon_n}\right)\left(\mathfrak{X}(u_n\circ T_n)(y)-\mathfrak{X}(u_n\circ T_n)(x) \right) \, dy\right|^p \, d\mu(x)  \right)^{1/p} \\
    &\leq  \left( \int_{\Omega} \left| \int_{\Sigma} \frac{1}{(\alpha\varepsilon_n)^{d+1}}|\nabla J|\left(\frac{x-y}{\alpha\varepsilon_n}\right)\left|\mathfrak{X}(u_n\circ T_n)(y)-\mathfrak{X}(u_n\circ T_n)(x) \right| \, dy\right|^p \, d\mu(x)  \right)^{1/p}.
\end{align*}
The second equality follows since $J\left(\frac{x-\cdot}{\alpha \varepsilon_n}\right)$ has compact support in $\Sigma$ and thus $\mathfrak{X}(u_n\circ T_n)(x) \int_\Sigma \nabla J \left(\frac{x-y}{\alpha \varepsilon_n}\right) \, dy = 0$. 

The next step is to use Jensen's inequality again. Define $A:=\int_{\mathbb{R}^d}|\nabla J|(z)dz$ and note, for $x\in \Omega$, $\frac{1}{A(\alpha\varepsilon_n)^d}|\nabla J|\left(\frac{x-y}{\alpha\varepsilon_n}\right)$ defines a probability density on $\Sigma$. From this we now produce the bound
\begin{align}
    \Vert \nabla v_n\Vert_{L^p(\Omega;\mu)} &\leq \frac{A}{\alpha\varepsilon_n} \left( \int_{\Omega} \left| \int_{\Sigma} \frac{1}{A(\alpha\varepsilon_n)^d}|\nabla J|\left(\frac{x-y}{\alpha\varepsilon_n}\right)\left|\mathfrak{X}(u_n\circ T_n)(y)-\mathfrak{X}(u_n\circ T_n)(x) \right| \, dy\right|^p \, d\mu(x)  \right)^{1/p} \label{eq:gradvnineq}\\
    &\leq  \frac{A}{\alpha\varepsilon_n} \left( \int_{\Omega}  \int_{\Sigma} \frac{1}{A(\alpha\varepsilon_n)^d}|\nabla J|\left(\frac{x-y}{\alpha\varepsilon_n}\right)\left|\mathfrak{X}(u_n\circ T_n)(y)-\mathfrak{X}(u_n\circ T_n)(x) \right|^p \, dy \, d\mu(x)  \right)^{1/p} \notag \\
     &\lesssim_n  \frac{1}{\varepsilon_n} \left( \int_{\Sigma}  \int_{\Sigma} \frac{1}{\varepsilon_n^d} \mathbf{1}_{\left|x-y\right|\leq l\alpha\varepsilon_n}\left|\mathfrak{X}(u_n\circ T_n)(y)-\mathfrak{X}(u_n\circ T_n)(x) \right|^p \, dy \, dx  \right)^{1/p},\notag
    \end{align}
    where the final inequality follows since $|\nabla J|$ is bounded and has support in the ball $B_{l\alpha \varepsilon_n}(x)$ and by using (iii) in Assumptions~\ref{assum}. 
    
    Next, applying part~(3) in Lemma~\ref{lem:ext1} we compute   
\begin{align*}
     &\Vert \nabla v_n\Vert_{L^p(\Omega;\mu)} \\
     &\lesssim_n \Vert u_n \circ T_n \Vert_{L^p(\Omega)} +  \frac{1}{\varepsilon_n} \left( \int_{\Omega}  \int_{\Omega} \frac{1}{\varepsilon_n^d} \mathbf{1}_{\left|x-y\right|\leq l_1\alpha\varepsilon_n}\left|(u_n\circ T_n)(y)-(u_n\circ T_n)(x) \right|^p \, dy \, dx  \right)^{1/p} \\
     &\lesssim_n \Vert u_n \Vert_{L^p(V_n;\mu_n)} +\frac{1}{\varepsilon_n} \left( \int_{\Omega}  \int_{\Omega} \frac{1}{\varepsilon_n^d} \mathbf{1}_{\left|T_n(x)-T_n(y)\right|\leq (l_1\alpha+2K)\varepsilon_n}\left|(u_n\circ T_n)(y)-(u_n\circ T_n)(x) \right|^p \, dy \, dx  \right)^{1/p} \\
     &\lesssim_n \Vert u_n \Vert_{L^p(V_n;\mu_n)} + \frac{1}{\varepsilon_n} \left( \int_{\Omega}  \int_{\Omega} \frac{1}{\varepsilon_n^d} \eta_{ (l_2\alpha+2bK)\varepsilon_n}(|T_n(x)-T_n(y)|)\left|(u_n\circ T_n)(y)-(u_n\circ T_n)(x) \right|^p \, dy \, dx  \right)^{1/p} \\
     &\lesssim_n \Vert u_n \Vert_{L^p(V_n;\mu_n)} + \frac{1}{\varepsilon_n} \left( \int_{\Omega}  \int_{\Omega} \frac{1}{\varepsilon_n^d} \eta_{ (l_2\alpha+2bK)\varepsilon_n}(|T_n(x)-T_n(y)|)\left|(u_n\circ T_n)(y)-(u_n\circ T_n)(x) \right|^p \, d\mu(y) \, d\mu(x)  \right)^{1/p} \\
     &\lesssim_n \Vert u_n \Vert_{L^p(V_n;\mu_n)} + \left(\mathcal{GE}^p[V_n,\mu_n;(l_2\alpha+2bK)\varepsilon_n](u_n)\right)^{1/p} \\
     &\lesssim_n \Vert u_n \Vert_{L^p(V_n;\mu_n)} + \left(\mathcal{GE}_n^p(u_n)\right)^{1/p}.
\end{align*}  
The second inequality follows from noting that, if $|x-y|\leq l_1\alpha\varepsilon_n$, then, by (i) from Assumptions~\ref{assum},
\begin{equation}\label{eq:TnxTny}
|T_n(x)-T_n(y)| \leq |x-y| + 2\sup_{x\in\Omega} |T_n(x)-x|\leq l_1\alpha\varepsilon_n + 2K\varepsilon_n .
\end{equation}
The third inequality follows from the observation stated in Remark~\ref{rem:simpletokernel}. The fourth inequality is a consequence of (iii) from Assumptions~\ref{assum} and the fifth follows from $T_n\# \mu = \mu_n$ and the definition of $\mathcal{GE}^p$. The sixth inequality follows from our choice of $\alpha$ and $H(\eta)$ which satisfy $l_2\alpha+2bK\leq 1$, justifying the final bound by~\eqref{eq:aepsilon}.

We now see that the sequence $(v_n)_{n\in\mathbb{N}}$ is bounded in $W^{1,p}(\Omega)$ and thus relatively compact in $L^p(\Omega;\mu)$ (see~\cite[Theorem 11.10]{Leoni2009}). To conclude we will show that $\Vert u_n\circ T_n - v_n \Vert_{L^p(\Omega;\mu)} \rightarrow 0$ as $n\rightarrow \infty$, which suffices to conclude convergence in $TL^p(\Omega)$ by \eqref{eq:TLpconvcond} and part~(i) in Assumptions~\ref{assum}.  Indeed, applying (iii) from Assumptions~\ref{assum} and Jensen's inequality again,
\begin{align*}
\Vert u_n\circ T_n - v_n \Vert_{L^p(\Omega;\mu)} &= \left( \int_{\Omega}\left| \int_{\Sigma} J_{\alpha\varepsilon_n}(x-y)\left[\mathfrak{X}(u_n\circ T_n)(y)-\mathfrak{X}(u_n\circ T_n)(x)\right] \, dy \right|^p \, d\mu(x)\right)^{1/p} \\
&\lesssim_n \left( \int_{\Omega}\left| \int_{\Sigma} J_{\alpha\varepsilon_n}(x-y)\left[\mathfrak{X}(u_n\circ T_n)(y)-\mathfrak{X}(u_n\circ T_n)(x)\right] \, dy \right|^p \, dx\right)^{1/p} \\
&\leq \left( \int_{\Omega} \int_{\Sigma} J_{\alpha\varepsilon_n}(x-y)\left|\mathfrak{X}(u_n\circ T_n)(y)-\mathfrak{X}(u_n\circ T_n)(x)\right|^p \, dy \, dx\right)^{1/p} \\
&\lesssim_n \varepsilon_n \Vert u_n \Vert_{L^p(V_n;\mu_n)} +\varepsilon_n  \mathcal{GE}_n^p(u_n) \rightarrow 0 \; \; \text{as} \; \; n \rightarrow \infty.
\end{align*}
The third inequality is a consequence of a similar argument as the one we used to bound $\Vert \nabla v_n \Vert_{L^p(\Omega)}$. Indeed, comparing with \eqref{eq:gradvnineq}, we note that the main difference is that the kernel $J$ appears above, rather than $|\nabla J|$ in \eqref{eq:gradvnineq}. Since both are bounded and supported within the same ball, the same argument as that following \eqref{eq:gradvnineq} holds, mutatis mutandis. The final limit follows since $\varepsilon_n\rightarrow 0$ as $n\rightarrow\infty$, and $\{\Vert u_n \Vert_{L^p(V_n;\mu_n)}\}_{n\in\mathbb{N}}$ and $\{\mathcal{GE}_n^p(u_n)\}_{n\in\mathbb{N}}$ are bounded. The proof is now concluded. 
\end{proof}

\begin{remark}
For the rest of the paper $H(\eta)$ will refer to the same constant as constructed in the proof of Lemma~\ref{prop:compactness}.
\end{remark}

It is known that if we assume (i$^*$), (ii) and (iii) from Assumptions~\ref{assum}, then $\mathcal{GE}_n^p$ $\Gamma$-converges to $\mathcal{E}^p$ over $TL^p(\Omega)$ as $n\rightarrow\infty$ (see \cite[Theorem 1.1]{garcia2016continuum} and \cite[Theorem 4.7]{Dejan2019}).  The following lemma justifies that the $\liminf$ inequality condition from the definition of $\Gamma$-convergence (see \cite{DalMaso93}) still holds if we replace (i$^*$) with (i), up to a constant factor.

\begin{lemma}
\label{lem:liminfweak}
Let $p\in [1,\infty)$. Assume that (i) for a $K\leq H(\eta)$, (ii) and (iii) from Assumptions~\ref{assum} hold and $\aleph_\eta^p<+\infty$. Then, there exists a constant $L>0$ such that, for any sequence $\left((u_n,\mu_n)\right)_{n\in\mathbb{N}}$ converging to $(u,\mu)$ in $TL^p(\Omega)$, we have
$$ \mathcal{E}^p(u) \leq L\cdot \liminf_{n\rightarrow \infty} \mathcal{GE}_n^p(u_n) .$$
\end{lemma}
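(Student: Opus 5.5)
The plan is to reuse the mollification construction from the proof of Proposition~\ref{prop:compactness}, but on interior subdomains so that no extension operator is needed. Only $\aleph_\eta^p<+\infty$ is assumed, not $\aleph_\eta^q$ for some $q>p$, so Lemma~\ref{lem:ext1} is not available. The statement is trivial when $\liminf_n \mathcal{GE}_n^p(u_n)=+\infty$. Otherwise, pass to a subsequence along which $\mathcal{GE}_n^p(u_n)$ converges to the $\liminf$ and is bounded by some $M$.

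Fix $\Omega'\subset\subset\Omega$ with $\operatorname{dist}(\Omega',\partial\Omega)>\delta$. With $J$, $l$, $\alpha$ as in that proof, define
$$ v_n(x):=\int_{\Omega} J_{\alpha\varepsilon_n}(x-y)\,(u_n\circ T_n)(y)\,dy, \qquad x\in\Omega'.$$
For $n$ large, $l\alpha\varepsilon_n<\delta$, so this is well defined. Repeating the Jensen computation in \eqref{eq:gradvnineq}, the passage from indicator to kernel via Remark~\ref{rem:simpletokernel}, the estimate \eqref{eq:TnxTny}, $T_n\#\mu=\mu_n$, and \eqref{eq:aepsilon} gives
$$\|\nabla v_n\|_{L^p(\Omega')}^p\le L\,\mathcal{GE}_n^p(u_n),$$
with $L$ independent of $n$ and $\Omega'$. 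It depends only on $J$, $D$, $a$, $b$, $\alpha$ and $K\le H(\eta)$. Crucially, no $\|u_n\|_{L^p}$ term appears here, because we never use $\mathfrak{X}$.

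Next, identify the limit. The $TL^p$ convergence, together with $\sup|T_n-x|\le K\varepsilon_n\to0$, should give $u_n\circ T_n\to u$ in $L^p(\Omega;\mu)$. The plan is to invoke the standard fact \cite[Proposition 3.12]{garcia2016continuum} that, when $\mu$ has a density bounded below, $TL^p$ convergence is equivalent to $L^p$ convergence of $u_n\circ T_n$ along any stagewise transport sequence with $T_n\to I$. Then $v_n-u_n\circ T_n\to0$ in $L^p(\Omega')$ by the same computation as at the end of the proof of Proposition~\ref{prop:compactness}: it is bounded by $\varepsilon_n M^{1/p}$. Hence $v_n\to u$ in $L^p(\Omega')$.

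Finally, take limits. Since $\nabla v_n$ is bounded in $L^p(\Omega')$, weak (weak-$*$ as measures when $p=1$) compactness and lower semicontinuity give
$$\int_{\Omega'}|\nabla u|^p\le \liminf_n\|\nabla v_n\|^p_{L^p(\Omega')}\le L\liminf_n\mathcal{GE}_n^p(u_n),$$
with total variation on $\Omega'$ in place of the integral when $p=1$. Letting $\Omega'\uparrow\Omega$ and using that the bound is uniform in $\Omega'$, we get $u\in W^{1,p}(\Omega)$ (respectively $BV$) and the claimed inequality.

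The main obstacle I expect is the identification step. Relating $TL^p$ convergence to $L^p$ convergence of $u_n\circ T_n$ for the specific maps $T_n$ of Assumption~(i) requires care, because the optimal couplings need not be induced by $T_n$. If citing \cite{garcia2016continuum} is not acceptable, I would instead compose couplings and use density of continuous functions in $L^p(\Omega;\mu)$ to transfer convergence.
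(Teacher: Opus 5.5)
Your proof is correct, but it takes a different route from the paper's. The paper does not mollify. It takes $\alpha=\frac{1}{2b}$ and uses $|x-y|\le\alpha\varepsilon_n \Rightarrow |T_n(x)-T_n(y)|\le(\alpha+2K)\varepsilon_n$, with $K\le\frac{1}{4b}$, together with Remark~\ref{rem:simpletokernel}. This bounds the continuum nonlocal energy $\varepsilon_n^{-d-p}\iint \mathbf{1}_{|x-y|\le\alpha\varepsilon_n}|(u_n\circ T_n)(x)-(u_n\circ T_n)(y)|^p\,d\mu\,d\mu$ by a multiple of $\mathcal{GE}_n^p(u_n)$. The paper then invokes the external nonlocal-to-local $\Gamma$-$\liminf$ inequality \cite[Lemma 4.6]{Dejan2019} for the sequence $u_n\circ T_n\to u$ in $L^p(\Omega;\mu)$.

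You instead prove that nonlocal-to-local $\liminf$ (up to a constant) by hand. Mollifying at scale $\alpha\varepsilon_n$ on $\Omega'\subset\subset\Omega$ bounds $\|\nabla v_n\|^p_{L^p(\Omega')}$ by essentially the same nonlocal quantity, and gives $\|v_n-u_n\circ T_n\|_{L^p(\Omega')}\lesssim\varepsilon_n$. Weak lower semicontinuity and exhaustion then finish. Exhaustion is legitimate because, for $p=1$, $\mathcal{E}^1$ only tests against compactly supported fields, and for $p>1$ the integral over $\Omega$ is the increasing limit over $\Omega'$.

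The paper's route is shorter, at the cost of checking the hypotheses of an external lemma. Yours is self-contained and reuses only the paper's own mollification estimates. It needs no extension operator, and it never uses $\aleph^p_\eta<+\infty$.

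On the identification step you are more careful than the paper. The paper appeals to \eqref{eq:TLpconvcond}, but that only gives the converse implication. The direction actually needed is \cite[Proposition 3.12]{garcia2016continuum}. That result concerns \emph{stagnating} transport maps and requires only $\mu\ll\mathfrak{L}^d$, not a lower density bound.

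One cosmetic point: rather than taking $\alpha$ ``as in that proof'', which involves the constant $B$ of Lemma~\ref{lem:ext1}, just choose any $\alpha$ with $bl\alpha\le\frac12$, e.g.\ $\alpha=\frac{1}{2bl}$. Together with $2bK\le\frac12$, this is all \eqref{eq:aepsilon} requires.
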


\begin{proof}
Take $\left((u_n,\mu_n)\right)_{n\in\mathbb{N}}$ and $(u,\mu)$ as within the statement of the theorem. By~\eqref{eq:TLpconvcond}, $(u_n\circ T_n)_{n\in\mathbb{N}}$ converges to $u$ in $L^p(\Omega;\mu)$ as $n\rightarrow \infty$. Take the constants $a>0$, $b\geq1$ as specified in Remark~\ref{rem:simpletokernel} and let $\alpha=\frac{1}{2b}$. For $x,y\in \Omega$ and $n\in \mathbb{N}$, we have that $|x-y|\leq \alpha\varepsilon_n$ implies $|T_n(x) -T_n(y)|\leq \alpha\varepsilon_n+2K\varepsilon_n$. This follows by a similar argument as in~\eqref{eq:TnxTny} in the previous proof. Hence, by the observation made in Remark~\ref{rem:simpletokernel}, 
$$ \mathbf{1}_{|x-y|\leq \alpha\varepsilon_n} \leq \mathbf{1}_{|T_n(x) -T_n(y)|\leq \alpha\varepsilon_n+2K\varepsilon_n} \leq \frac{1}{a} \eta \left( \frac{|T_n(x) -T_n(y)|}{\alpha b\varepsilon_n+2bK\varepsilon_n}\right).$$
As in the proof of Proposition~\ref{prop:compactness}, $K\leq H(\eta)= \frac{1}{4b}$. Putting this together with (ii) from Assumptions~\ref{assum} we get $\varepsilon_n^{-d}\mathbf{1}_{|x-y|\leq \alpha\varepsilon_n} \lesssim_n \eta_{\varepsilon_n}(|T_n(x)-T_n(y)|) $. We can now apply \cite[Lemma 4.6]{Dejan2019}\footnote{This lemma is a nonlocal-to-local $\Gamma$-convergence result for variations on $L^p(\Omega;\mu)$; we only need to apply the $\liminf$ part of $\Gamma$-convergence. We note that the assumptions (A6) and (A7) in \cite{Dejan2019} which \cite[Lemma 4.6]{Dejan2019} requires, are satisfied by part~(ii) in Assumptions~\ref{assum}. Assumption (A8) follows since $\aleph_\eta^p<+\infty$ (where we note `$p+d$' in (A8) is a typo that should read `$p+d-1$' instead). Assumption (A2) holds by part~(iii) in Assumptions~\ref{assum} and (A1) holds by our assumptions on $\Omega$.} to conclude 
\begin{align*}
    \mathcal{E}^p(u) &\lesssim_n \liminf_{n\rightarrow\infty} \frac{1}{\varepsilon_n^{d+p}} \iint_{\Omega\times\Omega} \mathbf{1}(|x-y|\leq \alpha\varepsilon_n) |(u\circ T_n)(x) - (u\circ T_n)(y)|^pd \, \mu(x) \, d\mu(y) \\
    &\lesssim_n \liminf_{n\rightarrow\infty}\frac{1}{\varepsilon_n^{p}} \iint_{\Omega\times\Omega}  \eta_{\varepsilon_n}(|T_n(x)-T_n(y)|)|(u\circ T_n)(x) - (u\circ T_n)(y)|^pd \, \mu(x) \, d\mu(y) \\
    &= \liminf_{n\rightarrow \infty} \mathcal{GE}_n^p(u_n) .
\end{align*}
\end{proof}

We also require a quick lemma regarding convergence of means in the $TL^p(\Omega)$ metric.

\begin{lemma}\label{lem:convmeans} Let $p\in [1,\infty)$. Let $(u_n)_{n\in \mathbb{N}}$ be a sequence of functions $u_n:V_n\rightarrow \mathbb{R}$ that converges to a function $u\in L^p(\Omega;\mu)$ in $TL^p(\Omega)$. Then $\mathbb{E}_{\mu_n}u_n\rightarrow \mathbb{E}_{\mu}u$ as $n\rightarrow \infty$.  
\end{lemma}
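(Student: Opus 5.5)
The plan is to rewrite both means as integrals against the single measure $\mu$ and then compare them directly. Since convergence in $TL^p(\Omega)$ is, throughout this paper, witnessed by maps $T_n$ with $T_n\#\mu=\mu_n$ satisfying \eqref{eq:TLpconvcond}, we have
$$\mathbb{E}_{\mu_n}u_n=\sum_{x\in V_n}u_n(x)\mu_n(x)=\int_{\Omega}u_n\circ T_n\,d\mu$$
by the change of variables formula for push-forward measures. Consequently
$$\left|\mathbb{E}_{\mu_n}u_n-\mathbb{E}_{\mu}u\right|\leq \int_\Omega |u_n\circ T_n-u|\,d\mu\leq \Vert u_n\circ T_n-u\Vert_{L^p(\Omega;\mu)},$$
where the last inequality is Jensen's (or H\"older's) inequality, valid because $\mu$ is a probability measure and $p\geq 1$. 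The right-hand side tends to zero by \eqref{eq:TLpconvcond}, which proves the claim.

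The only real obstacle is the case where the $TL^p$ convergence is given abstractly, through the infimum over couplings, rather than through transport maps. In that case I would instead choose couplings $\gamma_n\in\Gamma(\mu_n,\mu)$ that nearly attain the infimum, so that
$$\int |u_n(x)-u(y)|^p\,d\gamma_n(x,y)\to 0.$$
The same estimate then applies with $\gamma_n$ in place of the transport map. By the marginal conditions, $\mathbb{E}_{\mu_n}u_n-\mathbb{E}_\mu u=\int (u_n(x)-u(y))\,d\gamma_n(x,y)$, and this is bounded by $\left(\int|u_n(x)-u(y)|^p\,d\gamma_n\right)^{1/p}$ because $\gamma_n$ is a probability measure. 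So the coupling formulation gives the result just as directly. Integrability of $u_n$ and $u$ is automatic, since $u_n$ lives on a finite set and $u\in L^p(\Omega;\mu)\subset L^1(\Omega;\mu)$.
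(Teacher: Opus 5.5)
Your proposal is correct. The coupling argument in your second paragraph is essentially the paper's proof: take near-optimal $\gamma_n\in\Gamma(\mu_n,\mu)$, write $\mathbb{E}_{\mu_n}u_n-\mathbb{E}_{\mu}u=\int (u_n(x)-u(y))\,d\gamma_n(x,y)$, and bound it by Jensen. Your transport-map version is just the special case $\gamma_n=(T_n\times I)\#\mu$, so the coupling argument, which covers arbitrary $TL^p$ convergence, is the one that proves the lemma as stated.
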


\begin{proof}
By the definition of the $TL^p$ metric we can choose a sequence of transport plans $\gamma_n\in \Gamma(\mu_n,\mu)$ such that
$$ \int_{\Omega\times\Omega} \left(|u_n(x)-u(y)|^p+ |x-y|^p\right) \;d\gamma_n(x,y) \rightarrow 0 \; \; \text{as} \; \; n\rightarrow \infty .$$
Then we have
\begin{align*}
    |\mathbb{E}_{\mu_n}u_n - \mathbb{E}_{\mu}u|&= \left| \int_{\Omega\times\Omega} ( u_n(x)-u(y))  \; d\gamma_n(x,y) \right| 
    \leq \int_{\Omega\times\Omega} |u_n(x)-u(y)| \, d\gamma_n(x,y) \\
     &\leq \left(\int_{\Omega\times\Omega} |u_n(x)-u(y)|^p \, d\gamma_n(x,y) \right)^{1/p} \rightarrow 0 \; \; \text{as} \; \; n\rightarrow\infty.
\end{align*}
\end{proof}

We can now deduce the following uniform collection of Poincar\'e inequalities.

\begin{theorem}
\label{thm:poincare} 

Let $p\in [1,\infty)$. Assume that (i) for a $K\leq H(\eta)$, (ii) and (iii) from Assumptions~\ref{assum} hold. Additionally assume there exists $q>p$ such that $\aleph_{\eta}^q<+\infty$. Then there exists a constant $C>0$ such that, for any $n\in \mathbb{N}$ and function $u:V_n \rightarrow \mathbb{R}$, we have
$$ \Vert u -\mathbb{E}_{\mu_n}u\Vert_{L^p(V_n;\mu_n)} \leq C \left( \mathcal{GE}^p_n(u) \right)^{1/p}.$$

\end{theorem}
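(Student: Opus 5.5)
The plan is the standard compactness–contradiction argument, using Proposition~\ref{prop:compactness}, Lemma~\ref{lem:liminfweak} and Lemma~\ref{lem:convmeans} together with the continuum Poincar\'e inequality on the connected Lipschitz domain $\Omega$. Suppose the claim fails. Then for each $m\in\mathbb{N}$ there exist an index $n_m$ and a function $w_m:V_{n_m}\to\mathbb{R}$ with
$$\Vert w_m-\mathbb{E}_{\mu_{n_m}}w_m\Vert_{L^p(V_{n_m};\mu_{n_m})} > m\left(\mathcal{GE}^p_{n_m}(w_m)\right)^{1/p}.$$
We replace $w_m$ by $u_m:=(w_m-\mathbb{E}w_m)/\Vert w_m-\mathbb{E}w_m\Vert_{L^p}$; this is well defined because the left-hand side is positive. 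The normalised functions satisfy $\mathbb{E}_{\mu_{n_m}}u_m=0$, $\Vert u_m\Vert_{L^p}=1$ and $\mathcal{GE}^p_{n_m}(u_m)<m^{-p}$.

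Next we rule out the case where the indices $n_m$ stay bounded. If infinitely many $n_m$ equal a single $n$, we work on that fixed finite graph. When $\mathbb{G}_{n,\varepsilon_n}$ is connected, $\mathcal{GE}^p_n(u)=0$ forces $u$ to be constant. By finite dimensionality, the ratio $\mathcal{GE}_n^p(u)/\Vert u\Vert^p$ then has a positive minimum over mean-zero unit functions, which contradicts $\mathcal{GE}^p<m^{-p}$. Connectedness should follow from (i) with $K\le H(\eta)=\frac{1}{4b}$ together with Lemma~\ref{lem:degreebound}/friendliness-type reasoning, or from Remark~\ref{rem:simpletokernel}, since the weights are positive for $|x-y|\le \varepsilon_n/b$. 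We need every two vertices to be joined by a chain of steps of length at most $\varepsilon_n/b$, with $K\varepsilon_n\le \varepsilon_n/(4b)$. I expect this to be the delicate point for nonconvex $\Omega$. The fallback is to argue through $T_n$: follow a path in $\Omega$ between preimages in small steps. Consecutive images are within $2K\varepsilon_n+$step$\le\varepsilon_n/b$ of each other, because $\Omega$ is connected and open. So we may pass to a subsequence with $n_m\to\infty$.

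Along this subsequence, $\Vert u_m\Vert_{L^p}=1$ and $\mathcal{GE}^p_{n_m}(u_m)$ is bounded. Proposition~\ref{prop:compactness} applies, since $\varepsilon_{n_m}\to0$ and the hypotheses (i) with $K\le H(\eta)$, (ii), (iii) and $\aleph_\eta^q<\infty$ are available. It gives a further subsequence with $(u_m,\mu_{n_m})\to(u,\mu)$ in $TL^p(\Omega)$. We now pass to the limit in each quantity. By Lemma~\ref{lem:convmeans}, $\mathbb{E}_\mu u=0$. Convergence in $TL^p$ via \eqref{eq:TLpconvcond} gives $\Vert u_m\circ T_{n_m}\Vert_{L^p(\Omega;\mu)}\to\Vert u\Vert_{L^p(\Omega;\mu)}$, and since $\Vert u_m\circ T_{n_m}\Vert_{L^p(\Omega;\mu)}=\Vert u_m\Vert_{L^p(V_{n_m};\mu_{n_m})}=1$, we get $\Vert u\Vert_{L^p(\Omega;\mu)}=1$. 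Finally, Lemma~\ref{lem:liminfweak} applies because $\aleph_\eta^q<\infty$ with $q>p$ implies $\aleph_\eta^p<\infty$ (as $\eta$ is bounded near 0). It gives $\mathcal{E}^p(u)\le L\liminf\mathcal{GE}^p_{n_m}(u_m)=0$.

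Hence $\nabla u=0$; when $p=1$ this reads as $|Du|(\Omega)=0$. Since $\Omega$ is connected (implicit in the domain assumptions; otherwise the statement fails), $u$ is constant. Being mean-zero, $u=0$, which contradicts $\Vert u\Vert_{L^p(\Omega;\mu)}=1$. The main obstacle is the bounded-$n$ case, that is, connectivity of the weighted graph at the scale $\varepsilon_n/b$. I would handle it with the path argument above, which needs only that $\Omega$ is open and connected, rather than Corollary~\ref{cor:connect}.
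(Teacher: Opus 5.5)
Your proposal is correct and follows the same compactness--contradiction route as the paper: normalise, extract a $TL^p$-limit via Proposition~\ref{prop:compactness}, show the limit has zero gradient with Lemma~\ref{lem:liminfweak}, force it to vanish with Lemma~\ref{lem:convmeans}, and contradict convergence of the unit norms. Your separate treatment of bounded indices $n_m$ and your check that $\aleph_\eta^q<+\infty$ implies $\aleph_\eta^p<+\infty$ make explicit two points the paper passes over with ``after possibly passing to a subsequence'' and ``again possible since $\aleph_\eta^q<+\infty$''. In the fixed-$n$ connectivity step, take the intermediate points of your path off the $\mu$-null (hence, by (iii), Lebesgue-null) set $\{z\in\Omega : \mu_n(T_n(z))=0\}$, so that every vertex in the chain has positive mass and each weight $\eta_{\varepsilon_n}(|x-y|)\mu_n(x)\mu_n(y)$ you rely on is genuinely positive.
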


\begin{proof}
We will argue by contradiction. Indeed, assume the statement of the result fails. Thus (after possibly passing to a subsequence) there exists a sequence of functions $u_n:V_n \rightarrow \mathbb{R}$ and a sequence of constants $C_n>0$ such that $C_n\rightarrow \infty$ as $n\rightarrow \infty$ and additionally
$$ \Vert u_n - \mathbb{E}_{\mu_n}u_n \Vert_{L^p(V_n;\mu_n)}^p > C_n\cdot \mathcal{GE}_n^p(u_n).$$
Next, for each $n\in \mathbb{N}$, define
$$ v_n:= \frac{u_n - \mathbb{E}_{\mu_n}u_n}{\Vert u_n  - \mathbb{E}_{\mu_n}u_n\Vert_{L^p(V_n;\mu_n)}} .$$
We note $\Vert v_n \Vert_{L^p(V_n;\mu_n)}=1$ and $\mathbb{E}_{\mu_n}v_n=0$. Moreover,
\begin{align*}
\mathcal{GE}_n^p(v_n)&= \mathcal{GE}_n^p \left( \frac{u_n - \mathbb{E}_{\mu_n}u_n}{\Vert u_n  - \mathbb{E}_{\mu_n}u_n\Vert_{L^p(V_n;\mu_n)}} \right) 
= \frac{1}{\Vert u_n  - \mathbb{E}_{\mu_n}u_n\Vert_{L^p(V_n;\mu_n)}^p}\mathcal{GE}_n^p(u_n) \leq \frac{1}{C_n} . 
\end{align*}
Because also $\aleph_\eta^q<+\infty$, we can apply Proposition~\ref{prop:compactness} to conclude (again after possibly passing to another subsequence) there exists a $v\in L^p(\Omega;\mu)$ such that $(V_n;\mu_n) \rightarrow (v,\mu)$ as $n\rightarrow\infty$ in $TL^p(\Omega)$. Next we apply Lemma~\ref{lem:liminfweak}, which is again possible since $\aleph_\eta^q<+\infty$, to note that
$$ \mathcal{E}^p(v) \leq L\cdot \liminf_{n\rightarrow \infty} \mathcal{GE}^p_{(n,\varepsilon_n)}(v_n) \leq \liminf_{n\rightarrow \infty} \frac{L}{C_n} = 0.$$
So, by the definition of $\mathcal{E}^p$ and (iii) from Assumptions~\ref{assum}, $v$ is constant on $\Omega$; in particular we get, by Lemma~\ref{lem:convmeans},
$$ v =\mathbb{E}_{\mu}v = \lim_{n\rightarrow\infty} \mathbb{E}_{\mu_n}v_n=0.$$
Convergence in $TL^p(\Omega)$ implies convergence of norms, as proved in part 4 of~\cite[Proposition 5.17]{mercer2025}. Therefore,
$$ 0 = \Vert v \Vert_{L^p(\Omega;\mu)}=\lim_{n\rightarrow \infty} \Vert v_n \Vert_{L^p(V_n;\mu_n)} =1,$$
giving us our contradiction.
\end{proof}

\subsection{Graph Sobolev inequalities for \texorpdfstring{$p<d$}{plessd}}\label{sec:Sobolevplessd}

Assume that $1\leq p<d$, then the standard theory of Sobolev spaces \cite[Theorem 1.4.4.1]{Grisvard1985} (assuming part (iii) of Assumptions~\ref{assum}) shows that, if $p\leq q\leq \frac{pd}{d-p}$, there exists a constant $C>0$ such that, for any $u\in L^p(\Omega;\mu)$,
\begin{equation}\label{eq:sobolev}
\Vert u \Vert_{L^q(\Omega;\mu)} \leq C\left(( \Vert u \Vert_{L^p(\Omega;\mu)} + (\mathcal{E}^p(u))^{1/p}\right). 
\end{equation}
In the theorem that follows we will extend this control to our graph-based setting. We investigate under what conditions there exists a constant $C>0$ such that, for any $n\in\mathbb{N}$ and $u\in L^p(\Omega,\mu_n)$,
\begin{equation}
\label{eq:graphsobolev}
\Vert u \Vert_{L^q(V_n;\mu_n)} \leq C \left( \Vert u \Vert_{L^p(V_n;\mu_n)} + (\mathcal{GE}_n^p(u))^{1/p}\right).    
\end{equation}

As mentioned in the introduction, proving the above Sobolev inequality for any fixed $n\in \mathbb{N}$ is trivial. This is because any two norms on a finite dimensional space must be Lipschitz equivalent. Therefore, in trying to prove~\eqref{eq:graphsobolev} it will be sufficient to assume $n$ is sufficiently large.

We will observe that the existence of such a Sobolev inequality depends on the concentrating parameter sequence $(\varepsilon_n)_{n\in\mathbb{N}}$ and how fast it converges to zero.

\begin{theorem}
\label{thm:graphsobolev}
Let $p,q$ be constants such that $1\leq p<d$ and $p\leq q \leq \frac{dp}{d-p}$. Define $\gamma := \frac{1}{p}-\frac{1}{q}$. There exists a constant $S(\Omega,\eta)>0$, depending only on the kernel $\eta$ and domain $\Omega$, such that the following holds. Assume that (i) for a $K\leq S(\Omega,\eta)$, (ii) and (iii) from Assumptions~\ref{assum} all hold. Additionally assume there exists $t>q$ such that $\aleph_\eta^t<+\infty$. Then the uniform graph Sobolev inequality in~\eqref{eq:graphsobolev} holds if
$$ \left(\frac{\varepsilon_n (\Lambda_n^+)^{1/q+1/p}}{(\Lambda_n^-)^{2/p}} \right)_{n\in\mathbb{N}} \; \; \text{is bounded}.$$
Moreover, if we also assume (iv) from Assumptions~\ref{assum} holds, then the uniform graph Sobolev inequality in~\eqref{eq:graphsobolev} holds if and only if
$$ \left(\varepsilon_n|V_n|^{\gamma}\right)_{n\in\mathbb{N}} \; \; \text{is bounded.} $$

\end{theorem}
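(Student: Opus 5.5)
Following the three-part strategy announced in the introduction, I would first reduce to the kernel $\mathbf{1}_{r\le 1}$ via Remark~\ref{rem:simpletokernel}. Then I extend the graph and the function with Lemma~\ref{lem:ext2}, which is allowed since $\aleph_\eta^t<+\infty$ with $t>q\geq p$. This gives $\widetilde{V_n}\subset\Sigma$, $\widetilde{\mu_n}$, $\widetilde{T_n}$ and $\mathfrak{X}_n u$ with comparable $\Lambda^\pm$ and controlled variation.

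For $u:V_n\to\mathbb{R}$ I set $\widetilde u:=\mathfrak{X}_nu\circ\widetilde{T_n}$ on $\Sigma$. I mollify at scale $\alpha\varepsilon_n$ exactly as in the proof of Proposition~\ref{prop:compactness}, obtaining $v$ on $\Omega$ with
\[\|v\|_{W^{1,p}(\Omega)}\lesssim_n \|u\|_{L^p(V_n;\mu_n)}+(\mathcal{GE}^p_n(u))^{1/p}.\]
The classical Sobolev inequality \eqref{eq:sobolev} then bounds $\|v\|_{L^q(\Omega;\mu)}$. Since $\|u\|_{L^q(V_n;\mu_n)}=\|u\circ T_n\|_{L^q(\Omega;\mu)}$, it remains to bound $\|u\circ T_n - v\|_{L^q(\Omega;\mu)}$.

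\textbf{Bounding $\|u\circ T_n-v\|_{L^q(\Omega;\mu)}$.} For $x\in\Omega$, $v(x)-\widetilde u(x)$ is an average of differences $\mathfrak{X}_nu(y)-\mathfrak{X}_nu(T_n(x))$ over vertices $y\in\widetilde{V_n}$ with $|y-T_n(x)|\le s\varepsilon_n$. Here $s$ is a constant determined by $\alpha$ and $\theta K$; choosing $S(\Omega,\eta)$ small keeps $s$ of order one and the relevant kernel scales below $1$, so that \eqref{eq:aepsilon} applies. By Jensen,
\[\|u\circ T_n-v\|^q_{L^q(\Omega;\mu)}\lesssim_n \varepsilon_n^{-d}\sum_{xy\in E_{n,s\varepsilon_n}(\widetilde V_n),\,x\in V_n}|\mathfrak{X}_nu(x)-\mathfrak{X}_nu(y)|^q\,\widetilde\Lambda_n^{+\,2}.\]
Now apply Lemma~\ref{lem:friendlybound} on the extended graph, with $\mathbb{G}=\mathbb{G}_{n,s'\varepsilon_n}(\widetilde{V_n})$ and $\mathbb{G}'$ the restriction to $V_n$. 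Lemma~\ref{lem:graphfriendly} (applicable on $\Sigma$ since $\cup_{x\in\Omega}B_\delta(x)\subset\Sigma$ and $s\varepsilon_n<\delta$ for large $n$; small $n$ are trivial) gives $k+2\gtrsim_n \varepsilon_n^d/\widetilde\Lambda_n^+$. Converting edge sums to weighted sums with $\Lambda_n^\pm$ and using Lemma~\ref{lem:ext2}(5), the right side is bounded by
\[\left(\varepsilon_n^{-d}\Lambda_n^{+2}\right)^{1/q}\left(\frac{\Lambda_n^+}{\varepsilon_n^d}\right)^{\gamma}\left(\frac{\varepsilon_n^{d+p}}{\Lambda_n^{-2}}\right)^{1/p}\left(\|u\|_p^p+\mathcal{GE}_n^p(u)\right)^{1/p}.\]
Collecting the exponents, the powers of $\varepsilon_n$ combine to $\varepsilon_n^{-d/q-d\gamma+d/p+1}=\varepsilon_n$ (up to bookkeeping), and the measure factors combine to $(\Lambda_n^+)^{2/q+\gamma}(\Lambda_n^-)^{-2/p}=(\Lambda_n^+)^{1/q+1/p}(\Lambda_n^-)^{-2/p}$. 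This is exactly the assumed bounded quantity, which proves sufficiency. I expect the main obstacle to be this bookkeeping, together with making sure friendliness holds uniformly up to the boundary, which is where the extension is essential.

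\textbf{Necessity under (iv).} Under (iv), $\Lambda_n^\pm\sim|V_n|^{-1}$, so the sufficient condition becomes boundedness of $\varepsilon_n|V_n|^{\gamma}$. For the converse, test the inequality on $u=\mathbf{1}_{x_0}$ for a single vertex $x_0$. Then $\|u\|_q\sim|V_n|^{-1/q}$ and $\|u\|_p\sim|V_n|^{-1/p}$. For the variation,
\[\mathcal{GE}_n^p(u)\lesssim \varepsilon_n^{-p}|V_n|^{-1}\cdot\frac{1}{\varepsilon_n^d}\sum_{y\in V_n}\eta\!\left(\frac{|x_0-y|}{\varepsilon_n}\right)\mu_n(y)\lesssim \varepsilon_n^{-p}|V_n|^{-1},\]
using Lemma~\ref{lem:weightbound}. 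The inequality then forces $|V_n|^{-1/q}\lesssim |V_n|^{-1/p}+\varepsilon_n^{-1}|V_n|^{-1/p}$. Since $\varepsilon_n\to0$, this gives $\varepsilon_n|V_n|^{\gamma}\lesssim 1$ for large $n$. If $\gamma=0$ the condition is automatic.
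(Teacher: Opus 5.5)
Your proof is correct and uses the same main ingredients as the paper. These are mollification at scale $\alpha\varepsilon_n$ followed by the continuum Sobolev embedding, Lemma~\ref{lem:friendlybound} fed by Lemma~\ref{lem:graphfriendly} on the extended graph, Lemma~\ref{lem:ext2}(5) together with Remark~\ref{rem:simpletokernel}, and the one-vertex test function for necessity. The genuine difference is which function you mollify.

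The paper mollifies the continuum extension $\mathfrak{X}(u\circ T_n)$, so $\|u\circ T_n-v\|_{L^q}$ first appears as a nonlocal $q$-energy on $\Sigma$. It then applies Lemma~\ref{lem:ext1} at exponent $q$ to pull this back to a $q$-sum over edges inside $V_n$ only. That step produces an extra term $\varepsilon_n\|u\|_{L^q(V_n;\mu_n)}$, which is absorbed at the end using $\varepsilon_n\to0$.

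You mollify the discrete extension $\mathfrak{X}_nu\circ\widetilde{T_n}$, which the paper does only in Theorem~\ref{thm:graphsobolev2}. Your error term is then a discrete $q$-sum immediately, and there is nothing to absorb. The extension lemmas, and hence the integrability hypothesis on $\eta$, enter only at exponent $p$.

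Your route carries two costs.
\begin{enumerate}
\item The $W^{1,p}$ bound for your $v$ is not literally the one from Proposition~\ref{prop:compactness}. You have to push the double integral forward through $\widetilde{T_n}$ and invoke Lemma~\ref{lem:ext2}(5) instead of Lemma~\ref{lem:ext1}. This is routine; compare the footnote in the proof of Theorem~\ref{thm:graphsobolev2}.
\item Your $q$-sum contains edges joining $V_n$ to exterior vertices in $\widetilde{V_n}\setminus V_n$, as your own displayed sum indicates. So $\mathbb{G}'$ cannot be the restriction to $V_n$, as you wrote, because that graph misses exactly these edges. Take instead the induced subgraph on $\widetilde{V_n}\cap\{z:\operatorname{dist}(z,\Omega)<\delta/2\}$. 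It contains all such edges once $s'\varepsilon_n<\delta/2$, and Lemma~\ref{lem:graphfriendly} applies to it on $\Sigma$ with $\delta/2$ in place of $\delta$.
\end{enumerate}
You also need $s'\ge\max\{s,4\theta K\}$, for example $s'=l\alpha+4\theta K$ as in the paper, because Lemma~\ref{lem:graphfriendly} requires a scale of at least $4\theta K\varepsilon_n$. Your $s=l\alpha+2\theta K$ need not satisfy this. Then choose $\alpha$ and $S(\Omega,\eta)$ so that $bB's'\le1$, which is what lets Remark~\ref{rem:simpletokernel} return you to $\mathcal{GE}_n^p$.

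With these adjustments your exponent count is exact, not just up to bookkeeping: $\varepsilon_n^{-d/q-d\gamma+d/p+1}=\varepsilon_n$ and $(\Lambda_n^+)^{2/q+\gamma}(\Lambda_n^-)^{-2/p}=(\Lambda_n^+)^{1/q+1/p}(\Lambda_n^-)^{-2/p}$.
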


\begin{remark}\label{rem:boundedsequences}
It is easy to see that, under (iv) from Assumptions~\ref{assum}, boundedness of $\left(\frac{\varepsilon_n (\Lambda_n^+)^{1/q+1/p}}{(\Lambda_n^-)^{2/p}}\right)_{n\in\mathbb{N}}$ is equivalent to boundedness of $\left(\varepsilon_n|V_n|^{\gamma}\right)_{n\in\mathbb{N}}$. Even without this assumption we have the following useful bound. 

For any $n\in\mathbb{N}$, $\sum_{x\in V_n}\mu_n(x)=1$. Therefore
$ \Lambda_n^-\leq \frac{1}{|V_n|}\leq \Lambda_n^+$ and we compute
$$ \frac{\varepsilon_n (\Lambda_n^+)^{1/q+1/p}}{(\Lambda_n^-)^{2/p}} \geq \frac{\varepsilon_n \left(\frac{1}{|V_n|}\right)^{1/q+1/p}}{\left(\frac{1}{|V_n|}\right)^{2/p}} = \varepsilon_n |V_n|^{\gamma}. $$
As a result, in the statement of Theorem~\ref{thm:graphsobolev}, the assumption that $\left(\frac{\varepsilon_n (\Lambda_n^+)^{1/q+1/p}}{(\Lambda_n^-)^{2/p}}\right)_{n\in\mathbb{N}}$ is bounded is stronger than the assumption that $\varepsilon_n|V_n|^{\gamma}$ is bounded.
\end{remark}

\begin{proof}[Proof of Theorem~\ref{thm:graphsobolev}] To prove the first part of the theorem we assume $\left(\frac{\varepsilon_n (\Lambda_n^+)^{1/q+1/p}}{(\Lambda_n^-)^{2/p}}\right)_{n\in\mathbb{N}}$ is bounded. We start by specifying the constants we use throughout this proof. Take $B,l,l_1,l_2$ and the function $J$ as within the proof of Proposition~\ref{prop:compactness}. Take $b$ as within Remark~\ref{rem:simpletokernel}. We shall also apply Lemma~\ref{lem:ext1} to introduce the extended domain $\Sigma$ with probability measure $\widetilde{\mu}$, the constant $\kappa$, the constant $\delta>0$ and the extension operator $\mathfrak{X}:L^p(\Omega;\mu)\rightarrow L^p(\Sigma,\widetilde{\mu})$. Moreover, we apply Lemma~\ref{lem:ext2} to introduce the constants $\theta \geq 1$ and $B'\geq 1$ and, for each $n\in\mathbb{N}$, the vertex set $\widetilde{V_n}$, the discrete probability measure $\widetilde{\mu_n}$ on $\widetilde{V_n}$, the constant $\widetilde{\Lambda_n}^+$ and the extension operator $\mathfrak{X}_n:L^p(V_n;\mu_n)\rightarrow L^p(\widetilde{V_n},\widetilde{\mu}_n)$. The proposition and lemmas just mentioned can be used since $\aleph_\eta^t<+\infty$. We also define
\begin{equation}\label{eq:constants2}
\alpha := \frac{1}{2l_2B'} \; \; \text{and} \; \; S(\Omega,\eta):= \frac{1}{8B'b\theta}.    
\end{equation}

Let $n\in \mathbb{N}$, $u:V_n\rightarrow \mathbb{R}$ and define $v\in L^p(\Omega)$ by
$$ v(x) := \int_{\mathbb{R}^d} J_{\alpha\varepsilon_n}(x-y)\mathfrak{X}(u\circ T_n)(y) \, dy.$$
For $n$ large enough, the support of $J_{\alpha\varepsilon_n}(x-\cdot)$ is a subset of $\Sigma$, hence, despite $\mathfrak{X}(u\circ T_n)$ not being defined outside of $\Sigma$, the integral above is well defined for such $n$. For the finitely many $n$ which are not large enough, proving the desired Sobolev inequality in equation~\eqref{eq:graphsobolev} is trivial. Since $\theta\geq 1$ and $B'\geq 1$, we have $S(\Omega,\eta)\leq \frac1{4b}$ and $\alpha\leq \frac{1}{2l_2}$; hence we can use the same argument as in Proposition~\ref{prop:compactness} to show that 
$$ \Vert v \Vert_{W^{1,p}(\Omega)}\lesssim_n \Vert u \Vert_{L^p(V_n;\mu_n)}+ (\mathcal{GE}_n^p(u))^{1/p}. $$
Then, by the classical Sobolev inequality in \eqref{eq:sobolev},
\begin{equation}\label{eq:vnormestimate} \Vert v \Vert_{L^q(\Omega)}\lesssim_n  \Vert u \Vert_{L^p(V_n;\mu_n)}+ (\mathcal{GE}_n^p(u))^{1/p}.\end{equation}
Our next job is to bound $\Vert u\circ T_n-v\Vert_{L^q(\Omega)} $. Indeed
\begin{align*}
   &\Vert u\circ T_n-v\Vert_{L^q(\Omega)} \\
   &\leq \left( \int_{\Omega} \int_{\Sigma} J_{\alpha\varepsilon_n}(x-y)\left|\mathfrak{X}(u\circ T_n)(y)-\mathfrak{X}(u\circ T_n)(x)\right|^q \, dy \, dx\right)^{1/q} \\
   &\lesssim_n \left( \int_{\Sigma} \int_{\Sigma} \frac{1}{\varepsilon_n^d} \mathbf{1}_{|x-y|\leq l\alpha\varepsilon_n}\left|\mathfrak{X}(u\circ T_n)(y)-\mathfrak{X}(u\circ T_n)(x)\right|^q \, dy \, dx\right)^{1/q} \\
   &=\varepsilon_n\left( \int_{\Sigma} \int_{\Sigma} \frac{1}{\varepsilon_n^{d+q}} \mathbf{1}_{|x-y|\leq l\alpha\varepsilon_n}\left|\mathfrak{X}(u\circ T_n)(y)-\mathfrak{X}(u\circ T_n)(x)\right|^q \, dy \, dx\right)^{1/q} \\
   &\lesssim_n \varepsilon_n\Vert u \Vert_{L^q(V_n;\mu_n)} + \varepsilon_n\left( \int_{\Omega} \int_{\Omega} \frac{1}{\varepsilon_n^{d+q}} \mathbf{1}_{|x-y|\leq l_1\alpha\varepsilon_n}\left|(u\circ T_n)(y)-(u\circ T_n)(x)\right|^q \, dy \, dx\right)^{1/q} \\
   \end{align*}
The second inequality in the above computation follows since $J_{\alpha \varepsilon_n}$ is bounded and supported in a ball with radius $l \alpha \varepsilon_n$. The third inequality follows as an application of Lemma~\ref{lem:ext1} (with $q$ instead of $p$ with $q$, the indicator function $\mathbf{1_{r\leq 1}}$ in place of $\eta(r)$ and $l\alpha\varepsilon_n$ instead of $\varepsilon$) and Minkowski's inequality. Observe that we are applying this lemma with $q$ in place of $p$, which is why we need to assume there exists a $t>q$ such that $\aleph^t_\eta<+\infty$. We then deduce 
   \begin{align*}
   &\Vert u\circ T_n-v\Vert_{L^q(\Omega)} \\
    &\lesssim_n \varepsilon_n\Vert u \Vert_{L^q(V_n;\mu_n)} + \left( \int_{\Omega} \int_{\Omega} \frac{1}{\varepsilon_n^d} \mathbf{1}_{|T_n(x)-T_n(y)|\leq (l_1\alpha+2K)\varepsilon_n}\left|(u\circ T_n)(y)-(u\circ T_n)(x)\right|^q \, d\mu(y) \,  d\mu(x)\right)^{1/q} \\
    &= \varepsilon_n\Vert u \Vert_{L^q(V_n;\mu_n)} + \varepsilon_n^{-d/q} \left( 2 \sum_{xy\in E_{n, (l_1\alpha+2K)\varepsilon_n}} \left|u(y)-u(x)\right|^q \mu_n(y) \mu_n(x)\right)^{1/q} \\
     &\leq \varepsilon_n\Vert u \Vert_{L^q(V_n;\mu_n)} + \varepsilon_n^{-d/q} (\Lambda_n^+)^{2/q} \left( 2 \sum_{xy\in E_{n, (l_1\alpha+2K)\varepsilon_n}} \left|u(y)-u(x)\right|^q \right)^{1/q}.
\end{align*}
The first inequality in the above computation holds by (iii) from Assumptions~\ref{assum} in combination with, for any $x,y\in\Omega$,
$$ \mathbf{1}_{|x-y|\leq l_1\alpha\varepsilon_n} \leq \mathbf{1}_{|T_n(x)-T_n(y)|\leq (l_1\alpha+2K)\varepsilon_n},  $$ 
which is a consequence of (i) from Assumptions~\ref{assum}.

 Define, for each $n\in\mathbb{N}$, $s_n:=(l_1\alpha+4\theta K)\varepsilon_n$ and the simple graph $\widetilde{\mathbb{G}}_{n,s_n}:=(\widetilde{V_n},\widetilde{E}_{n.s_n})$. If we replace $\Omega$ with $\Sigma$, $V_n$ with $\widetilde{V_n}$ and $\mu_n$ with $\widetilde{\mu_n}$ then (i) from Assumptions~\ref{assum} holds with $\theta K$ in place of $K$. Moreover, if we replace $\rho$ with $\widetilde{\rho}$ then (iii) from Assumptions~\ref{assum} holds with $\kappa$ in place of $D$. Note that $s_n\geq 4\theta K\varepsilon_n$, thus for $n$ sufficiently large we can assume $s_n\in [4\theta K\varepsilon_n,\delta)$. Hence, we can apply Lemma~\ref{lem:graphfriendly} (with the replacements just described) to find that $\widetilde{\mathbb{G}}_{n,s_n}$ is $k$-friendly with respect to $\mathbb{G}_{n,s_n}$ where 
$$  k= \frac{\pi^{d/2}s_n^d}{\widetilde{\Lambda}_n^+\kappa 4^d\Gamma(\frac{d}{2}+1)}-2.$$ With this in hand, we can apply Lemma~\ref{lem:friendlybound} combined with the property $\mathfrak{X}_n u|_{V_n}=u$ and $s_n\geq (l_1\alpha+2K)\varepsilon_n$ to deduce 
\begin{align*}
 \left( \sum_{xy\in E_{n, (l_1\alpha+2K)\varepsilon_n}} \left|u(y)-u(x)\right|^q \right)^{1/q} &\leq \left( \sum_{xy\in E_{n, s_n}} \left|u(y)-u(x)\right|^q \right)^{1/q} \\
&\leq 2^{1-\frac{p}{q}} (k+2)^{\frac{1}{q}-\frac{1}{p}} \left( \sum_{xy\in \widetilde{E}_{n, s_n}} \left|\mathfrak{X}_nu(y)-\mathfrak{X}_nu(x)\right|^p \right)^{1/p} \\
&\lesssim_n \left(\frac{\varepsilon_n^d}{\widetilde{\Lambda}_n^+}\right)^{\frac{1}{q}-\frac{1}{p}} \left( \sum_{xy\in \widetilde{E}_{n, s_n}} \left|\mathfrak{X}_nu(y)-\mathfrak{X}_nu(x)\right|^p \right)^{1/p}.
\end{align*}
Putting this together with our previous bound we get
\begin{align}\label{eq:boundone}
 &\Vert u\circ T_n-v\Vert_{L^q(\Omega)}\notag \\
 &\lesssim_n \varepsilon_n\Vert u \Vert_{L^q(V_n;\mu_n)} + \varepsilon_n^{-d/q} (\Lambda_n^+)^{2/q}\left(\frac{\varepsilon_n^d}{\widetilde{\Lambda}_n^+}\right)^{\frac{1}{q}-\frac{1}{p}} \left( \sum_{xy\in \widetilde{E}_{n, s_n}} \left|\mathfrak{X}_nu(y)-\mathfrak{X}_nu(x)\right|^p \right)^{1/p}\notag \\
&=\varepsilon_n\Vert u \Vert_{L^q(V_n;\mu_n)} +  \frac{\varepsilon_n (\Lambda_n^+)^{2/q}(\widetilde{\Lambda}_n^+)^{\frac{1}{p}-\frac{1}{q}}}{(\widetilde{\Lambda}_n^-)^{2/p}} \left( \frac{1}{\varepsilon_n^{d+p}} \sum_{xy\in \widetilde{E}_{n, s_n}} \left|\mathfrak{X}_nu(y)-\mathfrak{X}_nu(x)\right|^p \widetilde{\Lambda}_n^- \widetilde{\Lambda}_n^- \right)^{1/p} \notag\\
&\lesssim_n \varepsilon_n\Vert u \Vert_{L^q(V_n;\mu_n)} +    \frac{\varepsilon_n (\Lambda_n^+)^{1/q+1/p}}{(\Lambda_n^-)^{2/p}}\left( \frac{1}{\varepsilon_n^{d+p}} \sum_{x\in \widetilde{V_n}}\sum_{y\in\widetilde{V_n}} \mathbf{1}_{|x-y|\leq s_n} \left|\mathfrak{X}_nu(y)-\mathfrak{X}_nu(x)\right|^p \widetilde{\mu_n}(x) \widetilde{\mu_n}(y) \right)^{1/p} \\
&\lesssim_n \varepsilon_n\Vert u \Vert_{L^q(V_n;\mu_n)} +  \left( \frac{1}{\varepsilon_n^{d+p}} \sum_{x\in \widetilde{V_n}}\sum_{y\in\widetilde{V_n}} \mathbf{1}_{|x-y|\leq s_n} \left|\mathfrak{X}_nu(y)-\mathfrak{X}_nu(x)\right|^p \widetilde{\mu_n}(x) \widetilde{\mu_n}(y) \right)^{1/p}.\notag
\end{align}
The second inequality follows from parts (2,ii) and (2,iii) from Lemma~\ref{lem:ext2}. The final bound holds according to part (2) from the extension result in Lemma~\ref{lem:ext2} alongside our assumption that $\left(\frac{\varepsilon_n (\Lambda_n^+)^{1/q+1/p}}{(\Lambda_n^-)^{2/p}}\right)_{n\in\mathbb{N}}$ is bounded.

Recall that $s_n=(l_1\alpha+4\theta K)\varepsilon_n$ and $l_1\alpha+4\theta K\geq \theta K$. By part~(5) from Lemma~\ref{lem:ext2} it then follows that
\begin{align}\label{eq:boundtwo}
&\Vert u\circ T_n-v\Vert_{L^q(\Omega)} \notag\\
&\lesssim_n \varepsilon_n\Vert u \Vert_{L^q(V_n;\mu_n)} + \Vert u \Vert_{L^p(V_n;\mu_n)} +  \left( \frac{1}{\varepsilon_n^{d+p}} \sum_{x\in V_n}\sum_{y\in V_n} \mathbf{1}_{|x-y|\leq (l_1B'\alpha+4B'\theta K)\varepsilon_n} \left|u(y)-u(x)\right|^p \mu_n(x) \mu_n(y) \right)^{1/p} \notag\\
&\lesssim_n \varepsilon_n\Vert u \Vert_{L^q(V_n;\mu_n)} +  \Vert u \Vert_{L^p(V_n;\mu_n)} +  \left( \frac{1}{\varepsilon_n^{p}} \sum_{x\in V_n}\sum_{y\in V_n} \eta_{ (l_2B'\alpha+4B'b\theta K)\varepsilon_n} \left|u(y)-u(x)\right|^p \mu_n(x) \mu_n(y) \right)^{1/p} \notag\\
&\lesssim_n \varepsilon_n\Vert u \Vert_{L^q(V_n;\mu_n)} +   \Vert u \Vert_{L^p(V_n;\mu_n)} +  (\mathcal{GE}_n^p(u))^{1/p}.
\end{align}
The second inequality holds by Remark~\ref{rem:simpletokernel}, where we recall that $b$ is the constant from that remark. The final bound in the above computation holds by our choice of $\alpha$ and $S(\Omega,\eta)$.

Combining the bound above with the one in \eqref{eq:vnormestimate}, we find
$$ \Vert u \Vert_{L^q(V_n;\mu_n)} = \Vert u\circ T_n \Vert_{L^q(\Omega;\mu)} \lesssim_n \Vert v \Vert_{L^q(\Omega)} + \Vert v - u\circ T_n \Vert_{L^q(\Omega)} \lesssim_n \varepsilon_n\Vert u \Vert_{L^q(V_n;\mu_n)}+ \Vert u \Vert_{L^p(V_n;\mu_n)} + (\mathcal{GE}_n^p(u))^{1/p}.$$
Since $\varepsilon_n\rightarrow 0$ as $n\rightarrow\infty$ we see the required Sobolev inequality from \eqref{eq:graphsobolev} holds.

Next we shall handle the second part of the theorem. The `if' part of the statement follows directly by the first observation in Remark~\ref{rem:boundedsequences}. It is then left to prove the `only if' direction. 

For a proof by contradiction we assume that $\left(\varepsilon_n |V_n|^\gamma\right)_{n\in\mathbb{N}}$ is unbounded. By passing to a subsequence we can assume $\varepsilon_n |V_n|^{\gamma}\rightarrow \infty $ as $n\rightarrow \infty$. For each $n\in \mathbb{N}$ select a vertex $x_n\in V_n$ and define a function $u_n:V_n\rightarrow \mathbb{R}$ by $u_n(x_n):=1$ and $u_n(x):=0$ for $x\in V_n\setminus\{x_n\}$. 
Then, by part (iv) of Assumptions~\ref{assum}, we compute $\Vert u_n\Vert_{L^p(V_n;\mu_n)} \lesssim_n |V_n|^{-1/p}$, $\Vert u_n\Vert_{L^q(V_n;\mu_n)}\gtrsim_n|V_n|^{-1/q}$ and 
\begin{align}\label{eq:GEpnbound}
\mathcal{GE}_n^p(u_n)&\lesssim_n \frac{1}{|V_n|\varepsilon_n^p}\sum_{y\in V_n} \eta_{\varepsilon_n}(y-x_n)|u_n(y)-u_n(x_n)|^p\mu_n(y)\notag\\
&= \frac{1}{|V_n|\varepsilon_n^p}\sum_{y\in V_n\setminus \{x_n\}} \eta_{\varepsilon_n}(y-x_n)\mu_n(y) \lesssim_n \frac{1}{|V_n|\varepsilon_n^p}.
\end{align}
The final upper bound follows from Lemma~\ref{lem:weightbound}, which we can use since $\aleph_\eta^t<+\infty$ and thus $\aleph_\eta^0<+\infty$. Then we have
\begin{align*}
    \frac{\Vert u_n \Vert_{L^q(V_n;\mu_n)}}{\Vert u_n \Vert_{L^p(V_n;\mu_n)} + (\mathcal{GE}_n^p(u_n))^{1/p}} \gtrsim_n \frac{\varepsilon_n|V_n|^{\gamma}}{1+\varepsilon_n} \rightarrow \infty.
\end{align*}
Hence the Sobolev inequality in \eqref{eq:graphsobolev} cannot hold. The theorem is now proved.

\end{proof}

Henceforth, in this subsection, we shall fix $S(\Omega,\eta)$ as the constant specified in \eqref{eq:constants2} in the proof of the above theorem. As a consequence of Theorem~\ref{thm:graphsobolev} we can introduce an improvement to the compactness result from \cite[Theorem 4.1]{Dejan2019}. By assuming additional information on the concentrating parameter sequence $(\varepsilon_n)_{n\in\mathbb{N}}$ we are able to improve the integrability of the space in which we deduce compactness. We will provide two different formulations depending on our assumptions.

\begin{corollary}
\label{cor:compactness1}
Let $p,q$ be constants such that $1\leq p<d$ and $p\leq q \leq \frac{dp}{d-p}$. Assume that (i) for a $K\leq S(\Omega,\eta)$, (ii) and (iii) from Assumptions~\ref{assum} all hold. Additionally assume there exists $t>q$ such that $\aleph_\eta^t<+\infty$ and
$$ \left(\frac{\varepsilon_n (\Lambda_n^+)^{1/q+1/p}}{(\Lambda_n^-)^{2/p}}\right)_{n\in\mathbb{N}} \; \; \text{is bounded}.$$

Let $(u_n)_{n\in\mathbb{N}}$ be a sequence of functions $u_n:V_n\rightarrow \mathbb{R}$ such that $(\Vert u_n \Vert_{L^p(V_n;\mu_n)})_{n\in\mathbb{N}}$ and $(\mathcal{GE}_n^p(u_n))_{n\in\mathbb{N}}$ are bounded. Then, for any $r\in [1,q)$, the sequence $\big((u_n,\mu_n)\big)_{n\in \mathbb{N}}$ is relatively compact in $TL^r(\Omega)$. 
\end{corollary}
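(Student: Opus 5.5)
The plan is to combine three earlier results: the $TL^p$ compactness of Proposition~\ref{prop:compactness}, the uniform Sobolev inequality of Theorem~\ref{thm:graphsobolev}, and the H\"older-type upgrade of Lemma~\ref{lem:tlpholder}. Since the statement is for $r\in[1,q)$ and $TL^r$ convergence for larger $r$ implies it for smaller $r$ (by H\"older/Jensen on the coupling), I only need to treat $r\in[p,q)$. The case $r<p$ then follows from $TL^p$ compactness directly.

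First, I check the hypotheses of Proposition~\ref{prop:compactness}. We have $S(\Omega,\eta)=\frac{1}{8B'b\theta}\le \frac1{4b}=H(\eta)$, since $B',\theta\ge1$, so (i) holds with $K\le H(\eta)$. Also $\aleph_\eta^t<+\infty$ with $t>q\ge p$ supplies the required exponent. Hence, given any subsequence of $\big((u_n,\mu_n)\big)_{n}$, I can extract a further subsequence converging in $TL^p(\Omega)$ to some $(u,\mu)$ with $u\in L^p(\Omega;\mu)$.

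Second, the first part of Theorem~\ref{thm:graphsobolev} applies under exactly the stated hypotheses, including boundedness of $\varepsilon_n(\Lambda_n^+)^{1/q+1/p}/(\Lambda_n^-)^{2/p}$. It gives
$$\Vert u_n\Vert_{L^q(V_n;\mu_n)}\le C\big(\Vert u_n\Vert_{L^p(V_n;\mu_n)}+(\mathcal{GE}_n^p(u_n))^{1/p}\big),$$
and the right-hand side is uniformly bounded by assumption.

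Third, I apply Lemma~\ref{lem:tlpholder} with exponents $p<q$. This needs $\mu_n\to\mu$ in $(\mathcal{P}_q(\Omega),d_q)$. That follows from (i): $T_n\#\mu=\mu_n$ and $\sup|T_n-x|\le K\varepsilon_n\to0$, so $d_q(\mu_n,\mu)\le d_\infty(\mu_n,\mu)\to0$, and all measures lie in $\mathcal{P}_q$ because $\Omega$ is bounded. The lemma then gives convergence of the subsequence in $TL^r(\Omega)$ for every $r\in[1,q)$. When $q=p$ the statement reduces to Proposition~\ref{prop:compactness}. This proves relative (sequential) compactness in $TL^r(\Omega)$.

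The only delicate point is the bookkeeping. The limit pair $(u,\mu)$ must lie in $TL^r$, i.e.\ $u\in L^r(\Omega;\mu)$; this holds because the limit from Lemma~\ref{lem:tlpholder} is an element of $TL^r$. I also need to confirm that the constant $S(\Omega,\eta)$ is small enough for Proposition~\ref{prop:compactness}, which was checked in the first step. I expect no real obstacle beyond these checks.
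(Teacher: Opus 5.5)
Your proposal is correct and follows the same route as the paper: you use $S(\Omega,\eta)\le H(\eta)$ to invoke Proposition~\ref{prop:compactness} for $TL^p$ compactness, the first part of Theorem~\ref{thm:graphsobolev} for a uniform $L^q(V_n;\mu_n)$ bound, and Lemma~\ref{lem:tlpholder} for the upgrade to $TL^r$. Your extra bookkeeping fills in details the paper leaves implicit, namely the check $d_q(\mu_n,\mu)\le d_\infty(\mu_n,\mu)\le K\varepsilon_n\to0$ and the separate treatment of $q=p$; your range $r\in[1,q)$ is also the right one, since the paper's proof writes $(1,q]$, which is a typo.
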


\begin{proof}
Firstly, observe that $S(\Omega,\eta)\leq H(\eta)$ (since $\theta\geq 1$ and $B'\geq 1$), where $H(\eta)$ is the constant from Proposition~\ref{prop:compactness}. We can then apply Proposition~\ref{prop:compactness} to deduce $\big((u_n,\mu_n)\big)_{n\in\mathbb{N}}$ is relatively compact in $TL^p(\Omega)$. By Theorem~\ref{thm:graphsobolev} we observe \eqref{eq:graphsobolev} holds and thus $(\Vert u_n\Vert_{L^q(V_n;\mu_n)})_{n\in\mathbb{N}}$ is bounded. This proposition and theorem can be used since $\aleph_\eta^t<+\infty$. By Lemma~\ref{lem:tlpholder} it then follows $\big((u_n,\mu_n)\big)_{n\in\mathbb{N}}$ is relatively compact in $TL^r(\Omega)$ for any $r\in (1,q]$.  
\end{proof}

Now the second formulation.

\begin{corollary}
\label{cor:compactness2}
Let $p,\gamma$ be constants such that $1\leq p<d$ and $0\leq \gamma \leq \frac{1}{d}$. Assume that (i) for a $K\leq S(\Omega,\eta)$, (ii), (iii) and (iv) from Assumptions~\ref{assum} all hold. Additionally assume there exists $t>q$ such that $\aleph_\eta^t<+\infty$ and
$$ (\varepsilon_n|V_n|^{\gamma})_{n\in\mathbb{N}} \; \; \text{is bounded} .$$

Let $(u_n)_{n\in \mathbb{N}}$ be a sequence of functions $u_n:V_n\rightarrow \mathbb{R}$ such that $(\Vert u_n \Vert_{L^p(V_n;\mu_n)})_{n\in\mathbb{N}}$ and $(\mathcal{GE}_n^p(u_n))_{n\in\mathbb{N}}$ are bounded. Then, for any $r\in \left[1,\frac{p}{1-p\gamma}\right)$, the sequence $\big( (u_n,\mu_n)\big)_{n\in \mathbb{N}}$ is relatively compact in $TL^r(\Omega)$. 
\end{corollary}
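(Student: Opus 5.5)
The plan is to reduce Corollary~\ref{cor:compactness2} to Corollary~\ref{cor:compactness1} by a suitable choice of the exponent $q$. Given $\gamma\in[0,\frac1d]$, I define $q$ through $\frac1p-\frac1q=\gamma$, that is $q:=\frac{p}{1-p\gamma}$. Since $p<d$ we have $p\gamma\leq p/d<1$, so $q$ is well defined and finite. Moreover $\gamma\geq 0$ gives $q\geq p$, and $\gamma\leq\frac1d$ gives $\frac1q\geq\frac1p-\frac1d=\frac{d-p}{dp}$, hence $q\leq\frac{dp}{d-p}$. This $q$ therefore lies in the admissible range of Theorem~\ref{thm:graphsobolev} and Corollary~\ref{cor:compactness1}. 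I read the hypothesis ``$\aleph_\eta^t<+\infty$ for some $t>q$'' in the statement as referring to this $q$.

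Next, I convert the boundedness of $(\varepsilon_n|V_n|^\gamma)_{n\in\mathbb{N}}$ into the hypothesis of Corollary~\ref{cor:compactness1}, using (iv) from Assumptions~\ref{assum}. By (iv), $\Lambda_n^+\leq \mathfrak{D}/|V_n|$ and $\Lambda_n^-\geq 1/(\mathfrak{D}|V_n|)$. Hence
$$\frac{\varepsilon_n(\Lambda_n^+)^{1/q+1/p}}{(\Lambda_n^-)^{2/p}}\leq \mathfrak{D}^{1/q+3/p}\,\varepsilon_n |V_n|^{\frac2p-\frac1q-\frac1p}=\mathfrak{D}^{1/q+3/p}\,\varepsilon_n|V_n|^{\gamma},$$
which is bounded. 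This is exactly the equivalence noted in Remark~\ref{rem:boundedsequences}.

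With this in hand, all hypotheses of Corollary~\ref{cor:compactness1} hold for our $p,q$: (i) for $K\leq S(\Omega,\eta)$, (ii), (iii), $\aleph_\eta^t<+\infty$ with $t>q$, the boundedness just verified, and the bounds on $\|u_n\|_{L^p(V_n;\mu_n)}$ and $\mathcal{GE}_n^p(u_n)$. That corollary then yields relative compactness of $((u_n,\mu_n))_{n\in\mathbb{N}}$ in $TL^r(\Omega)$ for every $r\in[1,q)=[1,\frac{p}{1-p\gamma})$, which is the claim.

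There is no real obstacle in this argument. The only points needing care are the bookkeeping that $q$ falls in $[p,\frac{dp}{d-p}]$ at both endpoints $\gamma=0$ and $\gamma=\frac1d$, and the exponent arithmetic in the comparison of the two boundedness conditions.
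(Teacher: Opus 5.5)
Your proposal is correct and follows the paper's proof: the paper also defines $q$ by $\gamma=\frac1p-\frac1q$, notes $q=\frac{p}{1-p\gamma}\leq\frac{pd}{d-p}$, and reduces to Corollary~\ref{cor:compactness1}, using the fact from Remark~\ref{rem:boundedsequences} that under (iv) the two boundedness conditions are equivalent. Your explicit exponent computation and your check of the endpoints $\gamma=0$ and $\gamma=\frac1d$ fill in details that the paper leaves implicit.
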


\begin{proof}
Define $q$ such that $\gamma=\frac{1}{p}-\frac{1}{q}$ and note that $q=\frac{p}{1-p\gamma}$ and $1\leq q\leq \frac{pd}{d-p}$.
The argument follows as in the proof of Corollary~\ref{cor:compactness1}. 
\end{proof}

We can provide a third compactness statement assuming a faster rate for the convergence of the parameter sequence $(\varepsilon_n)_{n\in \mathbb{N}}$ to zero. The proof follows a similar sequence of steps as the proof of Theorem~\ref{thm:graphsobolev}, so we will only sketch the proof.

\begin{proposition}
\label{prop:compactness3}
Let $p,q$ be constants such that $1\leq p<d$ and $p\leq q < \frac{dp}{d-p}$. Assume that (i) for a $K\leq S(\Omega,\eta)$, (ii) and (iii) from Assumptions~\ref{assum} all hold. Additionally assume there exists $t>q$ such that $\aleph_\eta^t<+\infty$ and 
$$ \frac{\varepsilon_n (\Lambda_n^+)^{1/q+1/p}}{(\Lambda_n^-)^{2/p}}\rightarrow 0 \; \; \text{as} \; \; n\rightarrow \infty .$$

Let $(u_n)_{n\in\mathbb{N}}$ be a sequence of functions $u_n:V_n\rightarrow \mathbb{R}$ such that $(\Vert u_n \Vert_{L^p(V_n;\mu_n)})_{n\in\mathbb{N}}$ and $(\mathcal{GE}_n^p(u_n))_{n\in\mathbb{N}}$ are bounded. Then the sequence $\big((u_n,\mu_n)\big)_{n\in \mathbb{N}}$ is relatively compact in $TL^q(\Omega)$. 
\end{proposition}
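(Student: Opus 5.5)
The plan is to redo the proof of Theorem~\ref{thm:graphsobolev}, but this time to show $\Vert u_n\circ T_n - v_n\Vert_{L^q(\Omega)}\to 0$. Here $v_n$ is the mollification built from $\mathfrak{X}(u_n\circ T_n)$, and compactness of $(v_n)$ would then come from Rellich--Kondrachov in $L^q$, which is available precisely because $q<\frac{dp}{d-p}$ is strict.

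Concretely, for each $n$ I set
$$v_n(x):=\int_{\mathbb{R}^d}J_{\alpha\varepsilon_n}(x-y)\,\mathfrak{X}(u_n\circ T_n)(y)\,dy,$$
with $\alpha$, $J$, $\mathfrak{X}$ as in the proofs of Proposition~\ref{prop:compactness} and Theorem~\ref{thm:graphsobolev}. As there,
$$\Vert v_n\Vert_{W^{1,p}(\Omega)}\lesssim_n \Vert u_n\Vert_{L^p(V_n;\mu_n)}+(\mathcal{GE}_n^p(u_n))^{1/p},$$
which is bounded. Since $p\le q<\frac{dp}{d-p}$, the embedding $W^{1,p}(\Omega)\hookrightarrow L^q(\Omega)$ is compact by the Rellich--Kondrachov theorem \cite[Theorem 11.10]{Leoni2009}. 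So after passing to a subsequence, $v_n\to v$ in $L^q(\Omega)$, and hence in $L^q(\Omega;\mu)$ by (iii).

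Next I bound the error $u_n\circ T_n-v_n$ by repeating the chain of estimates leading to \eqref{eq:boundone}, but keeping the prefactor instead of absorbing it into a constant:
\begin{align*}
\Vert u_n\circ T_n-v_n\Vert_{L^q(\Omega)}
&\lesssim_n \varepsilon_n\Vert u_n\Vert_{L^q(V_n;\mu_n)}+\frac{\varepsilon_n(\Lambda_n^+)^{1/q+1/p}}{(\Lambda_n^-)^{2/p}}\Big(\Vert u_n\Vert_{L^p(V_n;\mu_n)}+(\mathcal{GE}_n^p(u_n))^{1/p}\Big).
\end{align*}
This uses the $k$-friendly bound (Lemma~\ref{lem:graphfriendly} with Lemma~\ref{lem:friendlybound}) on the extended graph, followed by Lemma~\ref{lem:ext2}(5) to return to $V_n$ and Remark~\ref{rem:simpletokernel} to pass to $\mathcal{GE}_n^p$. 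In the second term the bracket is bounded and the prefactor tends to $0$ by hypothesis, so that term vanishes.

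For the first term, the hypothesis in particular makes the sequence bounded. Theorem~\ref{thm:graphsobolev} then gives the uniform inequality \eqref{eq:graphsobolev}, so $\Vert u_n\Vert_{L^q(V_n;\mu_n)}$ is bounded, and since $\varepsilon_n\to0$ the first term also tends to $0$. Hence $\Vert u_n\circ T_n - v\Vert_{L^q(\Omega;\mu)}\to0$. Finally, $\sup_x|T_n(x)-x|\le K\varepsilon_n\to0$, so by \eqref{eq:TLpconvcond} with exponent $q$ we get $(u_n,\mu_n)\to(v,\mu)$ in $TL^q(\Omega)$ along the subsequence. This gives relative compactness.

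The main obstacle is tracking the prefactor honestly through \eqref{eq:boundone}. In particular I need to check that the ratios $\widetilde\Lambda_n^\pm/\Lambda_n^\pm$ from Lemma~\ref{lem:ext2}(2) only contribute $n$-independent constants, so that the factor multiplying the $p$-energy is genuinely a constant times $\frac{\varepsilon_n(\Lambda_n^+)^{1/q+1/p}}{(\Lambda_n^-)^{2/p}}$ and hence tends to $0$. The remaining steps are routine.
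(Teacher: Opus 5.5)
Your proposal is correct and follows the paper's proof essentially step for step. You mollify $\mathfrak{X}(u_n\circ T_n)$, bound it in $W^{1,p}$, use Rellich--Kondrachov for compactness in $L^q$, and show $\Vert u_n\circ T_n - v_n\Vert_{L^q}\to 0$ by keeping the explicit prefactor from \eqref{eq:boundone}. The only cosmetic difference is how you handle the term $\varepsilon_n\Vert u_n\Vert_{L^q(V_n;\mu_n)}$: you cite Theorem~\ref{thm:graphsobolev}, whereas the paper absorbs it via $\Vert u_n\circ T_n\Vert_{L^q}\le \Vert u_n\circ T_n-v_n\Vert_{L^q}+\Vert v_n\Vert_{L^q}$ and the boundedness of $\Vert v_n\Vert_{L^q}$, which is the same trick used inside that theorem's proof.
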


\begin{proof}
As mentioned above, since the proof is similar to that of Theorem~\ref{thm:graphsobolev}, we will just sketch the steps.
\begin{enumerate}
    \item We take the same constants $\alpha$ and $S(\Omega,\eta)$ from \eqref{eq:constants2} in the proof of Theorem~\ref{thm:graphsobolev} and define a sequence $(v_n)_{n\in\mathbb{N}}$ in $L^p(\Omega)$ in the same way as in that proof:
    $$  v_n(x) := \int_{\mathbb{R}^d} J_{\alpha\varepsilon_n}(x-y)\mathfrak{X}(u_n\circ T_n)(y) \, dy. $$
    \item We show that $(v_n)_{n\in\mathbb{N}}$ is bounded in $W^{1,p}(\Omega)$ and by the classical Rellich--Kondrachov compact-embedding result \cite[Theorem 11.10]{Leoni2009} is relatively compact in $L^q(\Omega)$. 
    \item Using the same argument as in the proof of Theorem~\ref{thm:graphsobolev} (in particular equation~\eqref{eq:boundtwo}, where we keep the explicit constant from~\eqref{eq:boundone} rather than absorbing it into its upper bound), we justify that
    \begin{align*}
        \Vert v_n - u_n\circ T_n\Vert_{L^q(\Omega;\mu)}&\lesssim_n \varepsilon_n \Vert u_n\circ T_n \Vert_{L^q(\Omega;\mu)} +\frac{\varepsilon_n (\Lambda_n^+)^{1/q+1/p}}{(\Lambda_n^-)^{2/p}} \left( \Vert u_n \Vert_{L^p(V_n;\mu_n)} +\left( \mathcal{GE}_n^p(u_n) \right)^{1/p} \right).
    \end{align*}
    So there is a constant $c>0$ such that
    \begin{align*}
     \Vert v_n - u_n\circ T_n\Vert_{L^q(\Omega;\mu)} &\leq c\varepsilon_n\Vert u_n\circ T_n \Vert_{L^q(\Omega;\mu)}+  \frac{c\varepsilon_n (\Lambda_n^+)^{1/q+1/p}}{(\Lambda_n^-)^{2/p}} \left( \Vert u_n \Vert_{L^p(V_n;\mu_n)} +\left( \mathcal{GE}_n^p(u_n) \right)^{1/p} \right) \\
     &\leq c\varepsilon_n\Vert v_n - u_n\circ T_n\Vert_{L^q(\Omega;\mu)} + c\varepsilon_n\Vert v_n \Vert_{L^q(\Omega;\mu)} \\
     &+ \frac{c\varepsilon_n (\Lambda_n^+)^{1/q+1/p}}{(\Lambda_n^-)^{2/p}} \left( \Vert u_n \Vert_{L^p(V_n;\mu_n)} +\left( \mathcal{GE}_n^p(u_n) \right)^{1/p} \right).
    \end{align*}
   Therefore,
   $$ (1- c\varepsilon_n)\Vert v_n - u_n\circ T_n\Vert_{L^q(\Omega;\mu)} \leq c\varepsilon_n\Vert v_n \Vert_{L^q(\Omega;\mu)} + \frac{c\varepsilon_n (\Lambda_n^+)^{1/q+1/p}}{(\Lambda_n^-)^{2/p}} \left( \Vert u_n \Vert_{L^p(V_n;\mu_n)} +\left( \mathcal{GE}_n^p(u_n) \right)^{1/p} \right).$$
   Using that $(\Vert v_n\Vert_{L^q(\Omega;\mu)})_{n\in\mathbb{N}}$ is bounded in $L^q(\Omega)$ and, per assumption, $\left(\Vert u_n \Vert_{L^p(V_n;\mu_n)}\right)_{n\in\mathbb{N}}$ and $\left(\mathcal{GE}_n^p(u_n)\right)_{n\in\mathbb{N}}$ are bounded and     
    $$ \frac{\varepsilon_n (\Lambda_n^+)^{1/q+1/p}}{(\Lambda_n^-)^{2/p}}\rightarrow 0 \; \; \text{as} \; \; n\rightarrow \infty,$$
    we conclude
    $$ \Vert v_n - u_n\circ T_n\Vert_{L^q(\Omega)} \rightarrow 0 \; \; \text{as} \; \; n\rightarrow \infty.$$
    \item Putting parts 2 and 3 together we get that $(u_n\circ T_n)_{n\in\mathbb{N}}$ is relatively compact in $L^q(\Omega)$. Therefore, by~\eqref{eq:TLpconvcond} and part~(i) of Assumptions~\ref{assum}, $\big((u_n,\mu_n)\big)_{n\in \mathbb{N}}$ is relatively compact in $TL^q(\Omega)$.
\end{enumerate}
\end{proof}

In the following subsection we shall establish Sobolev inequalities for the $p>d$ setting. We do not consider the case $p=d$ for the same technical reasons that make Sobolev bounds difficult in the continuum setting. None the less, in the classical Sobolev setting various interesting embedding results are known at the $p=d$ threshold. One such embedding is into a suitably chosen Orlicz space~\cite[Theorem 11.29]{Leoni2009}. In order to produce a similar bound in the graph setting one would have to find the correct analogue for the norm of an Orlicz space. This is beyond the scope of the current paper.

\subsection{Graph Sobolev inequalities for \texorpdfstring{$p>d$}{pgreatd}}\label{sec:Sobolevpgreaterd}

In the classical theory of Sobolev spaces, in the case $p\in(d,\infty)$, we have the following famous embedding result~\cite[Theorem 1.4.4.1]{Grisvard1985} (Morrey's inequality). For any $u\in W^{1,p}(\Omega)$, $u$ can be identified with a H\"older-continuous function in $C^{0,\gamma}(\overline{\Omega})$, where, differently than in the previous subsection, $\gamma:=1-\frac{d}{p}$. In other words, $u$ is equal to such a H\"older-continuous function Lebesgue-almost everywhere on $\overline{\Omega}$. Moreover, there exists a constant $C>0$ such that
\begin{equation}\label{eq:Sobolev2}
\Vert u \Vert_{L^\infty(\Omega)}\leq\Vert u \Vert_{C^{0,\gamma}(\overline{\Omega})} \leq C \left( \Vert u \Vert_{L^p(\Omega)} + \left(\mathcal{E}^p(u)\right)^{1/p} \right),
\end{equation}
where,
$$ \Vert u \Vert_{C^{0,\gamma}(\overline{\Omega})} := \sup_{x\in \overline{\Omega}} |u(x)|+\sup_{x,y\in \overline{\Omega}} \frac{|u(x)-u(y)|}{|x-y|^{\gamma}}.$$

It is not clear to the authors in the present moment what is a suitable analogue to the $C^{0,\gamma}(\overline{\Omega})$ norm in a graph setting. For this reason we will consider uniform Sobolev inequalities for the $L^\infty(\Omega)$ norm.
In particular our aim is to investigate the circumstances in which there exists a constant $C>0$ such that, for any $n\in\mathbb{N}$ and $u:V_n\rightarrow \mathbb{R}$,
\begin{equation}
\label{eq:graphsobolev2}
\sup_{x\in V_n} |u(x)| \leq C \left( \Vert u \Vert_{L^p(V_n;\mu_n)} +  (\mathcal{GE}_n^p(u))^{1/p}\right).    
\end{equation}

As we observed in the $p<d$ case, proving the above Sobolev inequality for any fixed $n\in \mathbb{N}$ is trivial. This is because any two norms on a finite dimensional space must be Lipschitz equivalent. Therefore, in trying to prove~\eqref{eq:graphsobolev} it will be sufficient to assume $n$ is sufficiently large.

\begin{theorem}
\label{thm:graphsobolev2}
Let $p$ be a constant such that $p\in (d,\infty)$. There exists a constant $S_\infty(\Omega,\eta)>0$, depending only on $\Omega$ and $\eta$, such that the following holds. Assume that (i) for a $K\leq S_\infty(\Omega,\eta)$, (ii) and (iii) from Assumptions~\ref{assum} all hold. Moreover, assume there exists a $t>p$ such that $\aleph_\eta^t<+\infty$. Then the uniform graph Sobolev inequality in~\eqref{eq:graphsobolev2} holds if
$$ \left(\frac{\varepsilon_n^p\Lambda_n^+}{(\Lambda_n^-)^2} \right)_{n\in\mathbb{N}} \; \; \text{is bounded.} $$
Moreover, if we also assume (iv) from Assumptions~\ref{assum} holds, then the uniform graph Sobolev inequality in~\ref{eq:graphsobolev2} holds if and only if
$$ \left(\varepsilon_n|V_n|^{1/p}\right)_{n\in\mathbb{N}} \; \; \text{is bounded.}$$
\end{theorem}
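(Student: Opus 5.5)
The plan is to run the same three-part strategy as in Theorem~\ref{thm:graphsobolev}: mollify, invoke a classical embedding, and control the discrepancy combinatorially. Here the classical embedding is Morrey's inequality~\eqref{eq:Sobolev2}, and the combinatorial tool is the envelope bound of Lemma~\ref{lem:envelopebound} rather than the $k$-friendly bound. First, I take $\Sigma,\widetilde{\mu},\mathfrak{X}$ from Lemma~\ref{lem:ext1} and $\widetilde{V_n},\widetilde{\mu_n},\widetilde{T_n},\mathfrak{X}_n,\theta,B'$ from Lemma~\ref{lem:ext2}; this is allowed since $\aleph_\eta^t<+\infty$ with $t>p$. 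For $u:V_n\to\mathbb{R}$ I define
\[
v(x)=\int J_{\alpha\varepsilon_n}(x-y)\,\mathfrak{X}(u\circ T_n)(y)\,dy
\]
exactly as before. The argument of Proposition~\ref{prop:compactness} then gives $\Vert v\Vert_{W^{1,p}(\Omega)}\lesssim_n \Vert u\Vert_{L^p(V_n;\mu_n)}+(\mathcal{GE}_n^p(u))^{1/p}$. Morrey's inequality upgrades this to the same bound for $\Vert v\Vert_{L^\infty(\Omega)}$. The same Jensen argument also gives
\[
\Vert u\circ T_n-v\Vert_{L^p(\Omega)}\lesssim_n \varepsilon_n\bigl(\Vert u\Vert_{L^p(V_n;\mu_n)}+(\mathcal{GE}_n^p(u))^{1/p}\bigr).
\]

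Next, fix $x\in V_n$. I work on the extended graph $\widetilde{\mathbb{G}}_{n,s}$ with $s=c\varepsilon_n$, where $s\ge 2\theta K\varepsilon_n$ and $c$ is small enough (this fixes $S_\infty(\Omega,\eta)$, as in \eqref{eq:constants2}) that $\mathbf{1}_{|x-y|\le 2B's}$ is dominated by $\eta_{\varepsilon_n}$ via Remark~\ref{rem:simpletokernel}. The splitting is
\[
|u(x)|\le \frac{1}{|N(x)|+1}\sum_{y\in\{x\}\cup N(x)}|\mathfrak{X}_nu(y)|+\operatorname{osc}_{\widetilde{\mathbb{G}}_{n,s}}(\mathfrak{X}_nu)(x).
\]
For the oscillation term, $\widetilde{\mathbb{G}}_{n,2s}$ envelopes $\widetilde{\mathbb{G}}_{n,s}$. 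Lemma~\ref{lem:envelopebound} together with the degree bound of Lemma~\ref{lem:degreebound} applied in $\Sigma$ gives $|N(x)|\gtrsim_n \varepsilon_n^d/\widetilde\Lambda_n^+$. This holds for every $x\in V_n$, including those near $\partial\Omega$, because $B_s(x)\subset\Sigma$; this is exactly why the extension is needed. Writing the edge sum with weights $\widetilde{\mu_n}\ge\widetilde\Lambda_n^-$ yields
\[
\operatorname{osc}^p\lesssim_n \frac{\varepsilon_n^p\widetilde\Lambda_n^+}{(\widetilde\Lambda_n^-)^2}\cdot\frac{1}{\varepsilon_n^{d+p}}\sum_{x,y\in\widetilde V_n}\mathbf 1_{|x-y|\le 2s}|\mathfrak{X}_nu(x)-\mathfrak{X}_nu(y)|^p\widetilde{\mu_n}(x)\widetilde{\mu_n}(y).
\]
Parts (2) and (5) of Lemma~\ref{lem:ext2} then reduce the right-hand side to $\frac{\varepsilon_n^p\Lambda_n^+}{(\Lambda_n^-)^2}\bigl(\Vert u\Vert^p_{L^p}+\mathcal{GE}^p_n(u)\bigr)$, and the prefactor is bounded by hypothesis.

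The average term I expect to be the main obstacle, since it requires comparing a discrete neighbourhood average with $v$. The plan is to write the average as an integral: $\frac{1}{\widetilde{\mu_n}(B)}\int_{\widetilde{T_n}^{-1}(B)}|\mathfrak{X}(u\circ T_n)|\,d\widetilde\mu$, where $B=\overline{B}_s(x)\cap\widetilde V_n$, plus an error coming from the nonuniform weights. That error is handled by $\Lambda^\pm$ ratios or, more cleanly, by using the $\widetilde{\mu_n}$-weighted average together with the oscillation bound. The preimage set has Lebesgue measure $\gtrsim\varepsilon_n^d$. I then bound the integral by $\Vert v\Vert_\infty$ plus, by H\"older,
\[
\varepsilon_n^{-d/p}\Vert \mathfrak{X}(u\circ T_n)-v\Vert_{L^p}\lesssim_n \varepsilon_n^{1-d/p}(\cdots),
\]
which is bounded since $p>d$. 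Here I need the mollification and its $L^p$ comparison on a neighbourhood of $\Omega$ inside $\Sigma$, obtained by mollifying on a slightly smaller enlargement. Combining the pieces gives \eqref{eq:graphsobolev2}; the finitely many small $n$ are trivial.

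For the second part, the `if' direction follows because, under (iv), $\varepsilon_n^p\Lambda_n^+/(\Lambda_n^-)^2\simeq(\varepsilon_n|V_n|^{1/p})^p$. For `only if', I use the spike function $u_n=\mathbf 1_{x_n}$ exactly as in the proof of Theorem~\ref{thm:graphsobolev}. It has $\sup|u_n|=1$ and $\Vert u_n\Vert_{L^p}\lesssim_n|V_n|^{-1/p}$, and by \eqref{eq:GEpnbound} and Lemma~\ref{lem:weightbound} it has $(\mathcal{GE}^p_n(u_n))^{1/p}\lesssim_n|V_n|^{-1/p}\varepsilon_n^{-1}$. The ratio is therefore $\gtrsim_n \varepsilon_n|V_n|^{1/p}/(1+\varepsilon_n)\to\infty$ along a subsequence if the sequence is unbounded, which contradicts \eqref{eq:graphsobolev2}.
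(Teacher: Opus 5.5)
Your proposal is correct in outline but takes a different route for the pointwise comparison. The paper does not mollify $\mathfrak{X}(u\circ T_n)$; it mollifies the piecewise-constant lift $\mathfrak{X}_nu\circ\widetilde{T_n}$ of the \emph{discrete} extension. For $x\in\Omega$ one then has $u(T_n(x))=(\mathfrak{X}_nu\circ\widetilde{T_n})(x)$, while $v(x)$ is an average of values of $\mathfrak{X}_nu$ at vertices within $(l\alpha+2\theta K)\varepsilon_n$ of $T_n(x)$. Hence $|u\circ T_n(x)-v(x)|$ is bounded pointwise by an oscillation of $\mathfrak{X}_nu$. In the paper your ``average term'' therefore never arises, no $L^p$-to-pointwise conversion is needed, and $v$ and Morrey's inequality are used only on $\Omega$. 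The price is that the $W^{1,p}$ bound for this $v$ must go through part~(5) of Lemma~\ref{lem:ext2} rather than Lemma~\ref{lem:ext1} (the paper's footnote). Your route keeps the $v$ of Theorem~\ref{thm:graphsobolev} and pays with the averaging step, using $p>d$ a second time via $\varepsilon_n^{1-d/p}$. This works, but two points need tightening. First, use the $\widetilde{\mu_n}$-weighted average. The uniform average with a $\widetilde\Lambda_n^+/\widetilde\Lambda_n^-$ correction multiplies $\Vert v\Vert_{L^\infty}$ by a factor that need not be bounded under the first hypothesis alone, i.e.\ without (iv). Second, $\widetilde{T_n}^{-1}(B)$ leaves $\Omega$ near $\partial\Omega$. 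So ``mollifying on a slightly smaller enlargement'' requires Morrey's inequality on an intermediate domain $\Omega'$ with $\overline{\Omega}\subset\Omega'$, $\overline{\Omega'}\subset\Sigma$ and regular boundary; a regular superlevel set of a smooth cutoff, obtained via Sard's theorem, will do, but you should say so. A simpler fix avoids this entirely. Average only over $z\in A:=B_{r\varepsilon_n}(x)\cap\Omega$ with $r\varepsilon_n+K\varepsilon_n\le s$. Then $T_n(z)\in\{x\}\cup N(x)$, so $|u(x)|\le |A|^{-1}\int_A|u\circ T_n|\,dz+\operatorname{osc}$. The measure-density bound $|B_\rho(x)\cap\Omega|\gtrsim\rho^d$ for Lipschitz $\Omega$ then gives the same $\varepsilon_n^{1-d/p}$ estimate with $v$ needed only on $\Omega$. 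Your oscillation estimate and your `only if' argument coincide with the paper's.
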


\begin{remark}\label{rem:boundedsequences2}
    Similar to our observation in Remark~\ref{rem:boundedsequences}, if we assume (iv) from Assumptions~\ref{assum} holds, boundedness of $\left(\frac{\varepsilon_n^p\Lambda_n^+}{(\Lambda_n^-)^2}\right)_{n\in\mathbb{N}}$ is equivalent to boundedness of $\left(\varepsilon_n|V_n|^{1/p}\right)_{n\in\mathbb{N}}$. Without this assumption we simply have
    $$ \frac{\varepsilon_n^p\Lambda_n^+}{(\Lambda_n^-)^2} \geq \left(\varepsilon_n|V_n|^{1/p}\right)^p,$$
    which follows from $\Lambda_n^-\leq \frac{1}{|V_n|}\leq \Lambda_n^+$, as in the second part of Remark~\ref{rem:boundedsequences}.
\end{remark}

\begin{proof}[Proof of Theorem~\ref{thm:graphsobolev2}]
We will prove the first part of the theorem and assume $\left(\frac{\varepsilon_n^p\Lambda_n^+}{(\Lambda_n^-)^2}\right)_{n\in\mathbb{N}}$ is bounded. We begin by specifying the constants we use throughout this proof. Take $B,l,l_1,l_2$ and the function $J$ as within the proof of Proposition~\ref{prop:compactness}. Take $b$ as within Remark~\ref{rem:simpletokernel}. We shall also apply Lemma~\ref{lem:ext1} to introduce the extended domain $\Sigma$ with probability measure $\widetilde{\mu}$, the constant $\kappa$, the constant $\delta>0$ and the extension operator $\mathfrak{X}:L^p(\Omega;\mu)\rightarrow L^p(\Sigma,\widetilde{\mu})$. Moreover, we apply Lemma~\ref{lem:ext2} to introduce the constants $\theta \geq 1$ and $B'\geq 1$ and, for each $n\in\mathbb{N}$, the vertex set $\widetilde{V_n}$, the discrete probability measure $\widetilde{\mu_n}$ on $\widetilde{V_n}$, the constant $\widetilde{\Lambda_n}^+$, the Borel map $\widetilde{T_n}:\Sigma\rightarrow\Sigma$ and the extension operator $\mathfrak{X}_n:L^p(V_n;\mu_n)\rightarrow L^p(\widetilde{V_n},\widetilde{\mu}_n)$. The proposition and lemmas just mentioned can be used since $\aleph_\eta^t<+\infty$. Moreover, we define 
$$  \alpha := \frac{1}{4lB'b} \; \; \text{and} \; \; S_\infty(\Omega,\eta):= \frac{1}{8B'b\theta}.$$ 

Let $n\in \mathbb{N}$ and $u:V_n\rightarrow \mathbb{R}$ and define $v\in L^p(\Omega)$ by
$$ v(x) := \int_{\mathbb{R}^d} J_{\alpha\varepsilon_n}(x-y) (\mathfrak{X}_nu\circ \widetilde{T_n})(y) \, dy .$$
The above integral is well defined assuming $l\alpha\varepsilon_n\leq \delta$, which is satisfied for $n$ sufficiently large. For the rest of the proof we will assume this is indeed the case as proving the inequality in \eqref{eq:graphsobolev2} for any fixed $n$ is trivial.

Using the same argument \footnote{There is a subtle difference between the context here and that of Proposition~\ref{prop:compactness} where we wish to apply to the same argument. We have defined $v$ slightly differently by replacing $\mathfrak{X}(u\circ T_n)$ with $\mathfrak{X}_nu\circ T_n$. None the less, one may apply the same reasoning to reach our desired conclusion, taking care to check that our choice of constants $\alpha$ and $S_\infty(\Omega,\eta)$ are valid for the same computation to hold.} as in the proof of Proposition~\ref{prop:compactness} we deduce that$$ \Vert v \Vert_{W^{1,p}(\Omega)}\lesssim_n \Vert u \Vert_{L^p(V_n;\mu_n)}+ (\mathcal{GE}_n^p(u))^{1/p}. $$
Then, by the classical Sobolev inequality in \eqref{eq:Sobolev2},
\begin{equation*}
\Vert v \Vert_{L^\infty(\Omega)}\lesssim_n  \Vert u \Vert_{L^p(V_n;\mu_n)}+ (\mathcal{GE}_n^p(u))^{1/p}.
\end{equation*}

The next step is to bound $\Vert v-u\circ T_n \Vert_{L^{\infty}(\Omega)}$. Let $x\in\Omega$ and compute
\begin{align*}
    | u\circ T_n(x)-v(x)| &= \left| \int_{\mathbb{R}^d} J_{\alpha\varepsilon_n}(x-y)[(\mathfrak{X}_nu\circ \widetilde{T_n})(x)-(\mathfrak{X}_nu\circ \widetilde{T_n})(y)]dy \right| \\
     &\leq \sup_{y\in B_{l\alpha\varepsilon_n}(x)} |(\mathfrak{X}_nu\circ \widetilde{T_n})(x)-(\mathfrak{X}_nu\circ \widetilde{T_n})(y)|.
\end{align*}
Next we observe, for $y\in B_{l\alpha\varepsilon_n}(x)$,
$$ |\widetilde{T_n}(x)-\widetilde{T_n}(y)|\leq |x-y|+|\widetilde{T_n}(x)-x|+|\widetilde{T_n}(y)-y| \leq (l\alpha+2\theta K)\varepsilon_n,$$
where $\theta$ is the constant from part (4) of Lemma~\ref{lem:ext2}. Set $s:=l\alpha+2\theta K$ and define $\widetilde{\mathbb{G}}_{n,s\varepsilon_n}:=(\widetilde{V_n},\widetilde{E}_{n,s\varepsilon_n})$ such that $yz\in \widetilde{E}_{n,s\varepsilon_n}$ if and only if $|y-z|\leq s\varepsilon_n$. We then observe
\begin{align*}
|u\circ T_n(x)-v(x)|&\leq  \operatorname{osc}_{\widetilde{\mathbb{G}}_{n,s\varepsilon_n}}(\mathfrak{X}_nu)(\widetilde{T_n}(x)) 
\leq  \left(\frac{2^p}{|N_{\widetilde{\mathbb{G}}_{n,s\varepsilon_n}}(x)|} \sum_{yz\in \widetilde{E}_{n,2s\varepsilon_n}} |\mathfrak{X}_nu(y)-\mathfrak{X}_nu(z)|^p\right)^{1/p} .
\end{align*}
The second bound holds by Lemma~\ref{lem:envelopebound} since $\widetilde{\mathbb{G}}_{n,2s\varepsilon_n}$ envelopes $\widetilde{\mathbb{G}}_{n,s\varepsilon_n}$ (see Definition~\ref{def:envelopes}). In the bound that follows we apply Lemma~\ref{lem:degreebound}, replacing $\Omega$ with $\Sigma$, $V_n$ with $\widetilde{V_n}$, $T_n$ with $\widetilde{T_n}$, $K$ with $\theta K$, $D$ with $\kappa$, $\Lambda_n^+$ with $\widetilde{\Lambda_n^+}$ and $\mathbb{G}_{n,s\varepsilon_n}$ with $\widetilde{\mathbb{G}}_{n,s\varepsilon_n}$. For this to be valid we require $s\geq 2\theta K$, which indeed is the case. Moreover, $\widetilde{T_n}(x)=T_n(x)\in \Omega$ and, for $n$ sufficiently large, $s\varepsilon_n\leq \delta$, so, by part (1) of Lemma~\ref{lem:ext2}, $B_{s\varepsilon_n}(\widetilde{T_n}(x))\subset \Sigma$. Therefore
$$ 2|N_{\widetilde{\mathbb{G}}_{n,s\varepsilon_n}}(x)|\geq 1+ |N_{\widetilde{\mathbb{G}}_{n,s\varepsilon_n}}(x)| \geq \frac{\pi^{d/2}s^d\varepsilon_n^d}{\widetilde{\Lambda}_n^+\kappa2^d\Gamma(\frac{d}{2}+1)}.$$

Putting this together with our previous bound we get 
\begin{align}
|u\circ T_n(x)-v(x)|^p &\lesssim_n \frac{\widetilde{\Lambda}_n^+}{\varepsilon_n^d} \sum_{yz\in \widetilde{E}_{n,2s\varepsilon_n}} |\mathfrak{X}_nu(y)-\mathfrak{X}_nu(z)|^p \notag\\
& \lesssim_n \frac{\widetilde{\Lambda}_n^+}{\varepsilon_n^d(\widetilde{\Lambda}_n^-)^2} \sum_{yz\in E_{n,2s\varepsilon_n}} |\mathfrak{X}_nu(y)-\mathfrak{X}_nu(z)|^p \widetilde{\mu_n}(y)\widetilde{\mu_n}(z) \notag\\
& \lesssim_n \frac{\varepsilon_n^p\Lambda_n^+}{(\Lambda_n^-)^2}\cdot \frac{1}{\varepsilon_n^{d+p}}  \sum_{yz\in E_{n,2s\varepsilon_n}} |\mathfrak{X}_nu(y)-\mathfrak{X}_nu(z)|^p \widetilde{\mu_n}(y)\widetilde{\mu_n}(z) \label{eq:boundthree}\\
& \lesssim_n \frac{1}{\varepsilon_n^{d+p}}  \sum_{yz\in E_{n,2s\varepsilon_n}} |\mathfrak{X}_nu(y)-\mathfrak{X}_nu(z)|^p \widetilde{\mu_n}(y)\widetilde{\mu_n}(z) \notag\\
& \lesssim_n \Vert u \Vert^p_{L^p(V_n;\mu_n)} +  \frac{1}{\varepsilon_n^{d+p}} \sum_{y\in V_n} \sum_{z\in V_n} \mathbf{1}_{|y-z|\leq (2B'l\alpha+4B'\theta K)\varepsilon_n} |u(y)-u(z)|^p \mu_n(y)\mu_n(z) \notag\\
& \lesssim_n \Vert u \Vert^p_{L^p(V_n;\mu_n)} + \frac{1}{\varepsilon_n^{p}} \sum_{y\in V_n} \sum_{z\in V_n} \eta_{ (2bB'l\alpha+4bB'\theta K)\varepsilon_n}(|y-z|) |u(y)-u(z)|^p \mu_n(y)\mu_n(z) \notag\\
& \lesssim_n \Vert u \Vert^p_{L^p(V_n;\mu_n)} +  \mathcal{GE}_n^p(u). \label{eq:boundfour}
\end{align}
In the above calculation the second inequality follows from $\frac{\widetilde{\mu_n}}{\tilde\Lambda_n^-}\geq 1$. The third holds according to parts~(2,ii) and~(2,iii) of Lemma~\ref{lem:ext2}. The fourth inequality follows since we have assumed $\left(\frac{\varepsilon_n^p\Lambda_n^+}{(\Lambda_n^-)^2}\right)_{n\in\mathbb{N}}$ is bounded. The fifth holds by part~(5) of Lemma~\ref{lem:ext2}, where we remember that $s=l\alpha+2\theta K$. The sixth inequality follows from the observation in Remark~\ref{rem:simpletokernel}, with $b$ as in that remark. The final line holds by our choice of $\alpha$ and $S_\infty(\Omega,\eta)$. 

We can now piece together our bounds to conclude
$$ \sup_{x\in V_n} |u(x)| = \sup_{x\in \Omega} |u\circ T_n(x)| \leq \sup_{x\in \Omega} |v(x)| + \sup_{x\in \Omega} |v(x) -u\circ T_n(x)| \lesssim_n \Vert u \Vert_{L^p(V_n;\mu_n)}+\left(\mathcal{GE}_n^p(u)\right)^{1/p}.$$

Next we shall handle the second part of the theorem. From Remark~\ref{rem:boundedsequences} we know, assuming that (iv) from Assumptions~\ref{assum} holds, that boundedness of $\left(\frac{\varepsilon_n^p\Lambda_n^+}{(\Lambda_n^-)^2}\right)_{n\in\mathbb{N}}$ is equivalent to boundedness of $\left(\varepsilon_n|V_n|^{1/p}\right)_{n\in\mathbb{N}}$. Hence the `if' part of the second claim of the theorem follows from the first claim that we have just proven. It is left to prove the `only if' direction. We will mimic the last part of our proof of Theorem~\ref{thm:graphsobolev}.

For a proof by contradiction we assume that $\left(\varepsilon_n |V_n|^{1/p}\right)_{n\in\mathbb{N}}$ is unbounded. By passing to a subsequence we can assume $\varepsilon_n |V_n|^{1/p}\rightarrow \infty $ as $n\rightarrow \infty$. For each $n\in \mathbb{N}$ select a vertex $x_n\in V_n$ and define a function $u_n:V_n\rightarrow \mathbb{R}$ by $u_n(x_n):=1$ and $u_n(x):=0$ for $x\in V_n\setminus\{x_n\}$. 
Then we compute $\Vert u_n\Vert_{L^p(V_n;\mu_n)} \lesssim_n |V_n|^{-1/p}$ (by part (iv) of Assumptions~\ref{assum}), $\Vert u_n\Vert_{L^\infty(V_n;\mu_n)}=1$ and, by \eqref{eq:GEpnbound}, $\mathcal{GE}_n^p(u_n) \lesssim_n \frac{1}{|V_n|\varepsilon_n^p}$. 

Thus we have
\begin{align*}
    \frac{\Vert u_n \Vert_{L^\infty(V_n;\mu_n)}}{\Vert u_n \Vert_{L^p(V_n;\mu_n)} + (\mathcal{GE}_n^p(u_n))^{1/p}} \gtrsim_n \frac{\varepsilon_n |V_n|^{1/p}}{1+\varepsilon_n}\rightarrow \infty \; \; \text{as} \; \; n\rightarrow\infty.
\end{align*}
Hence the Sobolev inequality in \eqref{eq:graphsobolev2} cannot hold. The theorem is now proved.
\end{proof}

Henceforth, in this subsection, we shall fix $S_\infty(\Omega,\eta)$ as specified in the proof of Theorem~\ref{thm:graphsobolev2} above. As we did in the previous subsection for the case $p<d$, we can produce a number of compactness results in the case $p>d$.

\begin{corollary}
\label{cor:compactness4}
    Let $ p\in (d,\infty)$. Assume that (i) for a $K\leq S_\infty(\Omega,\eta)$, (ii) and (iii) from Assumptions~\ref{assum} all hold. Additionally assume there exists $t>p$ such that $\aleph_\eta^t<+\infty$ and
$$ \left(\frac{\varepsilon_n^p\Lambda_n^+}{(\Lambda_n^-)^2} \right)_{n\in \mathbb{N}}\; \; \text{is bounded.}$$

Let $(u_n)_{n\in\mathbb{N}}$ be a sequence of functions $u_n:V_n\rightarrow \mathbb{R}$ such that $(\Vert u_n \Vert_{L^p(V_n;\mu_n)})_{n\in\mathbb{N}}$ and $(\mathcal{GE}_n^p(u_n))_{n\in\mathbb{N}}$ are bounded. Then, for any $r\in [1,+\infty)$, the sequence $\big((u_n,\mu_n)\big)_{n\in \mathbb{N}}$ is relatively compact in $TL^r(\Omega)$. 
\end{corollary}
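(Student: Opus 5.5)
This follows the template of Corollary~\ref{cor:compactness1}: combine $TL^p$ relative compactness with a uniform $L^\infty$ bound coming from Theorem~\ref{thm:graphsobolev2}.

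\emph{Step 1: $TL^p$ compactness.} Since $\theta\geq 1$ and $B'\geq 1$, we have $S_\infty(\Omega,\eta)=\frac{1}{8B'b\theta}\leq \frac{1}{4b}=H(\eta)$. Also $\aleph_\eta^t<+\infty$ with $t>p$. Hence Proposition~\ref{prop:compactness} applies and shows that $\big((u_n,\mu_n)\big)_{n\in\mathbb{N}}$ is relatively compact in $TL^p(\Omega)$.

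\emph{Step 2: uniform $L^\infty$ bound.} The hypotheses of the first part of Theorem~\ref{thm:graphsobolev2} are exactly those assumed here, so \eqref{eq:graphsobolev2} holds with a constant independent of $n$. It follows that $\sup_n \sup_{x\in V_n}|u_n(x)|<\infty$. Because each $\mu_n$ is a probability measure, this gives, for every $r\in[1,\infty)$,
$$\sup_n \Vert u_n\Vert_{L^{r}(V_n;\mu_n)}\leq \sup_n\sup_{x\in V_n}|u_n(x)| <\infty .$$

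\emph{Step 3: upgrade to $TL^r$.} Fix $r\in[1,\infty)$. If $r\leq p$, then $TL^p$ convergence implies $TL^r$ convergence by H\"older/Jensen on the coupling integrals, and the $d_r$-part of the metric is controlled since $\Omega$ is bounded. So assume $r>p$ and choose $r'>r$. Take any subsequence; by Step 1 a further subsequence converges in $TL^p(\Omega)$ to some $(u,\mu)$. We then apply Lemma~\ref{lem:tlpholder} with exponents $p<r'$. The lemma needs $\mu_n\to\mu$ in $(\mathcal{P}_{r'}(\Omega),d_{r'})$, which holds because $d_{r'}(\mu_n,\mu)\leq d_\infty(\mu_n,\mu)\leq K\varepsilon_n\to 0$ by (i). It also needs $\{\Vert u_n\Vert_{L^{r'}(V_n;\mu_n)}\}$ bounded, which is Step 2. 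The lemma then gives convergence in $TL^r(\Omega)$, since $r<r'$, and hence relative compactness in $TL^r(\Omega)$.

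Essentially all the work sits in Theorem~\ref{thm:graphsobolev2}, so this corollary has no substantial obstacle. The only points requiring care are checking the constant comparison $S_\infty\leq H(\eta)$ and the passage through an auxiliary exponent $r'>r$, which is needed because Lemma~\ref{lem:tlpholder} only yields exponents strictly below the bounded one.
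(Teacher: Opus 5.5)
Your proposal is correct and follows the paper's proof: $TL^p$ relative compactness from Proposition~\ref{prop:compactness}, then a uniform sup-norm bound from Theorem~\ref{thm:graphsobolev2}, then the upgrade to $TL^r$ via Lemma~\ref{lem:tlpholder}. You are slightly more careful than the paper, which leaves implicit the comparison $S_\infty(\Omega,\eta)\leq H(\eta)$, the auxiliary exponent $r'>r$, and the check $d_{r'}(\mu_n,\mu)\leq K\varepsilon_n\to 0$ that the lemma needs.
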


\begin{proof}
 Firstly we can apply Proposition~\ref{prop:compactness} to note $\big((u_n,\mu_n)\big)_{n\in \mathbb{N}}$ is relatively compact in $TL^p(\Omega)$. Next we can apply Theorem~\ref{thm:graphsobolev2} to observe that \eqref{eq:graphsobolev2} holds and thus $(\Vert u_n \Vert_{L^{\infty}(V_n;\mu_n)})_{n\in \mathbb{N}}$ is bounded. We can use this proposition and theorem since $\aleph_\eta^t<+\infty$. Then, as a consequence of Lemma~\ref{lem:tlpholder}, for any $r\in [1,+\infty)$, the sequence $\big((u_n,\mu_n)\big)_{n\in \mathbb{N}}$ is relatively compact in $TL^r(\Omega)$.     
\end{proof}

We also have a formulation of the above Corollary when we assume (iv) from Assumptions~\ref{assum}.

\begin{corollary}
\label{cor:compactness6}
Let $p$ be a constant such that $ p\in (d,\infty)$. Assume that (i) for a $K\leq S_\infty(\Omega,\eta)$, (ii), (iii) and (iv) from Assumptions~\ref{assum} all hold. Additionally assume there exists $t>p$ such that $\aleph_\eta^t<+\infty$ and
$$ (\varepsilon_n|V_n|^{1/p})_{n\in\mathbb{N}} \; \; \text{is bounded} .$$

Let $(u_n)_{n\in \mathbb{N}}$ be a sequence of functions $u_n:V_n\rightarrow \mathbb{R}$ such that $(\Vert u_n \Vert_{L^p(V_n;\mu_n)})_{n\in\mathbb{N}}$ and $(\mathcal{GE}_n^p(u_n))_{n\in\mathbb{N}}$ are bounded. Then, for any $r\in \left[1,\frac{p}{1-p\gamma}\right)$, the sequence $\big( (u_n,\mu_n)\big)_{n\in \mathbb{N}}$ is relatively compact in $TL^r(\Omega)$. 
\end{corollary}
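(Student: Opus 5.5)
The plan is to reduce this statement to Corollary~\ref{cor:compactness4}, which already gives relative compactness in $TL^r(\Omega)$ for every $r\in[1,+\infty)$. Whatever the precise value of the exponent $\frac{p}{1-p\gamma}$ in the statement, the interval $\left[1,\frac{p}{1-p\gamma}\right)$ is a subset of $[1,+\infty)$ (empty if the exponent is at most $1$), so it is covered by that conclusion. The only thing to check is that the hypotheses of Corollary~\ref{cor:compactness4} follow from those stated here.

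First, most hypotheses transfer verbatim. We have (i) for some $K\leq S_\infty(\Omega,\eta)$, together with (ii) and (iii) from Assumptions~\ref{assum}, and $\aleph_\eta^t<+\infty$ for some $t>p$. The sequences $(\Vert u_n\Vert_{L^p(V_n;\mu_n)})_{n\in\mathbb{N}}$ and $(\mathcal{GE}_n^p(u_n))_{n\in\mathbb{N}}$ are bounded by assumption.

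Second, we must replace the assumed boundedness of $(\varepsilon_n|V_n|^{1/p})_{n\in\mathbb{N}}$ by boundedness of $\left(\frac{\varepsilon_n^p\Lambda_n^+}{(\Lambda_n^-)^2}\right)_{n\in\mathbb{N}}$. This is where (iv) is used, as noted in Remark~\ref{rem:boundedsequences2}. By (iv) we have $\Lambda_n^+\leq \frac{\mathfrak{D}}{|V_n|}$ and $\Lambda_n^-\geq \frac{1}{\mathfrak{D}|V_n|}$, hence
$$\frac{\varepsilon_n^p\Lambda_n^+}{(\Lambda_n^-)^2}\leq \mathfrak{D}^3\varepsilon_n^p|V_n| = \mathfrak{D}^3\left(\varepsilon_n|V_n|^{1/p}\right)^p,$$
and the right-hand side is bounded.

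With this, Corollary~\ref{cor:compactness4} applies. It yields that $\big((u_n,\mu_n)\big)_{n\in\mathbb{N}}$ is relatively compact in $TL^r(\Omega)$ for every $r\in[1,+\infty)$, and in particular for every $r$ in the range stated. Recall that its proof combines Proposition~\ref{prop:compactness}, the uniform $L^\infty$ bound from Theorem~\ref{thm:graphsobolev2}, and Lemma~\ref{lem:tlpholder}.

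There is no real obstacle here. The only point needing care is the bookkeeping of constants: $S_\infty(\Omega,\eta)\leq H(\eta)$, since $\theta, B'\geq 1$ give $S_\infty(\Omega,\eta)=\frac{1}{8B'b\theta}\leq \frac{1}{4b}$. This ensures that Proposition~\ref{prop:compactness} is applicable inside Corollary~\ref{cor:compactness4}.
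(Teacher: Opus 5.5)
Your proposal is correct and is exactly the argument the paper intends: the corollary is stated without proof as the (iv)-version of Corollary~\ref{cor:compactness4}, and your bound $\frac{\varepsilon_n^p\Lambda_n^+}{(\Lambda_n^-)^2}\leq\mathfrak{D}^3\left(\varepsilon_n|V_n|^{1/p}\right)^p$ is the direction of Remark~\ref{rem:boundedsequences2} that is needed. Your observations settle the only bookkeeping points: the stated range of $r$ lies inside $[1,+\infty)$ whatever its value (its $\gamma$ appears to be a leftover from Corollary~\ref{cor:compactness2}), and $S_\infty(\Omega,\eta)\leq H(\eta)$.
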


We can achieve a stronger compactness result under the assumption  a faster convergence rate to zero for the parameter sequence $(\varepsilon_n)_{n\in \mathbb{N}}$. The proof follows a similar sequence of steps as in the proof of Theorem~\ref{thm:graphsobolev2}, therefore we will only sketch the details.

\begin{proposition}
\label{prop:compactness5}
Let $p$ be a constant such that $ p\in (d,\infty)$. Assume that (i) for a $K\leq S_\infty(\Omega,\eta)$, (ii) and (iii) from Assumptions~\ref{assum} all hold. Additionally assume there exists $t>p$ such that $\aleph_\eta^t<+\infty$ and
$$ \frac{\varepsilon_n^p\Lambda_n^+}{(\Lambda_n^-)^2} \rightarrow 0 \; \; \text{as} \; \; n\rightarrow \infty .$$

Let $\left(u_n\right)_{n\in\mathbb{N}}$ be a sequence of functions $u_n:V_n\rightarrow \mathbb{R}$ such that $\{\Vert u_n \Vert_{L^p(V_n;\mu_n)}\}_{n\in\mathbb{N}}$ and $\{\mathcal{GE}_n^p(u_n)\}_{n\in\mathbb{N}}$ are bounded. Then, after possibly passing to a subsequence, there exists a $u\in C^{0,1-\frac{d}{p}}(\overline{\Omega})$ such that
$$ \sup_{x\in \Omega} |u_n\circ T_n(x) -u(x)|\rightarrow 0 \; \; \text{as} \; \; n\rightarrow \infty .$$
\end{proposition}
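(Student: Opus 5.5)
We follow the proof of Theorem~\ref{thm:graphsobolev2} and keep track of the small factor $\frac{\varepsilon_n^p\Lambda_n^+}{(\Lambda_n^-)^2}$, in the same way that Proposition~\ref{prop:compactness3} was obtained from Theorem~\ref{thm:graphsobolev}. Fix the constants $\alpha$ and $S_\infty(\Omega,\eta)$ from that proof, and for each $n$ set
$$ v_n(x):=\int_{\mathbb{R}^d}J_{\alpha\varepsilon_n}(x-y)\,(\mathfrak{X}_nu_n\circ\widetilde{T_n})(y)\,dy, $$
which is well defined once $l\alpha\varepsilon_n\leq\delta$. The argument of Proposition~\ref{prop:compactness}, as adapted in the footnote of the proof of Theorem~\ref{thm:graphsobolev2}, gives $\Vert v_n\Vert_{W^{1,p}(\Omega)}\lesssim_n \Vert u_n\Vert_{L^p(V_n;\mu_n)}+(\mathcal{GE}_n^p(u_n))^{1/p}$. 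By the hypotheses this is bounded uniformly in $n$.

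\textbf{Step 1: compactness of $(v_n)$.} Morrey's inequality \eqref{eq:Sobolev2} shows that each $v_n$ has a representative in $C^{0,\gamma}(\overline{\Omega})$, $\gamma=1-\frac dp$, with $\Vert v_n\Vert_{C^{0,\gamma}(\overline{\Omega})}$ bounded uniformly in $n$. So the family is uniformly bounded and equicontinuous on the compact set $\overline{\Omega}$. By Arzel\`a--Ascoli, after passing to a subsequence, $v_n\to u$ uniformly on $\overline{\Omega}$ for some continuous $u$. The H\"older seminorm bound passes to the uniform limit, since $|u(x)-u(y)|=\lim_n|v_n(x)-v_n(y)|\leq C|x-y|^\gamma$, so $u\in C^{0,1-\frac dp}(\overline{\Omega})$.

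\textbf{Step 2: $\sup_{x\in\Omega}|u_n\circ T_n(x)-v_n(x)|\to0$.} Repeat the pointwise chain from the proof of Theorem~\ref{thm:graphsobolev2} for $x\in\Omega$. First bound $|u_n\circ T_n(x)-v_n(x)|$ by the oscillation of $\mathfrak{X}_nu_n$ on $\widetilde{\mathbb{G}}_{n,s\varepsilon_n}$ at $T_n(x)$. Then apply Lemma~\ref{lem:envelopebound}, using that $\widetilde{\mathbb{G}}_{n,2s\varepsilon_n}$ envelopes $\widetilde{\mathbb{G}}_{n,s\varepsilon_n}$, together with the degree bound of Lemma~\ref{lem:degreebound} on the extended graph. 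Stop at \eqref{eq:boundthree} rather than absorbing the prefactor as was done there, and continue with parts (2) and (5) of Lemma~\ref{lem:ext2}, Remark~\ref{rem:simpletokernel} and \eqref{eq:aepsilon}. This yields
$$ \sup_{x\in\Omega}|u_n\circ T_n(x)-v_n(x)|^p\lesssim_n \frac{\varepsilon_n^p\Lambda_n^+}{(\Lambda_n^-)^2}\Big(\Vert u_n\Vert^p_{L^p(V_n;\mu_n)}+\mathcal{GE}_n^p(u_n)\Big). $$
The bracket is bounded and the prefactor tends to zero, so the left side tends to zero.

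\textbf{Step 3: conclusion.} Steps 1 and 2 together with the triangle inequality give $\sup_{x\in\Omega}|u_n\circ T_n(x)-u(x)|\to0$ along the subsequence.

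The main obstacle is Step 2. One has to check that the implicit constants in the chain \eqref{eq:boundthree}–\eqref{eq:boundfour} do not depend on $x$ or on $n$. In particular, the lower bound on the degree at $T_n(x)$ must hold uniformly even for $x$ near $\partial\Omega$. This is exactly where the extended graph $\widetilde{V_n}\subset\Sigma$ and the condition $B_{s\varepsilon_n}(T_n(x))\subset\Sigma$ for $n$ large are used, and this uniformity is what makes the bound hold on all of $\Omega$ rather than only on compact subsets.
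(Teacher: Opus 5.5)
Your proposal is correct and follows essentially the same route as the paper: the same mollified $v_n$ built from $\mathfrak{X}_nu_n\circ\widetilde{T_n}$, compactness of $(v_n)$ in $C^0(\overline{\Omega})$ (the paper cites the Rellich--Kondrachov embedding for $p>d$, and your Morrey-plus-Arzel\`a--Ascoli argument is the standard proof of that embedding), and the pointwise oscillation chain stopped at \eqref{eq:boundthree}, so that the vanishing prefactor $\frac{\varepsilon_n^p\Lambda_n^+}{(\Lambda_n^-)^2}$ forces $\sup_{\Omega}|u_n\circ T_n-v_n|\to0$. Your form of that bound, with $p$-th powers on both sides, is the precise form of what the chain yields; the paper's displayed estimate omits the $1/p$ exponent on the prefactor, which is harmless because the prefactor tends to zero either way.
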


\begin{proof}
    As previously mentioned we will just sketch the steps.
\begin{enumerate}
    \item We take the same constants $\alpha$ and $S_\infty(\Omega,\eta)$ from the proof of Theorem~\ref{thm:graphsobolev2} and define a sequence $(v_n)_{n\in\mathbb{N}}$ in $L^p(\Omega)$ by
    $$ v_n(x) := \int_{\mathbb{R}^d} J_{\alpha\varepsilon_n}(x-y) (\mathfrak{X}_nu\circ \widetilde{T_n})(y) \, dy .$$ 
    \item Using the same argument as in the proof of Theorem~\ref{thm:graphsobolev2} (i.e. the same argument as in the proof of Proposition~\ref{prop:compactness}) we show that $(v_n)_{n\in\mathbb{N}}$ is bounded in $W^{1,p}(\Omega)$ and by the classical Rellich--Kondrachov Theorem~\cite[Part III of Theorem 6.3]{AdamsFournier03} is relatively compact in $L^\infty(\Omega)$. We can pass to a subsequence to assume, without loss of generality, that there exists a $u\in C^{0,1-\frac{d}{p}}(\overline{\Omega})$ such that $v_n\rightarrow u$ in $L^{\infty}(\Omega)$.
    \item Using the same argument in Theorem~\ref{thm:graphsobolev2} (in particular~\eqref{eq:boundfour}, where we keep the explicit constant from~\eqref{eq:boundthree} rather than absorbing it into its upper bound), we justify that
    \begin{align*}
        \Vert v_n - u_n\circ T_n\Vert_{L^\infty(\Omega)}&\lesssim_n  \frac{\varepsilon_n^p\Lambda_n^+}{(\Lambda_n^-)^2} \left(  \Vert u_n \Vert_{L^p(V_n;\mu_n)} +\left( \mathcal{GE}_n^p(u_n) \right)^{1/p} \right) \rightarrow 0 \; \; \text{as} \; \; n\rightarrow \infty .
    \end{align*}
    \item Putting parts 2 and 3 together we get
    $$ \sup_{x\in \Omega}| u_n\circ T_n(x) -u(x)| \rightarrow 0 \; \; \text{as} \; \; n\rightarrow \infty.$$
\end{enumerate}
\end{proof}

\section{Discussion}
\label{sec:discussion}

In this final section we will discuss some consequences of the theoretical results derived in this paper alongside some related topics for possible future research.

\subsection{Connectivity of geometric graphs}\label{sec:connectivity}

Throughout this paper the concentrating parameter sequence $(\varepsilon_n)_{n\in\mathbb{N}}$ plays a pivotal role. The discrete variation $\mathcal{GE}_n^p$ cannot be useful as a tool for regularisation if the underlying graph $\mathbb{G}_{n,\varepsilon_n}$ is disconnected.

There is a close relationship between the concentrating parameter $\varepsilon_n$ and the connectivity of $\mathbb{G}_{n,\varepsilon_n}$. For example, if $\Omega$ is bi-Lipschitz homeomorphic to a convex set and (i) from Assumptions~\ref{assum} holds for $K\leq\frac{1}{C(\Omega)}$, where $C(\Omega)$ is the constant from Corollary~\ref{cor:connect2}, then by that same corollary the graph $\mathbb{G}_{n,\varepsilon_n}$ is connected. As we mentioned in Remark~\ref{rem:conndinfty}, this result suggests a close relationship between the connectivity of geometric graphs and the $\infty$-Wasserstein distance. A further investigation of this relationship is an interesting direction for future research.

In foundational works such as \cite{garcia2016continuum, Dejan2019}, the authors assume (i$^*$) from Assumptions~\ref{assum} holds instead of (i). This guarantees that the concentrating parameter $\varepsilon_n$, for large $n\in\mathbb{N}$, is much larger than the connectivity threshold $s^*(V_n)$. 

For certain applications the assumption (i$^*$) has advantages. For example, in the recent work~\cite{trillos2025} the authors reconstruct the eigenvalues and eigenfunctions of the Laplace--Beltrami operator on an unknown manifold with an unknown probability measure from finitely many sampled points. They find that a weighted graph Laplacian constructed from the samples with a concentrating parameter $\varepsilon_n$ satisfying (i$^*$) gives the best error estimate possible (in the best-worst-case sense of a minimax error, see~\cite[Equation (1.8)]{trillos2025}). Moreover, as we shall illustrate in Appendix A by constructing a counterexample, (i) alone is not sufficient to conclude $\Gamma$-convergence of the discrete variations $\mathcal{GE}_n^p$ towards an isotropic limit. 

On the other hand, some numerical experiments suggest that the performance when using $\mathcal{GE}_n^p$ as a regularizer, is best (in some sense) when $\varepsilon_n$ is chosen as close to $s^*(V_n)$ as possible. For example in \cite[Section 6]{Dejan2019}, $\mathcal{GE}_n^p$ is used as a regularizer in a data-labelling problem and the performance is measured in terms of the frequency of mislabelling. Unfortunately this choice of $\varepsilon_n$ falls outside of the scope of (i$^*$). Surprisingly, in the Poisson-point-cloud setting (see \cite[Theorem 3.3]{Braides2023}) and for $p=2$, an assumption weaker than (i) is actually sufficient to observe $\Gamma$-convergence. The language and notation used in~\cite{Braides2023} is naturally different to that used in our paper, but the reader should find the following heuristic useful. What we call the concentrating parameter `$\varepsilon_n$' corresponds to the product `$\varepsilon\lambda$' of two parameters in that paper. The authors require $\varepsilon\lambda$ to be large enough for percolation to occur, as described in \cite[Theorem 2.6]{Braides2023}, which is strictly smaller than what is required for the point cloud to be connected.  

Thus, there is need to further understand the consequences of different choices of the concentrating parameter $\varepsilon_n$ on analytical properties. The Sobolev inequalities help to bridge this gap in the literature by providing quantitative estimates on the regularising effects of the discrete variations $\mathcal{GE}_n^p$ close to the connectivity threshold, even when the requirements for $\Gamma$-convergence fail.

\subsection{Extension results}\label{sec:extension}

In Lemmas~\ref{lem:ext1} and~\ref{lem:ext2} we have introduced two new extension results. Lemma~\ref{lem:ext1} introduces an extension operator that preserves the control of a collection of nonlocal variations in a uniform way. Results similar to Lemma~\ref{lem:ext1} exist in the literature; for example, in the proof of \cite[Lemma 4.4]{garcia2016continuum} an extension is constructed which applies when $p=1$ and the domain has $C^2$ boundary. Our result holds for $p\geq 1$ and for domains with (weak) Lipschitz boundary. Lemma~\ref{lem:ext2} is a discretized version of Lemma~\ref{lem:ext1} which states we can extend both the graphs and functions on said graphs in a way that preserves the control on the discrete variations, again uniformly. In order to create our new discretized graphs we utilize Lemma~\ref{lem:discretization} which states we can construct a particularly nice discretization for the domain of the extended function in Lemma~\ref{lem:ext1}. 

These results are applied in this paper a few times in two different ways.
\begin{enumerate}
    \item The first is to construct mollifications. In order to approximate a function we mollify it by computing an average in a small local neighbourhood of each point in its domain. Such a mollifier is not suitably defined for points too close to the boundary. A typical strategy to overcome this is to introduce an extension operator. This is utilized in the proofs of Proposition~\ref{prop:compactness} and Theorems~\ref{thm:graphsobolev} and~\ref{thm:graphsobolev2}.
    \item The second is for combinatorial bounds. For example, in Lemmas~\ref{lem:graphfriendly} and~\ref{lem:degreebound} we can only establish the friendly property and the bound on the degrees, respectively, when we are fixed distance away from the boundary. Without a theory of extensions, this is not sufficient to prove our desired Sobolev inequalites over the full domain. Other authors have run into this challenge. For example, in \cite[Lemma 4.5]{Dejan2019}, the result is only stated over a compact subset contained inside the domain $\Omega$. We overcome this difficulty by introducing the discrete extension operators in Lemma~\ref{lem:ext2}.
\end{enumerate}

\subsection{Applications of Sobolev inequalities}
\label{sec:applications}

In this subsection we will investigate the assumptions in and consequences of Theorem~\ref{thm:graphsobolev}, the graph Sobolev inequalities for $p<d$, in a few different scenarios. We shall leave out a discussion of the $p>d$ case, as that topic is extensively covered in \cite{Dejan2019}.

Before we proceed, let us remark on some common choices of the function $\eta$ within the literature:  
\begin{itemize}
    \item an indicator function, $\eta(r)= \mathbf{1}_{r\leq 1}$, where $\aleph_\eta^t<+\infty$ for any $t\in [0,+\infty)$;
    \item an exponential function, $\eta(r)= e^{-r}$, where $\aleph_\eta^t<+\infty$ for any $t\in [0,+\infty)$;
    \item a power function, $\eta(r)= (1+r)^{-a} $ and $a>d$, where $\aleph_\eta^t<+\infty$ for any $t\in [0,a-d)$. 
\end{itemize}
Recall that for the $(p,q)$-Sobolev inequality in Theorem~\ref{thm:graphsobolev} to hold we require, for some $t>q$, $\aleph_\eta^t<+\infty$. In all of the applications that follow we shall assume that $\eta$ is such that this is the case.

\begin{enumerate}
    \item In order for the strongest Sobolev inequality that Theorem~\ref{thm:graphsobolev} can give us to hold, that is \eqref{eq:graphsobolev} for $q=\frac{dp}{d-p}$, we require 
    $\left(\frac{\varepsilon_n (\Lambda_n^+)^{1/q+1/p}}{(\Lambda_n^-)^{2/p}} \right)_{n\in\mathbb{N}}$ to be bounded. Recall from Remark~\ref{rem:boundedsequences} that
    $$ \varepsilon_n|V_n|^{\frac{1}{d}}=\varepsilon_n |V_n|^{\frac{1}{p}-\frac{1}{q}}\leq \frac{\varepsilon_n (\Lambda_n^+)^{1/q+1/p}}{(\Lambda_n^-)^{2/p}}.$$
    Putting this together with part~(i) from Assumptions~\ref{assum}, we get that there exists a constant $A>0$ such that
    $$ d_\infty(\mu_n,\mu)\leq \sup_{x\in\Omega} \{|T_n(x)-x|\}\leq K\varepsilon_n \leq AK|V_n|^{-\frac{1}{d}} .$$
    We can put this together with the bound in Lemma~\ref{lem:transportbound} to deduce
    $$ \frac{\Gamma(\frac{d}{2}+1)^{1/d}\mathfrak{L}^d(\Omega)^{1/d}}{K\pi^{1/2}|V_n|^{1/d}}\leq \frac{1}{K} d_\infty(\mu_n,\mu)\leq \varepsilon_n \leq \frac{A}{|V_n|^{1/d}}.$$
   This tells us that both $d_\infty(\mu_n,\mu)$ and $\varepsilon_n$ must converge to zero at a rate $|V_n|^{-\frac{1}{d}}.$ This can be achieved, for example, by a square lattice discretization and with $\varepsilon_n$ the width of the tiles. 

    \item We shall investigate what is the strongest Sobolev inequality we can prove, with the techniques provided in this paper, in circumstances where part~(iv) from Assumptions~\ref{assum} does not hold. Let us assume there exist constants $\alpha \in (0,1]$ and $\beta\in [1,+\infty)$ such that $\Lambda_n^+$ and $\Lambda_n^-$ are of the orders $|V_n|^{-\alpha}$ and $|V_n|^{-\beta}$, respectively. Then, analogously to what we observed in Remark~\ref{rem:boundedsequences}, boundedness of $\left(\frac{\varepsilon_n (\Lambda_n^+)^{1/q+1/p}}{(\Lambda_n^-)^{2/p}} \right)_{n\in\mathbb{N}}$ is equivalent to boundedness of $\left(\varepsilon_n |V_n|^{\frac{2\beta-\alpha}{p}-\frac{\alpha}{q}} \right)_{n\in\mathbb{N}}$. As shown in point 1 above, as a consequence of (i) from Assumptions~\ref{assum} and Lemma~\ref{lem:transportbound}, the fastest $\left(\varepsilon_n\right)_{n\in\mathbb{N}}$  can converge to zero is of the order $|V_n|^{-1/d}$. Therefore we require
    $$ \frac{2\beta-\alpha}{p}-\frac{\alpha}{q} \leq \frac{1}{d} .$$
    Rearranging we get $q\leq \frac{\alpha pd}{(2\beta-\alpha)d-p}$. Therefore, in this setting, the highest exponent for which we can produce the Sobolev inequality in \eqref{eq:graphsobolev} is $q=\frac{\alpha pd}{(2\beta-\alpha)d-p}$, which is less than or equal to $\frac{pd}{d-p}$ with equality (in the admissible set $(\alpha,\beta)\in (0,1]\times[1,+\infty)$) if and only if $\alpha=\beta=1$.

    \item Consider the random-point-cloud setup in Remark~\ref{rem:pointcloud} where $|V_n|=n$ and part~(iv) from Assumptions~\ref{assum} holds with $\mathfrak{D}=1$. Recall from Remark~\ref{rem:pointcloud} the following bound on the $\infty$-Wasserstein distance between $\mu_n$ and $\mu$: there exists a sequence of Borel maps $T_n:\Omega\rightarrow\Omega$ such that $T_n\#\mu=\mu_n$ and
    \begin{equation*}
    \sup_{x\in\Omega} | T_n(x) -x | \leq C \begin{cases}
        \left( \frac{\log \log n}{n} \right)^{1/2}, &\text{if } d=1, \\
        \frac{\log(n)^{3/4}}{n^{1/2}}, &\text{if } d=2,\\
        \frac{\log(n)^{1/d}}{n^{1/d}}, &\text{if } d\geq 3.
    \end{cases}
\end{equation*}

Moreover the above bound is the best rate of convergence possible \cite[Theorem 1.1]{Nicolas2015}. Therefore, the smallest we can choose $\varepsilon_n$ while still guaranteeing the validity of part~(i) in Assumptions~\ref{assum} is
$$ \varepsilon_n = \frac{C}{K}\begin{cases}
        \left( \frac{\log \log n}{n} \right)^{1/2}, &\text{if } d=1, \\
        \frac{(\log n)^{3/4}}{n^{1/2}}, &\text{if } d=2,\\
        \frac{(\log n)^{1/d}}{n^{1/d}}, &\text{if } d\geq 3.
    \end{cases}$$
In this case $\left(\varepsilon_n|V_n|^{\frac{1}{p}-\frac{1}{q}}\right)_{n\in \mathbb{N}}$ is bounded if and only if $q<\frac{dp}{d-p}$. That means we can achieve Sobolev inequalities all the way up to, but not including, $\frac{dp}{d-p}$.

    \item In the following example we shall use a trick that may be useful for other geometric graphs. Consider a standard lattice discretization of $(0,1)^d$ by $|V_n|=n^d$ points. We interpret this lattice as a simple graph $\mathbb{G}_n:=(V_n,E_n)$ where the vertices are given by the centres of each tile and two vertices are adjacent if their tiles are adjacent. This is illustrated below, the black dots represent the vertices and the black lines the edges. Define $\mu_n$ to be the uniform discrete probability measure on $V_n$ and $\mu$ the uniform probability measure on $(0,1)^d$. 

    We wish to interpret this graph as a geometric graph, but this requires the choice $\varepsilon_n=\frac1n$, which does not satisfy the requirements of Theorem~\ref{thm:graphsobolev}. Below we will choose a larger $\varepsilon_n$ for which the theorem can be used and then relate back the resulting Sobolev inequality to an inequality on the graph.

\begin{figure}[ht]
    \begin{center}
    \begin{tikzpicture}[scale=1]
    \draw[gray, very thin] (0,0) rectangle (12,12) ;
\foreach \x in {2,4,6,8,10}
\draw[gray,dashed] (\x,0) -- (\x,12) ;
\foreach \y in {2,4,6,8,10}
\draw[gray,dashed] (0,\y) -- (12,\y) ;
\foreach \x in {1,3,5,7,9,11}
\foreach \y in {1,3,5,7,9,11}
{
\filldraw[black] (\x,\y) circle (1.5pt) ;
\draw[thin] (\x,1) -- (\x,11) ;
\draw[thin] (1,\y) -- (11,\y) ;
}
\filldraw[black] (3,1) circle (1.5pt) node[anchor=north west]{$x$} ;
\filldraw[black] (5,1) circle (1.5pt) node[anchor=north west]{$y$} ;
\filldraw[black] (9,9) circle (1.5pt) node[anchor=north west]{$z$} ;
\end{tikzpicture}
\end{center}
    \caption{Lattice discretization for $d=2$, $n=6$}
    \label{fig:lattice}
\end{figure}

Define $T_n:(0,1)^d\rightarrow(0,1)^d$ by sending each point in the interior of a tile (depicted in Figure~\ref{fig:lattice} by dashed lines) to the vertex in its centre. We shall refrain from defining $T_n$ on the dashed lines themselves, this is not a problem as they have $\mu$-measure zero. Additionally define $\eta(r):=\mathbf{1}_{r\leq 1}$. Recall with this choice, $\aleph_{\eta}^t<+\infty$ for any $t\in[0,+\infty)$. With this setup (ii), (iii) and (iv) from Assumptions~\ref{assum} are all satisfied. Moreover $T_n\#\mu = \mu_n$ and also
$$\sup_{x\in (0,1)^d}|T_n(x)-x|=\frac{\sqrt{d}}{2n}=\frac{\sqrt{d}}{2|V_n|^{1/d}}.$$  We shall set $q:=\frac{pd}{d-p}$.    

Define $\mathbb{d}_n$ to be the graph metric on $\mathbb{G}_n$: the distance between vertices $x$ and $y$ is the least number of edges that have to be traversed to go from $x$ to $y$ in the graph. Thus in the example of Figure~\ref{fig:lattice}, since $x$ and $y$ are adjacent vertices, $\mathbb{d}_n(x,y)=1$, and $\mathbb{d}_n(x,z)=7$.  

Set $\widetilde S:=S([0,1]^d,\eta)$ and define $\varepsilon_n:=\frac{\sqrt{d}}{2\widetilde S n} $\footnote{From~\eqref{eq:constants2} we see that $\widetilde S \leq \frac18$, hence $\varepsilon_n \geq \frac4n$. As announced earlier in this example, this choice of $\varepsilon_n$ is larger than the choice that leads to a geometric-graph interpretation of the lattice. }.With these definitions, we obtain that $\sup_{x\in [0,1]^d}|T_n(x)-x| = \widetilde S\varepsilon_n$; thus part~(i) of Assumptions~\ref{assum} is satisfied alongside all of the requirements for the second statement of Theorem~\ref{thm:graphsobolev}. Since we also have $\varepsilon_n |V_n|^{1/d}= \frac{\sqrt{d}}{2\widetilde S}$, which is constant in $n$, Theorem~\ref{thm:graphsobolev} tells us that \eqref{eq:graphsobolev} holds: there exists a constant $C>0$ such that, for any $u:V_n\rightarrow \mathbb{R}$,
$$ \Vert u \Vert_{L^q(V_n;\mu_n)} \leq C \left( \Vert u \Vert_{L^p(V_n;\mu_n)} + \left(\frac{1}{n^{2d+p}}\sum_{x\in V_n}\sum_{\substack{y\in V_n \\ |x-y|\leq \varepsilon_n} } |u(x)-u(y)|^p\right)^{1/p} \right)    .$$
We shall bound the above double sum by a double sum written purely in terms of the graph structure of $\mathbb{G}_n$. We can achieve this since, for any $x,y\in V_n$, $ \mathbb{d}_n(x,y) \leq dn|x-y|$, and therefore $|x-y|\leq \varepsilon_n$ implies $\mathbb{d}_n(x,y)\leq  \frac{d\sqrt{d}}{2\widetilde S}$ and thus $\mathbb{d}_n(x,y)\leq \left\lfloor\frac{d\sqrt{d}}{2\widetilde S}\right\rfloor$, since $\mathbb{d}_n(x,y)$ is an integer. Define $N:= \left\lfloor \frac{d\sqrt{d}}{2\widetilde S} \right\rfloor$. We then have
$$ \Vert u \Vert_{L^q(V_n;\mu_n)} \leq C \left( \Vert u \Vert_{L^p(V_n;\mu_n)} + \left(\frac{1}{n^{2d+p}}\sum_{x\in V_n} \sum_{\substack{y\in V_n \\ \mathbb{d}_n(x,y)\leq N} } |u(x)-u(y)|^p\right)^{1/p} \right)    .$$
The above bound raises the question if we need to sum over all pairs of vertices $x$ and $y$ that lie within a distance $N$ of each other or if the inequality still holds if we only sum over adjacent vertices? Indeed the bound is then still valid, as we will now show. 

Let $s \in \{ 1,2,\ldots, N\}$ and consider an edge $e\in E_n$. By a path we mean a finite tuple of consecutively connected vertices without repetitions. We will find an upper bound for the number of paths of length $s$ that include the edge $e$, by building a path starting with the edge $e$ and iteratively adding edges at either end of the previously built path. At each step we have at most $2(1+2(d-1))=4d-2$ possible edges we can add: at each of the endpoints of the already existing path, there is at most one possible edge to add that continues along the dimension of the existing path and at most two possible edges in each of the other $d-1$ dimensions. Therefore we have at most $(4d-2)^{s-1}$ paths of length $s$ that include the edge $e$. Let $P$ be a path of length $s$ in $\mathbb{G}_n$ given by a sequence of vertices $x_0,x_1,\ldots,x_s$. Then, by Jensen's inequality,
$$  |u(x_0)-u(x_s)|^p = s^p\left|\frac{1}{s} \sum_{i=0}^{s-1} u(x_i)-u(x_{i+1}) \right|^p \leq s^{p-1}\sum_{i=0}^{s-1}| u(x_i)-u(x_{i+1})|^p .$$
For a general path $P$ we denote by $|P|$ its length, i.e. the number of vertices in the path, by $P_{\text{in}}$ its initial vertex and by $P_{\text{te}}$ its terminal vertex. We then compute
\begin{align*}
 \frac{1}{n^{2d+p}} \sum_{x\in V_n} \sum_{\substack{y\in V_n \\ \mathbb{d}_n(x,y)\leq N} } |u(x)-u(y)|^p\
&\leq \frac{1}{n^{2d+p}}\sum_{P: \; |P|\leq N }  |u(P_{\text{in}})-u(P_\text{te})|^p
\\
&\leq  \frac{N^{p-1}}{n^{2d+p}}\sum_{P: \; |P|\leq N } \sum_{xy\in P} |u(x)-u(y)|^p  
\\
&\leq   \frac{N^{p-1}}{n^{2d+p}}\sum_{s=1}^N (4d-2)^{s-1}\sum_{xy\in E_n} |u(x)-u(y)|^p 
\\
&=   \frac{N^{p-1}\left[(4d-2)^N-1\right]}{(4d-3)n^{2d+p}}\sum_{xy\in E_n} |u(x)-u(y)|^p.
\end{align*}
Therefore we have the bound
$$ \Vert u \Vert_{L^q(V_n;\mu_n)} \leq C \left( \Vert u \Vert_{L^p(V_n;\mu_n)} + \left(\frac{N^{p-1}\left[(4d-2)^N-1\right]}{(4d-3)} \frac{1}{n^{2d+p}}\sum_{xy\in E_n} |u(x)-u(y)|^p\right)^{1/p} \right)    .$$

Similar Sobolev inequalities on lattices exist in the literature, for example \cite[Theorem 4.2]{Fusco2025}. \cite{Chung1997}[Theorem 11.1 and Theorem 11.4]. The results \cite{Chung1997}[Theorem 11.1 and Theorem 11.4] are expressed in terms of the isoperimetric dimesion of a graph, hence the reason these results can be applied to lattices is, in this case, the isoperimetric dimension can be determined. For geometric graphs in general this is a much more difficult problem.  

We note that the path-counting argument above works, since the uniform degree of nodes in the lattice graph allows us to bound the number of paths of a fixed length containing a given edge. Similar arguments could work for other sequences of graphs in which the number of paths of fixed finite length containing a given edge is bounded.
    
\end{enumerate}

\subsection{Optimal discrete Sobolev constants}

In Theorem~\ref{thm:graphsobolev} we prove that a uniform Sobolev inequality~\eqref{eq:graphsobolev} holds under suitable conditions on the concentrating parameter $\varepsilon_n$. However, the constant $C>0$ in \eqref{eq:graphsobolev} derived in the proof of Theorem~\ref{thm:graphsobolev} is far from optimal. This is because we took a large number of overestimations in order to simplify the argument. For example, in restricting ourselves to proving \eqref{eq:graphsobolev} for sufficiently large $n$, we overestimate the Sobolev constant for the finitely many Sobolev inequalities at small $n$ by taking the maximum of the constants that the equivalence of the $p$ and $q$ norms gives us (as explained following \eqref{eq:graphsobolev}). Another overestimate appears when we use the estimate in Remark~\ref{rem:simpletokernel} to go from the case for general $\eta$ to the indicator-function setting.

For a fixed $n\in \mathbb{N}$, $p\in [1,d)$ and $q\in \left[ p, \frac{dp}{d-p} \right]$, computing the optimal Sobolev constant 
$$  C_n:= \sup_{\substack{
u:V_n\rightarrow\mathbb{R} \\
u\neq 0
}} \frac{\Vert u \Vert_{L^q(V_n;\mu_n)}}{\Vert u \Vert_{L^p(V_n;\mu_n)} + (\mathcal{GE}_n^p(u))^{1/p}}$$
is a very difficult problem. For any $n\in\mathbb{N}$, we should expect the constant $C$ that we constructed to be much larger than $C_n$. Nonetheless, under (i*), (ii) and (iii) in Assumptions~\ref{assum} and an additional requirement on the convergence rate of $\varepsilon_n\to0$, we can derive a convergence result for the optimal constants $C_n$ as $n\rightarrow\infty$. That is, we can show they converge to an optimal Sobolev constant in the continuum. This means, for large $n$, one can suitably approximate the problem of finding the optimal Sobolev constant $C_n$ with finding the optimal Sobolev constant in the continuum. A proof of this convergence is beyond the scope of this paper, however the details will appear in~\cite[Section 6.5.1]{methesis}.

\subsection{Future work}

\subsubsection{Open problem regarding Theorem~\ref{thm:graphsobolev}}
\label{sec:open}

For Theorem~\ref{thm:graphsobolev} to hold we have two different requirements on the concentrating parameter sequence $(\varepsilon_n)_{n\in\mathbb{N}}$ depending on whether we assume (iv) from Assumptions~\ref{assum} or not. Under (iv), the condition on $(\varepsilon_n)_{n\in\mathbb{N}}$ becomes sufficient and necessary. The condition we provide in the absence of (iv) is sufficient but we do not currently know if it is necessary.

Consider the following sequence of functions. For each $n\in\mathbb{N}$, select a vertex $x_n \in V_n$ such that $\mu_n(x_n)=\Lambda_n^+$. Define a function $u_n:V_n\rightarrow \mathbb{R}$ such that $u_n(x_n)=1$ and $u_n(x)=0$ for $x\in V_n\setminus\{x_n\}.$ Then we compute
$ \Vert u_n \Vert_{L^p(V_n;\mu_n)}=\left(\Lambda_n^+\right)^{1/p} $
and $\Vert u_n \Vert_{L^q(V_n;\mu_n)} =\left(\Lambda_n^+\right)^{1/q}$. Next we compute
\begin{align*}
\mathcal{GE}_n^p(u_n)&= \frac1{\varepsilon_n^p} \left(\sum_{x\in V_n\setminus\{x_n\}} \sum_{y\in V_n} \eta_{\varepsilon_n}(|x-y|) |u(x)-u(y)|^p \mu_n(x) \mu_n(y) + \Lambda_n^+ \sum_{y\in V_n} \eta_{\varepsilon_n}(|x-y|) |u(x_n)-u(y)| \mu_n(y)\right)\\
&= \frac1{\varepsilon_n^p} \left(\Lambda_n^+ \sum_{x\in V_n\setminus\{x_n\}} \eta_{\varepsilon_n}(|x-x_n|) |u(x_n)-u(x)|^p \mu_n(x) + \Lambda_n^+ \sum_{y\in V_n} \eta_{\varepsilon_n}(|x-y|) |u(x_n)-u(y)| \mu_n(y)\right)\\
&= \frac{2}{\varepsilon_n^p}\left( \Lambda_n^+\sum_{y\in V_n} \eta_{\varepsilon_n}(|y-x_n|)|u_n(y)-u_n(x_n)|^p\mu_n(y) \right) 
\leq \frac{2\alpha_K\aleph^0_\eta \Lambda_n^+}{\varepsilon_n^p},
\end{align*}
where the final bound follows from an application of Lemma~\ref{lem:weightbound}. Together we get
$$  \frac{\Vert u_n \Vert_{L^q(V_n;\mu_n)}}{\Vert u_n \Vert_{L^p(V_n;\mu_n)} + (\mathcal{GE}_n^p(u_n))^{1/p}} \gtrsim_n \varepsilon_n(\Lambda_n^+)^{\frac{1}{q}-\frac{1}{p}}. $$
Therefore, for the uniform Sobolev inequalities in Theorem~\ref{thm:graphsobolev} to hold, the sequence $\left(\varepsilon_n(\Lambda_n^+)^{\frac{1}{q}-\frac{1}{p}}\right)_{n\in \mathbb{N}}$ must be bounded. This is a weaker condition than asking for $\left(\frac{\varepsilon_n (\Lambda_n^+)^{1/q+1/p}}{(\Lambda_n^-)^{2/p}} \right)_{n\in\mathbb{N}}$ to be bounded. As it stands, under this weaker condition we do not know a necessary and sufficient conditon on the concentrating parameter sequence (in terms of $\Lambda_n^+$ and $\Lambda_n^-$) for our Sobolev inequalities to hold.

\subsubsection{H\"older regularity on graphs}
\label{sec:open2}

As we discussed prior to introducing Theorem~\ref{thm:graphsobolev2}, for $p>d$ the classical Morrey inequality~\cite[Theorem 1.4.4.1]{Grisvard1985} in the continuum presents a bound on the H\"older regularity of a $W^{1,p}$-function. Extending this control to the discrete setting presents a number of difficulties. First we need to decide how the H\"older semi-norm on a geometric graph should be defined. 

Let $\gamma\in (0,1]$. One option is to define, for $n\in\mathbb{N}$ and $u:V_n\rightarrow \mathbb{R}$,
$$ | u |_{C^{0,\gamma}(V_n;\mu_n)} := \sup_{x,y\in V_n} \frac{|u(x)-u(y)|}{|x-y|^{\gamma}}. $$
A second option is to set $\mathbb{G}_n:= \mathbb{G}_{n,\varepsilon_n}$, let $\mathbb{d}_n$ be the graph metric associated to $\mathbb{G}_n$ (as we also did in example 4 in Section~\ref{sec:applications}) and define, for $n\in\mathbb{N}$ and $u:V_n\rightarrow \mathbb{R}$,
$$ | u |_{C^{0,\gamma}(\mathbb{G}_n)} := \sup_{x,y\in V_n} \frac{|u(x)-u(y)|}{\varepsilon_n^{\gamma} \left(\mathbb{d}_n(x,y)\right)^{\gamma}}. $$

The first option is a convenient formulation as we simply let $V_n$ inherit the metric structure as a subset of $\Omega$. However the second option may be more applicable since the denominator in the semi-norm depends now entirely on $\varepsilon_n$ and the graph structure of $\mathbb{G}_n$, rather than an extrinsic metric. This mirrors how we defined the discrete variation $\mathcal{GE}_n^p$ in the case that $\eta$ is the indicator function. There is still a question of how to suitably scale $\mathbb{d}_n$, in such a way that it converges to a consistent limit. Multiplying by $\varepsilon_n$ certainly works for our lattice example (example~4) in Section~\ref{sec:applications} where $\varepsilon_n \mathbb{d}_n$ equals a multiple of the $\ell_1$ distance (with $\varepsilon_n$ as defined in that example); for general cases this will require justification and possibly additional assumptions. The lattice example also shows that $\varepsilon_n \mathbb{d}_n(x,y)$ does not necessarily converge to $|x-y|$ as $n\rightarrow \infty$; in this case though, the resulting metric (the $\ell_1$ metric) on $\Omega$ is Lipschitz equivalent to the $\ell_2$ metric on $\Omega$. 

We can now pose a second question. Let $\tilde\gamma\in \left(0,1-\frac{d}{p}\right]$. Under what conditions on the concentrating parameter sequence does there exist an $A>0$ such that, for $n\in\mathbb{N}$ and $u:V_n\rightarrow \mathbb{R}$,
$$ | u |_{C^{0,\tilde\gamma}(\mathbb{G}_n)} \leq A \left( \Vert u \Vert_{L^p(V_n;\mu_n)} +   \left(\mathcal{GE}_n^p(u) \right)^{1/p}\right)?$$

Of course we can also phrase the same question for the semi-norm $| u |_{C^{0,\tilde\gamma}(V_n;\mu_n)}$. Under the prerequisite that (iv) from Assumptions~\ref{assum} holds, we conjecture that it is necessary and sufficient for 
$ \left( \varepsilon_n^{1-\tilde\gamma} |V_n|^{1/p} \right)_{n\in \mathbb{N}}$ to be bounded. Taking the same sequence from Section~\ref{sec:open} we compute $|u_n|_{C^{0,\tilde\gamma}(\mathbb{G}_n)} = \frac{1}{\varepsilon_n^{\tilde\gamma}}$ and 
$$ \frac{|u_n|_{C^{0,\tilde\gamma}(\mathbb{G}_n)}}{\Vert u_n \Vert_{L^p(V_n;\mu_n)} + (\mathcal{GE}_n^p(u_n))^{1/p}} \gtrsim_n \varepsilon_n^{1-\tilde\gamma}|V_n|^{1/p}, $$
so indeed it is necessary.

\paragraph{Acknowledgements}

The authors are grateful for enlightening discussions and valuable advice from Matthew Thorpe, Nicol\'as Garc\'ia Trillos and Leon Bungert. The authors acknowledge receiving funding from the Dutch Research Council (NWO) via its Open Competition Domain Science (ENW) - M.

\appendix

\section{\texorpdfstring{$\Gamma$}{gamma}-convergence counterexample at the connectivity threshold.}\label{app:counterexamples}

For an overview of $\Gamma$-convergence and its properties we refer the reader to \cite{DalMaso93}. In order to simplify the discussion in this appendix we shall assume the limiting probability measure $\mu$ on $\Omega$ is the uniform probability measure.

Within the literature, $\Gamma$-convergence results for discrete-to-continuum limits on geometric graphs typically assume (i$^*$) in place of (i) from Assumptions~\ref{assum}. As a consequence of Corollary~\ref{cor:connect}, this means the concentrating parameter $\varepsilon_n$ is much larger than the connectivity threshold $s^*(V_n)$. Naturally one might ask if a strong assumption such as (i$^*$) is indeed necessary. For example, if we assume (i), (ii), (iii) and (iv) from Assumptions~\ref{assum} all hold and $\varepsilon_n\geq s^*(V_n)$, can we prove, for $p\in [1,\infty)$ and some suitable constant $C>0$,
\begin{equation}
\label{eq:gamma}
\mathcal{GE}_n^p \overset{\Gamma}{\longrightarrow} C\cdot\mathcal{E}^p \; \; \text{as} \; \; n\rightarrow \infty
\end{equation}
over the space $TL^p(\Omega)$?   

The immediate expectation is that~\eqref{eq:gamma} is false for $p\neq 2$. This is because $\mathcal{E}^p$ is isotropic and, if we are to take a regular lattice discretization, then it is typical for a discrete-to-continuum limit over said lattice to be anisotropic. Indeed, if we restrict a smooth function to a lattice, compute $\mathcal{GE}^p_n$ and use a Taylor expansion in the difference terms, we recognise that, at least formally, an anisotropic functional appears in the limit. Moreover, concrete $\Gamma$-convergence results on a lattice have been proven where the limit is seen to be anisotropic, for example \cite[Section 6.5.3]{mefuture} and \cite[Theorem 4.3]{Gennip2012}. Therefore, if we are looking for counterexamples to~\eqref{eq:gamma}, the non-trivial case is $p=2$. In the discussion that follows we shall construct such counterexamples for $p=2$.

\begin{figure}[ht]
    \begin{center}
    \begin{tikzpicture}[scale=1]
    \draw[gray, very thin] (0,0) rectangle (12,12) ;
\foreach \x in {2,4,6,8,10}
\draw[gray,dashed] (\x,0) -- (\x,12) ;
\foreach \y in {3,6,9}
\draw[gray,dashed] (0,\y) -- (12,\y) ;
\foreach \x in {1,3,5,7,9,11}
\foreach \y in {1.5,4.5,7.5,10.5}
{
\filldraw[black] (\x,\y) circle (1.5pt) ;
\draw[thin] (\x,1.5) -- (\x,10.5) ;
\draw[thin] (1,\y) -- (11,\y) ;
}
\end{tikzpicture}
\end{center}
    \caption{Rectangular discretization for $n=2$.}
    \label{fig:rectangle}
\end{figure}

We start by taking $\Omega:=(0,1)^2$ and, for each $n\in\mathbb{N}$, consider a rectangular discretization as illustrated in Figure~\ref{fig:rectangle}. To be more explicit, for each $n\in\mathbb{N}$, we place $6n^2$ vertices at the following positions: for $k\in \{0,\ldots,3n-1\}$ and $l\in\{0,\ldots,2n-1\}$ we place a vertex at $\left( \frac{1+2k}{6n},\frac{\frac32+3l}{6n}\right)$. We define $V_n$ to be this set of vertices and $\mu_n$ to be the uniform discrete probability measure on $V_n$. We shall also define $\eta(r):=\mathbf{1}_{r\leq 1}$. Additionally define $\mu$ to be the uniform probability measure on $(0,1)^2$. So far we can verify that (ii), (iii) and (iv) from Assumptions~\ref{assum} are satisfied. We also define a Borel map $T_n:(0,1)^2\rightarrow (0,1)^2$ such that $T_n\#\mu=\mu_n$ as follows: using the dashed lines as illustrated in Figure~\ref{fig:rectangle}, forming the boxes $\left(\frac{k}{3n},\frac{k+1}{3n}\right)\times\left(\frac{l}{2n},\frac{l+1}{2n}\right)$, we send all of the points inside of a box to the vertex at its centre. Since the edges of the boxes have zero measure, we do not need to define the map there. In this case $\sup_{x\in\Omega}|T_n(x)-x|=\frac{\sqrt{13}}{12n}$. We then set $\varepsilon_n:=s^*(V_n)=\frac{1}{2n}$, which we can see to be the connectivity threshold of $V_n$. With this, we observe that (i) from Assumptions~\ref{assum} is satisfied with $K:=\frac{\sqrt{13}}6$.  

Defining a graph $\mathbb{G}_n:=(V_n,E_n)$, where two vertices are connected if they are strictly within a distance $\frac{1}{2n}$, gives us the graph illustrated by the black lines in Figure~\ref{fig:rectangle}. 

Let $u:\mathbb{R}^2\rightarrow\mathbb{R}$ be a $C^\infty$ function with compact support in $(0,1)^2$. In the calculation that follows we shall use the symbol $u$ interchangeably with $u|_{V_n}$ in a slight abuse of notation. Then, for $n$ large enough (such that there is at least one layer of vertices between the support of $u$ and the boundary of $(0,1)^2$), we have by Taylor's theorem
\begin{align*}
    \mathcal{GE}_n^2(u)&:=  \frac{2}{|V_n|^2\varepsilon_n^{4}}\sum_{k=1}^{3n}\sum_{l=1}^{2n} \left\{ \left| u\left(  \frac{1+2(k-1)}{6n},\frac{\frac{3}{2}+3(l-1)}{6n}\right)-u\left( \frac{1+2(k-1)}{6n},\frac{\frac{3}{2}+3l}{6n} \right) \right|^2 \right. \\
    &\hspace{3.5cm} +  \left| u\left(  \frac{1+2(k-1)}{6n},\frac{\frac{3}{2}+3(l-1)}{6n}\right)-u\left( \frac{1+2(k-1)}{6n},\frac{\frac{3}{2}+3(l-2)}{6n} \right) \right|^2 \\
    &\hspace{3.5cm} +  \left| u\left(  \frac{1+2(k-1)}{6n},\frac{\frac{3}{2}+3(l-1)}{6n}\right)-u\left( \frac{1+2k}{6n},\frac{\frac{3}{2}+3(l-1)}{6n} \right) \right|^2 \\
    &\hspace{3.5cm} + \left.  \left| u\left(  \frac{1+2(k-1)}{6n},\frac{\frac{3}{2}+3(l-1)}{6n}\right)-u\left( \frac{1+2(k-2)}{6n},\frac{\frac{3}{2}+3(l-1)}{6n} \right) \right|^2 \right\} \\
    &= \frac{2^{4}n^2}{3|V_n|} \sum_{k=1}^{3n}\sum_{l=1}^{2n}  \left\{2 \left( \left|\frac{1}{2n} \frac{\partial u}{\partial y} \right|^2+ \left|\frac{1}{3n} \frac{\partial u}{\partial x} \right|^2 \right) \left(  \frac{1+2(k-1)}{6n},\frac{\frac{3}{2}+3(l-1)}{6n}\right) + \mathcal{O}\left(\frac{1}{n^{4}}\right) \right\}  \\
    &\rightarrow \frac{2^{5}}{3}\int_{x=0}^1\int_{y=0}^1  \left\{ \frac{1}{3^2}\left| \frac{\partial u}{\partial x} \right|^2 + \frac{1}{2^2}\left| \frac{\partial u}{\partial y} \right|^2\right\} \; dxdy \; \; \text{as} \; \; n\rightarrow \infty .
\end{align*}

This limit is sufficient to see that the convergence in \eqref{eq:gamma} is false, which we shall now justify. For simplicity in the argument that follows we shall fix $p=2$. Assume for a contradiction there exists $C>0$ such that \eqref{eq:gamma} holds. We will show that the subderivative of $\mathcal{GE}_n^2$ is bounded at the smooth function $u$ used above and consequently the sequence $\left((u|_{V_n},\mu_n)\right))_{n\in\mathbb{N}}$ is a recovery sequence. Hence the anisotropic limit in the computation above will contradict the limit in~\eqref{eq:gamma}.

For each $n\in\mathbb{N}$, $\mathcal{GE}_n^2$ is convex and lower semicontinuous in the topology of $L^2(V_n;\mu_n)$. Its subdifferential at $u:V_n\rightarrow \mathbb{R}$ is a singleton containing the function defined by
$$ \partial\mathcal{GE}_n^2(u)(z):= \frac{64n^2}{3}\left\{ 4u(z)-u\left(z+\left(0,\frac{1}{2n} \right) \right)-u\left(z-\left(0,\frac{1}{2n} \right) \right) -u\left(z+\left(\frac{1}{3n},0 \right) \right)-u\left(z-\left(\frac{1}{3n},0 \right) \right) \right\},$$
for $z\in V_n$. If $z'\in \mathbb{R}^2$ is not in $V_n$ we interpret $u(z')=0$. This is the subdifferential of $\mathcal{GE}_n^2$ over the space $L^2(V_n;\mu_n)$ when equipped with the inner product which is defined for $u,v\in L^2(V_n;\mu_n)$ by
$$ \langle u,v \rangle_n := \frac{1}{6n^2}\sum_{x\in V_n} u(x)v(x).$$

Now again assume $u:\mathbb{R}^2\rightarrow \mathbb{R}$ is smooth with compact support in $(0,1)^2$. We compute via a Taylor expansion, for $z\in V_n$,
\begin{align*}
\partial\mathcal{GE}_n^2(u)(z) = -\frac{64}{27}\frac{\partial^2 u}{\partial x^2}(z) -\frac{16}{3}\frac{\partial^2u}{\partial^2 y}(z)+\mathcal{O}\left(\frac{1}{n} \right).  
\end{align*}
Therefore
\begin{align*}
    \Vert \partial\mathcal{GE}_n^2(u) \Vert_{L^2(V_n;\mu_n)}^2&= \frac{1}{|V_n|}\sum_{z\in V_n} \left| -\frac{64}{27}\frac{\partial^2 u}{\partial x^2}(z) -\frac{16}{3}\frac{\partial^2u}{\partial^2 y}(z)+\mathcal{O}\left(\frac{1}{n}\right) \right|^2 \\
    &\rightarrow \int_{x=0}^1\int_{y=0}^1 \left| \frac{64}{27}\frac{\partial^2 u}{\partial x^2}+\frac{16}{3}\frac{\partial^2u}{\partial^2 y} \right|^2 \; dxdy \; \; \text{as} \; \; n\rightarrow \infty.
\end{align*}
What is important for us to observe is that $\left( \Vert \partial\mathcal{GE}_n^2(u) \Vert_{L^2(V_n;\mu_n)}\right)_{n\in\mathbb{N}}$ is bounded. Next, using our assumption of $\Gamma$-convergence, we shall choose a recovery sequence of functions $u_n:V_n\rightarrow\mathbb{R}$, with $n\in\mathbb{N}$, such that $\left((u_n,\mu_n)\right)_{n\in\mathbb{N}}$ converges to $(u,\mu)$ in $TL^2(\Omega)$ and $\mathcal{GE}_n^2(u_n)\rightarrow C\cdot \mathcal{E}^2(u)$ as $n\rightarrow \infty$. We claim that $\| u_n -u\|_{L^2(V_n;\mu_n)}\rightarrow 0$ as $n\rightarrow \infty$. To see why this convergence is true we first observe that $\left((u|_{V_n},\mu_n)\right)_{n\in\mathbb{N}}$ converges to $(u,\mu)$ in $TL^2(\Omega)$ as $n\rightarrow\infty$. Indeed,
$$ \int_{\Omega} |u\circ T_n(z) -u(z)|^2 \; dz \leq \| \nabla u \|_{L^\infty(\Omega)}^2 \cdot \sup_{x\in\Omega} |T_n(x)-x|^2\rightarrow 0 \; \; \text{as} \; \; n\rightarrow\infty . $$
By~\eqref{eq:TLpconvcond} and part~(i) of Assumptions~\ref{assum} (which we have shown to be satisfied above), we deduce $\left((u_n-u|_{V_n},\mu_n)\right)_{n\in\mathbb{N}}$ converges to $(0,\mu)$ in $TL^2(\Omega)$. Using the properties of $TL^2(\Omega)$ proved in~\cite[Proposition 5.17]{mercer2025} it follows that $\| u_n -u\|_{L^2(V_n;\mu_n)}\rightarrow 0$ as $n\rightarrow \infty$.

Using the definition of the subdifferential and convexity,
\begin{align*}
    \mathcal{GE}_n^2(u_n)&\geq  \mathcal{GE}_n^2(u) + \langle \partial\mathcal{GE}_n^2(u) , u_n-u\rangle_n \\
    &\geq \mathcal{GE}_n^2(u) + \| \partial\mathcal{GE}_n^2(u) \|_{L^2(V_n;\mu_n)} \cdot \|u_n -u \|_{L^2(V_n;\mu_n)}.  
\end{align*}
The second line follows by the Cauchy--Schwarz inequality. Now, passing to the limit as $n\rightarrow \infty$ in the above inequality yields
$$ C\cdot \int_{x=0}^1\int_{y=0}^1 \left\{\left| \frac{\partial u}{\partial x} \right|^2 +\left| \frac{\partial u}{\partial y} \right|^2 \right\} \; dxdy \geq   \int_{x=0}^1\int_{y=0}^1  \left\{ \frac{32}{27}\left| \frac{\partial u}{\partial x} \right|^2 + \frac{8}{3}\left| \frac{\partial u}{\partial y} \right|^2\right\} \; dxdy.$$
However, by the $\liminf$ inequality in the definition of $\Gamma$-convergence, we also have $\liminf_{n\rightarrow\infty} \mathcal{GE}_n^2(u)\geq C\cdot \mathcal{E}^2(u)$. Putting this together with the previous bound gives, for any $u:\mathbb{R}^2\rightarrow \mathbb{R}$ smooth with compact support in $(0,1)^2$,
$$ C\cdot \int_{x=0}^1\int_{y=0}^1 \left\{\left| \frac{\partial u}{\partial x} \right|^2 +\left| \frac{\partial u}{\partial y} \right|^2 \right\} \; dxdy =   \int_{x=0}^1\int_{y=0}^1  \left\{ \frac{32}{27}\left| \frac{\partial u}{\partial x} \right|^2 + \frac{8}{3}\left| \frac{\partial u}{\partial y} \right|^2\right\} \; dxdy.$$
Choosing $u$ suitably then yields a contradiction. 

We identify the problem that leads to the continuum limit in our example being anisotropic to be the choice of our kernel $\eta(r):=\mathbf{1}_{r\leq 1}$. This results in the edges, as illustrated in Figure~\ref{fig:rectangle}, being assigned the same weight---in the sense that each difference $|u(x)-u(y)|^p$ for the incident nodes $x$ and $y$ has the same coefficient $\eta_{\varepsilon_n}(|x-y|)=1$ in $\mathcal{GE}_n^2$---even though the vertical edges are longer than the horizontal edges. If we chose weights suitably, i.e. ones that are inversely proportional to the lengths of the edges (via a non-radially-symmetric kernel $\eta$, which goes beyond our assumptions on $\eta$ in Section~\ref{sec:geometry}), then the continuum limit will be isotropic. Below we provide two counterexamples for which it is impossible to observe a isotropic continuum limit, regardless of how we choose to weight the edges. 

\begin{figure}[ht]
\centering
    \begin{subcaptionblock}{.5\textwidth}
    \centering
    \begin{tikzpicture}[scale=0.8]
%vertices
\filldraw[black] (1,0) circle (1.5pt) ;
\filldraw[black] (3,0) circle (1.5pt) ;
\filldraw[black] (5,0) circle (1.5pt) ;
\filldraw[black] (7,0) circle (1.5pt) ;
\filldraw[black] (0,1) circle (1.5pt) ;
\filldraw[black] (2,1) circle (1.5pt) ;
\filldraw[black] (4,1) circle (1.5pt) ;
\filldraw[black] (6,1) circle (1.5pt) ;
\filldraw[black] (8,1) circle (1.5pt) ;
\filldraw[black] (0,2) circle (1.5pt) ;
\filldraw[black] (2,2) circle (1.5pt) ;
\filldraw[black] (4,2) circle (1.5pt) ;
\filldraw[black] (6,2) circle (1.5pt) ;
\filldraw[black] (8,2) circle (1.5pt) ;
\filldraw[black] (1,3) circle (1.5pt) ;
\filldraw[black] (3,3) circle (1.5pt) ;
\filldraw[black] (5,3) circle (1.5pt) ;
\filldraw[black] (7,3) circle (1.5pt) ;
\filldraw[black] (0,4) circle (1.5pt) ;
\filldraw[black] (2,4) circle (1.5pt) ;
\filldraw[black] (4,4) circle (1.5pt) ;
\filldraw[black] (6,4) circle (1.5pt) ;
\filldraw[black] (8,4) circle (1.5pt) ;
\filldraw[black] (0,5) circle (1.5pt) ;
\filldraw[black] (2,5) circle (1.5pt) ;
\filldraw[black] (4,5) circle (1.5pt) ;
\filldraw[black] (6,5) circle (1.5pt) ;
\filldraw[black] (8,5) circle (1.5pt) ;
\filldraw[black] (1,6) circle (1.5pt) ;
\filldraw[black] (3,6) circle (1.5pt) ;
\filldraw[black] (5,6) circle (1.5pt) ;
\filldraw[black] (7,6) circle (1.5pt) ;
\filldraw[black] (0,7) circle (1.5pt) ;
\filldraw[black] (2,7) circle (1.5pt) ;
\filldraw[black] (4,7) circle (1.5pt) ;
\filldraw[black] (6,7) circle (1.5pt) ;
\filldraw[black] (8,7) circle (1.5pt) ;
\filldraw[black] (0,8) circle (1.5pt) ;
\filldraw[black] (2,8) circle (1.5pt) ;
\filldraw[black] (4,8) circle (1.5pt) ;
\filldraw[black] (6,8) circle (1.5pt) ;
\filldraw[black] (8,8) circle (1.5pt) ;
%edges
\draw[thin] (1,0) -- (0,1) ;
\draw[thin] (1,0) -- (2,1) ;
\draw[thin] (3,0) -- (2,1) ;
\draw[thin] (3,0) -- (4,1) ;
\draw[thin] (5,0) -- (4,1) ;
\draw[thin] (5,0) -- (6,1) ;
\draw[thin] (7,0) -- (6,1) ;
\draw[thin] (7,0) -- (8,1) ;

\draw[thin] (0,1) -- (0,2) ;
\draw[thin] (2,1) -- (2,2) ;
\draw[thin] (4,1) -- (4,2) ;
\draw[thin] (6,1) -- (6,2) ;
\draw[thin] (8,1) -- (8,2) ;

\draw[thin] (1,3) -- (0,2) ;
\draw[thin] (1,3) -- (2,2) ;
\draw[thin] (3,3) -- (2,2) ;
\draw[thin] (3,3) -- (4,2) ;
\draw[thin] (5,3) -- (4,2) ;
\draw[thin] (5,3) -- (6,2) ;
\draw[thin] (7,3) -- (6,2) ;
\draw[thin] (7,3) -- (8,2) ;

\draw[thin] (1,3) -- (0,4) ;
\draw[thin] (1,3) -- (2,4) ;
\draw[thin] (3,3) -- (2,4) ;
\draw[thin] (3,3) -- (4,4) ;
\draw[thin] (5,3) -- (4,4) ;
\draw[thin] (5,3) -- (6,4) ;
\draw[thin] (7,3) -- (6,4) ;
\draw[thin] (7,3) -- (8,4) ;

\draw[thin] (0,4) -- (0,5) ;
\draw[thin] (2,4) -- (2,5) ;
\draw[thin] (4,4) -- (4,5) ;
\draw[thin] (6,4) -- (6,5) ;
\draw[thin] (8,4) -- (8,5) ;

\draw[thin] (1,6) -- (0,5) ;
\draw[thin] (1,6) -- (2,5) ;
\draw[thin] (3,6) -- (2,5) ;
\draw[thin] (3,6) -- (4,5) ;
\draw[thin] (5,6) -- (4,5) ;
\draw[thin] (5,6) -- (6,5) ;
\draw[thin] (7,6) -- (6,5) ;
\draw[thin] (7,6) -- (8,5) ;

\draw[thin] (1,6) -- (0,7) ;
\draw[thin] (1,6) -- (2,7) ;
\draw[thin] (3,6) -- (2,7) ;
\draw[thin] (3,6) -- (4,7) ;
\draw[thin] (5,6) -- (4,7) ;
\draw[thin] (5,6) -- (6,7) ;
\draw[thin] (7,6) -- (6,7) ;
\draw[thin] (7,6) -- (8,7) ;

\draw[thin] (0,7) -- (0,8) ;
\draw[thin] (2,7) -- (2,8) ;
\draw[thin] (4,7) -- (4,8) ;
\draw[thin] (6,7) -- (6,8) ;
\draw[thin] (8,7) -- (8,8) ;
\draw[gray, very thin] (0,0) rectangle (8,8) ;
\end{tikzpicture}
\caption{Hexagon discretization for $n=8$}
\end{subcaptionblock}%
\begin{subcaptionblock}{.5\textwidth}
\centering
\begin{tikzpicture}[scale=0.8]

%vertices
\filldraw[black] (1,0) circle (1.5pt) ;
\filldraw[black] (3,0) circle (1.5pt) ;
\filldraw[black] (5,0) circle (1.5pt) ;
\filldraw[black] (7,0) circle (1.5pt) ;
\filldraw[black] (0,1) circle (1.5pt) ;
\filldraw[black] (2,1) circle (1.5pt) ;
\filldraw[black] (4,1) circle (1.5pt) ;
\filldraw[black] (6,1) circle (1.5pt) ;
\filldraw[black] (8,1) circle (1.5pt) ;
\filldraw[black] (0,2) circle (1.5pt) ;
\filldraw[black] (2,2) circle (1.5pt) ;
\filldraw[black] (4,2) circle (1.5pt) ;
\filldraw[black] (6,2) circle (1.5pt) ;
\filldraw[black] (8,2) circle (1.5pt) ;
\filldraw[black] (1,3) circle (1.5pt) ;
\filldraw[black] (3,3) circle (1.5pt) ;
\filldraw[black] (5,3) circle (1.5pt) ;
\filldraw[black] (7,3) circle (1.5pt) ;
\filldraw[black] (1,4) circle (1.5pt) ;
\filldraw[black] (3,4) circle (1.5pt) ;
\filldraw[black] (5,4) circle (1.5pt) ;
\filldraw[black] (7,4) circle (1.5pt) ;
\filldraw[black] (0,5) circle (1.5pt) ;
\filldraw[black] (2,5) circle (1.5pt) ;
\filldraw[black] (4,5) circle (1.5pt) ;
\filldraw[black] (6,5) circle (1.5pt) ;
\filldraw[black] (8,5) circle (1.5pt) ;
\filldraw[black] (0,6) circle (1.5pt) ;
\filldraw[black] (2,6) circle (1.5pt) ;
\filldraw[black] (4,6) circle (1.5pt) ;
\filldraw[black] (6,6) circle (1.5pt) ;
\filldraw[black] (8,6) circle (1.5pt) ;
\filldraw[black] (1,7) circle (1.5pt) ;
\filldraw[black] (3,7) circle (1.5pt) ;
\filldraw[black] (5,7) circle (1.5pt) ;
\filldraw[black] (7,7) circle (1.5pt) ;
\filldraw[black] (1,8) circle (1.5pt) ;
\filldraw[black] (3,8) circle (1.5pt) ;
\filldraw[black] (5,8) circle (1.5pt) ;
\filldraw[black] (7,8) circle (1.5pt) ;
%edges
\draw[thin] (1,0) -- (0,1) ;
\draw[thin] (1,0) -- (2,1) ;
\draw[thin] (3,0) -- (2,1) ;
\draw[thin] (3,0) -- (4,1) ;
\draw[thin] (5,0) -- (4,1) ;
\draw[thin] (5,0) -- (6,1) ;
\draw[thin] (7,0) -- (6,1) ;
\draw[thin] (7,0) -- (8,1) ;

\draw[thin] (0,1) -- (0,2) ;
\draw[thin] (2,1) -- (2,2) ;
\draw[thin] (4,1) -- (4,2) ;
\draw[thin] (6,1) -- (6,2) ;
\draw[thin] (8,1) -- (8,2) ;

\draw[thin] (1,3) -- (0,2) ;
\draw[thin] (1,3) -- (2,2) ;
\draw[thin] (3,3) -- (2,2) ;
\draw[thin] (3,3) -- (4,2) ;
\draw[thin] (5,3) -- (4,2) ;
\draw[thin] (5,3) -- (6,2) ;
\draw[thin] (7,3) -- (6,2) ;
\draw[thin] (7,3) -- (8,2) ;

\draw[thin] (1,3) -- (1,4) ;
\draw[thin] (3,3) -- (3,4) ;
\draw[thin] (5,3) -- (5,4) ;
\draw[thin] (7,3) -- (7,4) ;

\draw[thin] (1,4) -- (0,5) ;
\draw[thin] (1,4) -- (2,5) ;
\draw[thin] (3,4) -- (2,5) ;
\draw[thin] (3,4) -- (4,5) ;
\draw[thin] (5,4) -- (4,5) ;
\draw[thin] (5,4) -- (6,5) ;
\draw[thin] (7,4) -- (6,5) ;
\draw[thin] (7,4) -- (8,5) ;

\draw[thin] (0,5) -- (0,6) ;
\draw[thin] (2,5) -- (2,6) ;
\draw[thin] (4,5) -- (4,6) ;
\draw[thin] (6,5) -- (6,6) ;
\draw[thin] (8,5) -- (8,6) ;

\draw[thin] (1,7) -- (0,6) ;
\draw[thin] (1,7) -- (2,6) ;
\draw[thin] (3,7) -- (2,6) ;
\draw[thin] (3,7) -- (4,6) ;
\draw[thin] (5,7) -- (4,6) ;
\draw[thin] (5,7) -- (6,6) ;
\draw[thin] (7,7) -- (6,6) ;
\draw[thin] (7,7) -- (8,6) ;

\draw[thin] (1,7) -- (1,8) ;
\draw[thin] (3,7) -- (3,8) ;
\draw[thin] (5,7) -- (5,8) ;
\draw[thin] (7,7) -- (7,8) ;

\draw[gray, very thin] (0,0) rectangle (8,8) ;
\end{tikzpicture}
\caption{Snakeskin discretization for $n=8$}
\end{subcaptionblock}%   
\caption{Discretizations that have an anisotropic continuum limit.}
    \label{fig:bestagons}
\end{figure}

In the examples provided in Figure~\ref{fig:bestagons}, the vertices are placed in a spatially uniform way. If we assign a uniform discrete probability measure to these vertices, then the sequence of probability measures will converge to the Lebesgue measure on $[0,1]^d$. In both of these examples the edges with a horizontal component have a vertical component of equal magnitude. Alongside this, there is a collection of vertical edges with no horizontal component. In order for the continuum limit to be isotropic, the vertical edges must be weighted zero. Weighting the vertical edges zero disconnects the graph, making it impossible to observe a continuum limit. 

We shall not provide calculations to justify these claims due to their length. We hope that the explanation provided is sufficient to give the reader some intuition as to how the combinatorics of a discretization may create (potentially) adverse affects, such as an anisotropic limit (which one may interpret as bias). As previously mentioned in Section~\ref{sec:connectivity}, this also highlights how deterministic discretizations can significantly differ from stochastically sampled discretizations in their limiting behaviour. There a result is discussed, namely \cite[Theorem 3.3]{Braides2023}, that shows that, within the Poisson-point-cloud setting, $\Gamma$-convergence to an isotropic limit is observed up to the percolation threshold, which, as our counterexamples justify, is not the case within the deterministic setting.

\bibliographystyle{plain} 
\bibliography{mybib}

@article{garcia2016continuum,
  title={Continuum limit of total variation on point clouds},
  author={Garc{\'\i}a Trillos, Nicol{\'a}s and Slep{\v{c}}ev, Dejan},
  journal={Archive for Rational Mechanics and Analysis},
  volume={220},
  pages={193--241},
  year={2016},
  publisher={Springer}
}

@book{DalMaso93,
    AUTHOR = {Dal Maso, Gianni},
     TITLE = {An introduction to {$\Gamma$}-convergence},
    SERIES = {Progress in Nonlinear Differential Equations and their
              Applications},
    VOLUME = {8},
 PUBLISHER = {Birkh\"{a}user Boston, Inc., Boston, MA},
      YEAR = {1993},
     PAGES = {xiv+340},
      ISBN = {0-8176-3679-X},
   MRCLASS = {49-02 (46N10 47H99 47N10 49J45 73B27)},
  MRNUMBER = {1201152},
MRREVIEWER = {T.\ Zolezzi},
       DOI = {10.1007/978-1-4612-0327-8},
       URL = {https://doi.org/10.1007/978-1-4612-0327-8},
}

@article {Nicolas2015,
    AUTHOR = {Garc\'ia Trillos, Nicol\'as and Slep{\v{c}}ev, Dejan},
     TITLE = {On the rate of convergence of empirical measures in
              {$\infty$}-transportation distance},
   JOURNAL = {Canad. J. Math.},
  FJOURNAL = {Canadian Journal of Mathematics. Journal Canadien de
              Math\'ematiques},
    VOLUME = {67},
      YEAR = {2015},
    NUMBER = {6},
     PAGES = {1358--1383},
      ISSN = {0008-414X,1496-4279},
   MRCLASS = {60B10 (05C70 05D40 60D05)},
  MRNUMBER = {3415656},
MRREVIEWER = {Hac\`ene\ Djellout},
       DOI = {10.4153/CJM-2014-044-6},
       URL = {https://doi.org/10.4153/CJM-2014-044-6},
}

@article {Dejan2019,
    AUTHOR = {Slep{\v{c}}ev, Dejan and Thorpe, Matthew},
     TITLE = {Analysis of {$p$}-{L}aplacian regularization in semisupervised
              learning},
   JOURNAL = {SIAM J. Math. Anal.},
  FJOURNAL = {SIAM Journal on Mathematical Analysis},
    VOLUME = {51},
      YEAR = {2019},
    NUMBER = {3},
     PAGES = {2085--2120},
      ISSN = {0036-1410,1095-7154},
   MRCLASS = {49J55 (35J20 35J92 49J45 62G20)},
  MRNUMBER = {3953458},
MRREVIEWER = {Mathias\ Hudoba de Badyn},
       DOI = {10.1137/17M115222X},
       URL = {https://doi.org/10.1137/17M115222X},
}

@article {Garcia2018,
    AUTHOR = {Garc\'ia Trillos, Nicol\'as and Slep{\v{c}}ev, Dejan},
     TITLE = {A variational approach to the consistency of spectral
              clustering},
   JOURNAL = {Appl. Comput. Harmon. Anal.},
  FJOURNAL = {Applied and Computational Harmonic Analysis. Time-Frequency
              and Time-Scale Analysis, Wavelets, Numerical Algorithms, and
              Applications},
    VOLUME = {45},
      YEAR = {2018},
    NUMBER = {2},
     PAGES = {239--281},
      ISSN = {1063-5203,1096-603X},
   MRCLASS = {49J55 (49J45 60D05 62G20 68R10)},
  MRNUMBER = {3824112},
       DOI = {10.1016/j.acha.2016.09.003},
       URL = {https://doi.org/10.1016/j.acha.2016.09.003},
}

@article {Thorpe2017,
    AUTHOR = {Thorpe, Matthew and Park, Serim and Kolouri, Soheil and Rohde,
              Gustavo K. and Slep{\v{c}}ev, Dejan},
     TITLE = {A transportation {$L^p$} distance for signal analysis},
   JOURNAL = {J. Math. Imaging Vision},
  FJOURNAL = {Journal of Mathematical Imaging and Vision},
    VOLUME = {59},
      YEAR = {2017},
    NUMBER = {2},
     PAGES = {187--210},
      ISSN = {0924-9907,1573-7683},
   MRCLASS = {94A08 (68T10 68T45 68U10)},
  MRNUMBER = {3694804},
MRREVIEWER = {A.\ F.\ Gualtierotti},
       DOI = {10.1007/s10851-017-0726-4},
       URL = {https://doi.org/10.1007/s10851-017-0726-4},
}

@article {slepcev2020,
    AUTHOR = {Caroccia, Marco and Chambolle, Antonin and Slep{\v{c}}ev, Dejan},
     TITLE = {Mumford--{S}hah functionals on graphs and their asymptotics},
   JOURNAL = {Nonlinearity},
  FJOURNAL = {Nonlinearity},
    VOLUME = {33},
      YEAR = {2020},
    NUMBER = {8},
     PAGES = {3846--3888},
      ISSN = {0951-7715,1361-6544},
   MRCLASS = {49J55 (05C90 49J45 49Q22 62G20 65N12)},
  MRNUMBER = {4115077},
MRREVIEWER = {Koji\ Kikuchi},
       DOI = {10.1088/1361-6544/ab81ee},
       URL = {https://doi.org/10.1088/1361-6544/ab81ee},
}

@article {Thorpe2020,
    AUTHOR = {Cristoferi, Riccardo and Thorpe, Matthew},
     TITLE = {Large data limit for a phase transition model with the
              {$p$}-{L}aplacian on point clouds},
   JOURNAL = {European J. Appl. Math.},
  FJOURNAL = {European Journal of Applied Mathematics},
    VOLUME = {31},
      YEAR = {2020},
    NUMBER = {2},
     PAGES = {185--231},
      ISSN = {0956-7925,1469-4425},
   MRCLASS = {62R07 (49J45 49J55 62G20 62H30)},
  MRNUMBER = {4069821},
       DOI = {10.1017/s0956792518000645},
       URL = {https://doi.org/10.1017/s0956792518000645},
}

@article {Davis2018,
    AUTHOR = {Davis, Erik and Sethuraman, Sunder},
     TITLE = {Consistency of modularity clustering on random geometric
              graphs},
   JOURNAL = {Ann. Appl. Probab.},
  FJOURNAL = {The Annals of Applied Probability},
    VOLUME = {28},
      YEAR = {2018},
    NUMBER = {4},
     PAGES = {2003--2062},
      ISSN = {1050-5164,2168-8737},
   MRCLASS = {60D05 (05C80 49J45 49J55 62G20 68R10)},
  MRNUMBER = {3843822},
       DOI = {10.1214/17-AAP1313},
       URL = {https://doi.org/10.1214/17-AAP1313},
}

@article {Dunlop2020,
    AUTHOR = {Dunlop, Matthew M. and Slep{\v{c}}ev, Dejan and Stuart, Andrew M.
              and Thorpe, Matthew},
     TITLE = {Large data and zero noise limits of graph-based
              semi-supervised learning algorithms},
   JOURNAL = {Appl. Comput. Harmon. Anal.},
  FJOURNAL = {Applied and Computational Harmonic Analysis. Time-Frequency
              and Time-Scale Analysis, Wavelets, Numerical Algorithms, and
              Applications},
    VOLUME = {49},
      YEAR = {2020},
    NUMBER = {2},
     PAGES = {655--697},
      ISSN = {1063-5203,1096-603X},
   MRCLASS = {62G20 (49J45 62C10 62F15 90C20 90C48)},
  MRNUMBER = {4117856},
       DOI = {10.1016/j.acha.2019.03.005},
       URL = {https://doi.org/10.1016/j.acha.2019.03.005},
}

@article {Garcia2020,
    AUTHOR = {Garc\'ia Trillos, Nicol\'as and Gerlach, Moritz and Hein,
              Matthias and Slep{\v{c}}ev, Dejan},
     TITLE = {Error estimates for spectral convergence of the graph
              {L}aplacian on random geometric graphs toward the
              {L}aplace-{B}eltrami operator},
   JOURNAL = {Found. Comput. Math.},
  FJOURNAL = {Foundations of Computational Mathematics. The Journal of the
              Society for the Foundations of Computational Mathematics},
    VOLUME = {20},
      YEAR = {2020},
    NUMBER = {4},
     PAGES = {827--887},
      ISSN = {1615-3375,1615-3383},
   MRCLASS = {62G20 (05C50 58J50 60D05 62R30 65N25 68R10 81Q35)},
  MRNUMBER = {4130541},
       DOI = {10.1007/s10208-019-09436-w},
       URL = {https://doi.org/10.1007/s10208-019-09436-w},
}

@article {Kaplan2020,
    AUTHOR = {Garc\'ia Trillos, Nicol\'as and Kaplan, Zachary and
              Samakhoana, Thabo and Sanz-Alonso, Daniel},
     TITLE = {On the consistency of graph-based {B}ayesian semi-supervised
              learning and the scalability of sampling algorithms},
   JOURNAL = {J. Mach. Learn. Res.},
  FJOURNAL = {Journal of Machine Learning Research (JMLR)},
    VOLUME = {21},
      YEAR = {2020},
     PAGES = {Paper No. 28, 47},
      ISSN = {1532-4435,1533-7928},
   MRCLASS = {62H30 (62F15)},
  MRNUMBER = {4073761},
}

@article {Alonso2018,
    AUTHOR = {Garc\'ia Trillos, Nicol\'as and Sanz-Alonso, Daniel},
     TITLE = {Continuum limits of posteriors in graph {B}ayesian inverse
              problems},
   JOURNAL = {SIAM J. Math. Anal.},
  FJOURNAL = {SIAM Journal on Mathematical Analysis},
    VOLUME = {50},
      YEAR = {2018},
    NUMBER = {4},
     PAGES = {4020--4040},
      ISSN = {0036-1410,1095-7154},
   MRCLASS = {28A33 (46N30 62F15)},
  MRNUMBER = {3829512},
MRREVIEWER = {Junxiong\ Jia},
       DOI = {10.1137/17M1138005},
       URL = {https://doi.org/10.1137/17M1138005},
}

@article {Garcia2017,
    AUTHOR = {Garc\'ia Trillos, Nicol\'as and Slep{\v{c}}ev, Dejan and von
              Brecht, James},
     TITLE = {Estimating perimeter using graph cuts},
   JOURNAL = {Adv. in Appl. Probab.},
  FJOURNAL = {Advances in Applied Probability},
    VOLUME = {49},
      YEAR = {2017},
    NUMBER = {4},
     PAGES = {1067--1090},
      ISSN = {0001-8678,1475-6064},
   MRCLASS = {60D05 (62G20 68R10)},
  MRNUMBER = {3732187},
       DOI = {10.1017/apr.2017.34},
       URL = {https://doi.org/10.1017/apr.2017.34},
}

@article {von2016,
    AUTHOR = {Garc\'ia Trillos, Nicol\'as and Slep{\v{c}}ev, Dejan and von
              Brecht, James and Laurent, Thomas and Bresson, Xavier},
     TITLE = {Consistency of {C}heeger and ratio graph cuts},
   JOURNAL = {J. Mach. Learn. Res.},
  FJOURNAL = {Journal of Machine Learning Research (JMLR)},
    VOLUME = {17},
      YEAR = {2016},
     PAGES = {Paper No. 181, 46},
      ISSN = {1532-4435,1533-7928},
   MRCLASS = {62H30 (05C85 68R10)},
  MRNUMBER = {3567449},
}

@article {Osting2017,
    AUTHOR = {Osting, Braxton and Reeb, Todd Harry},
     TITLE = {Consistency of {D}irichlet partitions},
   JOURNAL = {SIAM J. Math. Anal.},
  FJOURNAL = {SIAM Journal on Mathematical Analysis},
    VOLUME = {49},
      YEAR = {2017},
    NUMBER = {5},
     PAGES = {4251--4274},
      ISSN = {0036-1410,1095-7154},
   MRCLASS = {62H30 (49J55 60D05 62G20 68R10)},
  MRNUMBER = {3717816},
       DOI = {10.1137/16M1098309},
       URL = {https://doi.org/10.1137/16M1098309},
}

@article {Theil2019,
    AUTHOR = {Thorpe, Matthew and Theil, Florian},
     TITLE = {Asymptotic analysis of the {G}inzburg--{L}andau functional on
              point clouds},
   JOURNAL = {Proc. Roy. Soc. Edinburgh Sect. A},
  FJOURNAL = {Proceedings of the Royal Society of Edinburgh. Section A.
              Mathematics},
    VOLUME = {149},
      YEAR = {2019},
    NUMBER = {2},
     PAGES = {387--427},
      ISSN = {0308-2105,1473-7124},
   MRCLASS = {49J45 (60B10 62G20 62H30)},
  MRNUMBER = {3937712},
MRREVIEWER = {Mikhail\ I.\ Sumin},
       DOI = {10.1017/prm.2018.32},
       URL = {https://doi.org/10.1017/prm.2018.32},
}

@article {ThorpeVanGennip23,
    AUTHOR = {Thorpe, Matthew and van Gennip, Yves},
     TITLE = {Deep limits of residual neural networks},
   JOURNAL = {Res. Math. Sci.},
  FJOURNAL = {Research in the Mathematical Sciences},
    VOLUME = {10},
      YEAR = {2023},
    NUMBER = {1},
     PAGES = {Paper No. 6, 44},
      ISSN = {2522-0144,2197-9847},
   MRCLASS = {49J45 (34E05 39A30 49J15 49N60)},
  MRNUMBER = {4522828},
       DOI = {10.1007/s40687-022-00370-y},
       URL = {https://doi.org/10.1007/s40687-022-00370-y},
}

@book{AdamsFournier03,
  title={Sobolev Spaces},
  author={Adams, Robert A. and Fournier, John J. F.},
  volume={140},
  year={2003},
  Series = {Pure and applied mathematics},
  address = {Oxford, Amsterdam},
  publisher={Elsevier}
}

@book{Villani09,
    author = {Villani, C\'edric},
    title = {Optimal Transport},
    subtitle = {Old and New},
    doi = {https://doi.org/10.1007/978-3-540-71050-9},
    year = {2009},
    edition = {first},
    publisher = {Springer},
    address = {Berlin, Heidelberg},
    series = {Grundlehren der mathematischen Wissenschaften},
    volume = {338},
    pages = {xxii+976}
}

@article{mefuture,
    author = {Mercer, Samuel and van Gennip, Yves},
    title = {Uniform graph Sobolev inequalities and graph gradient flows: convergence from discrete to continuum},
    year = {in prep.},
}

@article {Champion2008,
    AUTHOR = {Champion, Thierry and De Pascale, Luigi and Juutinen, Petri},
     TITLE = {The {$\infty$}-{W}asserstein distance: local solutions and
              existence of optimal transport maps},
   JOURNAL = {SIAM J. Math. Anal.},
  FJOURNAL = {SIAM Journal on Mathematical Analysis},
    VOLUME = {40},
      YEAR = {2008},
    NUMBER = {1},
     PAGES = {1--20},
      ISSN = {0036-1410,1095-7154},
   MRCLASS = {49Q20 (49K30)},
  MRNUMBER = {2403310},
       DOI = {10.1137/07069938X},
       URL = {https://doi-org.tudelft.idm.oclc.org/10.1137/07069938X},
}

@misc{mercer2025,
      title={An extension to {B}anach stackings of the {B}rezis--{P}azy semigroup-convergence theorem, with applications to $\lambda$-convex gradient flows}, 
      author={Samuel Mercer and Yves van Gennip},
      year={2025},
      eprint={2511.23233},
      archivePrefix={arXiv},
      primaryClass={math.AP},
      url={https://arxiv.org/abs/2511.23233}, 
      publisher = {arXiv},
    copyright = {arXiv.org perpetual, non-exclusive license},
    howpublished = {arxiv.org/abs/2511.23233v1 [math.AP]}
}

@book {Chung1997,
    AUTHOR = {Chung, Fan R. K.},
     TITLE = {Spectral graph theory},
    SERIES = {CBMS Regional Conference Series in Mathematics},
    VOLUME = {92},
 PUBLISHER = {Conference Board of the Mathematical Sciences, Washington, DC;
              by the American Mathematical Society, Providence, RI},
      YEAR = {1997},
     PAGES = {xii+207},
      ISBN = {0-8218-0315-8},
   MRCLASS = {58G99 (05C50 35P05 46N20 47N20)},
  MRNUMBER = {1421568},
MRREVIEWER = {Robert\ Brooks},
}

@book {Leoni2009,
    AUTHOR = {Leoni, Giovanni},
     TITLE = {A first course in {S}obolev spaces},
    SERIES = {Graduate Studies in Mathematics},
    VOLUME = {105},
 PUBLISHER = {American Mathematical Society, Providence, RI},
      YEAR = {2009},
     PAGES = {xvi+607},
      ISBN = {978-0-8218-4768-8},
   MRCLASS = {46E35 (26Axx 26B30 28A78 46-01)},
  MRNUMBER = {2527916},
MRREVIEWER = {Dorothee\ D.\ Haroske},
       DOI = {10.1090/gsm/105},
       URL = {https://doi-org.tudelft.idm.oclc.org/10.1090/gsm/105},
}

@article {Fusco2025,
    AUTHOR = {Fusco, Giuliana},
     TITLE = {Variational analysis of discrete {D}irichlet problems in
              periodically perforated domains},
   JOURNAL = {ESAIM Control Optim. Calc. Var.},
  FJOURNAL = {ESAIM. Control, Optimisation and Calculus of Variations},
    VOLUME = {31},
      YEAR = {2025},
     PAGES = {Paper No. 99, 28},
      ISSN = {1292-8119,1262-3377},
   MRCLASS = {49J45 (35B27 74Q05 82B20)},
  MRNUMBER = {5006155},
       DOI = {10.1051/cocv/2025085},
       URL = {https://doi-org.tudelft.idm.oclc.org/10.1051/cocv/2025085},
}

@book {Evans1992,
    AUTHOR = {Evans, Lawrence C. and Gariepy, Ronald F.},
     TITLE = {Measure theory and fine properties of functions},
    SERIES = {Studies in Advanced Mathematics},
 PUBLISHER = {CRC Press, Boca Raton, FL},
      YEAR = {1992},
     PAGES = {viii+268},
      ISBN = {0-8493-7157-0},
   MRCLASS = {28-02 (26-02 26Bxx 46E35)},
  MRNUMBER = {1158660},
MRREVIEWER = {R.\ G.\ Bartle},
}

@book {Lee2013,
    AUTHOR = {Lee, John M.},
     TITLE = {Introduction to smooth manifolds},
    SERIES = {Graduate Texts in Mathematics},
    VOLUME = {218},
   EDITION = {Second},
 PUBLISHER = {Springer, New York},
      YEAR = {2013},
     PAGES = {xvi+708},
      ISBN = {978-1-4419-9981-8},
   MRCLASS = {58-01 (53-01 57-01)},
  MRNUMBER = {2954043},
}

@article {Alberti1998,
    AUTHOR = {Alberti, Giovanni and Bellettini, Giovanni},
     TITLE = {A non-local anisotropic model for phase transitions:
              asymptotic behaviour of rescaled energies},
   JOURNAL = {European J. Appl. Math.},
  FJOURNAL = {European Journal of Applied Mathematics},
    VOLUME = {9},
      YEAR = {1998},
    NUMBER = {3},
     PAGES = {261--284},
      ISSN = {0956-7925,1469-4425},
   MRCLASS = {80A22 (49S05 73B40 73V25 82B26)},
  MRNUMBER = {1634336},
MRREVIEWER = {Matthias\ Wilhelm\ Winter},
       DOI = {10.1017/S0956792598003453},
       URL = {https://doi-org.tudelft.idm.oclc.org/10.1017/S0956792598003453},
}

@article {Clarke1976,
    AUTHOR = {Clarke, F. H.},
     TITLE = {On the inverse function theorem},
   JOURNAL = {Pacific J. Math.},
  FJOURNAL = {Pacific Journal of Mathematics},
    VOLUME = {64},
      YEAR = {1976},
    NUMBER = {1},
     PAGES = {97--102},
      ISSN = {0030-8730,1945-5844},
   MRCLASS = {26A57},
  MRNUMBER = {425047},
MRREVIEWER = {B.\ Rodr\'iguez-Salinas},
       URL = {http://projecteuclid.org.tudelft.idm.oclc.org/euclid.pjm/1102867214},
}

@article {Licht2019,
    AUTHOR = {Licht, Martin W.},
     TITLE = {Smoothed projections over weakly {L}ipschitz domains},
   JOURNAL = {Math. Comp.},
  FJOURNAL = {Mathematics of Computation},
    VOLUME = {88},
      YEAR = {2019},
    NUMBER = {315},
     PAGES = {179--210},
      ISSN = {0025-5718,1088-6842},
   MRCLASS = {65N30 (58A12)},
  MRNUMBER = {3854055},
MRREVIEWER = {V\'it\ Dolej\v s\'i},
       DOI = {10.1090/mcom/3329},
       URL = {https://doi-org.tudelft.idm.oclc.org/10.1090/mcom/3329},
}

@book {Grisvard1985,
    AUTHOR = {Grisvard, P.},
     TITLE = {Elliptic problems in nonsmooth domains},
    SERIES = {Monographs and Studies in Mathematics},
    VOLUME = {24},
 PUBLISHER = {Pitman (Advanced Publishing Program), Boston, MA},
      YEAR = {1985},
     PAGES = {xiv+410},
      ISBN = {0-273-08647-2},
   MRCLASS = {35J25 (35-02)},
  MRNUMBER = {775683},
MRREVIEWER = {P.\ Szeptycki},
}

@incollection {Gupta1999,
    AUTHOR = {Gupta, Piyush and Kumar, P. R.},
     TITLE = {Critical power for asymptotic connectivity in wireless
              networks},
 BOOKTITLE = {Stochastic analysis, control, optimization and applications},
    SERIES = {Systems Control Found. Appl.},
     PAGES = {547--566},
 PUBLISHER = {Birkh\"auser Boston, Boston, MA},
      YEAR = {1999},
      ISBN = {0-8176-4078-9},
   MRCLASS = {93C05},
  MRNUMBER = {1702981},
}

@book {Penrose2003,
    AUTHOR = {Penrose, Mathew},
     TITLE = {Random geometric graphs},
    SERIES = {Oxford Studies in Probability},
    VOLUME = {5},
 PUBLISHER = {Oxford University Press, Oxford},
      YEAR = {2003},
     PAGES = {xiv+330},
      ISBN = {0-19-850626-0},
   MRCLASS = {60-02 (05C80 60D05)},
  MRNUMBER = {1986198},
MRREVIEWER = {Ilya\ S.\ Molchanov},
       DOI = {10.1093/acprof:oso/9780198506263.001.0001},
       URL = {https://doi-org.tudelft.idm.oclc.org/10.1093/acprof:oso/9780198506263.001.0001},
}

@misc{trillos2025,
      title={Minimax Rates for the Estimation of Eigenpairs of Weighted {L}aplace-{B}eltrami Operators on Manifolds}, 
      author={Nicolás García Trillos and Chenghui Li and Raghavendra Venkatraman},
      year={2025},
      eprint={2506.00171},
      archivePrefix={arXiv},
      primaryClass={stat.ML},
      url={https://arxiv.org/abs/2506.00171}, 
      publisher = {arXiv},
    copyright = {arXiv.org perpetual, non-exclusive license},
    howpublished = {arxiv.org/abs/2506.00171 [stat.ML]}
}

@article {Braides2023,
    AUTHOR = {Braides, Andrea and Caroccia, Marco},
     TITLE = {Asymptotic behavior of the {D}irichlet energy on {P}oisson
              point clouds},
   JOURNAL = {J. Nonlinear Sci.},
  FJOURNAL = {Journal of Nonlinear Science},
    VOLUME = {33},
      YEAR = {2023},
    NUMBER = {5},
     PAGES = {Paper No. 80, 57},
      ISSN = {0938-8974,1432-1467},
   MRCLASS = {60G55 (35B27 49J45 82B43)},
  MRNUMBER = {4617151},
MRREVIEWER = {Christian\ Hirsch},
       DOI = {10.1007/s00332-023-09937-7},
       URL = {https://doi-org.tudelft.idm.oclc.org/10.1007/s00332-023-09937-7},
}

@phdthesis{methesis,
  author    = {Mercer, Samuel},
  title     = {Novel approaches to the study of gradient flows: discrete-to-continuum limits and beyond},
  school    = {Technische Universiteit Delft},
  year      = {in prep.},
  type      = {Ph{D} Thesis},
  address   = {Delft, Netherlands}
}

@article {Gennip2012,
    AUTHOR = {van Gennip, Yves and Bertozzi, Andrea L.},
     TITLE = {{$\Gamma$}-convergence of graph {G}inzburg-{L}andau
              functionals},
   JOURNAL = {Adv. Differential Equations},
  FJOURNAL = {Advances in Differential Equations},
    VOLUME = {17},
      YEAR = {2012},
    NUMBER = {11-12},
     PAGES = {1115--1180},
      ISSN = {1079-9389},
   MRCLASS = {35R02 (35B27 35Q56 49J45)},
  MRNUMBER = {3013414},
MRREVIEWER = {Anneliese\ Defranceschi},
}

\end{document}